\newcommand{\Ker}{\operatorname{Ker}}
\newcommand{\Zp}{\mathbb{Z}_p}
\newcommand{\val}{\operatorname{val}}
\newcommand{\diag}{\operatorname{diag}}
\newcommand{\Tr}{\operatorname{Tr}}
\newcommand{\Hom}{\operatorname{Hom}}
\newcommand{\End}{\operatorname{End}}
\newcommand{\Ind}{\operatorname{Ind}}
\newcommand{\GL}{\operatorname{GL}}
\newcommand{\res}{\operatorname{res}}
\newcommand{\id}{\operatorname{id}}
\newcommand{\pr}{\operatorname{pr}}
\newcommand{\Coker}{\operatorname{Coker}}
\newcommand{\Sub}{\operatorname{Sub}}
\newcommand{\bg}{(\hspace{-0.06cm}(}
\newcommand{\jg}{)\hspace{-0.06cm})}
\newcommand{\bs}{[\hspace{-0.04cm}[}
\newcommand{\js}{]\hspace{-0.04cm}]}
\newtheorem{thm}{Theorem}[section]
\newtheorem{pro}[thm]{Proposition}
\newtheorem{lem}[thm]{Lemma}
\newtheorem{cor}[thm]{Corollary}
\newtheorem{df}[thm]{Definition}
\newtheorem{ex}[thm]{Example}
\newtheorem{que}{Question}
\theoremstyle{definition}
\newtheorem*{rem}{Remark}
\newtheorem*{rems}{Remarks}
\begin{document}
\date{\today}
\title{Links between generalized Montr\'eal-functors}
\author{M\'arton Erd\'elyi \and Gergely Z\'abr\'adi
\footnote{Both authors wish to thank the Alfr\'ed R\'enyi Institute of Mathematics, Hungarian Academy of Sciences for its hospitality where this work was written. The second author was partially supported by a Hungarian OTKA Research grant K-100291 and by the J\'anos Bolyai Scholarship of the Hungarian Academy of Sciences.}}

\maketitle

\begin{abstract}
Let $o$ be the ring of integers in a finite extension $K/\mathbb{Q}_p$ and $G=\mathbf{G}(\mathbb{Q}_p)$ be the $\mathbb{Q}_p$-points of a $\mathbb{Q}_p$-split reductive group $\mathbf{G}$ defined over $\Zp$ with connected centre and split Borel $\mathbf{B}=\mathbf{TN}$. We show that Breuil's \cite{B} pseudocompact $(\varphi,\Gamma)$-module $D^\vee_{\xi}(\pi)$ attached to a smooth $o$-torsion representation $\pi$ of $B=\mathbf{B}(\mathbb{Q}_p)$ is isomorphic to the pseudocompact completion of the basechange $\mathcal{O_E}\otimes_{\Lambda(N_0),\ell}\widetilde{D_{SV}}(\pi)$ to Fontaine's ring (via a Whittaker functional $\ell\colon N_0=\mathbf{N}(\Zp)\to \Zp$) of the \'etale hull $\widetilde{D_{SV}}(\pi)$ of $D_{SV}(\pi)$ defined by Schneider and Vigneras \cite{SVig}.
Moreover, we construct a $G$-equivariant map from the Pontryagin dual $\pi^\vee$ to the global sections $\mathfrak{Y}(G/B)$ of the $G$-equivariant sheaf $\mathfrak{Y}$ on $G/B$ attached to a noncommutative multivariable version $D^\vee_{\xi,\ell,\infty}(\pi)$ of Breuil's $D^\vee_{\xi}(\pi)$ whenever $\pi$ comes as the restriction to $B$ of a smooth, admissible representation of $G$ of finite length.
\end{abstract}

\tableofcontents

\section{Introduction}

\subsection{Notations}

Let $G=\mathbf{G}(\mathbb{Q}_p)$ be the $\mathbb{Q}_p$-points of a $\mathbb{Q}_p$-split connected reductive group $\mathbf{G}$ defined over $\Zp$ with connected centre and a fixed split Borel subgroup $\mathbf{B}=\mathbf{TN}$. Put $B:=\mathbf{B}(\mathbb{Q}_p)$, $T:=\mathbf{T}(\mathbb{Q}_p)$, and $N:=\mathbf{N}(\mathbb{Q}_p)$. We denote by $\Phi_+$ the  set  of  roots of $T$ in $N$, by $\Delta\subset \Phi_+$ the set of simple roots, and by $u_\alpha :\mathbb G_a \to  N_\alpha$, for $\alpha \in \Phi_+$,  a  $\mathbb Q_p$-homomorphism onto the root subgroup $N_\alpha$ of $N$ such that $tu_\alpha (x) t^{-1}=  u_\alpha(\alpha (t) x)$ for $x\in \mathbb Q_p$ and $t\in T(\mathbb Q_p)$, and   $N_0=\prod_{\alpha\in \Phi_+} u_\alpha (\mathbb Z_p)$  is a subgroup of $N(\mathbb Q_p) $. We put $N_{\alpha,0}:=u_{\alpha}(\mathbb{Z}_p)$ for the image of $u_{\alpha}$ on $\mathbb{Z}_p$.
We denote by $T_{+} $ the monoid of dominant elements  $t$ in $T(\mathbb Q_p)$ such that $ \val_p(\alpha(t))\geq 0$ for all $\alpha \in \Phi_+$,   by $T_0\subset T_+$ the maximal subgroup,  by $T_{++}$ the subset of strictly dominant elements, i.e. $\val_p(\alpha(t))> 0$ for all $\alpha \in \Phi_+$, and we put $B_+=N_0T_+, B_0=N_0T_0$.  The natural conjugation action of  $T_+$ on $N_0$ extends to an action on the Iwasawa $o$-algebra $\Lambda (N_0)=o\bs N_0\js$. For $t\in T_+$ we denote this action of $t$ on $\Lambda(N_0)$ by $\varphi_t$. The map $\varphi_t\colon \Lambda(N_0)\to\Lambda(N_0)$ is an injective ring homomorphism with a distinguished left inverse $\psi_t\colon \Lambda(N_0)\to\Lambda(N_0)$ satisfying $\psi_t\circ\varphi_t=\id_{\Lambda(N_0)}$ and $\psi_t(u\varphi_t(\lambda))=\psi_t(\varphi_t(\lambda)u)=0$ for all $u\in N_0\setminus tN_0t^{-1}$ and $\lambda\in\Lambda(N_0)$.

Each simple root $\alpha$ gives a $\mathbb Q_p$-homomorphism $x_\alpha: N\to \mathbb G_a$ with   section  $u_\alpha$. We denote  by $\ell_{\alpha}:N_0\to \mathbb Z_p$, resp. $\iota_{\alpha}\colon  \mathbb Z_p\to N_0$, the restriction of $x_\alpha$, resp.\ $u_{\alpha}$, to $N_0$, resp.\ $\mathbb Z_p$.

Since the centre of $G$ is assumed to be connected, there exists a cocharacter $\xi\colon \mathbb{Q}_p^\times\to T$ such that $\alpha\circ\xi$ is the identity on $\mathbb{Q}_p^\times$ for each $\alpha\in\Delta$. We put $\Gamma:=\xi(\mathbb{Z}_p^\times)\leq T$ and often denote the action of $s:=\xi(p)$ by $\varphi=\varphi_s$.

By a smooth $o$-torsion representation $\pi$ of $G$ (resp.\ of $B=\mathbf{B}(\mathbb{Q}_p)$) we mean a torsion $o$-module $\pi$ together with a smooth (ie.\ stabilizers are open) and linear action of the group $G$ (resp.\ of $B$). 

For example,  $\mathbf{G}=\GL_n$,  $B$  is  the subgroup of upper triangular matrices,  $N$    consists of the strictly upper triangular matrices ($1$ on the diagonal),
$T$  is the diagonal subgroup,  $N_0=\mathbf{N}(\mathbb Z_p)$, the simple roots are $\alpha_1, \ldots, \alpha_{n-1}$ where  $\alpha_i(\diag(t_1,\ldots, t_n))= t_i t_{i+1}^{-1}$, $x_{ \alpha_i}$ sends a matrix to its $(i,i+1)$-coefficient, $u_{ \alpha_i}(\cdot) $ is the strictly upper triangular matrix, with $(i,i+1)$-coefficient $\cdot$ and $0$ everywhere else.

Let $\ell\colon N_0\to \mathbb{Z}_p$ (for now) any surjective group homomorphism and denote by $H_0\lhd N_0$ the kernel of $\ell$. The ring $ \Lambda_{\ell}(N_{0})$, denoted by $ \Lambda_{H_0}(N_{0})$ in \cite{SVig}, is a generalisation of the ring  $\mathcal O_{\mathcal E}$, which corresponds to  $\Lambda_{\id} (N_{0}^{(2)})$ where $N_0^{(2)}$ is the $\mathbb{Z}_p$-points of the unipotent radical of a split Borel subgroup in $\GL_2$. We refer the reader to \cite{SVig} for the proofs of some of the following claims.

The maximal ideal $\mathcal{M}(H_0)$ of  the completed group $o$-algebra $ \Lambda(H_0)=o\bs H_0\js$ is generated by $\varpi$ and by the kernel of the augmentation map $o\bs H_0\js\to o$.

The ring $ \Lambda_{\ell}(N_{0})$  is the $\mathcal{M}(H_0)$-adic completion of the localisation of  $ \Lambda (N_{0})$ with respect to the Ore subset $S_{\ell} (N_{0})$
of elements which are not in the ideal $\mathcal{M}(H_0)  \Lambda (N_{0})$.  The   ring $\Lambda ( N_{0})$ can be viewed as the ring  $\Lambda (H_0)[[X]]$ of skew Taylor series over $\Lambda (H_0)$ in the variable $X=[u]-1$  where  $u \in N_{0}$ and $\ell(u)$ is  a topological generator of $\ell (N_{0})=\mathbb{Z}_p$.  Then   $ \Lambda_{\ell}(N_{0})$ is viewed as
the ring of infinite skew Laurent series $\sum_{n \in \mathbb Z}a_{n}X^{n}$ over $\Lambda (H_0)$ in the variable $X$ with $\lim_{n\to -\infty}a_{n}=0$ for the compact topology of $\Lambda (H_0)$. For a different characterization of this ring in terms of a projective limit $\Lambda_\ell(N_0)\cong\varprojlim_{n,k}\Lambda(N_0/H_k)[1/X]/\varpi^n$ for $H_k\lhd N_0$ normal subgroups contained and open in $H_0$ satisfying $\bigcap_{k\geq 0}H_k=\{1\}$ see also \cite{Z14}.

For a finite index subgroup $\mathcal{G}_2$ in a group $\mathcal{G}_1$ we denote by $J(\mathcal{G}_1/\mathcal{G}_2)\subset \mathcal{G}_1$ a (fixed) set of representatives of the left cosets in $\mathcal{G}_1/\mathcal{G}_2$.

\subsection{General overview}\label{schvig}

By now the $p$-adic Langlands correspondence for $\GL_2(\mathbb{Q}_p)$ is very well understood through the work of Colmez \cite{Mira}, \cite{C} and others (see \cite{B1} for an overview). To review Colmez's work let $K/\mathbb{Q}_p$ be a finite extension with ring of integers $o$, uniformizer $\varpi$ and residue field $k$. The starting point is Fontaine's \cite{F} theorem that the category of $o$-torsion Galois representations of $\mathbb{Q}_p$ is equivalent to the category of torsion $(\varphi,\Gamma)$-modules over $\mathcal{O_E}=\varprojlim_h o/\varpi^h\bg X\jg $. One of Colmez's breakthroughs was that he managed to relate $p$-adic (and mod $p$) representations of $\GL_2(\mathbb{Q}_p)$ to $(\varphi,\Gamma)$-modules, too.
The so-called ``Montr\'eal-functor'' associates to a smooth $o$-torsion representation $\pi$ of the standard Borel subgroup $B_2(\mathbb{Q}_p)$ of $\GL_2(\mathbb{Q}_p)$ a torsion $(\varphi,\Gamma)$-module over $\mathcal{O_E}$. There are two different approaches to generalize this functor to reductive groups $G$ other than $\GL_2(\mathbb{Q}_p)$. We briefly recall these ``generalized Montr\'eal functors'' here.

The approach by Schneider and Vigneras \cite{SVig} starts with the set $\mathcal{B}_+(\pi)$ of generating $B_+$-subrepresentations $W\leq \pi$. The Pontryagin dual $W^\vee=\Hom_o(W,K/o)$ of each $W$ admits a natural action of the inverse monoid $B_+^{-1}$. Moreover, the action of $N_0\leq B_+^{-1}$ on $W^\vee$ extends to an action of the Iwasawa algebra $\Lambda(N_0)=o\bs N_0\js$. For $W_1,W_2\in\mathcal{B}_+(\pi)$ we also have $W_1\cap W_2\in \mathcal{B}_+(\pi)$ (Lemma 2.2 in \cite{SVig}) therefore we may take the inductive limit $D_{SV}(\pi):=\varinjlim_{W\in \mathcal{B}_+(\pi)}W^\vee$. In general, $D_{SV}(\pi)$ does not have good properties: for instance it may not admit a canonical right inverse of the $T_+$-action making $D_{SV}(\pi)$ an \'etale $T_+$-module over $\Lambda(N_0)$.
However, by taking a resolution of $\pi$ by compactly induced representations of $B$, one may consider the derived functors $D^i_{SV}$ of $D_{SV}$ for $i\geq 0$ producing \'etale $T_+$-modules $D^i_{SV}(\pi)$ over $\Lambda(N_0)$. Note that the functor $D_{SV}$ is neither left- nor right exact, but exact in the middle. The fundamental open question of \cite{SVig} whether the topological localizations $\Lambda_\ell(N_0)\otimes_{\Lambda(N_0)}D^i_{SV}(\pi)$ are finitely generated over $\Lambda_\ell(N_0)$ in case when $\pi$ comes as a restriction of a smooth admissible representation of $G$ of finite length. One can pass to usual $1$-variable \'etale $(\varphi,\Gamma)$-modules---still not necessarily finitely generated---over $\mathcal{O_E}$ via the map $\ell\colon \Lambda_\ell(N_0)\to\mathcal{O_E}$ which step is an equivalence of categories for finitely generated \'etale $(\varphi,\Gamma)$-modules (Thm.\ 8.20 in \cite{SVZ}).

More recently, Breuil \cite{B} managed to find a different approach, producing a pseudocompact (ie.\ projective limit of finitely generated) $(\varphi,\Gamma)$-module $D^\vee_\xi(\pi)$ over $\mathcal{O_E}$ when $\pi$ is killed by a power $\varpi^h$ of the uniformizer $\varpi$.  In \cite{B} (and also in \cite{SVig}) $\ell$ is a \emph{generic} Whittaker functional, namely $\ell$ is chosen to be the composite map $$\ell\colon N_0\to N_0/(N_0\cap [N,N])\cong\prod_{\alpha\in \Delta}N_{\alpha,0}\overset{\sum\limits_{\alpha\in \Delta}u_\alpha^{-1}}{\longrightarrow}\Zp\ .$$ Breuil passes right away to the space of $H_0$-invariants $\pi^{H_0}$ of $\pi$ where $H_0$ is the kernel of the group homomorphism $\ell\colon N_0\to\Zp$. By the assumption that $\pi$ is smooth, the invariant subspace $\pi^{H_0}$ has the structure of a module over the Iwasawa algebra $\Lambda(N_0/H_0)/\varpi^h\cong o/\varpi^h\bs X\js$.
Moreover, it admits a semilinear action of $F$ which is the Hecke action of $s:=\xi(p)$: For any $m\in \pi^{H_0}$ we define $$F(m):=\Tr_{H_0/sH_0s^{-1}}(sm)=\sum_{u\in J(H_0/sH_0s^{-1})}usm\ .$$
So $\pi^{H_0}$ is a module over the skew polynomial ring $\Lambda(N_0/H_0)/\varpi^h[F]$ (defined by the identity $FX=(sXs^{-1})F=((X+1)^p-1)F$). We consider those $(i)$ finitely generated $\Lambda(N_0/H_0)/\varpi^h[F]$-submodules $M\subset \pi^{H_0}$ that are $(ii)$ invariant under the action of $\Gamma$ and are $(iii)$ \emph{admissible} as a $\Lambda(N_0/H_0)/\varpi^h$-module, ie.\ the Pontryagin dual $M^{\vee}=\Hom_{o}(M,o/\varpi^h)$ is finitely generated over $\Lambda(N_0/H_0)/\varpi^h$.
Note that this admissibility condition $(iii)$ is equivalent to the usual admissibility condition in smooth representation theory, ie.\ that for any (or equivalently for a single) open subgroup $N'\leq N_0/H_0$ the fixed points $M^{N'}$ form a finitely generated module over $o$. We denote by $\mathcal{M}(\pi^{H_0})$ the---via inclusion partially ordered---set of those submodules $M\leq \pi^{H_0}$ satisfying $(i),(ii),(iii)$. Note that whenever $M_1,M_2$ are in $\mathcal{M}(\pi^{H_0})$ then so is $M_1+M_2$. It is shown in \cite{C} (see also \cite{E1} and Lemma 2.6 in \cite{B}) that for $M\in\mathcal{M}(\pi^{H_0})$ the localized Pontryagin dual $M^\vee[1/X]$ naturally admits a structure of an \'etale $(\varphi,\Gamma)$-module over $o/\varpi^h\bg X\jg$. Therefore Breuil \cite{B} defines
\begin{equation*}
D_\xi^{\vee}(\pi):=\varprojlim_{M\in\mathcal{M}(\pi^{H_0})}M^{\vee}[1/X]\ .
\end{equation*}
By construction this is a projective limit of usual $(\varphi,\Gamma)$-modules. Moreover, $D^\vee_\xi$ is right exact and compatible with parabolic induction \cite{B}. It can be characterized by the following universal property: For any (finitely generated) \'etale $(\varphi,\Gamma)$-module over $o/\varpi^h\bg X\jg\cong o/\varpi^h\bs \Zp\js[([1]-1)^{-1}]$ (here $[1]$ is the image of the topological generator of $\Zp$ in the Iwasawa algebra $o/\varpi^h\bs \Zp\js$) we may consider continuous $\Lambda(N_0)$-homomorphisms $\pi^\vee\to D$ via the map $\ell\colon N_0\to \Zp$ (in the weak topology of $D$ and the compact topology of $\pi^\vee$). These all factor through $(\pi^\vee)_{H_0}\cong (\pi^{H_0})^\vee$.
So we may require these maps be $\psi_s$- and $\Gamma$-equivariant where $\Gamma=\xi(\Zp\setminus\{0\})$ acts naturally on $(\pi^{H_0})^\vee$ and $\psi_s\colon (\pi^{H_0})^\vee\to (\pi^{H_0})^\vee$ is the dual of the Hecke-action $F\colon \pi^{H_0}\to\pi^{H_0}$ of $s$ on $\pi^{H_0}$. Any such continuous $\psi_s$- and $\Gamma$-equivariant map $f$ factors uniquely through $D^\vee_\xi(\pi)$. However, it is not known in general whether $D^\vee_\xi(\pi)$ is nonzero for smooth irreducible representations $\pi$ of $G$ (restricted to $B$).

The way Colmez goes back to representations of $\GL_2(\mathbb{Q}_p)$ requires the following construction. From any $(\varphi,\Gamma)$-module over $\mathcal{E}=\mathcal{O_E}[1/p]$ and character $\delta\colon\mathbb{Q}_p^\times\to o^\times$ Colmez constructs a $\GL_2(\mathbb{Q}_p)$-equivariant sheaf $\mathfrak{Y}\colon U\mapsto D\boxtimes_\delta U$ ($U\subseteq \mathbb{P}^1$ open) of $K$-vectorspaces on the projective space $\mathbb{P}^1(\mathbb{Q}_p)\cong\GL_2(\mathbb{Q}_p)/B_2(\mathbb{Q}_p)$. This sheaf has the following properties: $(i)$ the centre of $\GL_2(\mathbb{Q}_p)$ acts via $\delta$ on $D\boxtimes_\delta \mathbb{P}^1$; $(ii)$ we have $D\boxtimes_\delta \mathbb{Z}_p\cong D$ as a module over the monoid $\begin{pmatrix}\mathbb{Z}_p\setminus\{0\}&\mathbb{Z}_p\\0&1\end{pmatrix}$ (where we regard $\mathbb{Z}_p$ as an open subspace in $\mathbb{P}^1=\mathbb{Q}_p\cup\{\infty\}$).
Moreover, whenever $D$ is $2$-dimensional and $\delta$ is the character corresponding to the Galois representation of $\bigwedge^2D$ via local class field theory then the $G$-representation of global sections $D\boxtimes_\delta\mathbb{P}^1$ admits a short exact sequence
\begin{equation*}
0\to \Pi(\check{D})^\vee\to D\boxtimes\mathbb{P}^1\to \Pi(D)\to 0
\end{equation*}
where $\Pi(\cdot)$ denotes the $p$-adic Langlands correspondence for $\GL_2(\mathbb{Q}_p)$ and $\check{D}=\Hom(D,\mathcal{E})$ is the dual $(\varphi,\Gamma)$-module.

In \cite{SVZ} the functor $D\mapsto\mathfrak{Y}$ is generalized to arbitrary $\mathbb{Q}_p$-split reductive groups $G$ with connected centre. Assume that $\ell=\ell_\alpha: N_0\to N_{\alpha,0}\cong\mathbb{Z}_p$ is the projection onto the root subgroup corresponding to a fixed simple root $\alpha\in\Delta$. Then we have an action of the monoid $T_+$ on the ring $\Lambda_\ell(N_0)$ as we have $tH_0t^{-1}\leq H_0$ for any $t\in T_+$. Let $D$ be an \'etale $(\varphi,\Gamma)$-module finitely generated over $\mathcal{O_E}$ and choose a character $\delta\colon\Ker(\alpha)\to o^\times$. Then we may let the monoid $\xi(\mathbb{Z}_p\setminus\{0\})\Ker(\alpha)\leq T$ (containing $T_+$) act on $D$ via the character $\delta$ of $\Ker(\alpha)$ and via the natural action of $\Zp\setminus\{0\}\cong\varphi^{\mathbb{N}_0}\times\Gamma$ on $D$. This way we also obtain a $T_+$-action on $\Lambda_\ell(N_0)\otimes_{u_\alpha}D$ making $\Lambda_\ell(N_0)\otimes_{u_\alpha}D$ an \'etale $T_+$-module over $\Lambda_\ell(N_0)$.
In \cite{SVZ} a $G$-equivariant sheaf $\mathfrak{Y}$ on $G/B$ is attached to $D$ such that its sections on $\mathcal{C}_0:=N_0w_0B/B\subset G/B$ is $B_+$-equivariantly isomorphic to the \'etale $T_+$-module $(\Lambda_\ell(N_0)\otimes_{u_\alpha}D)^{bd}$ over $\Lambda(N_0)$ consisting of bounded elements in $\Lambda_\ell(N_0)\otimes_{u_\alpha}D$ (for a more detailed overview see section \ref{sheaf}).

\subsection{Summary of our results}

Our first result is the construction of a noncommutative multivariable version of $D^\vee_\xi(\pi)$. Let $\pi$ be a smooth $o$-torsion representation of $B$ such that $\varpi^h\pi=0$. The idea here is to take the invariants $\pi^{H_k}$ for a family of open normal subgroups $H_k\leq H_0$ with $\bigcap_{k\geq 0}H_k=\{1\}$. Now $\Gamma$ and the quotient group $N_0/H_k$ act on $\pi^{H_k}$ (we choose $H_k$ so that it is normalized by both $\Gamma$ and $N_0$). Further, we have a Hecke-action of $s$ given by $F_k:=\Tr_{H_k/sH_ks^{-1}}\circ(s\cdot)$. As in \cite{B} we consider the set $\mathcal{M}_k(\pi^{H_k})$ of finitely generated $\Lambda(N_0/H_k)[F_k]$-submodules of $\pi^{H_k}$ that are stable under the action of $\Gamma$ and admissible as a representation of $N_0/H_k$. In section \ref{multvarbr} we show that for any $M_k\in \mathcal{M}_k(\pi^{H_k})$ there is an \'etale $(\varphi,\Gamma)$-module structure on $M_k^\vee[1/X]$ over the ring $\Lambda(N_0/H_k)/\varpi^h[1/X]$.
So the projective limit $$D^\vee_{\xi,\ell,\infty}(\pi):=\varprojlim_{k\geq0}\varprojlim_{M_k\in\mathcal{M}_k(\pi^{H_k})}M_k^\vee[1/X]$$ is an \'etale $(\varphi,\Gamma)$-module over $\Lambda_\ell(N_0)/\varpi^h=\varprojlim_k \Lambda(N_0/H_k)/\varpi^h[1/X]$. More-over, we also give a natural isomorphism $D^\vee_{\xi,\ell,\infty}(\pi)_{H_0}\cong D^\vee_\xi(\pi)$ showing that $D^\vee_{\xi,\ell,\infty}(\pi)$ corresponds to $D^\vee_\xi(\pi)$ via (the projective limit of) the equivalence of categories in Thm.\ 8.20 in \cite{SVZ}. Moreover, the natural map $\pi^\vee\to D^\vee_{\xi,\ell}(\pi)$ factors through the projection map $D^\vee_{\xi,\ell,\infty}(\pi)\twoheadrightarrow D^\vee_{\xi,\ell}(\pi)=D^\vee_{\xi,\ell,\infty}(\pi)_{H_0}$. Note that this shows that $D^\vee_{\xi,\ell,\infty}(\pi)$ is naturally attached to $\pi$---not just simply via the equivalence of categories (loc.\ cit.)---in the sense that any $\psi$- and $\Gamma$-equivariant map from $\pi^\vee$ to an \'etale $(\varphi,\Gamma)$-module over $o/\varpi^h\bg X\jg$ factors uniquely through the corresponding multivariable $(\varphi,\Gamma)$-module. This fact is used crucially in the subsequent sections of this paper.

In section \ref{transf} we develop these ideas further and show that the natural map $\pi^\vee\to D^\vee_{\xi,\ell,\infty}(\pi)$ factors through the map $\pi^\vee\to D_{SV}(\pi)$. In fact, we show (Prop.\ \ref{1otimestildeprinj}) that $D^\vee_{\xi,\ell,\infty}(\pi)$ has the following universal property: Any continuous $\psi_s$- and $\Gamma$-equivariant map $f\colon D_{SV}(\pi)\to D$ into a finitely generated \'etale $(\varphi,\Gamma)$-module $D$ over $\Lambda_\ell(N_0)$ factors uniquely through $\pr=\pr_{\pi}\colon D_{SV}(\pi)\to D^\vee_{\xi,\ell,\infty}(\pi)$. The association $\pi\mapsto \pr_{\pi}$ is a natural transformation between the functors $D_{SV}$ and $D^\vee_{\xi,\ell,\infty}$. One application is that Breuil's functor $D^\vee_\xi$ vanishes on compactly induced representations of $B$ (see Corollary \ref{compactlyinduced}).

In order to be able to compute $D^\vee_{\xi,\ell,\infty}(\pi)$ (hence also $D^\vee_\xi(\pi)$) from $D_{SV}(\pi)$ we introduce the notion of the \emph{\'etale hull} of a $\Lambda(N_0)$-module with a $\psi$-action of $T_+$ (or of a submonoid $T_\ast\leq T_+$). Here a $\Lambda(N_0)$-module $D$ with a $\psi$-action of $T_+$ is the analogue of a $(\psi,\Gamma)$-module over $o\bs X\js$ in this multivariable noncommutative setting. The \'etale hull $\widetilde{D}$ of $D$ (together with a canonical map $\iota\colon D\to \widetilde{D}$) is characterized by the universal property that any $\psi$-equivariant map $f\colon D\to D'$ into an \'etale $T_+$-module $D'$ over $\Lambda(N_0)$ factors uniquely through $\iota$. It can be constructed as a direct limit $\varinjlim_{t\in T_+}\varphi_t^* D$ where $\varphi_t^*D=\Lambda(N_0)\otimes_{\varphi_t,\Lambda(N_0)}D$ (Prop.\ \ref{etalehull}).
We show (Thm.\ \ref{pscompdsv} and the remark thereafter) that the pseudocompact completion of $\Lambda_\ell(N_0)\otimes_{\Lambda(N_0)}\widetilde{D_{SV}}(\pi)$ is canonically isomorphic to $D^\vee_{\xi,\ell,\infty}(\pi)$ as they have the same universal property. 

In order to go back to representations of $G$ we need an \'etale action of $T_+$ on $D^\vee_{\xi,\ell,\infty}(\pi)$, not just of $\xi(\Zp\setminus\{0\})$. This is only possible if $tH_0t^{-1}\leq H_0$ for all $t\in T_+$ which is not the case for generic $\ell$. So in section \ref{nongeneric} we equip $D^\vee_{\xi,\ell,\infty}(\pi)$ with an \'etale action of $T_+$ (extending that of $\xi(\Zp\setminus\{0\})\leq T_+$) in case $\ell=\ell_\alpha$ is the projection of $N_0$ onto a root subgroup $N_{\alpha,0}\cong\Zp$ for some simple root $\alpha$ in $\Delta$. Moreover, we show (Prop.\ \ref{prpsiT+}) that the map $\pr\colon D_{SV}(\pi)\to D^\vee_{\xi,\ell,\infty}(\pi)$ is $\psi$-equivariant for this extended action, too.
Note that $D^\vee_{\xi,\ell,\infty}(\pi)$ may not be the projective limit of finitely generated \'etale $T_+$-modules over $\Lambda_\ell(N_0)$ as we do not necessarily have an action of $T_+$ on $M^\vee_\infty[1/X]$ for $M\in \mathcal{M}(\pi^{H_0})$, only on the projective limit. So the construction of a $G$-equivariant sheaf on $G/B$ with sections on $\mathcal{C}_0=N_0w_0B/B\subset G/B$ isomorphic to a dense $B_+$-stable $\Lambda(N_0)$-submodule $D^\vee_{\xi,\ell,\infty}(\pi)^{bd}$ of $D^\vee_{\xi,\ell,\infty}(\pi)$ is not immediate from the work \cite{SVZ} as only the case of finitely generated modules over $\Lambda_\ell(N_0)$ is treated in there.
However, as we point out in section \ref{sheaf} the most natural definition of bounded elements in $D^\vee_{\xi,\ell,\infty}(\pi)$ works: The $\Lambda(N_0)$-submodule $D^\vee_{\xi,\ell,\infty}(\pi)^{bd}$ is defined as the union of $\psi$-invariant compact $\Lambda(N_0)$-submodules of $D^\vee_{\xi,\ell,\infty}(\pi)$. This section is devoted to showing that the image of $\widetilde{\pr}\colon \widetilde{D_{SV}}(\pi)\to D^\vee_{\xi,\ell,\infty}(\pi)$ is contained in $D^\vee_{\xi,\ell,\infty}(\pi)^{bd}$ (Cor.\ \ref{prbd}) and that the constructions of \cite{SVZ} can be carried over to this situation (Prop.\ \ref{sheafGB}). We denote the resulting $G$-equivariant sheaf on $G/B$ by $\mathfrak{Y}=\mathfrak{Y}_{\alpha,\pi}$.

Now consider the functors $(\cdot)^\vee\colon\pi\mapsto \pi^\vee$ and the composite $$\mathfrak{Y}_{\alpha,\cdot}(G/B)\colon\pi\mapsto D^\vee_{\xi,\ell,\infty}(\pi)\mapsto \mathfrak{Y}_{\alpha,\pi}(G/B)$$ both sending smooth, admissible $o/\varpi^h$-representations of $G$ of finite length to topological representations of $G$ over $o/\varpi^h$. The main result of our paper (Thm.\ \ref{main}) is a natural transformation $\beta_{G/B}$ from $(\cdot)^\vee$ to $\mathfrak{Y}_{\alpha,\cdot}$. This generalizes Thm.\ IV.4.7 in \cite{C}. The proof of this relies on the observation that the maps $\mathcal{H}_g\colon D^\vee_{\xi,\ell,\infty}(\pi)^{bd}\to D^\vee_{\xi,\ell,\infty}(\pi)^{bd}$ in fact come from the $G$-action on $\pi^\vee$. More precisely, for any $g\in G$ and $W\in \mathcal{B}_+(\pi)$ we have maps
\begin{equation*}
(g\cdot)\colon (g^{-1}W\cap W)^\vee\to (W\cap gW)^\vee
\end{equation*}
where both $(g^{-1}W\cap W)^\vee$ and $(W\cap gW)^\vee$ are naturally quotients of $W^\vee$. We show in (the proof of) Prop.\ \ref{betag} that these maps fit into a commutative diagram
\begin{center}
\begin{tikzpicture}[xscale=1.4, yscale=2]
\node (Dbd) at (0,0) {$D^\vee_{\xi,\ell,\infty}(\pi)^{bd}$};
\node (resg1CcapC) [right=of Dbd] {$\res_{g^{-1}\mathcal{C}_0\cap\mathcal{C}_0}^{\mathcal{C}_0}(D^\vee_{\xi,\ell,\infty}(\pi)^{bd})$};
\node (resCcapgC) [right=of resg1CcapC] {$\res_{\mathcal{C}_0\cap g\mathcal{C}_0}^{\mathcal{C}_0}(D^\vee_{\xi,\ell,\infty}(\pi)^{bd})$};
\node (Wvee) [above=of Dbd] {$W^\vee$};
\node (g1WcapW) [above=of resg1CcapC] {$(g^{-1}W\cap W)^\vee$};
\node (WcapgW) [above=of resCcapgC] {$(W\cap gW)^\vee$};
\draw[->,font=\scriptsize]
(Wvee) edge (g1WcapW)
(g1WcapW) edge node[below] {$g\cdot$} (WcapgW)
(Wvee) edge node[auto] {$\pr_W$} (Dbd)
(g1WcapW) edge (resg1CcapC)
(WcapgW) edge (resCcapgC)
(Dbd) edge (resg1CcapC)
(resg1CcapC) edge node[auto] {$g\cdot$} (resCcapgC)
(Wvee) edge[out=10,in=170] (WcapgW)
(Dbd) edge[out=350,in=190] (resCcapgC);
\end{tikzpicture}
\end{center}
allowing us to construct the map $\beta_{G/B}$. The proof of Thm.\ \ref{main} is similar to that of Thm.\ IV.4.7 in \cite{C}. However, unlike that proof we do not need the full machinery of ``standard presentations'' in Ch.\ III.1 of \cite{C} which is not available at the moment for groups other than $\GL_2(\mathbb{Q}_p)$.

\section{Comparison of Breuil's functor with that of Schneider and Vigneras}

\subsection{A $\Lambda_\ell(N_0)$-variant of Breuil's functor}\label{multvarbr}

Our first goal is to associate a $(\varphi,\Gamma)$-module over $\Lambda_\ell(N_0)$ (not just over $\mathcal{O_E}$) to a smooth $o$-torsion representation $\pi$ of $G$ in the spirit of \cite{B} that corresponds to $D^{\vee}_\xi(\pi)$ via the equivalence of categories of \cite{SVZ} between $(\varphi,\Gamma)$-modules over $\mathcal{O_E}$ and over $\Lambda_\ell(N_0)$. 

Let $H_k$ be the normal subgroup of $N_0$ generated by $s^kH_0s^{-k}$, ie.\ we put $$H_k=\langle n_0s^kH_0s^{-k}n_0^{-1}\mid n_0\in N_0\rangle\ .$$ $H_k$ is an open subgroup of $H_0$ normal in $N_0$ and we have $\bigcap_{k\geq 0} H_k=\{1\}$. Denote by $F_k$ the operator $\Tr_{H_k/sH_ks^{-1}}\circ (s\cdot)$ on $\pi$ and consider the skew polynomial ring $\Lambda(N_0/H_k)/\varpi^h[F_k]$ where $F_k\lambda=(s\lambda s^{-1})F_k$ for any $\lambda\in \Lambda(N_0/H_k)/\varpi^h$. The set of finitely generated $\Lambda(N_0/H_k)[F_k]$-submodules of $\pi^{H_k}$ that are stable under the action of $\Gamma$ and admissible as a representation of $N_0/H_k$ is denoted by $\mathcal{M}_k(\pi^{H_k})$.

\begin{lem}\label{formula}
We have $F=F_0$ and $F_k\circ \Tr_{H_k/s^kH_0s^{-k}}\circ(s^k \cdot)=\Tr_{H_k/s^kH_0s^{-k}}\circ(s^k \cdot)\circ F_0$ as maps on $\pi^{H_0}$.
\end{lem}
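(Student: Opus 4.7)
The first assertion $F=F_0$ is immediate, because by definition $F_0 = \Tr_{H_0/sH_0s^{-1}}\circ(s\cdot)$, which coincides verbatim with the definition of $F$ given earlier in the text.

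For the second assertion, the plan is to expand both sides as double sums over coset representatives and recognize that both coincide with the single trace $\Tr_{H_k/s^{k+1}H_0s^{-(k+1)}}\circ(s^{k+1}\cdot)$ applied to $m$. The key group-theoretic prerequisite is the chain of inclusions
\[
s^{k+1}H_0 s^{-(k+1)} \;\subseteq\; s^k H_0 s^{-k}\;\subseteq\; H_k,\qquad s^{k+1}H_0s^{-(k+1)}\;\subseteq\; sH_ks^{-1}\;\subseteq\; H_k,
\]
all of which follow from $sH_0s^{-1}\subseteq H_0$ together with the definition of $H_k$ as a normal subgroup of $N_0$ containing $s^kH_0s^{-k}$ (and the fact that $s$ normalises itself, so that conjugating the generators of $H_k$ by $s$ lands in $H_k$ again).

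Granting this, for $m\in\pi^{H_0}$ the left hand side unwinds to
\[
F_k\bigl(\Tr_{H_k/s^kH_0s^{-k}}(s^k m)\bigr)
= \sum_{w}\sum_{v} w\,s\,v\,s^k m
= \sum_{w,v} w\,(svs^{-1})\,s^{k+1}m,
\]
where $w$ runs over $J(H_k/sH_ks^{-1})$ and $v$ over $J(H_k/s^kH_0s^{-k})$. As $v$ ranges over its coset reps the elements $svs^{-1}$ give coset reps of $sH_ks^{-1}/s^{k+1}H_0s^{-(k+1)}$, and composing with $w$ produces coset reps of $H_k/s^{k+1}H_0s^{-(k+1)}$. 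Analogously, the right hand side unwinds to
\[
\Tr_{H_k/s^kH_0s^{-k}}\bigl(s^k F_0(m)\bigr) = \sum_{v'}\sum_{u} v'\,(s^k u s^{-k})\,s^{k+1}m,
\]
where now $u\in J(H_0/sH_0s^{-1})$ gives via conjugation by $s^k$ coset reps of $s^kH_0s^{-k}/s^{k+1}H_0s^{-(k+1)}$, and $v'\in J(H_k/s^kH_0s^{-k})$ then lifts these to coset reps of $H_k/s^{k+1}H_0s^{-(k+1)}$. Since $s^{k+1}m$ is fixed by $s^{k+1}H_0s^{-(k+1)}$, both double sums equal $\Tr_{H_k/s^{k+1}H_0s^{-(k+1)}}(s^{k+1}m)$, proving the claim.

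The argument is essentially bookkeeping with coset representatives; the only substantive point is the verification that $s^{k+1}H_0s^{-(k+1)}$ lies inside both $sH_ks^{-1}$ and $s^kH_0s^{-k}$, which is the ``main obstacle'' in the sense that one must chase the definition of $H_k$ carefully, but it is routine once one notes that $s$ contracts $H_0$.
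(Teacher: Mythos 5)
Your proof is correct and follows essentially the same route as the paper: both identify each side with the single trace $\Tr_{H_k/s^{k+1}H_0s^{-(k+1)}}\circ(s^{k+1}\cdot)$, using transitivity of the trace and the conjugation relation $(s\cdot)\circ\Tr_{A/B}=\Tr_{sAs^{-1}/sBs^{-1}}\circ(s\cdot)$; the paper merely writes this as a chain of operator identities rather than expanding coset representatives explicitly.
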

\begin{proof}
We compute
\begin{align*}
F_k\circ \Tr_{H_k/s^kH_0s^{-k}}\circ(s^k \cdot)=\Tr_{H_k/sH_ks^{-1}}\circ(s\cdot)\circ \Tr_{H_k/s^kH_0s^{-k}}\circ(s^k \cdot)=\\
\Tr_{H_k/sH_ks^{-1}}\circ \Tr_{sH_ks^{-1}/s^{k+1}H_0s^{-k-1}}\circ(s^{k+1} \cdot)=\\
\Tr_{H_k/s^{k+1}H_0s^{-k-1}}\circ(s^{k+1} \cdot)=\\
\Tr_{H_k/s^kH_0s^{-k}}\circ \Tr_{s^kH_0s^{-k}/s^{k+1}H_0s^{-k-1}}\circ(s^{k+1} \cdot)=\\
\Tr_{H_k/s^kH_0s^{-k}}\circ(s^k\cdot)\circ \Tr_{H_0/sH_0s^{-1}}\circ(s \cdot)=\\
\Tr_{H_k/s^kH_0s^{-k}}\circ(s^k \cdot)\circ F_0\ .\ \qed
\end{align*}
\end{proof}

Note that if $M\in\mathcal{M}(\pi^{H_0})$ then $ \Tr_{H_k/s^kH_0s^{-k}}\circ(s^kM)$ is a $s^kN_0s^{-k}H_k$-subrepresentation of $\pi^{H_k}$. So in view of the above Lemma we define $M_k$ to be the $N_0$-subrepresentation of $\pi^{H_k}$ generated by $ \Tr_{H_k/s^kH_0s^{-k}}\circ(s^kM)$, ie.\ $M_k:=N_0 \Tr_{H_k/s^kH_0s^{-k}}\circ(s^kM)$. By Lemma \ref{formula} $M_k$ is a $\Lambda(N_0/H_k)/\varpi^h[F_k]$-submodule of $\pi^{H_k}$.

\begin{lem}\label{M_k}
For any $M\in\mathcal{M}(\pi^{H_0})$ the $N_0$-subrepresentation $M_k$ lies in $\mathcal{M}_k(\pi^{H_k})$.
\end{lem}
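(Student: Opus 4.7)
My plan is to separately verify the three defining conditions of $\mathcal{M}_k(\pi^{H_k})$---finite generation over $\Lambda(N_0/H_k)/\varpi^h[F_k]$, $\Gamma$-stability, and admissibility---by exploiting the operator $T:=\Tr_{H_k/s^kH_0s^{-k}}\circ(s^k\cdot)\colon M\to\pi^{H_k}$ that features in the construction of $M_k$. Lemma \ref{formula} already gives $T\circ F_0=F_k\circ T$; I would complement this with the relation $T(\mu m)=\varphi_{s^k}(\mu)\,T(m)$ for $\mu\in\Lambda(N_0/H_0)$, which holds because $s^kN_0s^{-k}$ normalizes both $H_k$ (which is normal in $N_0$) and $s^kH_0s^{-k}$. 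The subring $\varphi_{s^k}(\Lambda(N_0/H_0))$ sits inside $\Lambda(N_0/H_k)$ via the injection $s^kN_0s^{-k}/s^kH_0s^{-k}\hookrightarrow N_0/H_k$, whose injectivity rests on $s^kN_0s^{-k}\cap H_k=s^kH_0s^{-k}$ (using $H_k\subseteq H_0$ and $\ell(\varphi_{s^k}(n))=p^k\ell(n)$). Combining the two commutation relations one sees that if $m_1,\dots,m_r$ generate $M$ over $\Lambda(N_0/H_0)[F_0]$, then $T(m_1),\dots,T(m_r)$ generate $M_k=\Lambda(N_0/H_k)\,T(M)$ over $\Lambda(N_0/H_k)[F_k]$, giving property (i). For (ii), $\Gamma$ centralizes $s$ and normalizes $N_0$ and $H_0$, hence normalizes $H_k$; consequently $\gamma\,T(m)=T(\gamma m)$, so $T(M)$ and its $\Lambda(N_0)$-span $M_k$ are both $\Gamma$-stable.

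Admissibility (iii) is the principal obstacle. I would introduce the subgroup $U':=s^kN_0s^{-k}H_k/H_k\leq N_0/H_k$ and argue in three steps. \emph{Step (a):} $U'$ has finite index in $N_0/H_k$. Indeed the quotient $(N_0/H_k)/U'$ surjects onto $N_0/(s^kN_0s^{-k}H_0)\cong\Zp/p^k\Zp$ with kernel $H_0/H_k$, and $H_0/H_k$ is finite because $H_k$ contains $s^kH_0s^{-k}$, itself of finite index in $H_0$. \emph{Step (b):} $T(M)$ is admissible as a $U'$-representation. Under the isomorphism $\varphi_{s^k}\colon N_0/H_0\xrightarrow{\sim}U'$ the map $T|_M$ becomes a $U'$-equivariant $o$-linear surjection, so $T(M)\cong M/\ker(T|_M)$ as $U'$-representation is a quotient of the admissible $N_0/H_0$-representation $M$; admissibility passes to quotients since the Pontryagin dual embeds into a finitely generated module over the Noetherian ring $\Lambda(N_0/H_0)$.

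\emph{Step (c):} admissibility of $M_k$. Picking coset representatives $g_1,\dots,g_d$ of $N_0/H_k$ modulo $U'$ gives an $N_0/H_k$-equivariant surjection $\Ind_{U'}^{N_0/H_k}T(M)\twoheadrightarrow M_k$. Since admissibility is preserved by quotients, it suffices to verify that the induced representation is admissible, which is the standard finite-index induction argument: for any open $N'\leq N_0/H_k$ the invariants $(\Ind_{U'}^{N_0/H_k}T(M))^{N'}$ sit inside $\prod_{j=1}^d T(M)^{(g_jN'g_j^{-1})\cap U'}$, a finite product of $o$-modules finitely generated by step (b). The principal subtleties to work out lie in the subgroup bookkeeping---keeping straight the interactions between conjugation by $s^k$, the normality of $H_k$ in $N_0$, and the passage to $N_0/H_k$---together with the finite-index induction argument; neither is conceptually deep, but both demand care.
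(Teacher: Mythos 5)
Your proposal is correct and follows essentially the same route as the paper: the same commutation relations $T(\mu m)=\varphi_{s^k}(\mu)T(m)$ and $T\circ F_0=F_k\circ T$ give finite generation from the generators $T(m_i)$, the same centralizing/normalizing observations give $\Gamma$-stability, and your induction argument for admissibility is just a repackaging of the paper's decomposition of $M_k$ as a finite sum of $N_0$-translates of the admissible $s^kN_0s^{-k}$-representation $T(M)$. No gaps.
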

\begin{proof}
Let $\{m_1,\dots,m_r\}$ be a set of generators of $M$ as a $\Lambda(N_0/H_0)/\varpi^h[F]$-module. We claim that the elements $\Tr_{H_k/s^kH_0s^{-k}}(s^k m_i)$ ($i=1,\dots,r$) generate $M_k$ as a module over $\Lambda(N_0/H_k)/\varpi^h[F_k]$. Since both $H_k$ and $s^kH_0s^{-k}$ are normalized by $s^kN_0s^{-k}$, for any $u\in N_0$ we have 
\begin{equation}\label{s^kN_0s^{-k}linear}
\Tr_{H_k/s^kH_0s^{-k}}\circ (s^kus^{-k}\cdot)=(s^kus^{-k}\cdot)\circ\Tr_{H_k/s^kH_0s^{-k}}\ . 
\end{equation}
Therefore by continuity we also have 
\begin{equation*}
\Tr_{H_k/s^kH_0s^{-k}}\circ (s^k\lambda s^{-k}\cdot)=(s^k\lambda s^{-k}\cdot)\circ\Tr_{H_k/s^kH_0s^{-k}} 
\end{equation*}
for any $\lambda\in\Lambda(N_0/H_0)/\varpi^h$. Now writing any $m\in M$ as $m=\sum_{j=1}^r\lambda_jF^{i_j}m_j$ we compute 
\begin{align*}
\Tr_{H_k/s^kH_0s^{-k}}\circ (s^k\sum_{j=1}^r\lambda_jF^{i_j}m_j)=\sum_{j=1}^r(s^k\lambda s^{-k})F_k^{i_j}\Tr_{H_k/s^kH_0s^{-k}} (s^km_j)\in\\
\in\sum_{j=1}^r\Lambda(N_0/H_k)/\varpi^h[F_k]\Tr_{H_k/s^kH_0s^{-k}}(s^km_j) \ .
\end{align*}

For the stability under the action of $\Gamma$ note that $\Gamma$ normalizes both $H_k$ and $s^kH_0s^{-k}$ and the elements in $\Gamma$ commute with $s$.

Since $M$ is admissible as an $N_0$-representation, $s^kM$ is admissible as a representation of $s^kN_0s^{-k}$. Further by \eqref{s^kN_0s^{-k}linear} the map $\Tr_{H_k/s^kH_0s^{-k}}$ is $s^kN_0s^{-k}$-equivariant therefore its image is also admissible. Finally, $M_k$ can be written as a finite sum
\begin{equation*}
\sum_{u\in J(N_0/s^kN_0s^{-k}H_k)}u\Tr_{H_k/s^kH_0s^{-k}}(s^kM)
\end{equation*}
of admissible representations of $s^kN_0s^{-k}$ therefore the statement.
\qed\end{proof}

\begin{lem}\label{tr}
Fix a simple root $\alpha\in\Delta$ such that $\ell(N_{\alpha,0})=\mathbb{Z}_p$. Then for any $M\in\mathcal{M}(\pi^{H_0})$ the kernel of the trace map
\begin{equation}\label{trmap}
\Tr_{H_0/H_k}\colon Y_k:=\sum_{u\in J(N_{\alpha,0}/s^kN_{\alpha,0}s^{-k})}u\Tr_{H_k/s^kH_0s^{-k}}(s^kM)\to N_0F^k(M)
\end{equation}
is finitely generated over $o$. In particular, the length of $Y_k^\vee[1/X]$ as a module over $o/\varpi^h\bg X\jg$ equals the length of $M^\vee[1/X]$.
\end{lem}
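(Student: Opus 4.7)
The first step is to notice that $\Tr_{H_0/H_k}$ is $N_0$-equivariant on $\pi^{H_k}$, since $H_0$ and $H_k$ are both normal in $N_0$ so conjugation by any element of $N_0$ permutes the cosets in $H_0/H_k$. Combined with the tower identity $\Tr_{H_0/H_k}\circ\Tr_{H_k/s^kH_0s^{-k}}=\Tr_{H_0/s^kH_0s^{-k}}$ and the iterated form $F^k=\Tr_{H_0/s^kH_0s^{-k}}\circ(s^k\cdot)$ on $\pi^{H_0}$ (which follows from Lemma~\ref{formula}), this gives
$$\Tr_{H_0/H_k}\bigl(u\,\Tr_{H_k/s^kH_0s^{-k}}(s^km)\bigr)=u\,F^k(m).$$
The hypothesis $\ell(N_{\alpha,0})=\Zp$ forces $\ell$ to restrict to an isomorphism $N_{\alpha,0}\xrightarrow{\sim}N_0/H_0$, so $N_0=N_{\alpha,0}\cdot H_0$; since $F^k(M)\subseteq\pi^{H_0}$ the image of the trace map is $N_{\alpha,0}\cdot F^k(M)=N_0\cdot F^k(M)$, proving surjectivity onto the stated target.

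The main step is to bound the kernel. My plan is to transfer the \'etale $(\varphi,\Gamma)$-module structure on $M^\vee[1/X]$ (from Lemma~2.6 of \cite{B}) back to $M$. Concretely, \'etaleness yields
$$M^\vee[1/X]=\bigoplus_{u\in J(N_{\alpha,0}/s^kN_{\alpha,0}s^{-k})}u\,\varphi^k\bigl(M^\vee[1/X]\bigr),$$
so on the un-inverted side the natural summation map $\bigoplus_u u\otimes F^k(M)\to N_0\,F^k(M)$ becomes an isomorphism after inverting $X$, hence its kernel is annihilated by a power of $X$ and is therefore finitely generated over $o$. Pulling this back through the surjection $\phi_k:=\Tr_{H_k/s^kH_0s^{-k}}\circ(s^k\cdot)\colon M\twoheadrightarrow V:=\Tr_{H_k/s^kH_0s^{-k}}(s^kM)$, whose own kernel is controlled by the admissibility of $M$, yields the desired finitely-generated-over-$o$ bound on $\Ker(\Tr_{H_0/H_k}|_{Y_k})$.

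For the length statement, a finitely generated $\varpi^h$-torsion $o$-module is finite, and $X=[u_\alpha(1)]-1$ is topologically nilpotent on such a module over the Iwasawa algebra. Pontryagin dualization of the short exact sequence $0\to\Ker(\Tr_{H_0/H_k}|_{Y_k})\to Y_k\to N_0\,F^k(M)\to 0$ followed by inversion of $X$ therefore gives $Y_k^\vee[1/X]\cong (N_0F^k(M))^\vee[1/X]$; the same \'etale decomposition identifies the latter with $M^\vee[1/X]$ (since $M/N_0F^k(M)$ has dual annihilated by a power of $X$), so the two lengths agree. The principal obstacle is this kernel bound: the \'etale decomposition only holds after inverting $X$, so transferring it back to a finitely-generated-over-$o$ statement requires careful tracking of $o$-finite discrepancies through the imperfect surjection $\phi_k$.
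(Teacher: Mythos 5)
Your argument is correct in substance but is organized differently from the paper's proof. The paper runs a pure length count: starting from $l=\length_{o/\varpi^h\bg X\jg}M^\vee[1/X]$ it passes through $(s^kM)^\vee[1/X]$, then $V^\vee[1/X]$ for $V:=\Tr_{H_k/s^kH_0s^{-k}}(s^kM)$, then the induced module $(o/\varpi^h\bs X\js\otimes_{o/\varpi^h\bs \varphi^k(X)\js}V)^\vee[1/X]$, and finally $Y_k^\vee[1/X]$, each step being an equality or a $\geq$ coming from dualizing a surjection; the surjection \eqref{trmap} then forces equality everywhere, so $\Ker(\Tr_{H_0/H_k})^\vee[1/X]=0$ and the kernel is finite by admissibility. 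You instead prove finiteness of the kernel directly, by combining the finite kernel of the summation map $\bigoplus_u u\otimes F^k(M)\to N_0F^k(M)$ (from the \'etale isomorphism of Lemma 2.6 in \cite{B}) with the finite kernel of $V\to F^k(M)$, and you recover the length statement afterwards. Both routes rest on exactly the same input (Lemma 2.6 of \cite{B} plus admissibility, which converts ``vanishing after dualizing and inverting $X$'' into ``finite over $o$''); the paper's bookkeeping yields the equality of lengths as a byproduct of the same chain, while your chase makes the structure of the kernel more explicit.

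One justification in your sketch needs repair. The kernel of $\phi_k=\Tr_{H_k/s^kH_0s^{-k}}\circ(s^k\cdot)\colon M\twoheadrightarrow V$ is \emph{not} controlled by admissibility alone --- admissibility says nothing about kernels of trace maps --- and in any case what your diagram chase actually requires is finiteness of $\Ker(\Tr_{H_0/H_k}\colon V\to F^k(M))$, which equals $\phi_k(\Ker(F^k\colon M\to M))$ because the composite $\Tr_{H_0/H_k}\circ\phi_k$ is $F^k$. Finiteness of $\Ker(F^k)$ is again a consequence of Lemma 2.6 in \cite{B}: the map $m\mapsto 1\otimes m$ embeds $\Ker(F^k)$ into $\Ker(1\otimes F^k)$, whose Pontryagin dual vanishes after inverting $X$ because $(1\otimes F)^\vee[1/X]$ is an isomorphism, and admissibility then gives finiteness over $o$. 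With that substitution (and noting that $s^kN_{\alpha,0}s^{-k}$ stabilizes $F^k(M)$, so that the finitely many translates $u\in J(N_{\alpha,0}/s^kN_{\alpha,0}s^{-k})$ really do exhaust $N_0F^k(M)$), your argument closes.
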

\begin{proof}
Since any $u\in N_{\alpha,0}\leq N_0$ normalizes both $H_0$ and $H_k$ and we have $N_{\alpha,0}H_0=N_0$ by the assumption that $\ell(N_{\alpha,0})=\mathbb{Z}_p$, the image of the map \eqref{trmap} is indeed $N_0F^k(M)$. Moreover, by the proof of Lemma 2.6 in \cite{B} the quotient $M/N_0F^k(M)$ is finitely generated over $o$. Therefore we have $M^\vee[1/X]\cong (N_0F^k(M))^\vee[1/X]$ as a module over $o/\varpi^h\bg X\jg$. In particular, their length are equal: $$l:=\mathrm{length}_{o/\varpi^h\bg X\jg}M^\vee[1/X]=\mathrm{length}_{o/\varpi^h\bg X\jg} (N_0F^k(M))^\vee[1/X]\ .$$ We compute
\begin{align*}
l=\mathrm{length}_{o/\varpi^h\bg X\jg}M^\vee[1/X]=\mathrm{length}_{o/\varpi^h\bg \varphi^k(X)\jg}(s^kM)^\vee[1/X]\geq \\
\geq \mathrm{length}_{o/\varpi^h\bg \varphi^k(X)\jg}(\Tr_{H_k/s^kH_0s^{-k}}(s^kM))^\vee[1/X]=\\
=\mathrm{length}_{o/\varpi^h\bg X\jg}(o/\varpi^h\bs X\js\otimes_{o/\varpi^h\bs \varphi^k(X)\js}\Tr_{H_k/s^kH_0s^{-k}}(s^kM))^\vee[1/X]\geq\\
\geq \mathrm{length}_{o/\varpi^h\bg X\jg}Y_k^\vee[1/X]\ .
\end{align*}
By the existence of a surjective map \eqref{trmap} we must have equality in the above inequality everywhere. Therefore we have $\Ker(\Tr_{H_0/H_k})^\vee[1/X]=0$, which shows that $\Ker(\Tr_{H_0/H_k})$ is finitely generated over $o$, because $M$ is admissible, and so is $\Ker(\Tr_{H_0/H_k})\leq M$.
\qed\end{proof}

The kernel of the natural homomorphism $$\Lambda(N_0/H_k)/\varpi^h\to \Lambda(N_0/H_0)/\varpi\cong k\bs X\js$$ is a nilpotent prime ideal in the ring $\Lambda(N_0/H_k)/\varpi^h$. We denote the localization at this ideal by $\Lambda(N_0/H_k)/\varpi^h[1/X]$. For the justification of this notation note that any element in $\Lambda(N_0/H_k)/\varpi^h[1/X]$ can uniquely be written as a formal Laurent-series $\sum_{n\gg-\infty} a_nX^n$ with coefficients $a_n$ in the finite group ring $o/\varpi^h[H_0/H_k]$. Here $X$---by an abuse of notation---denotes the element $[u_0]-1$ for an element $u_0\in N_{\alpha,0}\leq N_0$ with $\ell(u_0)=1\in\mathbb{Z}_p$. The ring $\Lambda(N_0/H_k)/\varpi^h[1/X]$ admits a conjugation action of the group $\Gamma$ that commutes with the operator $\varphi$ defined by $\varphi(\lambda):=s\lambda s^{-1}$ (for $\lambda\in\Lambda(N_0/H_k)/\varpi^h[1/X]$).
A $(\varphi,\Gamma)$-module over $\Lambda(N_0/H_k)/\varpi^h[1/X]$ is a finitely generated module over $\Lambda(N_0/H_k)/\varpi^h[1/X]$ together with a semilinear commuting action of $\varphi$ and $\Gamma$. Note that $\varphi$ is no longer injective on the ring $\Lambda(N_0/H_k)/\varpi^h[1/X]$ for $k\geq 1$, in particular it is not flat either. However, we still call a $(\varphi,\Gamma)$-module $D_k$ over $\Lambda(N_0/H_k)/\varpi^h[1/X]$ \'etale if the natural map
\begin{equation*}
1\otimes \varphi\colon \Lambda(N_0/H_k)/\varpi^h[1/X]\otimes_{\varphi,\Lambda(N_0/H_k)/\varpi^h[1/X]}D_k\to D_k
\end{equation*}
is an isomorphism of $\Lambda(N_0/H_k)/\varpi^h[1/X]$-modules. For any $M\in\mathcal{M}(\pi^{H_0})$ we put $$M_k^\vee[1/X]:=\Lambda(N_0/H_k)/\varpi^h[1/X]\otimes_{\Lambda(N_0/H_k)/\varpi^h}M_k^\vee$$ where $(\cdot)^\vee$ denotes the Pontryagin dual $\Hom_o(\cdot,K/o)$. 

The group $N_0/H_k$ acts by conjugation on the finite $H_0/H_k\lhd N_0/H_k$. Therefore the kernel of this action has finite index. In particular, there exists a positive integer $r$ such that $s^rN_{\alpha,0}s^{-r}\leq N_0/H_k$ commutes with $H_0/H_k$. Therefore the group ring $o/\varpi^h\bg \varphi^r(X)\jg [H_0/H_k]$ is contained as a subring in $\Lambda(N_0/H_k)/\varpi^h[1/X]$.

\begin{lem}\label{induced}
As modules over the group ring $o/\varpi^h\bg \varphi^r(X)\jg [H_0/H_k]$ we have an isomorphism
\begin{equation*}
M_k^\vee[1/X]\to o/\varpi^h\bg \varphi^r(X)\jg [H_0/H_k]\otimes_{o/\varpi^h\bg \varphi^r(X)\jg}Y_k^\vee[1/X]\ .
\end{equation*}
In particular, $M_k^\vee[1/X]$ is induced as a representation of the finite group $H_0/H_k$, so the reduced (Tate-) cohomology groups $\tilde{H}^i(H',M_k^\vee[1/X])$ vanish for all subgroups $H'\leq H_0/H_k$ and $i\in\mathbb{Z}$.
\end{lem}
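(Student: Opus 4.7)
The strategy is to exhibit a natural surjection $\mu\colon S'[H_0/H_k]\otimes_{S'}Y_k\twoheadrightarrow M_k$ (with $S':=o/\varpi^h\bg\varphi^r(X)\jg$), dualize, and verify via a length computation that the resulting injection becomes an isomorphism after inverting $X$.

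First, I would check the semi-direct product decomposition $N_0/H_k=(H_0/H_k)\rtimes N_{\alpha,0}$, which follows from three facts: $H_k\le H_0$ (since $s^kH_0s^{-k}\le H_0$ because $\alpha(s)=p$ scales $\ell$, and $H_0$ is normal in $N_0$), $H_0\cap N_{\alpha,0}=\{1\}$ (since $\ell|_{N_{\alpha,0}}\colon N_{\alpha,0}\xrightarrow{\sim}\Zp$), and $N_0=H_0 N_{\alpha,0}$. Consequently $M_k=N_0\Tr_{H_k/s^kH_0s^{-k}}(s^kM)=(H_0/H_k)\cdot Y_k$ and the multiplication map $\mu(h\otimes y):=hy$ is surjective; it is $S'[H_0/H_k]$-linear because $S'$ commutes with $H_0/H_k$ in $\Lambda(N_0/H_k)/\varpi^h$ by the choice of $r$.

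Dualizing and using finiteness of $H_0/H_k$ to identify $(S'[H_0/H_k]\otimes_{S'}Y_k)^\vee\cong S'[H_0/H_k]\otimes_{S'}Y_k^\vee$, I obtain an injective $S'[H_0/H_k]$-linear map
\[
\mu^\vee[1/X]\colon M_k^\vee[1/X]\hookrightarrow S'[H_0/H_k]\otimes_{S'}Y_k^\vee[1/X],
\]
where localization at $X$ preserves injectivity by exactness.

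To conclude surjectivity, I would compare $S'$-lengths on both sides. By Lemma~\ref{tr}, $\mathrm{length}_R Y_k^\vee[1/X]=\ell:=\mathrm{length}_R M^\vee[1/X]$, and since $R=o/\varpi^h\bg X\jg$ is free of rank $p^r$ over $S'$ (the Eisenstein-like relation $(X+1)^{p^r}-1=\varphi^r(X)$ gives the basis $1,X,\dots,X^{p^r-1}$), the target has $S'$-length $|H_0/H_k|\cdot p^r\ell$. For the source, the cokernel is $(\ker\mu)^\vee[1/X]$, and I would show it vanishes by arguing that $\ker\mu$ is essentially finitely generated over $o$: for $(y_h)_h\in\ker\mu$, applying $\Tr_{H_0/H_k}$ componentwise and using $\Tr_{H_0/H_k}(hy)=\Tr_{H_0/H_k}(y)$ shows that $\sum_h y_h$ lies in $\ker(\Tr_{H_0/H_k}|_{Y_k})$, which is finitely generated over $o$ by Lemma~\ref{tr}; iterating this observation through the lattice of subgroups of $H_0/H_k$ reduces $\ker\mu$ to a finitely-generated-over-$o$ situation where $X$-localization of the dual vanishes.

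The main obstacle is precisely this final step: rigorously controlling the overlaps $h_1Y_k\cap h_2Y_k$ inside $M_k$ so that $\ker\mu$ is genuinely finitely generated over $o$ in the appropriate sense — this requires exploiting the admissibility of $M_k\in\mathcal{M}_k(\pi^{H_k})$ and the $p$-group structure of $H_0/H_k$ acting on a $\varpi^h$-torsion module. Once the isomorphism is in place, the vanishing of reduced Tate cohomology $\tilde{H}^i(H',M_k^\vee[1/X])$ for $H'\le H_0/H_k$ follows from the standard cohomological triviality of induced modules over finite groups.
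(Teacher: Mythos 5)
Your setup matches the paper's: you construct the same multiplication map $\mu\colon S'[H_0/H_k]\otimes_{S'}Y_k\twoheadrightarrow M_k$, dualize, invert $X$, and obtain the injection $\mu^\vee[1/X]$. But the step you yourself flag as the ``main obstacle'' is a genuine gap, and the route you sketch for closing it does not work. Showing that $\ker\mu$ is finitely generated over $o$ is essentially equivalent to the lemma itself: the elements of $\ker\mu$ encode exactly the overlaps $h_1Y_k\cap h_2Y_k$ whose negligibility is the content of the statement. Your trace argument only controls one linear combination of the components: from $\sum_h hy_h=0$ you deduce that $\sum_h y_h$ lies in $\ker(\Tr_{H_0/H_k}|_{Y_k})$, which constrains the image of $\ker\mu$ under a single map to $Y_k$, not $\ker\mu$ itself. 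Iterating over subgroups $H'\leq H_0/H_k$ does not help, because $\Tr_{H'/H_k}(hy)$ depends on $h$ (not just on its coset), so the resulting relations do not separate the individual components $y_h$; and the length count for the source of $\mu^\vee[1/X]$ cannot be completed without precisely the information you are missing.

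The idea that closes the gap in the paper is a Nakayama argument for the $p$-group $H_0/H_k$, which lets one avoid controlling $\ker\mu$ entirely. One only needs to control the restriction of $\mu$ to the $H_0/H_k$-\emph{invariants} of the source: these invariants are $(\sum_{h}h)\otimes Y_k\cong Y_k$ (regular representation on the first factor), and the restriction of $\mu$ there is exactly the trace map $\Tr_{H_0/H_k}$ on $Y_k$, whose kernel is finitely generated over $o$ by Lemma \ref{tr}. Dually, $\mu^\vee[1/X]$ becomes surjective after passing to $H_0/H_k$-coinvariants, so its cokernel $C$ satisfies $C_{H_0/H_k}=0$. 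Since $C$ is finitely generated over the local artinian ring $o/\varpi^h\bg X\jg$ and $H_0/H_k$ is a finite $p$-group with $p$ in the maximal ideal, the augmentation ideal of $o/\varpi^h\bg X\jg[H_0/H_k]$ lies in its Jacobson radical, and $C_{H_0/H_k}=0$ forces $C=0$. You should replace your length comparison and kernel analysis with this coinvariants-plus-Nakayama step; the rest of your argument (the semidirect decomposition $N_0/H_k=(H_0/H_k)\rtimes N_{\alpha,0}$, the surjectivity and $S'[H_0/H_k]$-linearity of $\mu$, the dualization, and the final cohomological triviality of induced modules) is correct.
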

\begin{proof}
By the definition of $M_k$ we have a surjective $o/\varpi^h\bs \varphi^r(X)\js[H_0/H_k]$-linear map
\begin{equation*}
f\colon o/\varpi^h\bs \varphi^r(X)\js[H_0/H_k]\otimes_{o/\varpi^h\bs \varphi^r(X)\js}Y_k\to M_k
\end{equation*}
sending $\lambda\otimes y$ to $\lambda y$ for $\lambda\in o/\varpi^h\bs \varphi^r(X)\js[H_0/H_k]$ and $y\in Y_k$. By taking the Pontryagin dual of $f$ and inverting $X$ we obtain an injective $o/\varpi^h\bg \varphi^r(X)\jg[H_0/H_k]$-homomorphism
\begin{align*}
f^\vee[1/X]\colon M_k^\vee[1/X]\to (o/\varpi^h\bs \varphi^r(X)\js[H_0/H_k]\otimes_{o/\varpi^h\bs \varphi^r(X)\js}Y_k)^\vee[1/X]\cong\\
\cong o/\varpi^h\bg \varphi^r(X)\jg[H_0/H_k]\otimes_{o/\varpi^h\bg \varphi^r(X)\jg}(Y_k^\vee[1/X])\ .
\end{align*}
On the other hand, by construction the action of the group $H_0/H_k$ on the domain of $f$ is via the action on the first term which is a regular left-translation action. Therefore the $H_0/H_k$-invariants can be computed as the image of the trace map: 
$$(o/\varpi^h\bs \varphi^r(X)\js[H_0/H_k]\otimes_{o/\varpi^h\bs \varphi^r(X)\js}Y_k)^{H_0/H_k}=(\sum_{h\in H_0/H_k}h)\otimes Y_k\ .$$
The composite of $f$ with the bijection $$(\sum_{h\in H_0/H_k}h)\otimes\id_{Y_k}\colon Y_k\overset{\sim}{\to} (\sum_{h\in H_0/H_k}h)\otimes Y_k$$ is the trace map on $Y_k$ whose kernel is finitely generated over $o$ by Lemma \ref{tr}. In particular, the kernel of the restriction of $f$ to the $H_0/H_k$-invariants is finitely generated over $o$. Dually, we find that $f^\vee[1/X]$ becomes surjective after taking $H_0/H_k$-coinvariants. Since $M_k^\vee[1/X]$ is a finite dimensional representation of the finite $p$-group $H_0/H_k$ over the local artinian ring $o/\varpi^h\bg X\jg$ with residual characteristic $p$, the map $f^\vee[1/X]$ is in fact an isomorphism as its cokernel has trivial $H_0/H_k$-coinvariants.
\qed\end{proof}

Denote by $H_{k,-}/H_k$ the kernel of the group homomorphism $$s(\cdot)s^{-1}\colon N_0/H_k\to N_0/H_k\ .$$ It is a normal subgroup contained in the finite subgroup $H_0/H_k\leq N_0/H_k$ since $s(\cdot)s^{-1}$ is the multiplication by $p$ map on $N_0/H_0\cong \mathbb{Z}_p$ which is injective. If $k$ is big enough so that $H_k$ is contained in $sH_0s^{-1}$ then we have $H_{k,-}=s^{-1}H_ks$, otherwise we always have $H_{k,-}=H_0\cap s^{-1}H_ks$. The ring homomorphism $$\varphi\colon \Lambda(N_0/H_k)/\varpi^h\to\Lambda(N_0/H_k)/\varpi^h$$ factors through the quotient map $\Lambda(N_0/H_k)/\varpi^h\twoheadrightarrow\Lambda(N_0/H_{k,-})/\varpi^h$. We denote by $\tilde{\varphi}$ the induced ring homomorphism $$\tilde{\varphi}\colon \Lambda(N_0/H_{k,-})/\varpi^h\to \Lambda(N_0/H_k)/\varpi^h\ .$$ Note that $\tilde{\varphi}$ is injective and makes $\Lambda(N_0/H_k)/\varpi^h$ a free module of rank 
\begin{align*}
\nu:=|\Coker(s(\cdot)s^{-1}\colon N_0/H_k\to N_0/H_k)|=\\
=p|\Coker(s(\cdot)s^{-1}\colon H_0/H_k\to H_0/H_k)|=\\
=p|\Ker(s(\cdot)s^{-1}\colon H_0/H_k\to H_0/H_k)|=p|H_{k,-}/H_k|
\end{align*}
over $\Lambda(N_0/H_{k,-})/\varpi^h$ since the kernel and cokernel of an endomorphism of a finite group have the same cardinality.

\begin{lem}\label{dualtensor}
We have a series of isomorphisms of $\Lambda(N_0/H_k)/\varpi^h[1/X]$-mod-ules
\begin{align*}
\Tr^{-1}=\Tr^{-1}_{H_{k,-}/H_k}\colon(\Lambda(N_0/H_k)/\varpi^h\otimes_{\varphi,\Lambda(N_0/H_k)/\varpi^h}M_k)^\vee[1/X]\overset{(1)}{\to}\\
\overset{(1)}{\to}\Hom_{\Lambda(N_0/H_k),\varphi}(\Lambda(N_0/H_k),M_k^\vee[1/X]) \overset{(2)}{\to}\\
\overset{(2)}{\to}\Hom_{\Lambda(N_0/H_{k,-}),\tilde{\varphi}}(\Lambda(N_0/H_k),(M_k^\vee[1/X])^{H_{k,-}})\overset{(3)}{\to}\\
\overset{(3)}{\to}\Lambda(N_0/H_k)\otimes_{\Lambda(N_0/H_{k,-}),\tilde{\varphi}}M_k^\vee[1/X]^{H_{k,-}}\overset{(4)}{\to}\\
\overset{(4)}{\to}\Lambda(N_0/H_k)\otimes_{\Lambda(N_0/H_{k,-}),\tilde{\varphi}}(M_k^\vee[1/X])_{H_{k,-}}\overset{(5)}{\to}\\
\overset{(5)}{\to}\Lambda(N_0/H_k)/\varpi^h\otimes_{\Lambda(N_0/H_k)/\varpi^h,\varphi}M_k^\vee[1/X]\ .
\end{align*}
\end{lem}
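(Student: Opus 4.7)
The plan is to verify the five isomorphisms in sequence. Steps (1), (2), (3), (5) are categorical manipulations (tensor--Hom adjunction, factorization through a quotient, Frobenius reciprocity, associativity of tensor products), while the substantive new input occurs in step (4), where the induced structure established in Lemma \ref{induced} is used crucially.

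For (1), I would invoke the Pontryagin duality adjunction. Writing $B := \Lambda(N_0/H_k)/\varpi^h$ and regarding $B$ as a $(B,B)$-bimodule with right action twisted by $\varphi$, the dual of $B \otimes_{\varphi, B} M_k$ is canonically $\Hom_{B, \varphi}(B, M_k^\vee)$. Inverting $X$ is compatible with both functors because $X$ lies in the central subring $o/\varpi^h\bg\varphi^r(X)\jg[H_0/H_k] \subseteq B[1/X]$, so localization commutes with the constructions. For (2), I would exploit the factorization $\varphi = \tilde\varphi \circ p$, where $p \colon B \twoheadrightarrow B' := \Lambda(N_0/H_{k,-})/\varpi^h$: for any $h \in H_{k,-}/H_k$ one has $\varphi(h) = 1$, so any $\varphi$-semilinear map automatically takes values in the $H_{k,-}$-invariants and descends to a $\tilde\varphi$-semilinear map from $B$ (now viewed as a $B'$-module via $\tilde\varphi$). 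For (3), I would use that $\tilde\varphi$ makes $B$ a free $B'$-module of rank $\nu$ (established immediately before the lemma), combined with the Frobenius nature of the extension, to pass from the Hom to the tensor product as $B$-modules.

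For (4), I would apply Lemma \ref{induced}: the induced structure of $M_k^\vee[1/X]$ over $H_0/H_k$ restricts to induced structure over the subgroup $H_{k,-}/H_k \leq H_0/H_k$, so all reduced Tate cohomology of the latter on $M_k^\vee[1/X]$ vanishes, and hence the norm map yields a canonical isomorphism $(M_k^\vee[1/X])_{H_{k,-}} \cong (M_k^\vee[1/X])^{H_{k,-}}$. For (5), I would identify the coinvariants $(M_k^\vee[1/X])_{H_{k,-}}$ with $B' \otimes_{p, B} M_k^\vee[1/X]$ (base change along the quotient $p$), and then associativity of tensor products together with $\varphi = \tilde\varphi \circ p$ delivers $B \otimes_{B', \tilde\varphi} (B' \otimes_{p, B} M_k^\vee[1/X]) \cong B \otimes_{B, \varphi} M_k^\vee[1/X]$.

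The main obstacle I anticipate is (3): verifying that $\tilde\varphi \colon B' \hookrightarrow B$ is a Frobenius extension, that is, $\Hom_{B'}(B, B') \cong B$ as $(B, B')$-bimodules. The promotion of the abstract free-module identification of Hom and tensor product (both of rank $\nu$ over $B'$) to a canonical $B$-linear isomorphism requires careful tracking of the twist by $s$ in the definition of $\tilde\varphi$ and exhibiting a nondegenerate trace pairing from the finite-index inclusion $sN_0s^{-1}H_k/H_k \subseteq N_0/H_k$.
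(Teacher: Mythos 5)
Your proposal follows essentially the same route as the paper: (1) is tensor--Hom adjunction, (2) uses that $\varphi$ factors through $\Lambda(N_0/H_{k,-})$ so that $\varphi$-semilinear maps land in the $H_{k,-}$-invariants, (3) uses freeness of $\Lambda(N_0/H_k)$ over $\Lambda(N_0/H_{k,-})$ via $\tilde\varphi$, (4) is the trace map, an isomorphism by Lemma \ref{induced}, and (5) is the identification of coinvariants with base change along the quotient --- exactly the paper's five justifications, with (4) the only non-tautological step in both treatments. Your extra caution about step (3) (that identifying $\Hom_{B'}(B,N)$ with $B\otimes_{B'}N$ canonically needs a Frobenius structure, here supplied by the coset basis and trace form of the finite group-ring extension) is a legitimate refinement of a point the paper passes over silently, but it does not change the argument.
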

\begin{proof}
$(1)$ follows from the adjoint property of $\otimes$ and $\Hom$. The second isomorphism follows from noting that the action of the ring $\Lambda(N_0/H_k)$ over itself via $\varphi$ factors through the quotient $\Lambda(N_0/H_{k,-})$ therefore $H_{k,-}$ acts trivially on $\Lambda(N_0/H_k)$ via this map. So any module-homomorphism $\Lambda(N_0/H_k)\to M_k^\vee[1/X]$ lands in the $H_{k,-}$-invariant part $M_k^\vee[1/X]^{H_{k,-}}$ of $M_k^\vee[1/X]$. The third isomorphism follows from the fact that $\Lambda(N_0/H_k)$ is a free module over $\Lambda(N_0/H_{k,-})$ via $\tilde{\varphi}$. The fourth isomorphism is given by (the inverse of) the trace map $\Tr_{H_{k,-}/H_k}\colon (M_k^\vee[1/X])_{H_{k,-}}\to M_k^\vee[1/X]^{H_{k,-}}$ which is an isomorphism by Lemma \ref{induced}. The last isomorphism follows from the isomorphism $(M_k^\vee[1/X])_{H_{k,-}}\cong \Lambda(N_0/H_{k,-})\otimes_{\Lambda(N_0/H_k)}M_k^\vee[1/X]$.
\qed\end{proof}

\begin{rem}
Here $\varphi$ always acted only on the ring $\Lambda(N_0/H_k)$, hence denoting $\varphi_t$ the action $n\mapsto tnt^{-1}$ for a fixed $t\in T_+$ and choosing $k$ large enough such that $tH_0t^{-1}\geq H_k$ we get analogously an isomorphism
\begin{align*}
\Tr_{t^{-1}H_kt/H_k}^{-1}\colon(\Lambda(N_0/H_k)/\varpi^h\otimes_{\varphi_t,\Lambda(N_0/H_k)/\varpi^h}M_k)^\vee[1/X]\to\\
\to\Lambda(N_0/H_k)/\varpi^h\otimes_{\Lambda(N_0/H_k)/\varpi^h,\varphi_t}M_k^\vee[1/X]\ .
\end{align*}
One of the key points of Lemma \ref{induced} is that the trace map on $M_k^\vee[1/X]$ induces a bijection between $M_k^\vee[1/X]_{H_{k,-}}$ and $M_k^\vee[1/X]^{H_{k,-}}$ as noted in the isomorphism $(4)$ above. We shall use this fact later on.
\end{rem}

We denote the composite of the five isomorphisms in Lemma \ref{dualtensor} by $\Tr^{-1}$ emphasising that all but $(4)$ are tautologies. Our main result in this section is the following generalization of Lemma 2.6 in \cite{B}.

\begin{pro}\label{M_ketale}
The map 
\begin{eqnarray}\label{1F_kvee}
\Tr^{-1}\circ(1\otimes F_k)^\vee[1/X]\colon\\
M_k^\vee[1/X]\to \Lambda(N_0/H_k)/\varpi^h[1/X]\otimes_{\varphi,\Lambda(N_0/H_k)/\varpi^h[1/X]}M_k^\vee[1/X]\notag
\end{eqnarray}
is an isomorphism of $\Lambda(N_0/H_k)/\varpi^h[1/X]$-modules. Therefore the natural action of $\Gamma$ and the operator 
\begin{eqnarray*}
\varphi \colon  M_k^\vee[1/X]&\to& M_k^\vee[1/X]\\
f&\mapsto &(\Tr^{-1}\circ(1\otimes F_k)^\vee[1/X])^{-1}(1\otimes f)
\end{eqnarray*}
make $M_k^\vee[1/X]$ into an \'etale $(\varphi,\Gamma)$-module over the ring $\Lambda(N_0/H_k)/\varpi^h[1/X]$.
\end{pro}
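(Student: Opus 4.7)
The strategy is to reduce the isomorphism assertion to two finiteness statements on the module side, using Pontryagin duality to transfer from duals back to the original modules. Since $\Tr^{-1}$ is already an isomorphism by Lemma \ref{dualtensor}, it suffices to show that $(1\otimes F_k)^\vee[1/X]$ is an isomorphism, where
$$1\otimes F_k\colon \Lambda(N_0/H_k)/\varpi^h\otimes_{\varphi,\Lambda(N_0/H_k)/\varpi^h}M_k\to M_k,\qquad \lambda\otimes m\mapsto \lambda F_k(m)$$
is well-defined by the $\varphi$-semilinearity $F_k(\mu m)=\varphi(\mu)F_k(m)$. Because Pontryagin duality is exact and the localization $[1/X]$ is exact, and because any finitely generated $o$-module $N$ has $N^\vee$ killed by a power of $X$ (so $N^\vee[1/X]=0$), this reduces further to proving that both $\Ker(1\otimes F_k)$ and $\Coker(1\otimes F_k)$ are finitely generated over $o$.

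For the cokernel, the $\varphi$-semilinearity of $F_k$ together with Lemma \ref{formula} gives
$$N_0F_k(M_k)=N_0\cdot sN_0s^{-1}\cdot \Tr_{H_k/s^kH_0s^{-k}}(s^kF(M))=N_0\Tr_{H_k/s^kH_0s^{-k}}(s^kF(M))=(F(M))_k,$$
where $(F(M))_k$ denotes the analogue of $M_k$ for $F(M)$ in place of $M$. Writing both $M_k$ and $(F(M))_k$ as finite sums of $N_0$-translates (as in the proof of Lemma \ref{M_k}), one sees that $M_k/(F(M))_k$ is covered by finitely many copies of $\Tr_{H_k/s^kH_0s^{-k}}(s^kM)/\Tr_{H_k/s^kH_0s^{-k}}(s^kN_0F(M))$, which is itself a quotient of $M/N_0F(M)$ via the bijection $s^k\cdot\colon M\to s^kM$ and the surjectivity of $\Tr_{H_k/s^kH_0s^{-k}}$. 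Breuil's Lemma 2.6 in \cite{B} asserts that $M/N_0F(M)$ is finitely generated over $o$, so the cokernel is as well.

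For the kernel, the non-injectivity of $\varphi$ on $\Lambda(N_0/H_k)/\varpi^h$ for $k\geq 1$ blocks a direct copy of Breuil's original argument. Instead, I would argue by a length comparison. Using Lemma \ref{induced} and Lemma \ref{tr}, one can compute the length of $M_k^\vee[1/X]$ over $\Lambda(N_0/H_k)/\varpi^h[1/X]$ in terms of that of $M^\vee[1/X]$ over $o/\varpi^h\bg X\jg$, and one computes the length of $\varphi^*M_k^\vee[1/X]$ via the free rank-$\nu$ structure of $\tilde{\varphi}\colon\Lambda(N_0/H_{k,-})/\varpi^h\hookrightarrow\Lambda(N_0/H_k)/\varpi^h$ combined with the trace isomorphism between $H_{k,-}$-invariants and $H_{k,-}$-coinvariants (step $(4)$ of Lemma \ref{dualtensor}). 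The case $k=0$ handled by Breuil's Lemma 2.6 then shows that the two lengths agree; combined with the already-established finiteness of the cokernel, this forces the kernel to be finitely generated over $o$ as well.

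Finally, the $\Gamma$-action commutes with the resulting $\varphi$ because $\Gamma$ normalizes each of $H_k$ and $sH_ks^{-1}$, its elements commute with $s=\xi(p)$, and all the relevant trace maps are $\Gamma$-equivariant; hence $M_k^\vee[1/X]$ becomes an \'etale $(\varphi,\Gamma)$-module over $\Lambda(N_0/H_k)/\varpi^h[1/X]$. The main technical obstacle is the kernel analysis, since the failure of $\varphi$ to be injective for $k\geq 1$ prevents a direct transcription of Breuil's proof; the two ingredients that circumvent this are the induced-module structure of Lemma \ref{induced} and the trace identification between $H_{k,-}$-invariants and coinvariants from Lemma \ref{dualtensor}.
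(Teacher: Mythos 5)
Your proposal is correct and follows essentially the same route as the paper: injectivity of the dualized map is deduced from finiteness (over $o$) of the cokernel of $1\otimes F_k$, and surjectivity from a length comparison built on Lemmata \ref{tr} and \ref{induced}, the rank-$\nu$ freeness of $\tilde{\varphi}$, and the trace identification in step $(4)$ of Lemma \ref{dualtensor}. The only cosmetic differences are that the paper gets the cokernel's finiteness directly from Lemma \ref{M_k} (finitely generated over $\Lambda(N_0/H_k)/\varpi^h$ plus admissible) rather than by reducing to $M/N_0F(M)$, and it concludes ``injective plus equal lengths implies isomorphism'' outright, recording the finiteness of the kernel only in the remark afterwards.
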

\begin{proof}
Since $M_k$ is finitely generated over $\Lambda(N_0/H_k)/\varpi^h[F_k]$ by Lemma \ref{M_k}, the cokernel $C$ of the map
\begin{equation}\label{1otimesFkonMk}
1\otimes F_k\colon \Lambda(N_0/H_k)/\varpi^h\otimes_{\varphi,\Lambda(N_0/H_k)/\varpi^h}M_k\to M_k
\end{equation}
is finitely generated as a module over $\Lambda(N_0/H_k)/\varpi^h$. Further, it is admissible as a representation of $N_0$ (again by Lemma \ref{M_k}), therefore $C$ is finitely generated over $o$. In particular, we have $C^\vee[1/X]=0$ showing that \eqref{1F_kvee} is injective.

For the surjectivity put $Y_k:=\sum_{u\in J(N_{\alpha,0}/s^kN_{\alpha,0}s^{-k})}u\Tr_{H_k/s^kH_0s^{-k}}(s^kM)$. This is an $o/\varpi^h\bs X\js$-submodule of $M_k$. 
By Lemma \ref{tr} we have 
\begin{align*}
\mathrm{length}_{o/\varpi^h\bg \varphi^r(X)\jg}(Y_k^\vee[1/X])=\\
=|N_{\alpha,0}:s^rN_{\alpha,0}s^{-r}|\mathrm{length}_{o/\varpi^h\bg X\jg}(Y_k^\vee[1/X])=p^rl\ .
\end{align*}
By Lemma \ref{induced} we obtain 
\begin{align*}
\mathrm{length}_{o/\varpi^h\bg \varphi^r(X)\jg}M_k^\vee[1/X]=\\
=|H_0:H_k|\cdot\mathrm{length}_{o/\varpi^h\bg \varphi^r(X)\jg}Y_k^\vee[1/X]=|H_0:H_k|p^rl\ .
\end{align*}
Consider the ring homomorphism 
\begin{equation}\label{phi}
\varphi\colon \Lambda(N_0/H_k)/\varpi^h[1/X]\to \Lambda(N_0/H_k)/\varpi^h[1/X]\ .
\end{equation}
Its image is the subring $\Lambda(sN_0s^{-1}H_k/H_k)/\varpi^h[1/\varphi(X)]$ over which the ring $\Lambda(N_0/H_k)/\varpi^h[1/X]$ is a free module of rank $\nu=|N_0:sN_0s^{-1}H_k|=p|H_{k,-}:H_k|$. So we obtain
\begin{align*}
p\mathrm{length}_{o\bg \varphi^r(X)\jg}\Lambda(N_0/H_k)/\varpi^h[1/X]\otimes_{\varphi,\Lambda(N_0/H_k)/\varpi^h[1/X]}M_k^\vee[1/X]=\\
=\mathrm{length}_{o\bg \varphi^{r+1}(X)\jg}\Lambda(N_0/H_k)/\varpi^h[1/X]\otimes_{\varphi,\Lambda(N_0/H_k)/\varpi^h[1/X]}M_k^\vee[1/X]=\\
=\nu\mathrm{length}_{o\bg \varphi^{r+1}(X)\jg}\Lambda(sN_0s^{-1}H_k/H_k)/\varpi^h[1/\varphi(X)]\\
\otimes_{\varphi,\Lambda(N_0/H_k)/\varpi^h[1/X]}M_k^\vee[1/X]\overset{(*)}{=}\\
=\nu\mathrm{length}_{o\bg \varphi^{r}(X)\jg}M_k^\vee[1/X]_{H_{k,-}}=\\
=\nu\mathrm{length}_{o\bg \varphi^{r}(X)\jg}(o/\varpi^h[H_0/H_{k,-}]\otimes_{o/\varpi^h}Y_k^\vee[1/X])=\\
=\nu|H_0:H_{k,-}|p^rl=p|H_0:H_{k}|p^rl=p\mathrm{length}_{o/\varpi^h\bg \varphi^r(X)\jg}M_k^\vee[1/X]\ .
\end{align*}
Here the equality $(*)$ follows from the fact that the map $\varphi$ induces an isomorphism between $\Lambda(N_0/H_{k,-})/\varpi^h[1/X]$ and $\Lambda(sN_0s^{-1}H_k/H_k)/\varpi^h[1/\varphi(X)]$ sending the subring $o\bg \varphi^r(X)\jg$ isomorphically onto $o\bg \varphi^{r+1}(X)\jg$.

This shows that \eqref{1F_kvee} is an isomorphism as it is injective and the two sides have equal length as modules over the artinian ring $o/\varpi^h\bg X\jg$.
\qed\end{proof}

\begin{rem}
We also obtain in particular that the map \eqref{1otimesFkonMk} has finite kernel and cokernel. Hence there exists a finite $\Lambda(N_0/H_k)/\varpi^h$-submodule $M_{k,\ast}$ of $M_k$ such that the kernel of $1\otimes F_k$ is contained in the image of $ \Lambda(N_0/H_k)/\varpi^h\otimes_{\varphi}M_{k,\ast}$ in $\Lambda(N_0/H_k)/\varpi^h\otimes_{\varphi}M_k$. We denote by $M_k^\ast$ the image of $1\otimes F_k$.
\end{rem}

Note that for $k=0$ we have $M_0=M$. Let now $0\leq j\leq k$ be two integers. By Lemma \ref{induced} the space of $H_j$-invariants of $M_k$ is equal to $\Tr_{H_j/H_k}(M_k)$ upto finitely generated modules over $o$. On the other hand, we compute
\begin{align*}
N_0F_j^{k-j}(M_j)=N_0\Tr_{H_j/s^{k-j}H_js^{j-k}}\circ (s^{k-j}\cdot)\circ\Tr_{H_j/s^jH_0s^{-j}}(s^jM)=\\
=N_0\Tr_{H_j/s^kH_0s^{-k}}(s^kM)=N_0\Tr_{H_j/H_k}\circ\Tr_{H_k/s^kH_0s^{-k}}(s^kM)=\\
=\Tr_{H_j/H_k}(N_0\Tr_{H_k/s^kH_0s^{-k}}(s^kM))=\Tr_{H_j/H_k}(M_k)
\end{align*}
since both $H_k$ and $H_j$ are normal in $N_0$ whence we have $(u\cdot)\circ\Tr_{H_j/H_k}=\Tr_{H_j/H_k}\circ(u\cdot)$ for all $u\in N_0$. So taking $H_j/H_k$-coinvariants of $M_k^\vee[1/X]$, we have a natural identification
\begin{align}
M_k^\vee[1/X]_{H_j/H_k}\cong (M_k^{H_j/H_k})^\vee[1/X]\cong\notag\\
\cong (\Tr_{H_j/H_k}(M_k))^\vee[1/X]=(N_0F_j^{k-j}(M_j))^\vee[1/X]\cong M_j^\vee[1/X]\label{coinvariants}
\end{align}
induced by the inclusion $N_0F_j^{k-j}(M_j)\subseteq M_k^{H_j}\subseteq M_k$. The last identification follows from the fact that $M_j/N_0F_j^{k-j}(M_j)$ is finitely generated over $o$ as noted in the beginning of the proof of Proposition \ref{M_ketale} applied to $j$ instead of $k$.

\begin{lem}\label{trjk}
We have $\Tr_{H_j/H_k}\circ F_k=F_j\circ \Tr_{H_j/H_k}$.
\end{lem}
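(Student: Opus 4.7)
The plan is to verify the identity by two applications of transitivity of trace, together with the conjugation compatibility
\[
(s\cdot)\circ \Tr_{H_j/H_k} \;=\; \Tr_{sH_js^{-1}/sH_ks^{-1}}\circ(s\cdot),
\]
which is immediate by sending a set of coset representatives $J(H_j/H_k)$ to $sJ(H_j/H_k)s^{-1}$, a set of representatives for $sH_js^{-1}/sH_ks^{-1}$.

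First I would record the chain of inclusions that makes every trace below well-defined. Since $k\ge j$, the subgroup $s^kH_0s^{-k}$ lies in $s^jH_0s^{-j}$, so $H_k\subseteq H_j$. Moreover $sH_ks^{-1}\subseteq H_k$ and $sH_js^{-1}\subseteq H_j$ (these are exactly the hypotheses needed for $F_k$, $F_j$ to be defined), and consequently $sH_ks^{-1}\subseteq sH_js^{-1}\subseteq H_j$ and $sH_ks^{-1}\subseteq H_k\subseteq H_j$. Hence all four traces $\Tr_{H_j/H_k}$, $\Tr_{H_k/sH_ks^{-1}}$, $\Tr_{H_j/sH_js^{-1}}$, $\Tr_{sH_js^{-1}/sH_ks^{-1}}$ are finite sums of genuine coset operators, and $\Tr_{H_j/sH_ks^{-1}}$ makes sense as well.

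Now I compute both sides, starting from an element $m\in\pi^{H_k}$. On the left,
\[
\Tr_{H_j/H_k}\circ F_k \;=\; \Tr_{H_j/H_k}\circ \Tr_{H_k/sH_ks^{-1}}\circ (s\cdot) \;=\; \Tr_{H_j/sH_ks^{-1}}\circ(s\cdot)
\]
by transitivity (using $sH_ks^{-1}\subseteq H_k\subseteq H_j$). On the right,
\[
F_j\circ\Tr_{H_j/H_k} \;=\; \Tr_{H_j/sH_js^{-1}}\circ (s\cdot)\circ\Tr_{H_j/H_k} \;=\; \Tr_{H_j/sH_js^{-1}}\circ \Tr_{sH_js^{-1}/sH_ks^{-1}}\circ (s\cdot),
\]
where the second equality is the conjugation-compatibility noted above. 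A second application of transitivity (using $sH_ks^{-1}\subseteq sH_js^{-1}\subseteq H_j$) collapses this to $\Tr_{H_j/sH_ks^{-1}}\circ(s\cdot)$, matching the left-hand side.

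There is no genuine obstacle; the only point requiring care is keeping track of which subgroups are contained in which, so that each trace and the rewriting step are legitimate. Once the inclusions $sH_ks^{-1}\subseteq H_k\subseteq H_j$ and $sH_js^{-1}\subseteq H_j$ are in place, the lemma is a two-line transitivity-of-trace computation completely parallel to the proof of Lemma~\ref{formula}.
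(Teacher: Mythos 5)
Your proof is correct and is essentially the paper's own argument: both rest on the two applications of transitivity of the trace together with the conjugation identity $(s\cdot)\circ\Tr_{H_j/H_k}=\Tr_{sH_js^{-1}/sH_ks^{-1}}\circ(s\cdot)$, the only cosmetic difference being that you reduce each side separately to $\Tr_{H_j/sH_ks^{-1}}\circ(s\cdot)$ while the paper writes it as a single chain of equalities. The extra care you take in recording the inclusions $sH_ks^{-1}\subseteq H_k\subseteq H_j$ and $sH_js^{-1}\subseteq H_j$ is a welcome addition but does not change the argument.
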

\begin{proof}
We compute
\begin{align*}
\Tr_{H_j/H_k}\circ F_k=\Tr_{H_j/H_k}\circ \Tr_{H_k/sH_ks^{-1}}\circ (s\cdot)=\\
\Tr_{H_j/sH_ks^{-1}}\circ(s\cdot)=\Tr_{H_j/sH_js^{-1}}\circ\Tr_{sH_js^{-1}/sH_ks^{-1}}(s\cdot)=\\
\Tr_{H_j/sH_js^{-1}}\circ(s\cdot)\Tr_{H_j/H_k}=F_j\circ\Tr_{H_j/H_k}\ .\ \qed
\end{align*}
\end{proof}
\begin{pro}\label{limit}
The identification \eqref{coinvariants} is $\varphi$ and $\Gamma$-equivariant.
\end{pro}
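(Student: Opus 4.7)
After $[1/X]$ the identification \eqref{coinvariants} is the Pontryagin dual of the inclusion $N_0F_j^{k-j}(M_j)=\Tr_{H_j/H_k}(M_k)\hookrightarrow M_k$ composed with the canonical isomorphism $(N_0F_j^{k-j}(M_j))^\vee[1/X]\cong M_j^\vee[1/X]$, the latter being an isomorphism because the cokernel of $N_0F_j^{k-j}(M_j)\subseteq M_j$ is finitely generated over $o$ (as observed at the end of the proof of Proposition \ref{M_ketale} applied to $j$). The factorisation through $H_j/H_k$-coinvariants uses Lemma \ref{induced} to identify $(M_k^{H_j})^\vee$ with $(M_k^\vee)_{H_j/H_k}$ after inverting $X$. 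With this explicit description in hand, I would then check $\Gamma$- and $\varphi$-equivariance separately.

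The $\Gamma$-equivariance is immediate: by construction $\Gamma$ normalises each $H_i$ and each $s^iH_0s^{-i}$, and commutes with $s$, so every map in the chain ($\Tr_{H_j/H_k}$, the inclusions into $M_k$ and $M_j$, and the dualisations) is $\Gamma$-equivariant. The substantive part is the $\varphi$-equivariance. The input is Lemma \ref{trjk}, which yields the commutative square
\begin{equation*}
\begin{CD}
M_k @>F_k>> M_k\\
@VV\Tr_{H_j/H_k}V @VV\Tr_{H_j/H_k}V\\
M_j @>F_j>> M_j
\end{CD}
\end{equation*}
Recall that by Proposition \ref{M_ketale}, $\varphi$ on $M_k^\vee[1/X]$ is defined as $(\Tr^{-1}\circ(1\otimes F_k)^\vee[1/X])^{-1}\circ(1\otimes\cdot)$, and analogously on $M_j^\vee[1/X]$. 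So the plan is to dualise the square above, tensor it up to $\Lambda\otimes_{\varphi}(\cdot)$, and thread the result through the five isomorphisms of Lemma \ref{dualtensor} applied at both $k$ and $j$. The auxiliary traces $\Tr_{H_{k,-}/H_k}$ and $\Tr_{H_{j,-}/H_j}$ appearing in step $(4)$ of Lemma \ref{dualtensor} commute with $\Tr_{H_j/H_k}$ because all subgroups in sight are normal in $N_0$ and satisfy the evident inclusions; this is what makes the chase go through.

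The main obstacle is the bookkeeping: one must track semilinearities through Lemma \ref{dualtensor}'s chain simultaneously at levels $k$ and $j$ while making sure the various coinvariant and invariant identifications match up. A cleaner route I would prefer is to give a direct ``evaluation'' characterisation of $\varphi_k$ on $M_k^\vee[1/X]$ (roughly: $\varphi_k(f)$ is characterised by how it pairs with elements of the form $\lambda F_k(m)$, up to the fixed index factor $|H_{k,-}/H_k|$ from step $(4)$), and analogously for $\varphi_j$. The desired commutativity
\begin{equation*}
\begin{CD}
M_k^\vee[1/X] @>\varphi_k>> M_k^\vee[1/X]\\
@VV\pi V @VV\pi V\\
M_j^\vee[1/X] @>\varphi_j>> M_j^\vee[1/X]
\end{CD}
\end{equation*}
(with $\pi$ factoring through $H_j/H_k$-coinvariants) would then drop out of Lemma \ref{trjk} together with the observation that the index factors at levels $k$ and $j$ are compatible under $\Tr_{H_j/H_k}$.
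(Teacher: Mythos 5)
Your overall strategy is the one the paper follows: the $\Gamma$-part is indeed immediate, and the paper proves the $\varphi$-part precisely via the ``evaluation characterisation'' you say you would prefer, namely the explicit formula \eqref{varphik} for $\varphi_k(f)$ on elements $m=\sum_u uF_k(m_u)$ and its analogue \eqref{phij} at level $j$. However, as written your proposal has a genuine gap: the entire content of the proposition is the computation you defer, and the one concrete indication you give of how it would go is not right. The trace in step $(4)$ of Lemma \ref{dualtensor} does not enter as a ``fixed index factor $|H_{k,-}/H_k|$'' (which would be $0$ mod $p$ in any case); the correct statement is $\varphi_k(f)(u_0F_k(m_{u_0}))=f\bigl(\Tr_{H_{k,-}/H_k}((s^{-1}u_0s)m_{u_0})\bigr)$, i.e.\ a trace over $H_{k,-}/H_k$ applied \emph{inside the argument}, and moreover this formula is only well defined for $f$ vanishing on the finite submodule $M_{k,\ast}$ (one then recovers the general case by multiplying by $\varphi(X^r)$). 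Similarly, your first route's assertion that $\Tr_{H_{k,-}/H_k}$ and $\Tr_{H_{j,-}/H_j}$ ``commute with $\Tr_{H_j/H_k}$ because all subgroups in sight are normal'' glosses over the actual difficulty: $H_{j,-}/H_{k,-}$ and $H_j/H_k$ are different quotients (related only via conjugation by $s$ inside $sN_0s^{-1}$), and the comparison requires matching the index set $J(H_j/H_k)\cap sN_0s^{-1}u^{-1}$ with a set of representatives of $H_{j,-}/H_{k,-}$, as well as observing that at level $j$ only those $u\in J(N_0/sN_0s^{-1})$ lying in $H_jsN_0s^{-1}$ contribute to the evaluation of $\varphi_j$.

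Two further points the paper needs and you omit: one first reduces to $k$ large enough that $H_{k,-}=s^{-1}H_ks$ (the general case follows by transitivity of coinvariants through an auxiliary $k'$), and the choice of lifts $\overline{u}\in sN_0s^{-1}\cap H_ju$ entering \eqref{phij} must be shown to be immaterial (which uses $H_{j,-}=N_0\cap s^{-1}H_js$). None of these steps is difficult, but together they are the proof; without them the proposal is a correct plan rather than an argument.
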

\begin{proof}
For fixed $j$ it suffices to treat the case when $k$ is large enough so that we have $H_{k,-}=s^{-1}H_ks$. Indeed, for fixed $j$ and $k$ we may choose a larger integer $k'>k$ with $H_{k',-}=s^{-1}H_{k'}s$ and the $\varphi$- and $\Gamma$ equivariance of the identifications $M_k^\vee[1/X]\cong M_{k'}^\vee[1/X]_{H_{k'}/H_k}$ and $M_j^\vee[1/X]\cong M_{k'}^\vee[1/X]_{H_{k'}/H_j}$ will imply that of $$M_j^\vee[1/X]\cong M_{k'}^\vee[1/X]_{H_{k'}/H_j}= (M_{k'}^\vee[1/X]_{H_{k'}/H_j})_{H_k/H_j}\cong M_{k}^\vee[1/X]_{H_{k}/H_j}\ .$$ So from now on we assume $H_k\leq sH_0s^{-1}\leq sN_0s^{-1}$. As $\Gamma$ acts both on $M_k$ and $M_j$ by multiplication coming from the action of $\Gamma$ on $\pi$, the map \eqref{coinvariants} is clearly $\Gamma$-equivariant. In order to avoid confusion we are going to denote the map $\varphi$ on $M_k^\vee[1/X]$ (resp.\ on $M_j^\vee[1/X]$) temporarily by $\varphi_k$ (resp.\ by $\varphi_j$). Let $f$ be in $M_k^\vee$ such that its restriction to $M_{k,\ast}$ is zero (see the Remark after Prop.\ \ref{M_ketale}).
We regard $f$ as an element in $(M_k^*/M_{k,*})^\vee\leq (M_k^*)^\vee$. We are going to compute $\varphi_k(f)$ and $\varphi_j(f_{\mid\Tr_{H_j/H_k}(M_k^*)})$ explicitly and find that the restriction of $\varphi_k(f)$ to $\Tr_{H_j/H_k}(M_k^*)$ is equal to $\varphi_j(f_{\mid\Tr_{H_j/H_k}(M_k^*)})$. Note that we have an isomorphism $M_k^\vee[1/X]\cong {M_k^*}^\vee[1/X]\cong (M_k^*/M_{k,*})^\vee[1/X]$ (resp.\ $M_j^\vee[1/X]\cong \Tr_{H_j/H_k}(M_k^*)^\vee[1/X]$) obtained from the Remark after Prop.\ \ref{M_ketale}. 

Let $m\in M_k^\ast\leq M_k$ be in the form 
\begin{equation*}
m=\sum_{u\in J((N_0/H_k)/s(N_0/H_k)s^{-1})}u F_k(m_u)
\end{equation*}
with elements $m_u\in M_k$ for $u\in J((N_0/H_k)/s(N_0/H_k)s^{-1})$. By the remark after Proposition \ref{M_ketale} $M_k^\ast$ is a finite index submodule of $M_k$. Note that the elements $m_u$ are unique upto $M_{k,\ast}+\Ker(F_k)$. Therefore $\varphi_k(f)\in (M_k^\ast)^\vee$ is well-defined by our assumption that $f_{\mid M_{k,\ast}}=0$ noting that the kernel of $F_k$ equals the kernel of $\Tr_{H_{k,-}/H_k}$ since the multiplication by $s$ is injective and we have $F_k=s\circ\Tr_{H_{k,-}/H_k}$. So we compute
\begin{align}
\varphi_k(f)(m)=((1\otimes F_k)^\vee)^{-1}(\Tr_{H_{k,-}/H_k}(1\otimes f))(m)=\notag\\
=((1\otimes F_k)^\vee)^{-1}(1\otimes \Tr_{H_{k,-}/H_k}(f))(\sum_{u\in J((N_0/H_k)/s(N_0/H_k)s^{-1})}u F_k(m_u))=\notag\\
=((1\otimes F_k)^\vee)^{-1}(1\otimes \Tr_{H_{k,-}/H_k}(f))(\sum_{u}1\otimes F_k(u\otimes m_u))=\notag\\
=(1\otimes \Tr_{H_{k,-}/H_k}(f))(\sum_{u\in J((N_0/H_k)/s(N_0/H_k)s^{-1})}(u\otimes m_u))=\notag\\
=\Tr_{H_{k,-}/H_k}(f)(F_k^{-1}(u_0F_k(m_{u_0})))=f(\Tr_{H_{k,-}/H_k}((s^{-1}u_0s)m_{u_0}))\label{varphik}
\end{align}
where $u_0$ is the single element in $J(N_0/sN_0s^{-1})$ corresponding to the coset of $1$. The other terms in the above sum vanish as $1\otimes \Tr_{H_{k,-}/H_k}(f)$ is supported on $1\otimes M_k$ by definition. In order to simplify notation put $f_*$ for the restriction of $f$ to $\Tr_{H_j/H_k}(M_k)$ and
$$U:=J(N_0/sN_0s^{-1})\cap H_jsN_0s^{-1}\ .$$ Note that we have $0=\varphi_j(f_*)(uF_j(m'))$ for all $m'\in M_j$ and $$u\in J(N_0/sN_0s^{-1})\setminus U\ .$$ Therefore using Lemma \ref{trjk} we obtain
\begin{align}
\varphi_j(f_{*})(\Tr_{H_j/H_k}m)=\varphi_j(f_{*})(\Tr_{H_j/H_k}\sum_{u\in J(N_0/sN_0s^{-1})}u F_k(m_u))=\notag\\
=\varphi_j(f_{*})(\sum_{u\in J(N_0/sN_0s^{-1})}u F_j\circ\Tr_{H_j/H_k}(m_u))=\notag\\
=\sum_{u\in U}f(\Tr_{H_{j,-}/H_j}(s^{-1}\overline{u}s\Tr_{H_j/H_k}(m_u)))=\notag\\
=\sum_{u\in U}f(s^{-1}\overline{u}s\Tr_{H_{j,-}/H_k}(m_u))\label{phij}
\end{align}
where for each $u\in U$ we choose a fixed $\overline{u}$ in $sN_0s^{-1}\cap H_ju$. Note that $f(s^{-1}\overline{u}s\Tr_{H_{j,-}/H_k}(m_u))$ does not depend on this choice: If $\overline{u_1}\in sN_0s^{-1}\cap H_ju$ is another choice then we have  $(\overline{u_1})^{-1}\overline{u}\in sN_0s^{-1}\cap H_j$ whence $s^{-1}(\overline{u_1})^{-1}\overline{u}s$ lies in $H_{j,-}=N_0\cap s^ {-1}H_js$ so we have 
\begin{align*}
s^{-1}\overline{u}s\Tr_{H_{j,-}/H_k}(m_u)=s^{-1}\overline{u_1}ss^{-1}(\overline{u_1})^{-1}\overline{u}s\Tr_{H_{j,-}/H_k}(m_u)=\\
=s^{-1}\overline{u_1}s\Tr_{H_{j,-}/H_k}(m_u)\ .
\end{align*}
Moreover, the equation \eqref{phij} also shows that $\varphi_j(f_*)$ is a well-defined element in $(\Tr_{H_j/H_k}(M_k^\ast))^\vee$. On the other hand, for the restriction of $\varphi_k(f)$ to $\Tr_{H_j/H_k}(M_k)$ we compute
\begin{align*}
\varphi_k(f)(\Tr_{H_j/H_k}m)=\varphi_k(f)(\sum_{w\in J(H_j/H_k)}w\sum_{u\in J(N_0/sN_0s^{-1})}u F_k(m_u))=\\
=\sum_{w\in J(H_j/H_k)}\sum_{u\in J(N_0/sN_0s^{-1})}\varphi_k(f)(wu F_k(m_u))=\\
=\sum_{\substack{u\in U\\ w\in J(H_j/H_k)\cap (sN_0s^{-1}u^{-1})}}f(\Tr_{H_{k,-}/H_k}((s^{-1}wus)m_u))=\\
=f(\sum_{v:=s^{-1}wu{\overline{u}}^{-1}s\in J(H_{j,-}/H_{k,-})}\Tr_{H_{k,-}/H_k}\sum_{u\in U}vs^{-1}\overline{u}sm_u)=\\
=\sum_{u\in U}f(s^{-1}\overline{u}s\Tr_{H_{j,-}/H_k}(m_u))
\end{align*}
that equals $\varphi_j(f_{*})(\Tr_{H_j/H_k}m)$ by \eqref{phij}. Finally, let now $f\in M_k^\vee$ be arbitrary. Since $M_{k,\ast}$ is finite, there exists an integer $r\geq 0$ such that $X^rf$ vanishes on $M_{k,\ast}$. By the above discussion we have $\varphi_k(X^rf)(\Tr_{H_j/H_k}m)=\varphi_j(X^rf_*)(\Tr_{H_j/H_k}m)$. The statement follows noting that $\varphi(X^r)$ is invertible in the ring $\Lambda(N_0/H_j)/\varpi^h[1/X]$.
\qed\end{proof}

So we may take the projective limit $M_\infty^\vee[1/X]:=\varprojlim_k M_k^\vee[1/X]$ with respect to these quotient maps. The resulting object is an \'etale $(\varphi,\Gamma)$-module over the ring $$\varprojlim_{k} \Lambda(N_0/H_k)/\varpi^h[1/X]\cong \Lambda_\ell(N_0)/\varpi^h\ .$$ Moreover, by taking the projective limit of \eqref{coinvariants} with respect to $k$ we obtain a $\varphi$- and $\Gamma$-equivariant isomorphism $(M_\infty^\vee[1/X])_{H_j}\cong M_j^\vee[1/X]$. So we just proved
\begin{cor}\label{MinftyM}
For any object $M\in\mathcal{M}(\pi^{H_0})$ the $(\varphi,\Gamma)$-module $M^\vee[1/X]$ over $o/\varpi^h\bg X\jg$ corresponds to $M_\infty^\vee[1/X]$ via the equivalence of categories in Theorem 8.20 in \cite{SVZ}.
\end{cor}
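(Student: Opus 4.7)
The plan is to deduce the correspondence directly from the data already assembled in the paragraph preceding the statement, in three short steps: (i) recognize $M_\infty^\vee[1/X]$ as a finitely generated \'etale $(\varphi,\Gamma)$-module over $\Lambda_\ell(N_0)/\varpi^h$; (ii) identify its $H_0$-coinvariants with $M^\vee[1/X]$; (iii) invoke the equivalence in Theorem 8.20 of \cite{SVZ}, which (in our torsion setting) is realized by base change along the surjection $\ell\colon \Lambda_\ell(N_0)/\varpi^h \twoheadrightarrow o/\varpi^h\bg X \jg$, i.e.\ by $H_0$-coinvariants.

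For (i), Proposition \ref{M_ketale} endows each $M_k^\vee[1/X]$ with the structure of an \'etale $(\varphi,\Gamma)$-module over $\Lambda(N_0/H_k)/\varpi^h[1/X]$. The transition maps from \eqref{coinvariants} are $\Lambda(N_0/H_k)$-linear by construction and $\varphi$- and $\Gamma$-equivariant by Proposition \ref{limit}. The projective limit thus inherits compatible actions of $\varphi$, $\Gamma$, and the limit ring $\Lambda_\ell(N_0)/\varpi^h$; \'etaleness passes to the limit because the isomorphisms $1\otimes\varphi_k$ at finite level are compatible with the transition maps, so their projective limit computes $1\otimes\varphi$ on $M_\infty^\vee[1/X]$ and is again an isomorphism.

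For (ii), specialize to $j=0$ the $\varphi$- and $\Gamma$-equivariant isomorphism $(M_\infty^\vee[1/X])_{H_j}\cong M_j^\vee[1/X]$ already obtained (by passing \eqref{coinvariants} to the limit in $k$) in the paragraph preceding the corollary; recalling $M_0=M$, this yields $(M_\infty^\vee[1/X])_{H_0}\cong M^\vee[1/X]$ compatibly with $\varphi$ and $\Gamma$. Step (iii) then completes the argument: the equivalence of Theorem 8.20 in \cite{SVZ} sends an \'etale $(\varphi,\Gamma)$-module $D$ over $\Lambda_\ell(N_0)/\varpi^h$ to $o/\varpi^h\bg X\jg\otimes_{\Lambda_\ell(N_0)/\varpi^h,\ell} D\cong D_{H_0}$, and substituting $D=M_\infty^\vee[1/X]$ produces exactly $M^\vee[1/X]$.

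The only point requiring genuine verification, and where I expect to use the full strength of the construction, is that $M_\infty^\vee[1/X]$ is indeed finitely generated over $\Lambda_\ell(N_0)/\varpi^h$ so that Theorem 8.20 literally applies; I would argue this by choosing a finite generating set of $M^\vee[1/X]=(M_\infty^\vee[1/X])_{H_0}$ over $o/\varpi^h\bg X\jg$, lifting it to $M_\infty^\vee[1/X]$, and propagating via the \'etale structure isomorphism $1\otimes\varphi$ together with a topological Nakayama-type argument with respect to the ideal $\ker(\ell)+(\varpi)\subset \Lambda_\ell(N_0)/\varpi^h$. Once this is in place, the three steps combine to give the desired correspondence.
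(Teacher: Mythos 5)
Your argument is correct and is essentially the paper's own: the corollary is read off from the preceding paragraph, namely the \'etaleness of the projective limit over $\Lambda_\ell(N_0)/\varpi^h$, the $\varphi$- and $\Gamma$-equivariant identification $(M_\infty^\vee[1/X])_{H_0}\cong M_0^\vee[1/X]=M^\vee[1/X]$ obtained by passing \eqref{coinvariants} to the limit, and Theorem 8.20 of \cite{SVZ}. Your additional verification that $M_\infty^\vee[1/X]$ is finitely generated over $\Lambda_\ell(N_0)/\varpi^h$ addresses a point the paper leaves implicit, and either your topological Nakayama argument or a direct appeal to the quasi-inverse $D_0\mapsto\Lambda_\ell(N_0)\otimes_{u_\alpha}D_0$ of the equivalence settles it.
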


Note that whenever $M\subset M'$ are two objects in $\mathcal{M}(\pi^{H_0})$ then we have a natural surjective map ${M'}_\infty^\vee[1/X]\twoheadrightarrow M_\infty^\vee[1/X]$. So in view of the above corollary we define
\begin{equation*}
D^\vee_{\xi,\ell,\infty}(\pi):=\varprojlim_{k\geq 0,M\in\mathcal{M}(\pi^{H_0})}M_k^\vee[1/X]=\varprojlim_{M\in\mathcal{M}(\pi^{H_0})}M_\infty^\vee[1/X]\ .
\end{equation*}

We call two elements $M,M'\in\mathcal{M}(\pi^{H_0})$ equivalent ($M\sim M'$) if the inclusions $M\subseteq M+M'$ and $M'\subseteq M+M'$ induce isomorphisms $M^\vee[1/X]\cong (M+M')^\vee[1/X]\cong {M'}^\vee[1/X]$. This is equivalent to the condition that $M$ equals $M'$ upto finitely generated $o$-modules. In particular, this is an equivalence relation on the set $\mathcal{M}(\pi^{H_0})$. Similarly, we say that $M_k,M'_k\in\mathcal{M}_k(\pi^{H_k})$ are equivalent if the inclusions $M_k\subseteq M_k+M'_k$ and $M_k'\subseteq M_k+M'_k$ induce isomorphisms $M_k^\vee[1/X]\cong (M_k+M'_k)^\vee[1/X]\cong {M'_k}^\vee[1/X]$.
\begin{pro}
The maps
\begin{eqnarray*}
M&\mapsto& N_0\Tr_{H_k/s^kH_0s^{-k}}(s^kM)\\
\Tr_{H_0/H_k}(M_k) & \mapsfrom & M_k
\end{eqnarray*}
induce a bijection between the sets $\mathcal{M}(\pi^{H_0})/\sim$ and $\mathcal{M}_k(\pi^{H_k})/\sim$. In particaular, we have 
\begin{equation*}
D^\vee_{\xi,\ell,\infty}(\pi)=\varprojlim_{k\geq 0}\varprojlim_{M_k\in\mathcal{M}_k(\pi^{H_k})}M_k^\vee[1/X]\ .
\end{equation*}
\end{pro}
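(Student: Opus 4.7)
The plan is to establish the bijection by showing both compositions of the two maps are the identity modulo $\sim$, then to deduce the formula for $D^\vee_{\xi,\ell,\infty}(\pi)$ by commuting projective limits.

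First I verify that the second map lands in $\mathcal{M}(\pi^{H_0})$. Given $M_k\in\mathcal{M}_k(\pi^{H_k})$, the image $\Tr_{H_0/H_k}(M_k)$ is contained in $\pi^{H_0}\cap M_k$ (each $h\in J(H_0/H_k)$ lies in $H_0\leq N_0$ and $M_k$ is $N_0$-stable). Since $H_0$ and $H_k$ are both normal in $N_0$, the map $\Tr_{H_0/H_k}$ is $N_0$-equivariant, so $\Tr_{H_0/H_k}(M_k)$ is $N_0$-stable; it is $F$-stable by Lemma \ref{trjk} (applied with $j=0$) and $\Gamma$-stable because $\Gamma$ normalizes $H_0$ and $H_k$ and commutes with $s$. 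Finite generation over $\Lambda(N_0/H_0)/\varpi^h[F]$ follows by taking traces of a generating set of $M_k$. Finally, admissibility is immediate from $\Tr_{H_0/H_k}(M_k)\subseteq M_k$: for every open $N'\leq N_0$ the fixed points $\Tr_{H_0/H_k}(M_k)^{N'}$ sit inside the finitely generated $o$-module $M_k^{N'}$. That the first map lands in $\mathcal{M}_k(\pi^{H_k})$ is Lemma \ref{M_k}.

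Next I compute both compositions. Starting from $M\in\mathcal{M}(\pi^{H_0})$, the $N_0$-equivariance of the trace together with the composition rule $\Tr_{H_0/H_k}\circ\Tr_{H_k/s^kH_0s^{-k}}=\Tr_{H_0/s^kH_0s^{-k}}$ gives
\[
\Tr_{H_0/H_k}(M_k)=N_0\,\Tr_{H_0/s^kH_0s^{-k}}(s^kM)=N_0\,F^k(M),
\]
and $M/N_0F^k(M)$ is finitely generated over $o$ (invoked in the proof of Lemma \ref{tr} from Lemma 2.6 of \cite{B}), so $M\sim N_0F^k(M)$. Starting from $M_k\in\mathcal{M}_k(\pi^{H_k})$ and setting $M:=\Tr_{H_0/H_k}(M_k)$, the identity $s^k\Tr_{H_0/H_k}=\Tr_{s^kH_0s^{-k}/s^kH_ks^{-k}}\circ s^k$ combined with the same composition rule yields
\[
\Tr_{H_k/s^kH_0s^{-k}}(s^kM)=\Tr_{H_k/s^kH_ks^{-k}}(s^kM_k)=F_k^k(M_k),
\]
so the composite returns $N_0F_k^k(M_k)$, and the finite generation of $M_k/N_0F_k^k(M_k)$ over $o$ follows by rerunning Breuil's Lemma 2.6 in the ``level-$k$'' setup (with $H_k$ in place of $H_0$ and $F_k$ in place of $F$, using the same formal properties of $\mathcal{M}_k(\pi^{H_k})$).

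Well-definedness on $\sim$-classes is then routine: both maps are additive ($(M+M')_k=M_k+M'_k$ and $\Tr_{H_0/H_k}(M_k+M'_k)=\Tr_{H_0/H_k}(M_k)+\Tr_{H_0/H_k}(M'_k)$), preserve inclusions, and the cokernel of $M_k\hookrightarrow M_k+M'_k$ is a quotient of $(M+M')/M$ (via $\Tr_{H_k/s^kH_0s^{-k}}\circ s^k$), hence finitely generated over $o$; the other direction is parallel. For the ``in particular'' statement, since projective limits commute,
\[
D^\vee_{\xi,\ell,\infty}(\pi)=\varprojlim_{M\in\mathcal{M}(\pi^{H_0})}\varprojlim_{k} M_k^\vee[1/X]=\varprojlim_{k}\varprojlim_{M\in\mathcal{M}(\pi^{H_0})} M_k^\vee[1/X],
\]
and the bijection just established (together with the defining property of $\sim$ that equivalent modules give canonically isomorphic $M_k^\vee[1/X]$) identifies the inner limit for each fixed $k$ with $\varprojlim_{M_k\in\mathcal{M}_k(\pi^{H_k})}M_k^\vee[1/X]$. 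The main subtlety is the finite-generation claim for $M_k/N_0F_k^k(M_k)$; everything else is formal from Lemmas \ref{M_k}, \ref{tr}, \ref{trjk} and the definition of $\sim$.
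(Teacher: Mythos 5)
Your proof is correct and follows essentially the same route as the paper's: both compute the two composites as $N_0F^k(M)$ and $N_0F_k^k(M_k)$ respectively, and both reduce the equivalences to the finite cokernel of the relevant $1\otimes F$ maps (the paper cites this directly from the argument in Proposition \ref{M_ketale}, which is the same content as your ``level-$k$ Lemma 2.6''). Your additional checks that $\Tr_{H_0/H_k}(M_k)$ lands in $\mathcal{M}(\pi^{H_0})$ and that the maps respect $\sim$ are sound and fill in details the paper leaves implicit.
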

\begin{proof}
We have $\Tr_{H_0/H_k}(N_0\Tr_{H_k/s^kH_0s^{-k}}(s^kM))=N_0\Tr_{H_0/s^kH_0s^{-k}}(s^kM)=N_0F^k(M)$ which is equivalent to $M$. Conversely, $$N_0\Tr_{H_k/s^kH_0s^{-k}}(s^k\Tr_{H_0/H_k}(M_k))=N_0\Tr_{H_k/s^k{H_k}s^{-k}}(s^kM_k)=N_0F_k^k(M_k)$$ is equivalent to $M_k$ as it is the image of the map $$1\otimes F_k^k\colon \Lambda(N_0/H_k)/\varpi^h\otimes_{\varphi^k,\Lambda(N_0/H_k)/\varpi^h}\to M_k$$ having finite cokernel.
\qed\end{proof}

We equip the pseudocompact $\Lambda_\ell(N_0)$-module $D^\vee_{\xi,\ell,\infty}(\pi)$ with the weak topology, ie.\ with the projective limit  topology of the weak topologies of $M^\vee_{\infty}[1/X]$. (The weak topology on $\Lambda_\ell(N_0)$ is defined in section 8 of \cite{SVig}.)  Recall that the sets
\begin{equation}\label{OMll'}
O(M,l,l'):=f_{M,l}^{-1}(\Lambda(N_0/H_l)\otimes_{u_\alpha}X^{l'}M^\vee[1/X]^{++})
\end{equation}
for $l,l'\geq 0$ and $M\in \mathcal{M}(\pi^{H_0})$ form a system of neighbourhoods of $0$ in the weak topology of $D^\vee_{\xi,\ell,\infty}(\pi)$. Here $f_{M,l}$ is the natural projection map $f_{M,l}\colon D^\vee_{\xi,\ell,\infty}(\pi)\twoheadrightarrow M_l^\vee[1/X]$ and $M^\vee[1/X]^{++}$ denotes the set of elements $d\in M^\vee[1/X]$ with $\varphi^n(d)\to 0$ in the weak topology of $M^\vee[1/X]$ as $n\to \infty$.

\subsection{A natural transformation from $D_{SV}$ to $D^\vee_{\xi,\ell,\infty}$}\label{transf}

In order to avoid confusion we denote by $D_{SV}(\pi)$ the $\Lambda(N_0)$-module with an action of $B_+^{-1}$ associated to the smooth $o$-torsion representation $\pi$ defined as $D(\pi)$ in \cite{SVig} (note that in \cite{SVig} the notation $V$ is used for the $o$-torsion representation that we denote by $\pi$). For a brief review of this functor see section \ref{schvig}.

\begin{lem}\label{minw}
Let $W$ be in $\mathcal{B}_+(\pi)$ and $M\in\mathcal{M}(\pi^{H_0})$. There exists a positive integer $k_0>0$ such that for all $k\geq k_0$ we have $s^kM\subseteq W$. In particular, both $M_k=N_0\Tr_{H_k/s^kH_0s^{-k}}(s^kM)$ and $N_0F^k(M)$ are contained in $W$ for all $k\geq k_0$.
\end{lem}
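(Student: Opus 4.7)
The plan is to reduce the claim to the finitely many generators of $M$ and then propagate via the $N_0$- and $T_+$-stability of $W$. Since $M\in\mathcal{M}(\pi^{H_0})$ is by definition finitely generated over the skew polynomial ring $\Lambda(N_0/H_0)/\varpi^h[F]$, I fix a set of generators $m_1,\dots,m_r$. Using that $W$ is a generating $B_+$-subrepresentation, i.e.\ $\pi=\bigcup_{t\in T_+}t^{-1}W$, I pick $t_j\in T_+$ with $t_jm_j\in W$ for each $j$. Because the cocharacter $\xi$ satisfies $\alpha\circ\xi=\id_{\Qp^\times}$ for every simple root $\alpha\in\Delta$, we have $\val_p(\alpha(s))=1$ and hence $\val_p(\alpha(s^kt_j^{-1}))=k-\val_p(\alpha(t_j))$. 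Taking
\begin{equation*}
k_0:=\max_{1\leq j\leq r,\;\alpha\in\Delta}\val_p(\alpha(t_j)),
\end{equation*}
the element $s^kt_j^{-1}$ lies in $T_+$ for every $k\geq k_0$ and every $j$, so the $T_+$-stability of $W$ yields $s^km_j=(s^kt_j^{-1})(t_jm_j)\in W$.

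To pass from the generators to all of $M$, I would use the iterated identity $F^im_j=\Tr_{H_0/s^iH_0s^{-i}}(s^im_j)$ (an easy induction from the definition of $F$), so that an arbitrary element of $M$ is a finite sum of terms $\lambda F^im_j$ with $\lambda\in\Lambda(N_0/H_0)/\varpi^h$ acting through the $N_0$-action on $\pi$. A direct computation then gives
\begin{equation*}
s^k(\lambda F^im_j)=(s^k\lambda s^{-k})\sum_{h\in J(H_0/s^iH_0s^{-i})}(s^khs^{-k})\cdot s^i(s^km_j).
\end{equation*}
Since $s\in T_+$ satisfies $sN_0s^{-1}\subseteq N_0$ and $sH_0s^{-1}\subseteq H_0$, both the measure $s^k\lambda s^{-k}$ and each element $s^khs^{-k}$ act through $N_0$, while $s^i(s^km_j)\in s^iW\subseteq W$ for $k\geq k_0$ by $T_+$-stability. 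Hence $s^kM\subseteq W$ for every $k\geq k_0$.

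The two ``in particular'' statements are now immediate: $\Tr_{H_k/s^kH_0s^{-k}}(s^kM)$ and $F^k(M)=\Tr_{H_0/s^kH_0s^{-k}}(s^kM)$ are finite sums of $N_0$-translates of elements of $s^kM\subseteq W$, and multiplying by $N_0$ keeps us inside $W$ by $N_0$-stability, yielding $M_k\subseteq W$ and $N_0F^k(M)\subseteq W$. The only slightly delicate point is the conjugation bookkeeping in the displayed formula; beyond the strict dominance $\alpha(s)=p$ for $\alpha\in\Delta$ and the containment $sH_0s^{-1}\subseteq H_0$, no further input is required.
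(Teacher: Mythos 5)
Your proof is correct and follows essentially the same route as the paper: reduce to the finitely many generators $m_1,\dots,m_r$ of $M$ over $\Lambda(N_0/H_0)/\varpi^h[F]$, use that $W$ generates $\pi$ together with the cofinality of the powers of $s$ in $T_+$ to get $s^km_j\in W$, and then propagate to all of $M$ via the $B_+$-stability of $W$. The paper simply cites Lemma 2.1 of \cite{SVig} and leaves the conjugation bookkeeping implicit, whereas you spell out both steps explicitly (and correctly, since checking $\val_p(\alpha(s^kt_j^{-1}))\geq 0$ on simple roots suffices for all of $\Phi_+$).
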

\begin{proof}
By the assumption that $M$ is finitely generated over $\Lambda(N_0/H_0)/\varpi^h[F]$ and $W$ is a $B_+$-subrepresentation it suffices to find an integer $s^{k_0}$ such that we have $s^{k_0}m_i$ lies in $W$ for all the generators $m_1,\dots,m_r$ of $M$. This, however, follows from Lemma 2.1 in \cite{SVig} noting that the powers of $s$ are cofinal in $T_+$.
\qed\end{proof}
In particular, we have a homomorphism $W^\vee\twoheadrightarrow M_k^\vee$ of $\Lambda(N_0)$-modules induced by this inclusion. We compose this with the localisation map $M_k^\vee\to M_k^\vee[1/X]$ and take projective limits with respect to $k$ in order to obtain a $\Lambda(N_0)$-homomorphism
\begin{equation*}
\pr_{W,M}\colon W^\vee\to M_\infty^\vee[1/X]\ .
\end{equation*}

\begin{lem}\label{prpsigamma}
The map $\pr_{W,M}$ is $\psi_s$- and $\Gamma$-equivariant.
\end{lem}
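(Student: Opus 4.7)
My plan is to pass to finite levels: by construction $\pr_{W,M}=\varprojlim_k \pr_{W,M,k}$, where $\pr_{W,M,k}\colon W^\vee\twoheadrightarrow M_k^\vee\to M_k^\vee[1/X]$ is restriction (valid for $k$ large enough by Lemma~\ref{minw}) followed by localisation. Since equivariance passes to projective limits, it suffices to verify that each $\pr_{W,M,k}$ is $\psi_s$- and $\Gamma$-equivariant. The $\Gamma$-equivariance is immediate: $\Gamma$ commutes with $s$ and normalises both $H_k$ and $s^kH_0s^{-k}$, hence stabilises the generating set $\{\Tr_{H_k/s^kH_0s^{-k}}(s^km):m\in M\}$ of $M_k$, so the inclusion $M_k\hookrightarrow W$ is $\Gamma$-equivariant and so is $\pr_{W,M,k}$.

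For the $\psi_s$-equivariance I would first use cofinality to assume $H_k\le sN_0s^{-1}$, so that $H_{k,-}=s^{-1}H_ks$ and $h\mapsto shs^{-1}$ yields a bijection $J(H_{k,-}/H_k)\leftrightarrow J(H_k/sH_ks^{-1})$. The Schneider--Vigneras $\psi_s$ on $W^\vee$ is $(\psi_s f)(w)=f(sw)$, while on $M_k^\vee[1/X]$ the $\psi_s$ is characterised as the canonical left inverse of $\varphi$ from Proposition~\ref{M_ketale}. Unravelling Lemma~\ref{dualtensor} at the element $1\otimes f$ produces the identity $F_k^\vee\circ\varphi=\Tr_{H_{k,-}/H_k}$ on $M_k^\vee[1/X]$. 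Since $\Tr_{H_{k,-}/H_k}$ is bijective by Lemma~\ref{induced} and $F_k^\vee$ is injective after inverting $X$, it suffices to compare both sides after applying $F_k^\vee$, reducing the equivariance to
\[
\Tr_{H_{k,-}/H_k}\bigl(\pr_{W,M,k}(\psi_s f)\bigr)=F_k^\vee\bigl(\pr_{W,M,k}(f)\bigr)\quad\text{in }M_k^\vee[1/X].
\]

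This identity is then a direct coset manipulation: for $f\in W^\vee$ and $m\in M_k\subseteq\pi^{H_k}$,
\[
\Tr_{H_{k,-}/H_k}\bigl(m'\mapsto f(sm')\bigr)(m)=\sum_{h\in J(H_{k,-}/H_k)}f(sh^{-1}m)=\sum_{h'\in J(H_k/sH_ks^{-1})}f(h'sm)=f(F_km),
\]
where the reindexing uses the bijection above and the fact that $H_k$ fixes $m$. The main obstacle is the proper identification of the Breuil-style $\psi_s$ on $M_k^\vee[1/X]$ via the trace isomorphism of Lemma~\ref{dualtensor}; once this is in hand the equivariance follows by the above elementary calculation.
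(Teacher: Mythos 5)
Your $\Gamma$-equivariance argument and your final coset manipulation are fine and essentially reproduce the paper's computation. The gap is in the reduction step. You claim that $F_k^\vee$ (precomposition with the Hecke operator $F_k\colon M_k\to M_k$) is injective after inverting $X$, and that $\Tr_{H_{k,-}/H_k}$ is bijective on $M_k^\vee[1/X]$ by Lemma \ref{induced}. Neither holds for $k\geq 1$ when $H_0/H_k$ is nontrivial. The map that is injective after inverting $X$ is $(1\otimes F_k)^\vee[1/X]$, whose source is the dual of the \emph{full} tensor product $\Lambda(N_0/H_k)/\varpi^h\otimes_{\varphi}M_k$; by contrast $F_k^\vee$ kills every functional vanishing on $F_k(M_k)$, and $F_k(M_k)$ is only the single ``$u_0=1$'' component of $M_k^\ast=\sum_u uF_k(M_k)$, of infinite index in $M_k$. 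Likewise, Lemma \ref{induced} gives bijectivity of the trace from \emph{coinvariants} to \emph{invariants}; as a self-map of $M_k^\vee[1/X]$ the trace has kernel $(H_{k,-}-1)M_k^\vee[1/X]\neq 0$. Consequently your displayed identity $\Tr_{H_{k,-}/H_k}(\pr_{W,M,k}(\psi_s f))=F_k^\vee(\pr_{W,M,k}(f))$ --- which is correct, and is exactly the restriction of the desired statement to the component $F_k(M_k)$ --- determines $\pr_{W,M,k}(\psi_s f)$ only modulo $(H_{k,-}-1)M_k^\vee[1/X]$, so the equivariance does not follow from it at level $k$. (Note also that the ``canonical left inverse of $\varphi$'' is not available at finite level, since $\varphi$ on $M_k^\vee[1/X]$ is itself not injective for $k\geq 1$: it factors through $\Tr_{H_{k,-}/H_k}$.)

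The paper's proof avoids this by establishing the \emph{full} decomposition $\pr_{W,M}(f)=\sum_{u\in J(N_0/sN_0s^{-1})}u\varphi(\pr_{W,M}(\psi_s(u^{-1}f)))$ --- i.e.\ your identity simultaneously for all translates $u^{-1}f$, assembled over all cosets --- and then invoking the \emph{uniqueness} of the étale decomposition $d=\sum_u u\varphi(\psi(u^{-1}d))$ in the projective limit $M_\infty^\vee[1/X]$, where $\varphi$ genuinely is injective and $\psi$ is its canonical left inverse. To repair your argument you would either have to do the same, or show that the ambiguity $\bigcap_k\Ker(\Tr_{H_{k,-}/H_k})$ vanishes in the limit, which is an extra step you have not supplied. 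Finally, the identification of the abstract $\varphi$ of Proposition \ref{M_ketale} with the explicit formula \eqref{varphik} only holds on functionals vanishing on the finite submodule $M_{k,\ast}$ (the general case is handled by multiplying by $\varphi(X^r)$); you flag this as ``the main obstacle'' but it does need to be carried out, as in the paper.
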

\begin{proof}
The $\Gamma$-equivariance is clear as it is given by the multiplication by elements of $\Gamma$ on both sides. For the $\psi_s$-equivariance let $k>0$ be large enough so that $H_k$ is contained in $sH_0s^{-1}\leq sN_0s^{-1}$ (ie.\ $H_{k,-}=s^{-1}H_ks$) and $M_k$ is contained in $W$. Let $f$ be in $W^\vee=\Hom_o(W,o/\varpi^h)$ such that $f_{\mid N_0sM_{k,\ast}}=0$. By definition we have $\psi_s(f)(w)=f(sw)$ for any $w\in W$. Denote the restriction of $f$ to $M_k$ by $f_{\mid M_k}$ and choose an element $m\in M_k^\ast\leq M_k$ written in the form $$m=\sum_{u\in J(N_0/sN_0s^{-1})}uF_k(m_u)=\sum_{u\in J(N_0/sN_0s^{-1})}us\Tr_{H_{k,-}/H_k}(m_u)\ .$$ Then we compute
\begin{align*}
f_{\mid M_k}(m)=\sum_{u\in J(N_0/sN_0s^{-1})}f(us\Tr_{H_{k,-}/H_k}(m_u))=\\
=\sum_{u\in J(N_0/sN_0s^{-1})}(u^{-1}f)(s\Tr_{H_{k,-}/H_k}(m_u))=\\
=\sum_{u\in J(N_0/sN_0s^{-1})}\psi_s(u^{-1}f)(\Tr_{H_{k,-}/H_k}(m_u))=\\
\overset{\eqref{varphik}}{=}\sum_{u\in J(N_0/sN_0s^{-1})}\varphi(\psi_s(u^{-1}f)_{\mid M_k})(F_k(m_u))=\\
=\sum_{u\in J(N_0/sN_0s^{-1})}u\varphi(\psi_s(u^{-1}f)_{\mid M_k})(uF_k(m_u))=\\
=\sum_{u\in J(N_0/sN_0s^{-1})}u\varphi(\psi_s(u^{-1}f)_{\mid M_k})(m)
\end{align*}
as for distinct $u,v\in J(N_0/sN_0s^{-1})$ we have $u\varphi(f_0)(vF_k(m_v))=0$ for any $f_0\in (M_k^\ast)^\vee$. So by inverting $X$ and taking projective limits with respect to $k$ we obtain
\begin{equation*}
\pr_{W,M}(f)=\sum_{u\in J(N_0/sN_0s^{-1})}u\varphi(\pr_{W,M}(\psi_s(u^{-1}f)))
\end{equation*}
as we have $(M_k^\ast)^\vee[1/X]\cong M_k^\vee[1/X]$. However, since $M_\infty^\vee[1/X]$ is an \'etale $(\varphi,\Gamma)$-module over $\Lambda_\ell(N_0)/\varpi^h$ we have a unique decomposition of $\pr_{W,M}(f)$ as
\begin{equation*}
\pr_{W,M}(f)=\sum_{u\in J(N_0/sN_0s^{-1})}u\varphi(\psi(u^{-1}\pr_{W,M}(f)))
\end{equation*}
so we must have $\psi(\pr_{W,M}(f))=\pr_{W,M}(\psi_s(f))$. For general $f\in W^\vee$ note that $N_0sM_{k,*}$ is killed by $\varphi(X^r)$ for $r\geq 0$ big enough, so we have $X^r\psi(\pr_{W,M}(f))=\psi(\pr_{W,M}(\varphi(X^r)f))=\pr_{W,M}(\psi_s(\varphi(X^r)f))=X^r\pr_{W,M}(\psi_s(f))$. The statement follows since $X^r$ is invertible in $\Lambda_\ell(N_0)$.
\qed\end{proof}

By taking the projective limit with respect to $M\in\mathcal{M}(\pi^{H_0})$ and the injective limit with respect to $W\in\mathcal{B}_+(\pi)$ we obtain a $\psi_s$- and $\Gamma$-equivariant $\Lambda(N_0)$-homomorphism 
\begin{equation*}
\pr:=\varinjlim_{W}\varprojlim_{M}\pr_{W,M}\colon D_{SV}(\pi)\to D^\vee_{\xi,\ell,\infty}(\pi)\ .
\end{equation*}

\begin{rems}
\begin{enumerate}
\item Taking Pontryagin dual of the inclusion $M_k\leq \pi$ for all $M\in \mathcal{M}(\pi^{H_k})$ and $k\geq 0$ we obtain a composite map $\pi^\vee\twoheadrightarrow M_k^\vee\to M_k^\vee[1/X]$. These are compatible with the projective limit construction therefore induce natural maps $\pi^\vee\to D^\vee_{\xi}(\pi)$ and $\pi^\vee\to D^\vee_{\xi,\ell,\infty}(\pi)$. Both of these maps factor through the map $\pi^\vee\twoheadrightarrow D_{SV}(\pi)$ by Lemma \ref{minw}.
\item The natural topology on $D_{SV}$ obtained as the quotient topology from the compact topology on $\pi^\vee$ via the surjective map $\pi^\vee\twoheadrightarrow D_{SV}(\pi)$ is compact, but may not be Hausdorff in general. However, if $\mathcal{B}_+(\pi)$ contains a minimal element (as in the case of the principal series \cite{Er}) then it is also Hausdorff. However, the map $\pr$ factors through the maximal Hausdorff quotient of $D_{SV}(\pi)$, namely $\overline{D}_{SV}(\pi):=(\bigcap_{W\in \mathcal{B}_+(\pi)} W)^\vee$. Indeed, $\pr$ is continuous and $D^\vee_{\xi,\ell,\infty}(\pi)$ is Hausdorff, so the kernel of $\pr$ is closed in $D_{SV}(\pi)$ (and contains $0$).
\item Assume that $h=1$, ie.\ $\pi$ is a smooth representation in characteristic $p$. Then $D^\vee_{\xi,\ell,\infty}(\pi)$ has no nonzero $\Lambda(N_0)/\varpi$-torsion. Hence the $\Lambda(N_0)/\varpi$-torsion part of $D_{SV}(\pi)$ is contained in the kernel of $\pr$.
\item If $D_{SV}(\pi)$ has finite rank and its torsion free part is \'etale over $\Lambda(N_0)$ then $\Lambda_\ell(N_0)\otimes_{\Lambda(N_0)}D_{SV}(\pi)$ is also \'etale and of finite rank $r$ over $\Lambda_\ell(N_0)$. Moreover, the map $\Lambda_\ell(N_0)\otimes_{\Lambda(N_0)}\pr: \Lambda_\ell(N_0)\otimes_{\Lambda(N_0)}D_{SV}(\pi)\to D_{\xi,\ell,\infty}(\pi)$ has dense image by Lemma \ref{minw}. Thus $D_{\xi,\ell,\infty}^\vee(\pi)$ has rank at most $r$ over $\Lambda_\ell(N_0)$. In particular, for $\pi$ being the principal series $D_{SV}(\pi)$ has rank $1$ and its torsion free part is \'etale over $\Lambda(N_0)$ (\cite{Er}), hence we obtained that $D^\vee_{\xi,\ell,\infty}(\pi)$ has rank $1$ over $\Lambda_\ell(N_0)$ (cf.\ Example 7.6 of \cite{B}).
\end{enumerate}
\end{rems}

One can show the above Remark 2 algebraically, too. Let $M\in \mathcal{M}(\pi^{H_0})$ be arbitrary. Then the map $1\otimes\id_{M^\vee}\colon M^\vee\to M^\vee[1/X]$ has finite kernel, so the image $(1\otimes \id_{M^\vee})(M^\vee)$ is isomorphic to $M_0^\vee$ for some finite index submodule $M_0\leq M$. Moreover, $M_0^\vee$ is a $\psi$- and $\Gamma$-invariant treillis in $D:=M^\vee[1/X]=M_0^\vee[1/X]$. Therefore the map $(1\otimes F)^\vee$ is injective on $M_0^\vee$ since it is injective after inverting $X$ and $M_0^\vee$ has no $X$-torsion. This means that $1\otimes F\colon o/\varpi^h\bs X\js\otimes_{o/\varpi^h\bs X\js,\varphi}M_0\to M_0$ is surjective, ie.\ we have $M_0=N_0F^k(M_0)$ for all $k\geq 0$. However, for any $W\in\mathcal{B}_+(\pi)$ and $k$ large enough (depending a priori on $W$) we have $N_0F^k(M_0)\subseteq W$, so we deduce $M_0\subset \cap_{W\in \mathcal{B}_+} W$.

\begin{cor}\label{compactlyinduced}
If $\pi=\Ind_{B_0}^B\pi_0$ is a compactly induced representation of $B$ for some smooth $o/\varpi^h$-representation $\pi_0$ of $B_0$ then we have $D^\vee_\xi(\pi)=0$. In particular, $D^\vee_\xi$ is not exact on the category of smooth $o/\varpi^h$-representations of $B$. (However, it may still be exact on a smaller subcategory with additional finiteness conditions.)
\end{cor}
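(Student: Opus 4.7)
My plan combines the algebraic reformulation of Remark~2 given just before this corollary with an explicit family of generating $B_+$-subrepresentations of a compactly induced representation whose intersection is zero.

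By the algebraic paragraph, every $M\in\mathcal{M}(\pi^{H_0})$ admits a finite-index submodule $M_0\leq M$ with $M_0=N_0F^k(M_0)$ for all $k\geq 0$; Lemma~\ref{minw} then gives $M_0\subseteq W$ for every $W\in\mathcal{B}_+(\pi)$, so $M_0\subseteq\bigcap_{W\in\mathcal{B}_+(\pi)}W$. It therefore suffices to prove this intersection is zero. Granting this, $M_0=0$, so $M$ is finite as an $o$-module; the smooth $N_0/H_0\cong\Zp$-action then factors through a finite $p$-group, hence $(1+X)^{p^N}=1$ on $M$ (and on $M^\vee$) for some $N$, and combined with $\varpi^h=0$ this forces $X$ to act nilpotently on $M^\vee$. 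Therefore $M^\vee[1/X]=0$, and $D^\vee_\xi(\pi)=\varprojlim_M M^\vee[1/X]=0$.

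To prove $\bigcap_W W=0$ for $\pi=\Ind_{B_0}^B\pi_0$, I would introduce the family $W_{t_0}:=B_+\cdot(t_0\pi_0)\subseteq\pi$ for $t_0\in T_+$, where $\pi_0$ is viewed inside $\pi$ as the functions supported on $B_0$. Each $W_{t_0}$ is $B_+$-stable by construction, and $B\cdot W_{t_0}\supseteq B\cdot t_0\pi_0=B\pi_0=\pi$, so $W_{t_0}\in\mathcal{B}_+(\pi)$. Using the decomposition $B=N\rtimes T$ together with the fact that any element of $T_+t_0\subseteq T_+$ conjugates $N_0$ into itself, any finite sum $\sum_i n_is_it_0f_i\in W_{t_0}$ (with $n_i\in N_0$, $s_i\in T_+$, $f_i\in\pi_0$) has support (modulo $B_0$) inside $N_0\cdot T_+t_0T_0\subseteq B$, which consists of those $g=nt\in B$ with $n\in N_0$ and $\val_p(\alpha(t))\geq\val_p(\alpha(t_0))$ for every $\alpha\in\Phi_+$.

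The decisive step is the valuation estimate: no single $t\in T$ can satisfy $\val_p(\alpha(t))\geq\val_p(\alpha(t_0))$ for all $t_0\in T_+$ and all $\alpha\in\Phi_+$, because $\xi(p^k)\in T_+$ and, writing $\alpha=\sum_i c_i\alpha_i$ with $c_i\in\mathbb{Z}_{\geq 0}$ and $\sum_i c_i\geq 1$, one has $\val_p(\alpha(\xi(p^k)))=k\sum_i c_i\to\infty$. Therefore $\bigcap_{t_0\in T_+}N_0T_+t_0T_0=\emptyset$, and since elements of $\pi$ have finite support modulo $B_0$ we conclude $\bigcap_{t_0\in T_+}W_{t_0}=0$, whence $\bigcap_{W\in\mathcal{B}_+(\pi)}W=0$. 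The main obstacle is precisely this valuation estimate; it crucially uses the unbounded dominant sequence $\{\xi(p^k)\}\subseteq T_+$, which is available thanks to the connectedness assumption on the centre of $G$. For the ``in particular'' clause, note that Breuil's right exactness of $D^\vee_\xi$ on a suitable subcategory together with the vanishing on compactly induced representations would force $D^\vee_\xi$ to vanish on every smooth quotient of a compactly induced representation; but examples such as the principal series (cf.\ Example~7.6 of \cite{B}) provide nonzero $D^\vee_\xi$, so right exactness (and hence exactness) must fail on the full category of smooth $o/\varpi^h$-representations of $B$.
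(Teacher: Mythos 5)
Your proof is correct and rests on the same two pillars as the paper's, but it fills in the second one by hand rather than by citation. Both arguments reduce the corollary to the single statement $\bigcap_{W\in\mathcal{B}_+(\pi)}W=0$: the paper gets there via the topological Remark~2 (the natural map $\pi^\vee\to D^\vee_\xi(\pi)$ factors through $\overline{D}_{SV}(\pi)=(\bigcap_W W)^\vee$, and this map being zero forces $D^\vee_\xi(\pi)=0$), while you use the algebraic paragraph preceding the corollary (every $M\in\mathcal{M}(\pi^{H_0})$ contains a finite-index $M_0$ with $M_0=N_0F^k(M_0)\subseteq\bigcap_W W$, so $M_0=0$, $M$ is finite over $o$, and $M^\vee[1/X]=0$); these two reductions are interchangeable and both appear in the text just before the corollary. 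For the vanishing of the intersection itself the paper simply quotes Lemma~3.2 of \cite{SVig} together with the explicit description of the subrepresentations $W_\sigma$ there, whereas you exhibit the cofinal subfamily $W_{t_0}=B_+t_0\pi_0$ (essentially the $W_\sigma$ attached to the order-preserving maps $\sigma$ that are ``indicators'' of $\{t\geq t_0\}$) and verify directly, via the support estimate $\mathrm{supp}(W_{t_0})\subseteq N_0T_+t_0B_0$ and the divergence $\val_p(\alpha(\xi(p^k)))=k\sum_ic_i\to\infty$, that $\bigcap_{t_0\in T_+}W_{t_0}=0$. This buys a self-contained argument at the cost of a short computation; the only imprecision is that you never fix a convention for the compact induction (left versus right cosets, hence where the support of $b\cdot f$ sits), but the argument is insensitive to that choice since $tN_0t^{-1}\subseteq N_0$ for $t\in T_+$ in either normalization. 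Your deduction of non-exactness from the vanishing (every smooth representation, in particular the principal series, is a quotient of a compactly induced one, contradicting Example~7.6 of \cite{B}) is identical to the paper's.
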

\begin{proof}
By the $2$nd remark above the map $\pi^\vee\to D^\vee_\xi(\pi)$ factors through the maximal Hausdorff quotient $\overline{D}_{SV}(\pi)$ of $D_{SV}(\pi)$. By Lemma 3.2 in \cite{SVig}, we have $\overline{D}_{SV}(\pi)=(\bigcap_{\sigma} W_\sigma)^\vee$ where the $B_+$-subrepresentations $W_\sigma$ are indexed by order-preserving maps $\sigma\colon T_+/T_0\to \Sub(\pi_0)$ where $\Sub(\pi_0)$ is the partially order set of $B_0$-subrepresentations of $\pi_0$. The explicit description of the $B_+$-subrepresentations $W_\sigma$ (there denoted by $M_\sigma$) before Lemma 3.2 in \cite{SVig} shows that we have in fact $\bigcap_\sigma W_\sigma=\{0\}$ whence the natural map $\pi^\vee\to D^\vee_\xi(\pi)$ is zero. However, by the construction of this map this can only be zero if $D^\vee_\xi(\pi)=0$.

Since the principal series arises as a quotient of a compactly induced representation, the exactness of $D^\vee_\xi$ would imply the vanishing of $D^\vee_\xi$ on the principal series, too---which is not the case by Ex.\ 7.6 in \cite{B}.
\qed\end{proof}

\begin{pro}\label{1otimestildeprinj}
Let $D$ be an \'etale $(\varphi,\Gamma)$-module over $\Lambda_\ell(N_0)/\varpi^h$, and $f:D_{SV}(\pi)\to D$ be a continuous $\psi_s$ and $\Gamma$-equivariant $\Lambda(N_0)$-homomorphism. Then $f$ factors uniquely through $\pr$, ie.\ there exists a unique $\psi$- and $\Gamma$-equi-variant $\Lambda(N_0)$-homomorphism $\hat{f}:D^\vee_{\xi,\ell,\infty}(\pi)\to D$ such that $f=\hat{f}\circ\pr$.
\end{pro}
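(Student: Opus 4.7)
The plan is to reduce to Breuil's universal property of $D^\vee_\xi(\pi)$ at the level of $H_0$-coinvariants and then invert the equivalence of categories of Theorem 8.20 in \cite{SVZ} to lift the resulting map. For existence, I would post-compose $f$ with the canonical projection $\mathrm{can}\colon D \twoheadrightarrow D_{H_0}$, where $D_{H_0}$ corresponds to $D$ under that equivalence and is an étale $(\varphi,\Gamma)$-module over $o/\varpi^h\bg X\jg$. Pre-composing further with the quotient $\pi^\vee \twoheadrightarrow D_{SV}(\pi)$ yields a continuous, $\Lambda(N_0)$-linear, $\psi_s$- and $\Gamma$-equivariant map $\pi^\vee \to D_{H_0}$. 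Since $H_0$ acts trivially on $D_{H_0}$, this descends to $(\pi^{H_0})^\vee$, and Breuil's universal property (recalled in the introduction) factors it uniquely as $(\pi^{H_0})^\vee \to D^\vee_\xi(\pi) \xrightarrow{g} D_{H_0}$. By Corollary \ref{MinftyM}, $D^\vee_{\xi,\ell,\infty}(\pi)_{H_0} \cong D^\vee_\xi(\pi)$, and the inverse of the Theorem 8.20 equivalence, applied compatibly along the projective system $\{M_\infty^\vee[1/X]\}_{M\in\mathcal{M}(\pi^{H_0})}$, lifts $g$ to a continuous $\Lambda_\ell(N_0)/\varpi^h$-linear $\varphi$- and $\Gamma$-equivariant map $\hat{f}\colon D^\vee_{\xi,\ell,\infty}(\pi) \to D$ satisfying $\mathrm{can}\circ \hat{f} = g \circ \mathrm{can}_{D^\vee_{\xi,\ell,\infty}(\pi)}$.

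To verify $f = \hat{f}\circ \pr$, I would pre-compose both sides with the surjection $\pi^\vee \twoheadrightarrow D_{SV}(\pi)$. The resulting maps $\pi^\vee \to D$ are continuous $\psi_s$- and $\Gamma$-equivariant $\Lambda(N_0)$-linear lifts of the same map $(\pi^{H_0})^\vee \to D_{H_0}$---using the Remarks after Lemma \ref{prpsigamma} that the natural map $\pi^\vee \to D^\vee_{\xi,\ell,\infty}(\pi)$ factors through $\pr$. Their difference is a continuous $\Lambda(N_0)$-linear $\psi_s$-equivariant map $\pi^\vee \to \ker(\mathrm{can})$; the $\Lambda_\ell(N_0)$-linearity of $\hat{f}$ together with $\psi_s$-equivariance forces this difference to vanish, since the $\Lambda_\ell(N_0)$-module structure on $D$ is determined by its $H_0$-coinvariants via the inverse equivalence. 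Passing this identity back through the surjection $\pi^\vee \twoheadrightarrow D_{SV}(\pi)$ gives $f = \hat{f}\circ \pr$ on all of $D_{SV}(\pi)$.

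Uniqueness of $\hat{f}$ follows because any two $\Lambda_\ell(N_0)$-linear factorizations of $f$ coincide on $\pr(D_{SV}(\pi))$; this image contains the image of the natural map $\pi^\vee \to D^\vee_{\xi,\ell,\infty}(\pi)$, which surjects onto $M^\vee \subseteq M_\infty^\vee[1/X]$ for every $M\in\mathcal{M}(\pi^{H_0})$ and hence generates $D^\vee_{\xi,\ell,\infty}(\pi)$ as a topological $\Lambda_\ell(N_0)$-module. The main obstacle is the verification step: tracking the $\psi_s$- and $\Gamma$-equivariance through the Theorem 8.20 equivalence so that the inverse equivalence assembles to a single map out of the full projective limit $D^\vee_{\xi,\ell,\infty}(\pi)$ rather than only at each finite stage, and then using $\psi_s$-equivariance to bridge between the $\Lambda(N_0)$-structure on $D_{SV}(\pi)$ and the $\Lambda_\ell(N_0)$-structure on $D^\vee_{\xi,\ell,\infty}(\pi)$ in order to upgrade coincidence after $H_0$-coinvariants to genuine equality of maps into $D$.
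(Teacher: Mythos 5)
Your overall architecture matches the paper's: pass to $H_0$-coinvariants, factor the resulting map through $D^\vee_\xi(\pi)$, lift back through the equivalence of Theorem 8.20 in \cite{SVZ} using Corollary \ref{MinftyM}, and then upgrade agreement after coinvariants to genuine agreement. Your uniqueness argument (density of the $\Lambda_\ell(N_0)$-span of $\pr(D_{SV}(\pi))$ plus continuity) is also the paper's. However, the two steps you treat as citations or heuristics are exactly where the paper has to do real work, and as written your proposal does not supply it.

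First, the factorization of $\pi^\vee\to D_{H_0}$ through $D^\vee_\xi(\pi)$ cannot simply be quoted: the universal property stated in the introduction is asserted there without proof, and the proof of this very proposition is where the paper establishes it. Concretely, one sets $f'\colon\pi^\vee\to D_{H_0}$ and must show that $M_0:=(\pi^\vee/\Ker(f'))^\vee\subseteq\pi^{H_0}$ lies in $\mathcal{M}(\pi^{H_0})$. Stability under $\Gamma$ and admissibility are easy; the nontrivial point is finite generation of $M_0$ over $o/\varpi^h\bs X\js[F]$, which the paper proves via an auxiliary lemma (any compact $\psi$- and $\Gamma$-stable $o/\varpi^h\bs X\js$-submodule $D_0$ of an \'etale $(\varphi,\Gamma)$-module has $D_0^\vee$ finitely generated over $o/\varpi^h\bs X\js[F]$), whose proof uses $D^\natural$ and the surjectivity of $P(\psi)$ on $D^\natural$ from Prop.\ II.5.15 of \cite{Mira}. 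Your proposal black-boxes this.

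Second, and more seriously, your verification of $f=\hat f\circ\pr$ is not an argument. The difference $\delta=f-\hat f\circ\pr$ is a continuous $\psi_s$-equivariant $\Lambda(N_0)$-homomorphism with image in $\Ker(D\to D_{H_0})$, i.e.\ in (the closure of) $(H_0-1)D$, and the assertion that ``the $\Lambda_\ell(N_0)$-structure on $D$ is determined by its $H_0$-coinvariants'' does not force $\delta=0$: there are many nonzero $o$-linear maps landing in $(H_0-1)D$. What actually kills $\delta$ is the \'etale decomposition of $D$: since $\delta$ is $\psi_s$-equivariant and $\Lambda(N_0)$-linear, $\delta(x)=\sum_{u\in J(N_0/s^kN_0s^{-k})}u\varphi^k\bigl(\delta(\psi_s^k(u^{-1}x))\bigr)$ lies in $(H_k-1)D$ for every $k\geq 0$, and $\bigcap_k(H_k-1)D=0$ by Lemmas 8.17 and 8.18 of \cite{SVZ}. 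You correctly identify this bridging step as the main obstacle, but identifying an obstacle is not the same as overcoming it; this is the key point of the whole proof and is missing from your proposal.
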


\begin{proof}
For the uniqueness of $\hat{f}$ note that Lemma \ref{minw} implies the density of the image of $\Lambda_\ell(N_0)\otimes D_{SV}(\pi)$ in $D^\vee_{\xi,\ell,\infty}(\pi)$ as its composite with the projection onto $M_k^\vee[1/X]$ is surjective for $k$ large enough and $M\in\mathcal{M}(\pi^{H_0})$ arbitrary. Therefore if $\hat{f}'$ is another lift then $\hat{f}-\hat{f}'$ vanishes on a dense subset whence it is zero by continuity.

At first we construct a homomorphism $\hat{f}_{H_0}:D^\vee_\xi=(D^\vee_{\xi,\ell,\infty})_{H_0}\to D_{H_0}$ such that the following diagram commutes:
\begin{equation*}
\xymatrix{
D_{SV}(\pi)\ar[rd]_f\ar[r]^{\pr} & D^\vee_{\xi,\ell,\infty}(\pi)\ar[r]^{(\cdot)_{H_0}} & D^\vee_\xi(\pi)\ar[d]^{\hat{f}_{H_0}}\\
 & D\ar[r]_{(\cdot)_{H_0}} & D_{H_0}
}
\end{equation*}

Consider the composite map $f':\pi^\vee\to D_{SV}(\pi)\stackrel{f}{\to}D\to D_{H_0}$. Note that $f'$ is continuous and $D_{H_0}$ is Hausdorff, so $\Ker(f')$ is closed in $\pi^\vee$. Therefore $M_0=(\pi^\vee/\Ker(f'))^\vee$ is naturally a subspace in $\pi$. We claim that $M_0$ lies in $\mathcal{M}(\pi^{H_0})$. Indeed, $M_0^\vee$ is a quotient of $\pi^\vee_{H_0}$, hence $M_0\leq\pi^{H_0}$ and it is $\Gamma$-invariant since $f'$ is $\Gamma$-equivariant. $M_0$ is admissible because it is discrete, hence $M_0^\vee$ is compact, equivalently finitely generated over $o/\varpi^h\bs X\js$, because $M_0^\vee$ can be identified with a $o/\varpi^h\bs X\js$-submodule of $D_{H_0}$ which is finitely generated over $o/\varpi^h\bg X\jg$. The last thing to verify is that $M$ is finitely generated over $o/\varpi^h\bs X\js[F]$, which follows from the following

\begin{lem}
Let $D$ be an \'etale $(\varphi,\Gamma)$-module over $o/\varpi^h\bg X\jg$ and $D_0\subset D$ be a $\psi$ and $\Gamma$-invariant compact (or, equivalently, finitely generated) $o/\varpi^h\bs X\js$ submodule. Then $D_0^\vee$ is finitely generated as a module over $o/\varpi^h\bs X\js[F]$ where for any $m\in D_0^\vee=\Hom_o(D_0,o/\varpi^h)$ we put $F(m)(f):=m(\psi(f))$ (for all $f\in D_0$).
\end{lem}
\begin{proof}
As the extension of finitely generated modules over a ring is again finitely generated, we may assume without loss of generality that $h=1$ and $D$ is irreducible, ie.\ $D$ has no nontrivial \'etale $(\varphi,\Gamma)$-submodule over $o/\varpi\bg X\jg$.

If $D_0=\{0\}$ then there is nothing to prove. Otherwise $D_0$ contains the smallest $\psi$ and $\Gamma$ stable $o\bs X\js $-submodule $D^\natural$ of $D$. So let $0\neq m\in D_0^\vee$ be arbitrary such that the restriction of $m$ to $D^\natural$ is nonzero and consider the $o/\varpi\bs X\js[F]$-submodule $M:=o/\varpi\bs X\js[F]m$ of $D_0^\vee$ generated by $m$. We claim that $M$ is not finitely generated over $o$. Suppose for contradiction that the elements $F^rm$ are not linearly independent over $o/\varpi$. Then we have a polynomial $P(x)=\sum_{i=0}^n a_ix^i\in o/\varpi[x]$ such that $0=P(F)m(f)=m(\sum a_i\psi^i(f))=m(P(\psi)f)$ for any $f\in D^\natural\subset D_0$.
However, $P(\psi)\colon D^\natural\to D^\natural$ is surjective by Prop.\ II.5.15.\ in \cite{Mira}, so we obtain $m_{\mid D^\natural}=0$ which is a contradiction. In particular, we obtain that $M^\vee[1/X]\neq 0$. However, note that $M^\vee[1/X]$ has the structure of an \'etale $(\varphi,\Gamma)$-module over $o/\varpi\bg X\jg$ by Lemma 2.6 in \cite{B}. Indeed, $M$ is admissible, $\Gamma$-invariant, and finitely generated over $o/\varpi\bs X\js [F]$ by construction. Moreover, we have a natural surjective homomorphism $D=D_0[1/X]=(D_0^\vee)^\vee[1/X]\to M^\vee[1/X]$ which is an isomorphism as $D$ is assumed to be irreducible. Therefore we have $(D_0^\vee/M)^\vee[1/X]=0$ showing that $D_0^\vee/M$ is finitely generated over $o$. In particular, both $M$ and $D_0^\vee/M$ are finitely generated over $o/\varpi\bs X\js [F]$ therefore so is $D_0^\vee$.
\qed\end{proof}

Now $D_0=M_0^\vee$ is a $\psi$- and $\Gamma$-invariant $o/\varpi^ h\bs X\js$-submodule of $D$ therefore we have an injection $f_0\colon M_0^ \vee[1/X]\hookrightarrow D$ of \'etale $(\varphi,\Gamma)$-modules. The map $\hat{f}_{H_0}\colon D^\vee_{\xi}\to D_{H_0}$ is the composite map $D^ \vee_{\xi}\twoheadrightarrow M_0^\vee[1/X]\hookrightarrow D$. It is well defined and makes the above diagram commutative, because the map $$\pi^\vee\to D_{SV}(\pi)\stackrel{\pr}{\to} D^\vee_{\xi,\ell,\infty}(\pi)\stackrel{(\cdot)_{H_0}}{\to}D^\vee_\xi(\pi)\to M_0^\vee[1/X]$$
is the same as $\pi^\vee\to M_0^\vee\to M_0^\vee[1/X]$.

Finally, by Corollary \ref{MinftyM} $M^\vee[1/X]$ (resp.\ $D_{H_0}$) corresponds to $M_\infty^\vee[1/X]$ (resp.\ to $D$) via the equivalence of categories in Theorem 8.20 in \cite{SVZ} therefore $f_0$ can uniquely be lifted to a $\varphi$- and $\Gamma$-equivariant $\Lambda_\ell(N_0)$-homomorphism $f_\infty\colon M_\infty^\vee[1/X]\hookrightarrow D$. The map $\hat{f}$ is defined as the composite $D^\vee_{\xi,\ell,\infty}\twoheadrightarrow M_\infty^\vee[1/X]\hookrightarrow D$. Now the image of $f-\hat{f}\circ\pr$ is a $\psi_s$-invariant $\Lambda(N_0)$-submodule in $(H_0-1)D$ therefore it is zero by Lemma 8.17 and the proof of Lemma 8.18 in \cite{SVZ}. Indeed, for any $x\in D_{SV}(\pi)$ and $k\geq 0$ we may write $(f-\hat{f}\circ\pr)(x)$ in the form $\sum_{u\in J(N_0/s^kN_0s^{-k})}u\varphi^k((f-\hat{f}\circ\pr)(\psi^k(u^{-1}x)))$ that lies in $(H_k-1)D$.
\qed\end{proof}

\subsection{\'Etale hull}

In this section we construct the \'etale hull of $D_{SV}(\pi)$: an \'etale $T_+$-module $\widetilde{D_{SV}}(\pi)$ over $\Lambda(N_0)$ with an injection $\iota:D_{SV}(\pi)\to\widetilde{D_{SV}}(\pi)$ with the following universal property: For any \'etale $(\varphi,\Gamma)$-module $D'$ over $\Lambda(N_0)$, and $\psi_s$ and $\Gamma$-equivariant map $f:D_{SV}(\pi)\to D'$, $f$ factors through $\widetilde{D_{SV}}(\pi)$, ie.\ there exists a unique $\psi$- and $\Gamma$-equivariant $\Lambda(N_0)$-homomorphism $\widetilde{f}:\widetilde{D_{SV}}(\pi)\to D'$ making the diagram
\begin{equation*}
\xymatrix{
D_{SV}(\pi)\ar[r]^{\iota}\ar[d]_f&\widetilde{D_{SV}}(\pi)\ar[dl]^{\widetilde{f}}\\
D'&
}
\end{equation*}
commutative. Moreover, if we assume further that $D'$ is an \'etale $T_+$-module over $\Lambda(N_0)$ and the map $f$ is $\psi_t$-equivariant for all $t\in T_+$ then the map $\widetilde{f}$ is $T_+$-equivariant.

\begin{df}
Let $D$ be a $\Lambda(N_0)$-module and $T_\ast\leq T_+$ be a submonoid. Assume moreover that the monoid $T_\ast$ (or in the case of $\psi$-actions the inverse monoid $T_\ast^{-1}$) acts $o$-linearly on $D$, as well.

We call the action of $T_\ast$ a $\varphi$-action (relative to the $\Lambda(N_0)$-action) and denote the action of $t$ by $d\mapsto\varphi_t(d)$, if for any $\lambda\in\Lambda(N_0)$, $t\in T_\ast$ and $d\in D$ we have $\varphi_t(\lambda d)=\varphi_t(\lambda)\varphi_t(d)$. Moreover, we say that the $\varphi$-action is \emph{injective} if for all $t\in T_\ast$ the map $\varphi_t$ is injective. The $\varphi$-action of $T_\ast$ is \emph{nondegenerate} if for all $t\in T_\ast$ we have
\begin{equation*}
D=\sum_{u\in J(N_0/tN_0t^{-1})}\mathrm{Im}(u\circ\varphi_t)=\sum_{u\in J(N_0/tN_0t^{-1})}u(\varphi_t(D))\ .
\end{equation*}

We call the action of $T_\ast^{-1}$ a $\psi$-action of $T_\ast$ (relative to the $\Lambda(N_0)$-action) and denote the action of $t^{-1}\in T_\ast^{-1}$ by $d\mapsto\psi_t(d)$, if for any $\lambda\in\Lambda(N_0)$, $t\in T_\ast$ and $d\in D$ we have $\psi_t(\varphi_t(\lambda) d)=\lambda\psi_t(d)$. Moreover, we say that the $\psi$-action of $T_\ast$ is \emph{surjective} if for all $t\in T_\ast$ the map $\psi_t$ is surjective. The $\psi$-action of $T_\ast$ is \emph{nondegenerate} if for all $t\in T_\ast$ we have
\begin{equation*}
\{0\}=\bigcap_{u\in J(N_0/tN_0t^{-1})}\mathrm{Ker}(\psi_t\circ u^{-1})\ .
\end{equation*}
The nondegeneracy is equivalent to the condition that for any $t\in T_\ast$ $\Ker(\psi_t)$ does not contain any nonzero $\Lambda(N_0)$-submodule of $D$.

We say that a $\varphi$- and a $\psi$-action of $T_\ast$ are \emph{compatible} on $D$, if
\begin{enumerate}[$(\varphi\psi)$]
\item for any $t\in T_\ast$, $\lambda\in\Lambda(N_0)$, and $d\in D$ we have $\psi_t(\lambda\varphi_t(d))=\psi_t(\lambda)d$.
\end{enumerate}
Note that with $\lambda=1$ we also have $\psi_t\circ\varphi_t=\mathrm{id}_D$ for any $t\in T_\ast$ assuming $(\varphi\psi)$. 

We also consider $\varphi$- and $\psi$-actions of the monoid $\Zp\setminus\{0\}$ on $\Lambda(N_0)$-modules via the embedding $\xi\colon \Zp\setminus\{0\}\to T_+$. Modules with a $\varphi$-action (resp.\ $\psi$-action) of $\Zp\setminus\{0\}$ are called $(\varphi,\Gamma)$-modules (resp.\ $(\psi,\Gamma)$-modules). 
\end{df}

For example, the natural $\varphi$- and $\psi$-actions of $T_+$ on $\Lambda(N_0)$ are compatible.

\begin{rems}
\begin{enumerate}
\item Note that the $\psi$-action of the monoid $T_\ast$ is in fact an action of the inverse monoid $T_\ast^{-1}$. However, we assume $T_+$ to be commutative so it may also be viewed as an action of $T_\ast$.
\item Pontryagin duality provides an equivalence of categories between compact $\Lambda(N_0)$-modules with a continuous $\psi$-action of $T_\ast$ and discrete $\Lambda(N_0)$-modules with a continuous $\varphi$-action of $T_\ast$. The surjectivity of the $\psi$-action corresponds to the injectivity of $\varphi$-action. Moreover, the $\psi$-action is nondegenerate if and only if so is the corresponding $\varphi$-action on the Pontryagin dual.
\end{enumerate}
\end{rems}

If $D$ is a $\Lambda(N_0)$-module with a $\varphi$-action of $T_\ast$ then there exists a homomorphism 
\begin{equation}
\Lambda(N_0)\otimes_{\Lambda(N_0),\varphi_t}D\to D, \lambda\otimes d\mapsto\lambda\varphi_t(d)\label{varphimap}
\end{equation}
of $\Lambda(N_0)$-modules. We say that the $T_\ast$-action on $D$ is \emph{\'etale} if the above map is an isomorphism. The $\varphi$-action of $T_\ast$ on $D$ is \'etale if and only if it is injective and for any $t\in T_\ast$ we have
\begin{equation}\label{etalevarphi_t}
D=\bigoplus_{u\in J(N_0/tN_0t^{-1})}u\varphi_t(D)\ .
\end{equation}
Similarly, we call a $\Lambda(N_0)$-module together with a $\varphi$-action of the monoid $\Zp\setminus\{0\}$ an \'etale $(\varphi,\Gamma)$-module over $\Lambda(N_0)$ if the action of $\varphi=\varphi_s$ is \'etale.

If $D$ is an \'etale $T_\ast$-module over $\Lambda(N_0)$ then there exists a $\psi$-action of $T_\ast$ compatible with the \'etale $\varphi$-action (see \cite{SVig} Section 6).

Dually, if $D$ is a $\Lambda(N_0)$-module with a $\psi$-action of $T_\ast$ then there exists a map
\begin{eqnarray*}
\iota_t\colon D&\to&\Lambda(N_0)\otimes_{\Lambda(N_0),\varphi_t}D\\
d&\mapsto&\sum_{u\in J(N_0/tN_0t^{-1})}u\otimes\psi_t(u^{-1}d)\ .
\end{eqnarray*}

\begin{lem}\label{phiuvorthogonal}
Fix $t\in T_\ast$. For any $\lambda\in\Lambda(N_0)$ and $u,v\in N_0$ we put $\lambda_{u,v}:=\psi_t(u^{-1}\lambda v)$. For any fixed $v\in N_0$ we have 
\begin{align*}
\lambda v=\sum_{u\in J(N_0/tN_0t^{-1})}u\varphi_t(\lambda_{u,v})
\end{align*}
and for any fixed $u\in N_0$ we have
\begin{align*}
u^{-1}\lambda=\sum_{v\in J(N_0/tN_0t^{-1})}\varphi_t(\lambda_{u,v})v^{-1}\ .
\end{align*}
\end{lem}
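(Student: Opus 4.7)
The plan is to deduce both identities from the observation that $\Lambda(N_0)=o\bs N_0\js$ is free as a module over the subring $\varphi_t(\Lambda(N_0))=o\bs tN_0t^{-1}\js$ in two complementary ways, coming from the left and right coset decompositions of $N_0$ modulo $tN_0t^{-1}$, and then to invert these decompositions by applying $\psi_t$. Since $[N_0:tN_0t^{-1}]$ is finite for $t\in T_+$, the relevant coset decomposition is a finite disjoint union, giving
\begin{equation*}
\Lambda(N_0)=\bigoplus_{u\in J(N_0/tN_0t^{-1})}u\,\varphi_t(\Lambda(N_0))=\bigoplus_{v\in J(N_0/tN_0t^{-1})}\varphi_t(\Lambda(N_0))\,v^{-1};
\end{equation*}
the second decomposition follows from the first by taking inverses in $N_0$, using that $(tN_0t^{-1})^{-1}=tN_0t^{-1}$ and that $J(N_0/tN_0t^{-1})^{-1}$ is again a set of left coset representatives.

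For the first identity, I would fix $v\in N_0$ and use the left decomposition to write $\lambda v=\sum_{u}u\,\varphi_t(\mu_u)$ uniquely, with $u$ ranging over $J(N_0/tN_0t^{-1})$ and $\mu_u\in\Lambda(N_0)$. To extract $\mu_{u_0}$, apply $\psi_t(u_0^{-1}\,\cdot\,)$: the term $u=u_0$ contributes $\psi_t(\varphi_t(\mu_{u_0}))=\mu_{u_0}$ by $\psi_t\circ\varphi_t=\id$, while for $u\neq u_0$ the elements $u_0,u$ represent distinct cosets so $u_0^{-1}u\in N_0\setminus tN_0t^{-1}$, and hence $\psi_t(u_0^{-1}u\,\varphi_t(\mu_u))=0$ by the defining vanishing property of $\psi_t$. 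This yields $\mu_{u_0}=\psi_t(u_0^{-1}\lambda v)=\lambda_{u_0,v}$, proving the first formula.

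The second identity is entirely symmetric: fix $u\in N_0$, use the right decomposition to write $u^{-1}\lambda=\sum_{v}\varphi_t(\nu_v)\,v^{-1}$ (sum over $v\in J(N_0/tN_0t^{-1})$), multiply on the right by a chosen $v_0\in J(N_0/tN_0t^{-1})$, and apply $\psi_t$. This time the cross-term vanishing uses the right-sided version $\psi_t(\varphi_t(\mu)\,n)=0$ for $n\in N_0\setminus tN_0t^{-1}$, applied to $n=v^{-1}v_0$ when $v\neq v_0$. The diagonal term gives $\psi_t(\varphi_t(\nu_{v_0}))=\nu_{v_0}$, so $\nu_{v_0}=\lambda_{u,v_0}$.

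I do not anticipate a serious obstacle: the only points requiring care are (i) the right-module decomposition of $\Lambda(N_0)$ (a formal consequence of inversion in $N_0$) and (ii) the symmetric vanishing property $\psi_t(\varphi_t(\mu)n)=0$, both of which are already contained in the basic properties of $\varphi_t,\psi_t$ recalled in Section 1.1. Everything else is a direct application of $(\varphi\psi)$ and the uniqueness of the decomposition.
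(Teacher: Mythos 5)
Your proposal is correct and follows essentially the same route as the paper: the paper's proof invokes the standard decompositions $\mu=\sum_{u}u\varphi_t(\psi_t(u^{-1}\mu))=\sum_{v}\varphi_t(\psi_t(\mu v))v^{-1}$ (applied to $\mu=\lambda v$ and $\mu=u^{-1}\lambda$), noting that the inverses of the chosen left coset representatives represent the right cosets, which is exactly the content of your two freeness decompositions together with the orthogonality relations $\psi_t\circ\varphi_t=\id$ and $\psi_t(u\varphi_t(\cdot))=\psi_t(\varphi_t(\cdot)u)=0$ for $u\notin tN_0t^{-1}$. You merely spell out the coefficient extraction that the paper leaves implicit.
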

\begin{proof}
The above formulae follow from the usual identities $$\sum_{u\in J(N_0/tN_0t^{-1})}u\varphi_t(\psi_t(u^{-1}\mu))=\mu=\sum_{v\in J(N_0/tN_0t^{-1})}\varphi_t(\psi_t(\mu v))v^{-1}$$ for $\mu\in\Lambda(N_0)$ as the inverses of elements of $J(N_0/tN_0t^{-1})$ form a set of representatives of the right cosets of $tN_0t^{-1}$.
\qed\end{proof}

\begin{lem}\label{psimap}
For any $t\in T_\ast$ the map $\iota_t$ is a homomorphism of $\Lambda(N_0)$-modules. It is injective for all $t\in T_\ast$ if and only if the $\psi$-action of $T_\ast$ on $D$ is nondegenerate.
\end{lem}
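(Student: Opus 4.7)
My plan is to verify the two assertions of the lemma separately, both by direct manipulation of the explicit formula defining $\iota_t$, relying on Lemma \ref{phiuvorthogonal} and on the defining property $\psi_t(\varphi_t(\mu)x) = \mu\,\psi_t(x)$ of a $\psi$-action.

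For $\Lambda(N_0)$-linearity, I would first reduce to checking $\iota_t(\lambda d) = \lambda\,\iota_t(d)$ for $\lambda\in\Lambda(N_0)$ (continuity in $\lambda$ is automatic from the finite sum formula, so density of $o[N_0]$ lets one start with $\lambda\in N_0$ if preferred; but the computation works uniformly). Expanding using the decomposition from Lemma \ref{phiuvorthogonal} of $\lambda u_0 = \sum_{u} u\,\varphi_t(\psi_t(u^{-1}\lambda u_0))$ and moving $\varphi_t(\psi_t(u^{-1}\lambda u_0))$ across the tensor sign gives
\[
\lambda\,\iota_t(d)=\sum_{u} u\otimes \sum_{u_0}\psi_t(u^{-1}\lambda u_0)\,\psi_t(u_0^{-1}d).
\]
On the other hand, the second formula of Lemma \ref{phiuvorthogonal} (with $u=1$) yields $u^{-1}\lambda=\sum_{u_0}\varphi_t(\psi_t(u^{-1}\lambda u_0))\,u_0^{-1}$; applying this element to $d$ and then $\psi_t$, using $(\varphi\psi)$, produces the ``projection formula''
\[
\psi_t(u^{-1}\lambda d)=\sum_{u_0}\psi_t(u^{-1}\lambda u_0)\,\psi_t(u_0^{-1}d),
\]
which shows the two expressions match $\iota_t(\lambda d)=\sum_u u\otimes\psi_t(u^{-1}\lambda d)$. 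So $\iota_t$ is a $\Lambda(N_0)$-homomorphism.

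For injectivity, the key observation is that $\Lambda(N_0)$ is free as a right module over $\varphi_t(\Lambda(N_0))$ on the basis $J(N_0/tN_0t^{-1})$, hence there is a canonical $\Lambda(N_0)$-linear isomorphism
\[
\Lambda(N_0)\otimes_{\Lambda(N_0),\varphi_t}D \;\cong\; \bigoplus_{u\in J(N_0/tN_0t^{-1})} D
\]
sending $u\otimes x$ to $x$ in the $u$-th coordinate. Under this identification $\iota_t(d)=0$ is equivalent to $\psi_t(u^{-1}d)=0$ for every $u\in J(N_0/tN_0t^{-1})$, i.e.\ to $d\in\bigcap_{u}\Ker(\psi_t\circ u^{-1})$. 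Hence $\iota_t$ is injective iff this intersection is zero, which, quantified over all $t\in T_\ast$, is precisely the nondegeneracy of the $\psi$-action.

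The only point requiring care is the ``projection formula'' used for linearity; once that identity is in hand, both parts are immediate. I expect no serious obstacle: everything reduces to the explicit cross-relations of Lemma \ref{phiuvorthogonal} combined with $(\varphi\psi)$, together with the freeness of $\Lambda(N_0)$ over $\varphi_t(\Lambda(N_0))$ (which is implicit in the statement of Lemma \ref{phiuvorthogonal} and used throughout \cite{SVig}).
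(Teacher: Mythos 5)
Your proposal is correct and follows essentially the same route as the paper: both parts rest on the two cross-relations of Lemma \ref{phiuvorthogonal} together with the identity $\psi_t(\varphi_t(\mu)x)=\mu\psi_t(x)$ for the linearity computation (the paper runs the chain from $\iota_t(\lambda x)$ to $\lambda\iota_t(x)$, you run it in the reverse direction), and on the freeness of $\Lambda(N_0)$ over $\varphi_t(\Lambda(N_0))$ with basis $J(N_0/tN_0t^{-1})$ for the injectivity criterion. One tiny labelling slip: the identity you invoke is the defining axiom of the $\psi$-action itself, not the compatibility condition $(\varphi\psi)$ (which would presuppose a $\varphi$-action on $D$ that is not assumed here).
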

\begin{proof}
Using Lemma \ref{phiuvorthogonal} we compute
\begin{align*}
\iota_t(\lambda x)=\sum_{u\in J(N_0/tN_0t^{-1})}u\otimes\psi_t(u^{-1}\lambda x)=\\
=\sum_{u,v\in J(N_0/tN_0t^{-1})}u\otimes\psi_t(\varphi_t(\lambda_{u,v})v^{-1}x)=\\
=\sum_{u,v\in J(N_0/tN_0t^{-1})}u\otimes\lambda_{u,v}\psi_t(v^{-1}x)=\\
=\sum_{u,v\in J(N_0/tN_0t^{-1})}u\varphi_t(\lambda_{u,v})\otimes\psi_t(v^{-1}x)=\\
=\sum_{v\in J(N_0/tN_0t^{-1})}\lambda v\otimes\psi_t(v^{-1}x)=\lambda\iota_t(x)\ .
\end{align*}
The second statement follows from noting that $\Lambda(N_0)$ is a free right module over itself via the map $\varphi_t$ with free generators $u\in J(N_0/tN_0t^{-1})$.
\qed\end{proof}

\begin{lem}\label{psipushforward}
Let $D$ be a $\Lambda(N_0)$-module with a $\psi$-action of $T_\ast$ and $t\in T_\ast$. Then there exists a $\psi$-action of $T_\ast$ on $\varphi_t^*D:=\Lambda(N_0)\otimes_{\Lambda(N_0),\varphi_t}D$ making the homomorphism $\iota_t$ $\psi$-equivariant. Moreover, if we assume in addition that the $\psi$-action on $D$ is nondegenerate then so is the $\psi$-action on $\varphi_t^*D$.
\end{lem}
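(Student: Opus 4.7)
The plan is to define, for each $t'\in T_\ast$, an $o$-linear endomorphism $\psi_{t'}$ of $\varphi_t^*D$, verify it gives a well-defined $\psi$-action compatible with the $\Lambda(N_0)$-structure, makes $\iota_t$ equivariant, and preserves nondegeneracy. The key preliminary input is the commutation identity $\psi_{t'}\circ\varphi_t=\varphi_t\circ\psi_{t'}$ on $\Lambda(N_0)$: since $T_\ast$ is commutative, $\varphi_t\circ\varphi_{t'}=\varphi_{t'}\circ\varphi_t$; writing $\Lambda(N_0)=\bigoplus_{v\in J(N_0/t'N_0t'^{-1})}v\varphi_{t'}(\Lambda(N_0))$ and noting that conjugation by $t$ maps each coset $vt'N_0t'^{-1}$ onto $(tvt^{-1})t'N_0t'^{-1}$ (because $tt'=t't$), one checks the commutation separately on the $v=1$ piece and on the vanishing pieces. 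An analogous commutation $\psi_{t'}\circ\psi_t=\psi_t\circ\psi_{t'}$ on $D$ follows because the $\psi$-action is by hypothesis an action of the commutative inverse monoid $T_\ast^{-1}$.

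I would then define $\psi_{t'}\colon\varphi_t^*D\to\varphi_t^*D$ on simple tensors by a formula dictated by $\iota_t$-equivariance together with the $(\varphi\psi)$ axiom: concretely, one sets
\[
  \psi_{t'}(\lambda\otimes d):=\sum_{u\in J(N_0/tN_0t^{-1})} u\otimes \psi_t\!\bigl(u^{-1}\psi_{t'}(\lambda)\bigr)\,\psi_{t'}(d),
\]
which for $t'=t$ reduces, by the $(\varphi\psi)$ identity on $\Lambda(N_0)$, to the clean formula $\psi_t(\lambda\otimes d)=\iota_t(\psi_t(\lambda)\,d)$. Well-definedness on the tensor product — the comparison $\psi_{t'}(\lambda\varphi_t(\mu)\otimes d)=\psi_{t'}(\lambda\otimes \mu d)$ — is then a bookkeeping computation in which one decomposes $\lambda$ in the $\varphi_{t'}$-basis of $\Lambda(N_0)$, pushes $\varphi_t(\mu)$ through the $\varphi_{t'}$-components using the commutation identity above, and on the other side invokes the $(\varphi\psi)$ axiom on $D$. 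The $(\varphi\psi)$ axiom $\psi_{t'}(\varphi_{t'}(\nu)\cdot x)=\nu\cdot\psi_{t'}(x)$ on $\varphi_t^*D$ is then immediate from $(\varphi\psi)$ on $\Lambda(N_0)$. Finally, $\iota_t$-equivariance follows by applying the formula to $\iota_t(d)=\sum_u u\otimes\psi_t(u^{-1}d)$ and comparing with $\iota_t(\psi_{t'}(d))=\sum_u u\otimes\psi_t(u^{-1}\psi_{t'}(d))$, using the commutation of $\psi_t$ and $\psi_{t'}$ on $D$.

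For the nondegeneracy assertion, assume the $\psi$-action on $D$ is nondegenerate. By Lemma~\ref{psimap}, this is equivalent to the injectivity of $\iota_{t''}\colon D\to\varphi_{t''}^*D$ for every $t''\in T_\ast$. Using the canonical associativity isomorphism
\[
  \varphi_{t''}^*\varphi_t^*D\cong\varphi_{tt''}^*D\cong\varphi_t^*\varphi_{t''}^*D
\]
(well-defined because $\varphi_t\varphi_{t''}=\varphi_{t''}\varphi_t$), I would identify the induced map $\iota_{t''}\colon\varphi_t^*D\to\varphi_{t''}^*\varphi_t^*D$ with $\varphi_t^*(\iota_{t''})$; since $\varphi_t^*$ is base change along $\varphi_t$ making $\Lambda(N_0)$ a free module of finite rank over $\varphi_t(\Lambda(N_0))$, it preserves injections, so $\iota_{t''}$ on $\varphi_t^*D$ is injective for every $t''$. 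Applying Lemma~\ref{psimap} in the reverse direction yields the nondegeneracy of the $\psi$-action on $\varphi_t^*D$.

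The main obstacle, as already flagged, is the well-definedness check of the formula for $\psi_{t'}$ on the tensor product: the $\varphi_t$- and $\varphi_{t'}$-decompositions of $\Lambda(N_0)$ do not mutually distribute, so the reconciliation between $\psi_{t'}(\lambda\varphi_t(\mu))$ and $\varphi_t(\mu)$-translates of $\psi_{t'}(\lambda)$ only becomes manifest after expanding $\lambda$ in the $\varphi_{t'}$-basis, applying the commutation $\psi_{t'}\varphi_t=\varphi_t\psi_{t'}$ component-wise, and carefully reorganizing using the tensor relation $a\varphi_t(b)\otimes e=a\otimes be$.
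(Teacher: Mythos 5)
There is a genuine gap, and it sits exactly at the point you flagged as the main obstacle. Your ``key preliminary input'' $\psi_{t'}\circ\varphi_t=\varphi_t\circ\psi_{t'}$ on $\Lambda(N_0)$ is false. Take $N_0=\Zp$ with $t=t'=s$ acting by multiplication by $p$: then $\psi_s(\varphi_s([1]))=\psi_s([p])=[1]$, whereas $\varphi_s(\psi_s([1]))=\varphi_s(0)=0$. The reason is that conjugation by $t$ carries the coset $vt'N_0t'^{-1}$ onto $(tvt^{-1})\,t'(tN_0t^{-1})t'^{-1}$, which is strictly smaller than $(tvt^{-1})t'N_0t'^{-1}$, so elements outside $t'N_0t'^{-1}$ can land inside it after applying $\varphi_t$; the only usable identities are $\psi_t\circ\varphi_t=\id$ and the projection formulae of Lemma~\ref{phiuvorthogonal}.

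Consequently the formula you propose is not well defined on the tensor product. In $\varphi_s^*\Lambda(N_0)$ with $N_0=\Zp$ one has $[p]\otimes 1=\varphi_s([1])\otimes 1=1\otimes[1]$, but your formula sends the first representative to $[1]\otimes 1$ and the second to $0$ (because $\psi_s([1])=0$), and $[1]\otimes 1\neq 0$ in the free module $\bigoplus_u u\otimes\Lambda(N_0)$. The definition that works, and the one the paper uses, is
\begin{equation*}
\psi_{t'}(\lambda\otimes d)=\sum_{u'\in J(N_0/t'N_0t'^{-1})}\psi_{t'}(\lambda\varphi_t(u'))\otimes\psi_{t'}(u'^{-1}d)\ ,
\end{equation*}
where the left factor is the full element $\psi_{t'}(\lambda\varphi_t(u'))\in\Lambda(N_0)$ rather than a coset representative for $t$, and the sum runs over representatives for $t'$; well-definedness is then checked by expanding $\mu=\sum_{u',v'}u'\varphi_{t'}(\mu_{u',v'})v'^{-1}$ as in Lemma~\ref{phiuvorthogonal} and using only the $\psi$-module axiom, never a commutation of $\varphi_t$ with $\psi_{t'}$. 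Your claimed reduction for $t'=t$ to $\iota_t(\psi_t(\lambda)d)$ fails for the same reason, since $\psi_t(u^{-1}\psi_t(\lambda)d)\neq\psi_t(u^{-1}\psi_t(\lambda))\psi_t(d)$ in general. Finally, your route to nondegeneracy via identifying $\iota_{t''}$ on $\varphi_t^*D$ with $\varphi_t^*(\iota_{t''})$ is a plausible idea but presupposes the very $\psi$-action whose construction is at issue; the paper instead argues directly with the components $x_u=\psi_t(u^{-1}x)$ of an element killed by all $\psi_{t'}\circ u_0'^{-1}$.
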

\begin{proof}
Let $t'\in T_\ast$ be arbitrary and define the action of $\psi_{t'}$ on $\varphi_t^*D$ by putting
\begin{equation*}
\psi_{t'}(\lambda\otimes d):=\sum_{u'\in J(N_0/t'N_0t'^{-1})}\psi_{t'}(\lambda\varphi_t(u'))\otimes\psi_{t'}(u'^{-1}d)\text{ for }\lambda\in \Lambda(N_0),d\in D\ ,
\end{equation*}
and extending $\psi_{t'}$ to $\varphi_t^*D$ $o$-linearly. Note that we have 
\begin{align*}
\psi_{t'}(\varphi_{t'}(\mu)\lambda\otimes d)=\\
=\sum_{u'\in J(N_0/t'N_0t'^{-1})}\psi_{t'}(\varphi_{t'}(\mu)\lambda\varphi_t(u'))\otimes\psi_{t'}(u'^{-1}d)=\mu\psi_{t'}(\lambda\otimes d)\ .
\end{align*} 
Moreover, the map $\psi_{t'}$ is well-defined since we have 
\begin{align*}
\psi_{t'}(\lambda\varphi_t(\mu)\otimes d)=\sum_{v'\in J(N_0/t'N_0t'^{-1})}\psi_{t'}(\lambda\varphi_t(\mu)\varphi_t(v'))\otimes\psi_{t'}(v'^{-1}d)=\\
=\sum_{v'\in J(N_0/t'N_0t'^{-1})}\psi_{t'}(\lambda\varphi_t(\mu v'))\otimes\psi_{t'}(v'^{-1}d)=\\
=\sum_{u',v'\in J(N_0/t'N_0t'^{-1})}\psi_{t'}(\lambda\varphi_t(u'\varphi_{t'}(\mu_{u',v'})))\otimes\psi_{t'}(v'^{-1}d)=\\
=\sum_{u',v'\in J(N_0/t'N_0t'^{-1})}\psi_{t'}(\lambda\varphi_t(u'))\varphi_{t}(\mu_{u',v'})\otimes\psi_{t'}(v'^{-1}d)=\\
=\sum_{u',v'\in J(N_0/t'N_0t'^{-1})}\psi_{t'}(\lambda\varphi_t(u'))\otimes\mu_{u',v'}\psi_{t'}(v'^{-1}d)=\\
=\sum_{u',v'\in J(N_0/t'N_0t'^{-1})}\psi_{t'}(\lambda\varphi_t(u'))\otimes\psi_{t'}(\varphi_{t'}(\mu_{u',v'})v'^{-1}d)=\\
=\sum_{u'\in J(N_0/t'N_0t'^{-1})}\psi_{t'}(\lambda\varphi_t(u'))\otimes\psi_{t'}(u'^{-1}\mu d)=\psi_{t'}(\lambda\otimes\mu d)\ ,
\end{align*}
using Lemma \ref{phiuvorthogonal} where $\mu_{u',v'}=\psi_{t'}(u'^{-1}\mu v')$. Introducing the notation $J':=J(N_0/t'N_0t'^{-1})$ and $J'':=J(N_0/t''N_0t''^{-1})$ we further compute
\begin{align*}
\psi_{t''}(\psi_{t'}(\lambda\otimes d))=\psi_{t''}(\sum_{u'\in J(N_0/t'N_0t'^{-1})}\psi_{t'}(\lambda\varphi_t(u'))\otimes\psi_{t'}(u'^{-1}d))=\\
=\sum_{u''\in J''}\sum_{u'\in J'}\psi_{t''}(\psi_{t'}(\lambda\varphi_t(u'))\varphi_t(u''))\otimes\psi_{t''}(u''^{-1}\psi_{t'}(u'^{-1}d))=\\
=\sum_{u''\in J''}\sum_{u'\in J'}\psi_{t''}(\psi_{t'}(\lambda\varphi_t(u'\varphi_{t'}(u''))))\otimes\psi_{t''}(\psi_{t'}(\varphi_{t'}(u'')^{-1}u'^{-1}d))=\\
=\psi_{t''t'}(\lambda\otimes d)
\end{align*}
showing that it is indeed a $\psi$-action of the monoid $T_\ast$. 

For the second statement of the Lemma we compute
\begin{align*}
\psi_{t'}(\iota_t(x))=\\
=\sum_{u'\in J(N_0/t'N_0t'^{-1})}\sum_{u\in J(N_0/tN_0t^{-1})}\psi_{t'}(u\varphi_t(u'))\otimes\psi_{t'}(u'^{-1}\psi_t(u^{-1}x))=\\
=\sum_{u'\in J(N_0/t'N_0t'^{-1})}\sum_{u\in J(N_0/tN_0t^{-1})}\psi_{t'}(u\varphi_t(u'))\otimes\psi_{t'}(\psi_t(\varphi_t(u')^{-1}u^{-1}x))\ .
\end{align*}
Note that in the above sum $u\varphi_t(u')$ runs through a set of representatives for the cosets $N_0/tt'N_0t'^{-1}t^{-1}$. Moreover, $v:=\psi_{t'}(u\varphi_t(u'))$ is nonzero if and only if $u\varphi_t(u')$ lies in $t'N_0t'^{-1}$ and the nonzero values of $v$ run through a set $J'(N_0/tN_0t^{-1})$ of representatives of the cosets $N_0/tN_0t^{-1}$. In case $v\neq 0$ we have $\varphi_{t'}(v)^{-1}=(u\varphi_t(u'))^{-1}=\varphi_t(u')^{-1}u^{-1}$. So we continue computing by replacing $\psi_{t'}(u\varphi_t(u'))$ by $v$ and omitting the terms with $v=0$
\begin{align*}
\psi_{t'}(\iota_t(x))=\\
=\sum_{u'\in J(N_0/t'N_0t'^{-1})}\sum_{u\in J(N_0/tN_0t^{-1})}\psi_{t'}(u\varphi_t(u'))\otimes\psi_{t'}(\psi_t(\varphi_t(u')^{-1}u^{-1}x))=\\
=\sum_{v\in J'(N_0/tN_0t^{-1})}v\otimes\psi_t(\psi_{t'}(\varphi_{t'}(v^{-1})x))=\\
=\sum_{v\in J'(N_0/tN_0t^{-1})}v\otimes\psi_t(v^{-1}\psi_{t'}(x))=\iota_t(\psi_{t'}(x))\ .
\end{align*}

Assume now that the $\psi$-action of $T_\ast$ on $D$ is nondegenerate. Any element in $x\in\varphi_t^*D$ can be uniquely written in the form $\sum_{u\in J(N_0/tN_0t^{-1})}u\otimes x_u$. Assume that for a fixed $t'\in T_\ast$ we have $\psi_{t'}(u_0'^{-1}x)=0$ for all $u_0'\in N_0$. Then we compute
\begin{align*}
0=\psi_{t'}(u_0'^{-1}x)=\\
=\sum_{u'\in J(N_0/t'N_0t'^{-1})}\sum_{u\in J(N_0/tN_0t^{-1})}\psi_{t'}(u_0'^{-1}u\varphi_t(u'))\otimes\psi_{t'}(u'^{-1}x_u)\ .
\end{align*}
Put $y=u_0'^{-1}u\varphi_t(u')$. For any fixed $u_0'$ the set $\{y\mid u\in J(N_0/tN_0t^{-1}),u'\in J(N_0/t'N_0t'^{-1})\}$ forms a set of representatives of $N_0/tt'N_0(tt')^{-1}$, and we have $\psi_{t'}(y)\neq0$ if and only if $y$ lies in $t'N_0t'^{-1}$ in which case we have $\psi_{t'}(y)=t'^{-1}yt'$. So the nonzero values of $\psi_{t'}(y)$ run through a set of representatives of $N_0/tN_0t^{-1}$. Since we have the direct sum decomposition $\varphi_t^*D=\bigoplus_{v\in J(N_0/tN_0t^{-1})}v\otimes D$ we obtain $\psi_{t'}(u'^{-1}x_u)=0$ for all $u'\in J(N_0/t'N_0t'^{-1})$ and $u\in J(N_0/tN_0t^{-1})$ such that $y=u_0'^{-1}u\varphi_t(u')$ is in $t'N_0t'^{-1}$. However, for any choice of $u'$ and $u$ there exists such a $u_0'$, so we deduce $x=0$. 
\qed\end{proof}

\begin{pro}
\label{psiprop}
Let $D$ be a $\Lambda(N_0)$-module with a $\psi$-action of $T_\ast$. The following are equivalent:
\begin{enumerate}
\item There exists a unique $\varphi$-action on $D$, which is compatible with $\psi$ and which makes $D$ an \'etale $T_\ast$-module.
\item The $\psi$-action is surjective and for any $t\in T_\ast$ we have
\begin{equation}\label{psidirect}
D=\bigoplus_{u_0\in J(N_0/tN_0t^{-1})}\bigcap_{\substack{u\in J(N_0/tN_0t^{-1}) \\ u\neq u_0}}\mathrm{Ker}(\psi_t\circ u^{-1})\ .
\end{equation}
In particular, the action of $\psi$ is nondegenerate.
\item The map $\iota_t$ is bijective for all $t\in T_\ast$.
\end{enumerate}
\end{pro}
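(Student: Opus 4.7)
The plan is to prove $(1)\Leftrightarrow(3)$ and $(2)\Leftrightarrow(3)$, using the decomposition $\Lambda(N_0)\otimes_{\varphi_t,\Lambda(N_0)}D=\bigoplus_{u\in J(N_0/tN_0t^{-1})}u\otimes D$ that comes from the freeness of $\Lambda(N_0)$ over itself via $\varphi_t$ (Lemma~\ref{phiuvorthogonal}). This decomposition identifies $\iota_t$ with the single $o$-linear map $\Psi_t\colon D\to\prod_u D$, $d\mapsto(\psi_t(u^{-1}d))_u$, which makes both the intersection in $(2)$ and the tensor target of $\iota_t$ transparent.

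For $(3)\Rightarrow(1)$, assuming $\iota_t$ is bijective for all $t$, I would define $\varphi_t\colon D\to D$ by $\varphi_t(d):=\iota_t^{-1}(1\otimes d)$, where $1\in J(N_0/tN_0t^{-1})$ represents the trivial coset. By $\Lambda(N_0)$-linearity of $\iota_t^{-1}$, one has $\iota_t^{-1}(\lambda\otimes d)=\lambda\varphi_t(d)$, so $\iota_t^{-1}$ is precisely the map \eqref{varphimap}, which is therefore an isomorphism --- this gives the étale decomposition $D=\bigoplus_u u\varphi_t(D)$. Applying $\iota_t$ to both sides of $\lambda\varphi_t(d)\in D$ and using Lemma~\ref{phiuvorthogonal} to write $\lambda=\sum_u u\varphi_t(\psi_t(u^{-1}\lambda))$ yields $\sum_u u\otimes\psi_t(u^{-1}\lambda\varphi_t(d))=\lambda\otimes d=\sum_u u\otimes\psi_t(u^{-1}\lambda)d$; comparing summand by summand gives the compatibility $(\varphi\psi)$. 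The multiplicativity $\varphi_t(\lambda d)=\varphi_t(\lambda)\varphi_t(d)$ follows by applying $\iota_t$ to both sides, using $\varphi_t(\lambda)\otimes d=1\otimes\lambda d$. For $t,t'\in T_\ast$ the fact that $\varphi_{tt'}=\varphi_t\circ\varphi_{t'}$ is checked similarly via $\iota$. For uniqueness, any $\varphi$-action compatible with $\psi$ and étale forces the étale map \eqref{varphimap} to be a two-sided inverse of $\iota_t$ (again by Lemma~\ref{phiuvorthogonal}), so $\varphi_t$ is determined by $\iota_t$.

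For $(1)\Rightarrow(3)$, the étale decomposition together with $(\varphi\psi)$ gives $\psi_t(u^{-1}\cdot v\varphi_t(d))=\psi_t(u^{-1}v)d$, which vanishes for $u\neq v$ in $J$ and equals $d$ for $u=v=1$; thus the map $\lambda\otimes d\mapsto\lambda\varphi_t(d)$ is a two-sided inverse of $\iota_t$.

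For $(3)\Rightarrow(2)$, surjectivity of $\psi_t$ follows by taking any $d'\in D$ and setting $e:=\iota_t^{-1}(1\otimes d')$: then $\psi_t(e)=d'$. The decomposition \eqref{psidirect} follows from the direct sum decomposition of $\Lambda(N_0)\otimes_{\varphi_t}D$, since under $\iota_t^{-1}$ the summand $u_0\otimes D$ maps precisely onto $\bigcap_{u\neq u_0}\operatorname{Ker}(\psi_t\circ u^{-1})$. Finally, for $(2)\Rightarrow(3)$, injectivity of $\iota_t$ is immediate: its kernel is $\bigcap_{u\in J}\operatorname{Ker}(\psi_t\circ u^{-1})$, which lies in every summand of \eqref{psidirect} and hence is zero. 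The key step --- and the main obstacle --- is surjectivity of $\iota_t$. Given $(d_u)_u\in\bigoplus_u D$, I need to produce $d\in D$ with $\psi_t(u^{-1}d)=d_u$ for all $u\in J(N_0/tN_0t^{-1})$. The idea is to treat each coordinate separately: for fixed $u_0$, use surjectivity of $\psi_t$ to pick $e\in D$ with $\psi_t(e)=d_{u_0}$, decompose $u_0 e=\sum_{u'}d'_{u'}$ according to \eqref{psidirect}, and take the $u_0$-component $d^{(u_0)}:=d'_{u_0}\in\bigcap_{u\neq u_0}\operatorname{Ker}(\psi_t\circ u^{-1})$. Then $\psi_t(u_0^{-1}d^{(u_0)})=\sum_{u'}\psi_t(u_0^{-1}d'_{u'})=\psi_t(e)=d_{u_0}$, where all $u'\neq u_0$ terms vanish because $d'_{u'}\in\operatorname{Ker}(\psi_t\circ u_0^{-1})$. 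Summing the $d^{(u_0)}$ over $u_0\in J$ gives the required $d$, since the off-diagonal contributions vanish by the defining property of each summand in \eqref{psidirect}.
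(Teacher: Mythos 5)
Your proof is correct and uses essentially the same ingredients as the paper's: the identification $\Lambda(N_0)\otimes_{\varphi_t}D=\bigoplus_{u\in J(N_0/tN_0t^{-1})}u\otimes D$, Lemma \ref{phiuvorthogonal}, and the observation that $\iota_t^{-1}(u_0\otimes D)$ is the summand $\bigcap_{u\neq u_0}\Ker(\psi_t\circ u^{-1})$. The only difference is organizational: the paper proves the cycle $1\Rightarrow3\Rightarrow2\Rightarrow1$ and builds $\varphi_t(d)$ directly as the distinguished component of a $\psi_t$-preimage of $d$, whereas you route the same construction through explicitly inverting $\iota_t$ in $(2)\Rightarrow(3)$ and then define $\varphi_t=\iota_t^{-1}(1\otimes\cdot)$.
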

\begin{proof}
$1\Longrightarrow 3$ In this case the map $\iota_t$ is the inverse of the isomorphism \eqref{varphimap} so it is bijective by the \'etale property.

$3\Longrightarrow 2$: The injectivity of $\iota_t$ shows the nondegeneracy of the $\psi$-action. Further if $1\otimes d=\iota_t(x)$ then we have $\psi_t(x)=d$ so the $\psi$-action is surjective. Moreover, $\iota_t^{-1}(u_0\otimes D)$ equals $\bigcap_{u_0\neq u\in J(N_0/tN_0t^{-1}) }\Ker(\psi_t\circ u^{-1})$ therefore $D$ can be written as a direct sum \eqref{psidirect}.

$2\Longrightarrow 1$:
In order to define the $\varphi$-action of $T_\ast$ on $D$ we fix $t\in T_\ast$. For any $d\in D$ we have to choose $\varphi_t(d)$ such that $\psi_t(\varphi_t(d))=d$. By the surjectivity of $\psi_t$ we can choose $x\in D$ such that $\psi_t(x)=d$. Using the assumption we can write $x=\sum_{u_0\in J(N_0/tN_0t^{-1})}x_{u_0}$, with
\begin{equation*}
x_{u_0}\in\bigcap_{\substack{u\in J(N_0/tN_0t^{-1}) \\ u\neq u_0}}\mathrm{Ker}(\psi_t\circ u^{-1})\ .
\end{equation*}
By the compatibility $(\varphi\psi)$ we should have
\begin{equation*}
\varphi_t(d)\in\bigcap_{\substack{u\in J(N_0/tN_0t^{-1}) \\ u\neq 1}}\mathrm{Ker}(\psi_t\circ u^{-1})
\end{equation*}
as we have $\psi_t(u)=0$ for all $u\in N_0\setminus tN_0t^{-1}$.

A convenient choice is $\varphi_t(d)=x_1$, and there exists exactly one such element in $D$: if $x'$ would be an other, then
\begin{equation*}
x_1-x'\in\bigcap_{u\in J(N_0/tN_0t^{-1})}\mathrm{Ker}(\psi_t\circ u^{-1})=\{0\}\ .
\end{equation*}
This shows the uniqueness of the $\varphi$-action. Further, $x_1=\varphi_t(d)=0$ would mean that $x$ lies in $\Ker(\psi_t)$ whence $d=\psi_t(x)=0$---therefore the injectivity. Similarly, by definition we also have $x_{u_0}=u_0\varphi_t\circ\psi_t(u_0^{-1}x)$ for all $u_0\in J(N_0/sN_0s^{-1})$. By the surjectivity of the $\psi$-action any element in $D$ can be written of the form $\psi_t(u_0^{-1}x)$ for any fixed $u_0\in J(N_0/tN_0t^{-1})$ so we obtain 
\begin{equation*}
u_0\varphi_t(D)=\bigcap_{u_0\neq u\in J(N_0/tN_0t^{-1})}\Ker(\psi_t\circ u^{-1})\ .
\end{equation*}
The \'etale property \eqref{etalevarphi_t} follows from this using our assumption 2. Moreover, this also shows $\psi_t(u\varphi_t(d))=0$ for all $u\in N_0\setminus tN_0t^{-1}$ which implies $(\varphi\psi)$ using that $\psi_t\circ\varphi_t=\id_D$ by construction. Finally, $\varphi_t(\lambda)\varphi_t(d)-\varphi_t(\lambda d)$ lies in the kernel of $\psi_t\circ u_0^{-1}$ for any $u_0\in J(N_0/tN_0t^{-1})$, $\lambda\in\Lambda(N_0)$ and $d\in D$, so it is zero.
\qed\end{proof}

From now on if we have an \'etale $T_\ast$-module over $\Lambda(N_0)$ we a priori equip it with the compatible $\psi$-action, and if we have a $\Lambda(N_0)$-module with a $\psi$-action, which satisfies the above property $2$, we equip it with the compatible $\varphi$-action, which makes it \'etale. The construction of the \'etale hull and its universal property is given in the following

\begin{pro}\label{etalehull}
For any $\Lambda(N_0)$-module $D$, with a $\psi$-action of $T_\ast$ there exists an \'etale $T_\ast$-module $\widetilde{D}$ over $\Lambda(N_0)$ and a $\psi$-equivariant $\Lambda(N_0)$-homomor-phism $\iota\colon D\to \widetilde{D}$ with the following universal property: For any $\psi$-equivariant $\Lambda(N_0)$-homomorphism $f:D\to D'$ into an \'etale $T_\ast$-module $D'$ we have a unique morphism $\widetilde{f}:\widetilde{D}\to D'$ of \'etale $T_\ast$-modules over $\Lambda(N_0)$ making the diagram
\begin{equation*}
\xymatrix{
D\ar[r]^{\iota}\ar[d]_f&\widetilde{D}\ar[dl]^{\widetilde{f}}\\
D'&
}
\end{equation*}
commutative. $\widetilde{D}$ is unique upto a unique isomorphism. If we assume the $\psi$-action on $D$ to be nondegenerate then $\iota$ is injective.
\end{pro}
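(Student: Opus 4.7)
The plan is to define $\widetilde{D}$ as the filtered colimit
$$\widetilde{D}:=\varinjlim_{t\in T_\ast}\varphi_t^*D$$
of $\Lambda(N_0)$-modules along the filtered poset $(T_\ast,\leq)$ where $t\leq t'$ iff $t'=ts$ for some $s\in T_\ast$; for such a pair the bonding map is
$$\iota_s\colon \varphi_t^*D\longrightarrow\varphi_s^*(\varphi_t^*D)=\varphi_{ts}^*D$$
produced by Lemma \ref{psimap} from the $\psi$-action on $\varphi_t^*D$ furnished by Lemma \ref{psipushforward}. Directedness of the poset is automatic from commutativity of $T_\ast$, and the cocycle condition $\iota_{ss'}=\iota_{s'}\circ\iota_s$ on composable arrows follows from the action axiom $\psi_{ss'}=\psi_{s'}\psi_s$ combined with the explicit formula for $\iota_s$ and Lemma \ref{phiuvorthogonal}. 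The map $\iota\colon D\to\widetilde{D}$ is then the structural arrow from the term $t=1$, and it is $\psi$-equivariant and $\Lambda(N_0)$-linear by Lemma \ref{psipushforward}.

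The $\psi$-actions assemble to a $\psi$-action of $T_\ast$ on $\widetilde{D}$ because the bonding maps $\iota_s$ are $\psi$-equivariant. I would then verify that this action is étale via criterion (3) of Proposition \ref{psiprop}, namely that $\iota_{t'}\colon\widetilde{D}\to\varphi_{t'}^*\widetilde{D}$ is bijective for every $t'\in T_\ast$. Since tensor product commutes with filtered colimits,
$$\varphi_{t'}^*\widetilde{D}\cong\varinjlim_{t\in T_\ast}\varphi_{tt'}^*D,$$
and under this identification $\iota_{t'}$ is the colimit of the bonding arrows $\iota_{t'}\colon\varphi_t^*D\to\varphi_{tt'}^*D$ from the defining system. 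Cofinality of the subfamily indexed by $t\mapsto tt'$ turns this into a bijection, so Proposition \ref{psiprop} equips $\widetilde{D}$ with a unique compatible étale $\varphi$-action of $T_\ast$.

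For the universal property, given a $\psi$-equivariant $\Lambda(N_0)$-homomorphism $f\colon D\to D'$ into an étale $T_\ast$-module $D'$, I would define
$$f_t\colon\varphi_t^*D\to D',\qquad \lambda\otimes d\mapsto\lambda\varphi_t(f(d))$$
using the compatible $\varphi$-action on $D'$; $\Lambda(N_0)$-linearity follows from $\varphi_t(\mu)\varphi_t(x)=\varphi_t(\mu x)$ in $D'$, and the compatibility $f_{ts}\circ\iota_s=f_t$ with bonding maps follows from $\psi$-equivariance of $f$ together with the étale identity $y=\sum_u u\varphi_s\psi_s(u^{-1}y)$ applied to $y=\varphi_t(f(d))$. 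Passing to the colimit yields the unique $\widetilde{f}\colon\widetilde{D}\to D'$ lifting $f$, and uniqueness forces $\widetilde{f}$ to be $\psi$-equivariant, hence a morphism of étale $T_\ast$-modules. Uniqueness of $\widetilde{D}$ up to unique isomorphism is formal, and the final assertion is immediate from Lemma \ref{psimap}: if the $\psi$-action on $D$ is nondegenerate then each $\iota_t$ is injective, and filtered colimits of injections of $\Lambda(N_0)$-modules remain injective. The principal technical obstacle I expect is the bookkeeping for the cocycle condition and for the matching between the colimit $\psi$-structure and its étale extension, both of which require a systematic use of the coset-sum identities of Lemma \ref{phiuvorthogonal}.
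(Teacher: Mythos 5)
Your construction is exactly the one the paper uses: the filtered colimit $\varinjlim_{t\in T_\ast}\varphi_t^*D$ with bonding maps supplied by Lemmata \ref{psimap} and \ref{psipushforward}, the \'etale structure obtained from Proposition \ref{psiprop} via the identification $\varphi_{t'}^*\widetilde{D}\cong\widetilde{D}$, the lift $\widetilde{f}(\lambda\otimes_t x)=\lambda\varphi_t(f(x))$ checked against the bonding maps by the \'etale coset-sum identity, and injectivity of $\iota$ in the nondegenerate case from the injectivity of each $\iota_t$. The argument is correct and matches the paper's proof in all essentials.
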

\begin{proof}
We will construct $\widetilde{D}$ as the injective limit of $\varphi_t^*D$ for $t\in T_\ast$. Consider the following partial order on the set $T_\ast$: we put $t_1\leq t_2$ whenever we have $t_2t_1^{-1}\in T_\ast$. Note that by Lemma \ref{psipushforward} we obtain a $\psi$-equivariant isomorphism $\varphi_{t_2t_1^{-1}}^*\varphi_{t_1}^*D\cong \varphi_{t_2}^*D$ for any pair $t_1\leq t_2$ in $T_\ast$. In particular, we obtain a $\psi$-equivariant map $\iota_{t_1,t_2}\colon \varphi_{t_1}^*D\to \varphi_{t_2}^*D$. Applying this observation to $\varphi_{t_1}^*D$ for a sequence $t_1\leq t_2\leq t_3$ we see that the $\Lambda(N_0)$-modules $\varphi_t^*D$ ($t\in T_\ast$) with the $\psi$-action of $T_\ast$ form a direct system with respect to the connecting maps $\iota_{t_1,t_2}$. We put 
\begin{equation*}
\widetilde{D}:=\varinjlim_{t\in T_\ast}\varphi_t^*D
\end{equation*}
as a $\Lambda(N_0)$-module with a $\psi$-action of $T_\ast$. For any fixed $t'\in T_\ast$ we have
\begin{align*}
\varphi_{t'}^*\widetilde{D}=\Lambda(N_0)\otimes_{\Lambda(N_0),\varphi_{t'}}\varinjlim_{t\in T_\ast}\varphi_t^*D\cong\\
\cong \varinjlim_{t\in T_\ast}\Lambda(N_0)\otimes_{\Lambda(N_0),\varphi_{t'}}\varphi_{t}^*D\cong \varinjlim_{t't\in T_\ast}\varphi_{t't}^*D\cong\widetilde{D}
\end{align*}
showing that there exists a unique $\varphi$-action of $T_\ast$ on $\widetilde{D}$ making $\widetilde{D}$ an \'etale $T_\ast$-module over $\Lambda(N_0)$ by Proposition \ref{psiprop}. 

For the universal property, let $f:D\to D'$ be an $\psi$-equivariant map into an \'etale $T_\ast$-module $D'$ over $\Lambda(N_0)$. By construction of the map $\varphi_t$ on $\widetilde{D}$ ($t\in T_\ast$) we have $\varphi_t(\iota(x))=(1\otimes x)_t$ where $(1\otimes x)_t$ denotes the image of $1\otimes x\in \varphi_t^*D$ in $\widetilde{D}$. So we put
\begin{equation*}
\widetilde{f}((\lambda\otimes x)_t):=\lambda\varphi_t(f(x))\in D'
\end{equation*}
and extend it $o$-linearly to $\widetilde{D}$. Note right away that $\widetilde{f}$ is unique as it is $\varphi_t$-equivariant. The map $\widetilde{f}\colon \widetilde{D}\to D'$ is well-defined as we have
\begin{align*}
\widetilde{f}(\iota_{t,tt'}(1\otimes_t x))=\widetilde{f}(\sum_{u'\in N_0/t'N_0t'^{-1}}u'\otimes_{t'}\psi_{t'}(u'^{-1}\otimes_t x))=\\
=\sum_{u',v'\in N_0/t'N_0t'^{-1}}\widetilde{f}(u'\otimes_{t'}\psi_{t'}(u'^{-1}\varphi_t(v'))\otimes_t\psi_{t'}(v'^{-1}x))=\\
=\sum_{u',v'\in N_0/t'N_0t'^{-1}}\widetilde{f}(u'\varphi_{t'}\circ\psi_{t'}(u'^{-1}\varphi_t(v'))\otimes_{tt'}\psi_{t'}(v'^{-1}x))=\\
=\sum_{v'\in N_0/t'N_0t'^{-1}}\widetilde{f}(\varphi_t(v')\otimes_{tt'}\psi_{t'}(v'^{-1}x))=\\
=\sum_{v'\in N_0/t'N_0t'^{-1}}\varphi_t(v')\varphi_{tt'}(f(\psi_{t'}(v'^{-1}x)))=\\
=\sum_{v'\in N_0/t'N_0t'^{-1}}\varphi_t(v'\varphi_{t'}\circ\psi_{t'}(v'^{-1}f(x)))=\varphi_t(f(x))=\widetilde{f}(1\otimes_t x)
\end{align*}
noting that $\iota_{t,tt'}$ is a $\Lambda(N_0)$-homomorphism. Here the notation $\otimes_t$ indicates that the tensor product is via the map $\varphi_t$. By construction $\widetilde{f}$ is a homomorphism of \'etale $T_\ast$-modules over $\Lambda(N_0)$ satisfying $\widetilde{f}\circ\iota=f$.

The injectivity of $\iota$ in case the $\psi$-action on $D$ is nondegenerate follows from Lemmata \ref{psimap} and \ref{psipushforward}.
\qed\end{proof}

\begin{ex}
If $D$ itself is \'etale then we have $\widetilde{D}=D$.
\end{ex}

\begin{cor}
The functor $D\mapsto\widetilde{D}$ from the category of $\Lambda(N_0)$-modules with a $\psi$-action of $T_\ast$ to the category of \'etale $T_\ast$-modules over $\Lambda(N_0)$ is exact.
\end{cor}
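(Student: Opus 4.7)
The plan is to exploit the explicit description of the \'etale hull given in Proposition \ref{etalehull}, namely
\begin{equation*}
\widetilde{D}\cong\varinjlim_{t\in T_\ast}\varphi_t^*D,\qquad \varphi_t^*D=\Lambda(N_0)\otimes_{\Lambda(N_0),\varphi_t}D,
\end{equation*}
together with the transition maps $\iota_{t_1,t_2}\colon\varphi_{t_1}^*D\to\varphi_{t_2}^*D$ constructed in Lemma \ref{psipushforward}. Since these constructions are manifestly functorial in $D$, it suffices to prove that each functor $\varphi_t^*$ is exact and then invoke the exactness of filtered direct limits in the category of $\Lambda(N_0)$-modules.

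First I would verify exactness of $\varphi_t^*$ for a fixed $t\in T_\ast$. Recall that $\varphi_t\colon\Lambda(N_0)\to\Lambda(N_0)$ is injective and makes $\Lambda(N_0)$ into a \emph{free} module over its subring $\varphi_t(\Lambda(N_0))$, with basis any set of coset representatives $J(N_0/tN_0t^{-1})$ (this is the decomposition underlying the definition of $\psi_t$ stated in the notational preliminaries, and is the same input that yields \eqref{etalevarphi_t}). In particular $\Lambda(N_0)$ is flat as a right $\Lambda(N_0)$-module via $\varphi_t$, so the base-change functor $D\mapsto\Lambda(N_0)\otimes_{\Lambda(N_0),\varphi_t}D$ is exact on $\Lambda(N_0)$-modules. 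The $\psi$-action plays no role here because a morphism in the source category is by definition already $\Lambda(N_0)$-linear and $\psi$-equivariant; exactness is tested at the level of the underlying $\Lambda(N_0)$-modules.

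Next, given a short exact sequence $0\to D'\to D\to D''\to 0$ in the source category, applying $\varphi_t^*$ for each $t$ yields a short exact sequence of $\Lambda(N_0)$-modules with $\psi$-action, and these fit into a morphism of direct systems indexed by $(T_\ast,\leq)$ (the ordering introduced in the proof of Proposition \ref{etalehull}). Since $(T_\ast,\leq)$ is filtered---any two elements $t_1,t_2\in T_\ast$ have the common upper bound $t_1t_2$---and filtered direct limits are exact in the category of $\Lambda(N_0)$-modules, the resulting sequence $0\to\widetilde{D'}\to\widetilde{D}\to\widetilde{D''}\to 0$ is exact. The induced $T_\ast$-action on each term is automatically the unique \'etale one by Proposition \ref{psiprop}, and the maps commute with it by construction, so the sequence is exact in the target category as well.

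The only point requiring care---and the one I would identify as the main potential obstacle---is the flatness of $\Lambda(N_0)$ via $\varphi_t$; once that is in hand, the argument is formal. No additional input (such as nondegeneracy of the $\psi$-action) is needed, and in particular the functor sends the zero object to the zero object.
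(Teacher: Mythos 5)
Your argument is correct and is essentially identical to the paper's own proof, which likewise notes that $\Lambda(N_0)$ is free over $\varphi_t(\Lambda(N_0))$ (so each $\varphi_t^*$ is exact) and then invokes the exactness of the filtered direct limit. Your write-up just spells out the same two steps in more detail.
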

\begin{proof}
$\Lambda(N_0)$ is a free $\varphi_t(\Lambda(N_0))$-module, so $\Lambda(N_0)\otimes_{\Lambda(N_0),\varphi_t}-$ is exact, and so is the direct limit functor.
\qed\end{proof}

\begin{cor}\label{tildeinjective}
Assume that $D$ is a $\Lambda(N_0)$-module with a nondegenerate $\psi$-action of $T_\ast$ and $f\colon D\to D'$ is an injective $\psi$-equivariant $\Lambda(N_0)$-homomor-phism into the \'etale $T_\ast$-module $D'$ over $\Lambda(N_0)$. Then $\widetilde{f}$ is also injective. 
\end{cor}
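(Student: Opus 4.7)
My plan is to exploit the explicit description of $\widetilde{D}$ as the directed colimit $\varinjlim_{t \in T_\ast} \varphi_t^\ast D$ given in the proof of Proposition \ref{etalehull}, combined with the étaleness of $D'$ and the flatness of $\Lambda(N_0)$ over itself via each $\varphi_t$. The idea is to show injectivity on each term of the colimit and then conclude by a formal direct-limit argument.

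First, I would recall the formula for $\widetilde{f}$: on the term $\varphi_t^\ast D$, the map sends $\lambda \otimes_t x$ to $\lambda \varphi_t(f(x)) \in D'$. This factors as
\begin{equation*}
\varphi_t^\ast D \xrightarrow{\varphi_t^\ast f = \mathrm{id} \otimes f} \varphi_t^\ast D' \xrightarrow{\sim} D',
\end{equation*}
where the second arrow is the structural isomorphism $\mu \otimes y \mapsto \mu \varphi_t(y)$, which is bijective precisely because $D'$ is étale. By the corollary preceding the one we are proving (or directly, since $\Lambda(N_0)$ is free of finite rank over $\varphi_t(\Lambda(N_0))$), the functor $\varphi_t^\ast$ is exact and in particular preserves injections; hence $\varphi_t^\ast f$ is injective, and so is the composition $g_t := \widetilde{f}\big|_{\varphi_t^\ast D} \colon \varphi_t^\ast D \to D'$.

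Second, I would conclude by the standard observation that a directed colimit of maps into a common target, each of which is injective, induces an injective map out of the colimit: if $y \in \widetilde{D}$ satisfies $\widetilde{f}(y) = 0$ and $y$ is represented by some $\omega \in \varphi_t^\ast D$, then $g_t(\omega) = 0$ forces $\omega = 0$ in $\varphi_t^\ast D$, hence $y = 0$ in $\widetilde{D}$. The required compatibility $g_{tt'} \circ \iota_{t,tt'} = g_t$ with the transition maps of the directed system was already verified in the proof of Proposition \ref{etalehull} when $\widetilde{f}$ was shown to be well-defined, so no further checking is needed.

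I do not anticipate any real obstacle: the argument is essentially formal once the factorisation through the étaleness isomorphism of $D'$ is written down. The nondegeneracy hypothesis on the $\psi$-action of $D$ is not strictly needed for the colimit argument itself, but it is natural to include it since it ensures via Proposition \ref{etalehull} that $\iota \colon D \hookrightarrow \widetilde{D}$ is injective, so that $\widetilde{f}$ is genuinely an injective extension of $f$ rather than an injective map whose precomposition with $\iota$ happens to be injective by accident.
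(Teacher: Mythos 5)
Your proof is correct, but it takes a genuinely different route from the paper's. The paper also argues one colimit term $\varphi_t^*D$ at a time, but its mechanism is the $\psi$-module structure: using nondegeneracy it identifies $\varphi_t^*D$ with a submodule of $\widetilde{D}$, writes a kernel element as $x=\sum_u u\otimes_t x_u$ with $x_u=\psi_t(u^{-1}x)\in D$, uses the $\psi_t$-equivariance of $\widetilde{f}$ to see that each component $x_u$ again lies in the kernel, and then concludes from $\widetilde{f}(x_u)=f(x_u)$ and the injectivity of $f$ on $D$ itself. You instead factor $\widetilde{f}$ on each term as $\varphi_t^*f$ followed by the structural isomorphism $\varphi_t^*D'\xrightarrow{\sim}D'$, and invoke the freeness of $\Lambda(N_0)$ over $\varphi_t(\Lambda(N_0))$ (the same fact underlying the exactness corollary after Proposition \ref{etalehull}) to see that $\varphi_t^*f$ is injective; the colimit step is then formal. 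What your approach buys is that the nondegeneracy hypothesis plays no role in the argument — as you correctly observe, it is only needed to make $\iota$ itself injective, and indeed it is forced anyway by the existence of an injective $\psi$-equivariant map into an \'etale module, since injectivity of each $g_t=g_{tt'}\circ\iota_{t,tt'}$ implies injectivity of the transition maps. What the paper's approach buys is that it stays entirely inside the $\psi$-formalism and never needs to unwind $\widetilde{f}$ through the \'etaleness isomorphism of $D'$. Both are complete; yours is arguably the more transparent base-change argument.
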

\begin{proof}
Since $D$ is nondegenerate we may identify $\varphi_t^*D$ with a $\Lambda(N_0)$-submodule of $\widetilde{D}$. Assume that $x=\sum_{u\in J(N_0/tN_0t^{-1})}u\otimes_t x_u\in \varphi_t^*D$ lies in the kernel of $\widetilde{f}$. Then $x_u=\psi_t(u^{-1}x)\in D\subseteq \varphi_t^* D\subseteq \widetilde{D}$ ($u\in J(N_0/tN_0t^{-1})$) also lies in the kernel of $\widetilde{f}$. However, we have $\widetilde{f}(x_u)=f(x_u)$ showing that $x_u=0$ for all $u\in J(N_0/tN_0t^{-1})$ as $f$ is injective. 
\qed\end{proof}

\begin{ex}
Let $D$ be a (classical) irreducible \'etale $(\varphi,\Gamma)$-module over $k\bg X\jg$ and $D_0\subset D$ a $\psi$- and $\Gamma$-invariant treillis in $D$. Then we have $\widetilde{D_0}\cong D$ unless $D$ is $1$-dimensional and $D_0=D^\natural$ in which case we have $\widetilde{D_0}=D_0$.
\end{ex}
\begin{proof}
If $D$ is $1$-dimensional then $D^\natural=D^+$ is an \'etale $(\varphi,\Gamma)$-module over $k\bs X\js$ (Prop.\ II.5.14 in \cite{Mira}) therefore it is equal to its \'etale hull. If $\dim D>1$ then we have $D^\natural=D^\#\subseteq D_0$ by Cor.\ II.5.12 and II.5.21 in \cite{Mira}. By Corollary \ref{tildeinjective} $\widetilde{D^\#}\subseteq \widetilde{D_0}$ injects into $D$ and it is $\varphi$- and $\psi$-invariant. Since $D^\#$ is not $\varphi$-invariant (Prop.\ II.5.14 in \cite{Mira}) and it is the maximal compact $o\bs X\js$-submodule of $D$ on which $\psi$ acts surjectively (Prop.\ II.4.2 in \cite{Mira}) we obtain that $\widetilde{D_0}$ is not compact. In particular, its $X$-divisible part is nonzero therefore equals $D$ as the $X$-divisible part of $\widetilde{D_0}$ is an \'etale $(\varphi,\Gamma)$-submodule of the irreducible $D$.
\qed\end{proof}

\begin{pro}
The $T_+^{-1}$ action on $D_{SV}(\pi)$ is a surjective nondegenerate $\psi$-action of $T_+$.
\end{pro}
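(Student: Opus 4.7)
The plan is to verify three things: that the $T_+^{-1}$-action on $D_{SV}(\pi)$ is compatible with the $\Lambda(N_0)$-action in the sense required of a $\psi$-action, that each $\psi_t$ is surjective, and that the $\psi$-action is nondegenerate. The compatibility is formal: on each $W^\vee$ ($W \in \mathcal{B}_+(\pi)$) the $B_+^{-1}$-action is given by $(n\cdot f)(w) = f(n^{-1}w)$ for $n \in N_0$ and $(\psi_t f)(w) = f(tw)$ for $t \in T_+$, so the identity $\psi_t(\varphi_t(n) f) = n\cdot\psi_t(f)$ amounts to the direct computation $f((tnt^{-1})^{-1}tw) = f(tn^{-1}w) = \psi_t(f)(n^{-1}w)$. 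This extends to $\lambda \in \Lambda(N_0)$ by continuity and then passes to the inductive limit.

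For surjectivity of $\psi_t$ I would fix $W \in \mathcal{B}_+(\pi)$. Since $t$ acts on $\pi$ by a group automorphism, its restriction $W \to W$, $w \mapsto tw$, is an injective $o$-linear endomorphism with image $tW$, fitting into a short exact sequence $0 \to W \to W \to W/tW \to 0$ of $o$-modules. Applying $\Hom_o(-, K/o)$, which is exact because $K/o$ is $o$-injective, yields $0 \to (W/tW)^\vee \to W^\vee \to W^\vee \to 0$ whose rightmost map is precisely $\psi_t$. Thus $\psi_t$ is surjective on each $W^\vee$, and hence on $D_{SV}(\pi) = \varinjlim_W W^\vee$, since filtered colimits preserve surjections.

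The main point is nondegeneracy. Assume $[f] \in D_{SV}(\pi)$ lies in $\bigcap_u \Ker(\psi_t \circ u^{-1})$ as $u$ runs through $J(N_0/tN_0t^{-1})$, a finite set since $t \in T_+$ forces $tN_0t^{-1} \subseteq N_0$ to be of finite index. Choose a representative $f \in W^\vee$. For each $u$ the vanishing in the inductive limit provides $W'_u \in \mathcal{B}_+(\pi)$ with $W'_u \subseteq W$ and $f(utw) = 0$ for all $w \in W'_u$. Take $W'' := \bigcap_u W'_u \in \mathcal{B}_+(\pi)$ (by Lemma 2.2 of \cite{SVig}, using finiteness). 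Combining the coset decomposition $N_0 = \bigsqcup_u u(tN_0t^{-1})$ with the $N_0$-stability of $W''$ gives $N_0 \cdot tW'' = \bigcup_u utW''$, so $f$ vanishes on $N_0 tW''$. It remains to show that $W''' := N_0 tW'' \subseteq W$ is $B_+$-stable (using $tN_0t^{-1}\subseteq N_0$ and the $T_+$-stability of $W''$) and satisfies $t^{-1}W''' \supseteq W''$, hence generates $\pi$ as a $B$-representation; granting this, $W''' \in \mathcal{B}_+(\pi)$ and $f$ vanishes on $W'''$, so $[f] = 0$ in $D_{SV}(\pi)$. The only nonformal step is this stability and generation check for $W'''$, both of which follow from the dominance of $t$.
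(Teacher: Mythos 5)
Your proposal is correct and follows essentially the same route as the paper: the formal check that the $B_+^{-1}$-action restricts to a $\psi$-action, surjectivity from injectivity of $t\cdot$ on each $W$ together with exactness of $(\cdot)^\vee$ and $\varinjlim$, and nondegeneracy by choosing a common $W''$ via Lemma 2.2 of \cite{SVig} and observing that $f$ vanishes on $N_0tW''=\sum_{u\in J(N_0/tN_0t^{-1})}utW''$, which is again in $\mathcal{B}_+(\pi)$ (the paper's $t^*W''$). The stability and generation check you defer is also only asserted, not proved, in the paper, so there is no gap relative to its standard.
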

\begin{proof}
Let $d\in D_{SV}(\pi)$ and $t\in T_+$. Since the action of both $t$ and $\Lambda(N_0)$ on $D_{SV}(\pi)$ comes from that on $\pi^\vee$ we have $t^{-1}\varphi_t(\lambda)d=t^{-1}t\lambda t^{-1}d=\lambda t^{-1}d$, so this is indeed a $\psi$-action. The surjectivity of each $\psi_t$ follows from the injectivity of the multiplication by $t$ on each $W\in\mathcal{B}_+(\pi)$ and the exactness of $\varinjlim$ and $(\cdot)^\vee$. Finally, if $W$ is in $\mathcal{B}_+(\pi)$ then so is $t^*W:=\sum_{u\in J(N_0/tN_0t^{-1})}utW$ for any $t\in T_+$. Take an element $d\in D_{SV}(\pi)$ lying in the kernel of $\psi_t(u^ {-1}\cdot)$ for all $u\in J(N_0/tN_0t^ {-1})$. Now $D_{SV}(\pi)$ is by definition the direct limit of $W^\vee$ for all $W\in \mathcal{B}_+(\pi)$, so $\psi_t(u^{-1}d)=0$ means that $t^{-1}u^{-1}d$ vanishes on some $W\in \mathcal{B}_+(\pi)$ (depending a priori on $u$). Since the set $J(N_0/tN_0t^{-1})$ is finite, we may even choose a common $W$ for all $u$ (taking the intersection and using Lemma 2.2 in \cite{SVig}). Then the restriction of $d$ to $t^*W$ is zero showing that $d$ is zero in $D_{SV}(\pi)$ therefore the nondegeneracy.
Alternatively, the nondegeneracy of the $\psi$-action also follows from the existence of a $\psi$-equivariant injective map $D_{SV}(\pi)\hookrightarrow D^0_{SV}(\pi)$ into an \'etale $T_+$-module $D^0_{SV}(\pi)$ (\cite{SVig} Proposition 3.5 and Remark 6.1).
\qed\end{proof}

\begin{que}
Let $D_{SV}^{(0)}(\pi)$ as in \cite{SVig}. We have that $D_{SV}^{(0)}(\pi)$ is an \'etale $T_\ast$-module over $\Lambda(N_0)$ (\cite{SVig} Proposition 3.5) and $f:D_{SV}(\pi)\hookrightarrow D_{SV}^{(0)}(\pi)$ is a $\psi$-equivariant map (\cite{SVig} Remark 6.1). By the universal property of the \'etale hull and Corollary \ref{tildeinjective} $\widetilde{D_{SV}}(\pi)$ also injects into $D_{SV}^{(0)}(\pi)$. Whether or not this injection is always an isomorphism is an open question. In case of the Steinberg representation this is true by Proposition 11 in \cite{Z}.
\end{que}

We call the submonoid $T'_\ast\leq T_\ast\leq T_+$ cofinal in $T_\ast$ if for any $t\in T_\ast$ there exists a $t'\in T'_\ast$ such that $t\leq t'$. For example $\xi(\Zp\setminus\{0\})$ is cofinal in $T_+$.

\begin{cor}\label{univphigamma}
Let $D$ be a $\Lambda(N_0)$-module with a $\psi$-action of $T_\ast$ and denote by $\widetilde{D}$ (resp.\ by $\widetilde{D}'$) the \'etale hull of $D$ for the $\psi$-action of $T_\ast$ (resp.\ of $T'_\ast$). Then we have a natural isomorphism $\widetilde{D}'\overset{\sim}{\rightarrow}\widetilde{D}$ of \'etale $T'_\ast$-modules over $\Lambda(N_0)$. More precisely, if $f\colon D\to D_1$ is a $\psi$-equivariant $\Lambda(N_0)$-homomorphism into an \'etale $T'_\ast$-module $D_1$ then $f$ factors uniquely through $\iota\colon D\to \widetilde{D}$.
\end{cor}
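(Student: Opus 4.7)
The plan is to use the explicit direct-limit construction of the \'etale hull from Proposition \ref{etalehull} and reduce everything to the cofinality hypothesis. Recall that $\widetilde D = \varinjlim_{t\in T_\ast}\varphi_t^*D$ and, by the same construction applied to $T'_\ast$, $\widetilde D' = \varinjlim_{t'\in T'_\ast}\varphi_{t'}^*D$; in both cases the transition maps $\iota_{t_1,t_2}\colon\varphi_{t_1}^*D\to\varphi_{t_2}^*D$ are those of Lemma \ref{psipushforward} and the $\psi$-action on each individual $\varphi_t^*D$ depends only on $t$, not on the ambient monoid. Thus the inclusion of directed systems induces a canonical $\Lambda(N_0)$-linear, $T'_\ast$-$\psi$-equivariant map $\widetilde D'\to\widetilde D$ which is manifestly natural in $D$.

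First I would verify that this canonical map is bijective. Since $T'_\ast$ is a commutative submonoid of $T_+$, it is directed with respect to its intrinsic order (given $t'_1,t'_2\in T'_\ast$, their product $t'_1t'_2\in T'_\ast$ dominates both), and the cofinality assumption says exactly that every $t\in T_\ast$ is dominated by some $t'\in T'_\ast$. These two facts together amount to saying that the inclusion $T'_\ast\hookrightarrow T_\ast$ is a cofinal functor of filtered categories, which forces the induced map on colimits to be an isomorphism of $\Lambda(N_0)$-modules carrying the $\psi$-action of $T'_\ast$.

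Next, since by Proposition \ref{psiprop} the \'etale $T'_\ast$-module structure compatible with a given nondegenerate $\psi$-action of $T'_\ast$ is uniquely determined by the $\psi$-action, the bijection $\widetilde D'\xrightarrow{\sim}\widetilde D$ automatically intertwines the \'etale $T'_\ast$-module structures and identifies the canonical maps $\iota'\colon D\to\widetilde D'$ and $\iota\colon D\to\widetilde D$.

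Finally, for the ``more precisely'' statement: given a $\psi$-equivariant $\Lambda(N_0)$-homomorphism $f\colon D\to D_1$ into an \'etale $T'_\ast$-module $D_1$, the universal property of $\widetilde D'$ (Proposition \ref{etalehull} applied to the monoid $T'_\ast$) provides a unique morphism $\widetilde f'\colon\widetilde D'\to D_1$ of \'etale $T'_\ast$-modules with $\widetilde f'\circ\iota'=f$. Transporting along the isomorphism $\widetilde D'\cong\widetilde D$ yields the desired unique factorization through $\iota$. The only nontrivial step is the cofinality argument in the second paragraph; everything else is either a direct invocation of a previously established universal property or a bookkeeping check on the direct-limit construction.
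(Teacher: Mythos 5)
Your proposal is correct and takes essentially the same route as the paper: the paper's entire proof is the one-line observation that cofinality of $T'_\ast$ in $T_\ast$ yields $\varinjlim_{t'\in T'_\ast}\varphi_{t'}^*D\cong\varinjlim_{t\in T_\ast}\varphi_t^*D=\widetilde{D}$, and your cofinal-functor argument (together with the remark that the transition maps and $\psi$-actions on each $\varphi_t^*D$ do not depend on the ambient monoid) is exactly the justification being invoked. The final factorization step via the universal property of Proposition \ref{etalehull} applied to $T'_\ast$ matches the intended reading of the ``more precisely'' clause.
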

\begin{proof}
Since $T'_\ast\leq T_\ast$ is cofinal in $T_\ast$ we have $\varinjlim_{t'\in T'_\ast}\varphi_{t'}^*D\cong \varinjlim_{t\in T_\ast}\varphi_t^*D= \widetilde{D}$.
\qed\end{proof}

By Corollary \ref{univphigamma} there exists a homomorphism $\widetilde{\pr}:\widetilde{D_{SV}}(\pi)\to D^{\vee}_{\xi,\ell,\infty}(\pi)$ of \'etale $(\varphi,\Gamma)$-modules over $\Lambda(N_0)$ such that $\pr=\widetilde{\pr}\circ\iota$. Our main result in this section is the following

\begin{thm}\label{pscompdsv}
$D^\vee_{\xi,\ell,\infty}(\pi)$ is the pseudocompact completion of $\Lambda_\ell(N_0)\otimes_{\Lambda(N_0)}\widetilde{D_{SV}}(\pi)$ in the category of \'etale $(\varphi,\Gamma)$-modules over $\Lambda_\ell(N_0)$, ie.\ we have
\begin{equation*}
D^\vee_{\xi,\ell,\infty}(\pi)\cong \varprojlim_{D}D
\end{equation*}
where $D$ runs through the finitely generated \'etale $(\varphi,\Gamma)$-modules over $\Lambda_\ell(N_0)$ arising as a quotient of $\Lambda_\ell(N_0)\otimes_{\Lambda(N_0)}\widetilde{D_{SV}}(\pi)$ by a closed submodule. This holds in \emph{any} topology on $\Lambda_\ell(N_0)\otimes_{\Lambda(N_0)}\widetilde{D_{SV}}(\pi)$ making both the maps $1\otimes\iota\colon D_{SV}(\pi)\to \Lambda_\ell(N_0)\otimes_{\Lambda(N_0)}\widetilde{D_{SV}}(\pi)$, $d\mapsto 1\otimes\iota(d)$ and $1\otimes\widetilde{\pr}\colon \Lambda_\ell(N_0)\otimes_{\Lambda(N_0)}\widetilde{D_{SV}}(\pi)\to D^\vee_{\xi,\ell,\infty}(\pi)$ continuous.
\end{thm}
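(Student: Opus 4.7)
The plan is to verify that $(D^\vee_{\xi,\ell,\infty}(\pi),\,1\otimes\widetilde{\pr})$ satisfies the universal property of the pseudocompact completion of $\Lambda_\ell(N_0)\otimes_{\Lambda(N_0)}\widetilde{D_{SV}}(\pi)$ in the category of finitely generated \'etale $(\varphi,\Gamma)$-modules over $\Lambda_\ell(N_0)/\varpi^h$. Since $D^\vee_{\xi,\ell,\infty}(\pi)=\varprojlim_M M_\infty^\vee[1/X]$ is by construction a projective limit of such modules, it is already pseudocompact; the task is therefore to show that for every finitely generated \'etale $(\varphi,\Gamma)$-module $D$ over $\Lambda_\ell(N_0)/\varpi^h$ and every continuous $\Lambda_\ell(N_0)$-linear, $\varphi$- and $\Gamma$-equivariant homomorphism $g\colon \Lambda_\ell(N_0)\otimes_{\Lambda(N_0)}\widetilde{D_{SV}}(\pi)\to D$, there exists a unique $\hat{g}\colon D^\vee_{\xi,\ell,\infty}(\pi)\to D$ with $g=\hat{g}\circ(1\otimes\widetilde{\pr})$.

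For existence I would precompose $g$ with the canonical $\psi$-equivariant map $D_{SV}(\pi)\xrightarrow{\iota}\widetilde{D_{SV}}(\pi)\xrightarrow{1\otimes-}\Lambda_\ell(N_0)\otimes_{\Lambda(N_0)}\widetilde{D_{SV}}(\pi)$ to obtain $f:=g\circ(1\otimes\iota)\colon D_{SV}(\pi)\to D$, which is continuous, $\psi_s$- and $\Gamma$-equivariant (using that $\iota$ is $\psi$-equivariant by Proposition \ref{etalehull}, and that any $\varphi_s$-equivariant map into an \'etale module is automatically $\psi_s$-equivariant). Proposition \ref{1otimestildeprinj} then yields a unique $\hat{f}\colon D^\vee_{\xi,\ell,\infty}(\pi)\to D$ with $f=\hat{f}\circ\pr$, and I set $\hat{g}:=\hat{f}$. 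The identity $(1\otimes\widetilde{\pr})\circ(1\otimes\iota)=\pr$ implies that $\hat{g}\circ(1\otimes\widetilde{\pr})$ and $g$ agree when precomposed with $1\otimes\iota$; since both are continuous $\Lambda_\ell(N_0)$-linear $\varphi$-equivariant maps and $\widetilde{D_{SV}}(\pi)=\varinjlim_{t\in T_+}\varphi_t^*D_{SV}(\pi)$ is generated as an \'etale $T_+$-module over $\Lambda(N_0)$ by $\iota(D_{SV}(\pi))$, the two maps must coincide everywhere. Uniqueness of $\hat{g}$ then follows from the uniqueness clause of Proposition \ref{1otimestildeprinj}.

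For the explicit inverse limit description $D^\vee_{\xi,\ell,\infty}(\pi)\cong \varprojlim_D D$, the universal property above yields a map $D^\vee_{\xi,\ell,\infty}(\pi)\to \varprojlim_D D$ by factoring each quotient map. Conversely, each $M_\infty^\vee[1/X]$ itself occurs as a finitely generated \'etale quotient of $\Lambda_\ell(N_0)\otimes_{\Lambda(N_0)}\widetilde{D_{SV}}(\pi)$: by Lemma \ref{minw} the composite $W^\vee\to M_k^\vee[1/X]$ is surjective for $k\gg 0$, so the image of the composite $\Lambda_\ell(N_0)\otimes_{\Lambda(N_0)}\widetilde{D_{SV}}(\pi)\xrightarrow{1\otimes\widetilde{\pr}}D^\vee_{\xi,\ell,\infty}(\pi)\twoheadrightarrow M_\infty^\vee[1/X]$ is dense, hence equal to $M_\infty^\vee[1/X]$ by compactness of the target. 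This provides the inverse $\varprojlim_D D\twoheadrightarrow \varprojlim_M M_\infty^\vee[1/X]=D^\vee_{\xi,\ell,\infty}(\pi)$, and the two maps are mutually inverse by construction. The main obstacle is the equality $g=\hat{g}\circ(1\otimes\widetilde{\pr})$: one must propagate the agreement from the generating set $1\otimes\iota(D_{SV}(\pi))$ to all of $\Lambda_\ell(N_0)\otimes_{\Lambda(N_0)}\widetilde{D_{SV}}(\pi)$, exploiting $\Lambda_\ell(N_0)$-linearity, $\varphi_t$-equivariance, and the direct-limit description of $\widetilde{D_{SV}}(\pi)$, while being careful that the universal property of the \'etale hull is formulated for $\Lambda(N_0)$-linear maps into \'etale $T_+$-modules over $\Lambda(N_0)$, not the $\Lambda_\ell(N_0)$-linear $(\varphi,\Gamma)$-module setting in which we are working.
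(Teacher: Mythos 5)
Your overall architecture --- verifying that $(D^\vee_{\xi,\ell,\infty}(\pi),\,1\otimes\widetilde{\pr})$ has the universal property of the pseudocompact completion and then unwinding this into the inverse-limit description --- is essentially a repackaging of the paper's proof, which constructs $\widehat{1\otimes\widetilde{\pr}}\colon\varprojlim_D D\to D^\vee_{\xi,\ell,\infty}(\pi)$ directly, deduces surjectivity from the density statement coming from Lemma \ref{minw}, and deduces injectivity from Proposition \ref{1otimestildeprinj}; these are exactly the two ingredients you use. Your existence and uniqueness argument for $\hat{g}$ (precompose with $1\otimes\iota$, apply Proposition \ref{1otimestildeprinj}, then propagate the agreement from $1\otimes\iota(D_{SV}(\pi))$ using that $\widetilde{D_{SV}}(\pi)=\varinjlim_k\varphi_{s^k}^*D_{SV}(\pi)$ is generated over $\Lambda(N_0)$ by the elements $\varphi_s^k(\iota(x))$) is sound.

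The one step that does not work as written is the justification of surjectivity onto $M_\infty^\vee[1/X]$: you claim the image is ``dense, hence equal to $M_\infty^\vee[1/X]$ by compactness of the target,'' but $M_\infty^\vee[1/X]$ is a finitely generated module over $\Lambda_\ell(N_0)/\varpi^h$ and is \emph{not} compact, so density alone does not give surjectivity. You genuinely need this surjectivity, since otherwise $M_\infty^\vee[1/X]$ need not occur among the quotients $D$ and your inverse map $\varprojlim_D D\to\varprojlim_M M_\infty^\vee[1/X]$ is not defined. The gap is repairable: Lemma \ref{minw} gives surjectivity of $W^\vee\to M_k^\vee$ at each finite level $k$, so the image of $\Lambda_\ell(N_0)\otimes_{\Lambda(N_0)}W^\vee$ already surjects onto $(M_\infty^\vee[1/X])_{H_0}\cong M^\vee[1/X]$, and a topological Nakayama argument (the augmentation ideal of $H_0$ lies in the radical of $\Lambda_\ell(N_0)/\varpi^h$, and $M_\infty^\vee[1/X]$ is finitely generated) upgrades this to surjectivity onto $M_\infty^\vee[1/X]$. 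Note that the paper sidesteps this entirely: it only claims \emph{density} of the image of $1\otimes\widetilde{\pr}$ in $D^\vee_{\xi,\ell,\infty}(\pi)$ and then uses that the image of the pseudocompact module $\varprojlim_D D$ under the induced continuous map is closed, hence equal to the closure of a dense subset.
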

\begin{rem}
Since the map $\pr\colon D_{SV}(\pi)\to D^\vee_{\xi,\ell,\infty}(\pi)$ is continuous, there exists such a topology on $ \Lambda_\ell(N_0)\otimes_{\Lambda(N_0)}\widetilde{D_{SV}}(\pi)$. For instance we could take either the final topology of the map $D_{SV}(\pi)\to \Lambda_\ell(N_0)\otimes_{\Lambda(N_0)}\widetilde{D_{SV}}(\pi)$ or the initial topology of the map $\Lambda_\ell(N_0)\otimes_{\Lambda(N_0)}\widetilde{D_{SV}}(\pi)\to D^\vee_{\xi,\ell,\infty}(\pi)$.
\end{rem}
\begin{proof}
The homomorphism $\widetilde{\pr}$ factors through the map $1\otimes\id \colon \widetilde{D_{SV}}(\pi)\to \Lambda_\ell(N_0)\otimes_{\Lambda(N_0)}\widetilde{D_{SV}}(\pi)$ since $D^\vee_{\xi,\ell,\infty}(\pi)$ is a module over $\Lambda_\ell(N_0)$, so we obtain a homomorphism 
\begin{equation*}
1\otimes\widetilde{\pr}\colon \Lambda_\ell(N_0)\otimes_{\Lambda(N_0)}\widetilde{D_{SV}}(\pi)\to D^\vee_{\xi,\ell,\infty}(\pi)
\end{equation*}
of \'etale $(\varphi,\Gamma)$-modules over $\Lambda_\ell(N_0)$. At first we claim that $1\otimes\widetilde{\pr}$ has dense image. Let $M\in\mathcal{M}(\pi^{H_0})$ and $W\in\mathcal{B}_+(\pi)$ be arbitrary. Then by Lemma \ref{minw} the map $\pr_{W,M,k}\colon W^\vee\to M_k^\vee$ is surjective for $k\geq 0$ large enough. This shows that the natural map 
\begin{equation*}
1\otimes\pr_{W,M,k}\colon\Lambda_\ell(N_0)\otimes_{\Lambda(N_0)}W^\vee\to \Lambda_\ell(N_0)\otimes_{\Lambda(N_0)}M_k^\vee\cong M_k^\vee[1/X]
\end{equation*}
is surjective. However, $1\otimes\pr_{W,M,k}$ factors through $\Lambda_\ell(N_0)\otimes_{\Lambda(N_0)}D_{SV}(\pi)$ by the Remarks after Lemma \ref{prpsigamma}. In particular, the natural map
\begin{equation*}
1\otimes\pr_{M,k}\colon\Lambda_\ell(N_0)\otimes_{\Lambda(N_0)}D_{SV}(\pi)\to M_k^\vee[1/X]
\end{equation*}
is surjective for all $M\in\mathcal{M}(\pi^{H_0})$ and $k\geq 0$ large enough (whence in fact for all $k\geq 0$). This shows that the image of the map
\begin{equation*}
1\otimes\pr\colon \Lambda_\ell(N_0)\otimes_{\Lambda(N_0)}D_{SV}(\pi)\to D^\vee_{\xi,\ell,\infty}(\pi)
\end{equation*}
is dense whence so is the image of $1\otimes\widetilde{\pr}$. By the assumption that $1\otimes\widetilde{\pr}$ is continuous we obtain a surjective homomorphism
\begin{equation*}
\widehat{1\otimes\widetilde{\pr}}\colon\varprojlim_{D}D\to D^\vee_{\xi,\ell,\infty}(\pi)
\end{equation*}
of pseudocompact $(\varphi,\Gamma)$-modules over $\Lambda_\ell(N_0)$ where $D$ runs through the finitely generated \'etale $(\varphi,\Gamma)$-modules over $\Lambda_\ell(N_0)$ arising as a quotient of $\Lambda_\ell(N_0)\otimes_{\Lambda(N_0)}\widetilde{D_{SV}}(\pi)$. 

Let $0\neq (x_D)_D$ be in the kernel of $\widehat{1\otimes\widetilde{\pr}}$. Then there exists a finitely generated \'etale $(\varphi,\Gamma)$-module $D$ over $\Lambda_\ell(N_0)$ with a surjective continuous homomorphism $\Lambda_\ell(N_0)\otimes_{\Lambda(N_0)}\widetilde{D_{SV}}(\pi)\twoheadrightarrow D$ such that $x_D\neq 0$. By Proposition \ref{1otimestildeprinj} this map factors through $D^\vee_{\xi,\ell,\infty}(\pi)$ contradicting to the assumption $\widehat{1\otimes\widetilde{\pr}}((x_D)_D)=0$.
\qed\end{proof}

\begin{rem}
Breuil's functor $D^\vee_\xi$ can therefore be computed from $D_{SV}$ the following way: For a smooth $o/\varpi^h$-representation $\pi$ we have $D^\vee_\xi(\pi)\cong (\varprojlim_{D}D)_{H_0}\cong \varprojlim_D D_{H_0}$ where $D$ runs through the finitely generated \'etale $(\varphi,\Gamma)$-modules over $\Lambda_\ell(N_0)$ arising as a quotient of $\Lambda_\ell(N_0)\otimes_{\Lambda(N_0)}\widetilde{D_{SV}}(\pi)$ by a closed submodule.
\end{rem}

\section{Nongeneric $\ell$}  \label{nongeneric}

Assume from now on that $\ell=\ell_\alpha$ is a nongeneric Whittaker functional defined by the projection of $N_0$ onto $N_{\alpha,0}\cong\Zp$ for some simple root $\alpha\in\Delta$. 

\begin{rem}
In \cite{B} the Whittaker functional $\ell$ is assumed to be generic. However, even if $\ell$ is not generic, the functor $D^\vee_\xi$ (hence also $D^\vee_{\xi,\ell,\infty}$) is right exact even though the restriction of $D^\vee_\xi$ to the category $SP_{o/\varpi^h}$ may not be exact in general. 
\end{rem}

\subsection{Compatibility with parabolic induction}

Let $P=L_PN_P$ be a parabolic subgroup of $G$ containing $B$ with Levi component $L_P$ and unipotent radical $N_P$ and let $\pi_P$ be a smooth $o/\varpi^h$-representation of $L_P$ that we view as a representation of $P^-$ via the quotient map $P^-\twoheadrightarrow L_P$ where $P^-=L_PN_{P^-}$ is the parabolic subgroup opposite to $P$. Since $T$ is contained in $L_P$, we may consider the same cocharacter $\xi\colon \mathbb{Q}_p^\times\to T$ for the group $L_P$ instead of $G$. Further, we put $N_{L_P}:=N\cap L_P$ and $N_{L_P,0}:=N_0\cap L_P$.

As in \cite{B} denote by $W:=N_G(T)/T$ (resp.\ by $W_P:=(N_G(T)\cap L_P)/T$) the Weyl group of $G$ (resp.\ of $L_P$) and by $w_0\in W$ the element of maximal length. We have a canonical system $$K_P:=\{w\in W\mid w^{-1}(\Phi_P^+)\subseteq\Phi^+\}$$
of representatives (the Kostant representatives) of the right cosets $W_P\backslash W$ where $\Phi_P^+$ denotes the set of positivie roots of $L_P$ with respect to the Borel subgroup $L_P\cap B$. We have a generalized Bruhat decomposition
$$G=\coprod_{w\in K_P}P^-wB=\coprod_{w\in K_P}P^-wN\ .$$

Now let $\pi_P$ be a smooth representation of $L_P$ over $A$. We regard $\pi_P$ as a representation of $P^-$ via the quotient map $P^-\twoheadrightarrow L_P$. Then the parabolically induced representation $\Ind_{P^-}^G\pi_P$ admits \cite{V} (see also \cite{E} \S 4.3) a filtration by $B$-subrepresentations whose graded pieces are contained in
$$\mathcal{C}_w(\pi_P):=c-\Ind_{P^-}^{P^-wN}\pi_P$$
for $w\in K_P$ where $c-\Ind_{P^-}^*$ stands for the space of locally constant functions on $*\supseteq P^-$ with compact support modulo $P^-$. $B$ acts on $\mathcal{C}_w(\pi_P)$ by right translations. Moreover, the first graded piece equals $\mathcal{C}_1(\pi_P)$.

\begin{lem}\label{DxiCw}
Let $\pi'\leq \mathcal{C}_w(\pi_P)$ be any $B$-subrepresentation for some $w\in K_P\setminus\{1\}$. Then we have $D^\vee_\xi(\pi')=0$.
\end{lem}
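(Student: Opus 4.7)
The plan is to reduce the vanishing of $D^\vee_\xi(\pi')$ to the assertion that $\bigcap_{W\in\mathcal{B}_+(\pi')} W = \{0\}$ inside $\pi'$. First, $D^\vee_\xi(\pi')=0$ will follow once we know that the natural map $(\pi')^\vee \to D^\vee_\xi(\pi')$ vanishes: this map is by construction the inverse limit over $M \in \mathcal{M}((\pi')^{H_0})$ of the composite $(\pi')^\vee \twoheadrightarrow ((\pi')^{H_0})^\vee \twoheadrightarrow M^\vee \to M^\vee[1/X]$, whose first two arrows are Pontryagin-dual to inclusions, hence surjective; so if the composite is zero, the localisation map $M^\vee \to M^\vee[1/X]$ is zero for every $M$, forcing the finitely generated $o/\varpi^h\bs X\js$-module $M^\vee$ to be $X$-torsion and therefore finite over $o$, whence $M^\vee[1/X] = 0$ and $D^\vee_\xi(\pi') = \varprojlim_M M^\vee[1/X] = 0$. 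Second, by Remark 2 following Lemma \ref{prpsigamma}, this natural map factors through the maximal Hausdorff quotient
\begin{equation*}
\overline{D}_{SV}(\pi') = \Bigl(\bigcap_{W\in\mathcal{B}_+(\pi')} W\Bigr)^{\!\vee}
\end{equation*}
of $D_{SV}(\pi')$, so the vanishing of the intersection indeed suffices.

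To show $\bigcap_W W = \{0\}$ I would identify $\mathcal{C}_w(\pi_P)$, as a $B$-representation under right translation, with the compactly induced representation $c-\Ind_{TN_w'}^B \tilde{\pi}_P$, where $N_w' := N \cap w^{-1}P^-w$ and $\tilde{\pi}_P$ is $\pi_P$ equipped with the twisted $TN_w'$-action $b \mapsto \pi_P(wbw^{-1})$ (using that $wbw^{-1} \in P^-$ and regarding $\pi_P$ as an $L_P$-representation via the standard projection $P^- \twoheadrightarrow L_P$). The crucial feature is that $w \neq 1$ forces $N_w'$ to be nontrivial: when $P \neq B$ it contains $N_{w^{-1}\beta}$ for every $\beta \in \Phi_P^+$ (each a positive root by the Kostant-representative property $w^{-1}\Phi_P^+ \subseteq \Phi^+$), and when $P = B$ one uses instead that $w \neq 1$ must send some negative root to a positive one. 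Given $0 \neq f \in \pi'$, whose representative $\phi_f$ on $N_w'\backslash N$ has compact support, the $T_+$-action on $N_w'\backslash N$ by $N_w' n \mapsto N_w'(t^{-1}nt)$ (together with the twist by $\pi_P(wtw^{-1})$ on values) contracts supports into arbitrarily small neighbourhoods of the identity coset, which should allow one to build, in the spirit of Lemma 3.2 in \cite{SVig}, a generating $B_+$-subrepresentation $W \subseteq \pi'$ that excludes $f$.

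The main obstacle is the adaptation of Lemma 3.2 in \cite{SVig}, formulated there for compact induction from the compact subgroup $B_0 = T_0 N_0$, to our setting where the inducing subgroup $TN_w'$ is neither compact (it contains all of $T$) nor contained in $B_0$. The parametrization of $\mathcal{B}_+$-subrepresentations by order-preserving profiles and the verification that their intersection vanishes must be redone in this broader setting, carefully tracking both the additional $TN_w'$-covariance and the twist by $\pi_P(w(\cdot)w^{-1})$; once this is in place, one concludes $\bigcap_W W = \{0\}$ and hence $D^\vee_\xi(\pi') = 0$.
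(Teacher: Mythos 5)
Your reduction of the vanishing of $D^\vee_\xi(\pi')$ to the vanishing of the natural map $(\pi')^\vee\to D^\vee_\xi(\pi')$ is correct (it is the same argument as in Corollary \ref{compactlyinduced}), and the factorization through $\overline{D}_{SV}(\pi')=(\bigcap_{W\in\mathcal{B}_+(\pi')}W)^\vee$ is indeed available by the Remark after Lemma \ref{prpsigamma}. The fatal problem is the claim you then set out to prove: $\bigcap_{W\in\mathcal{B}_+(\pi')}W=\{0\}$ is \emph{false} for the representations $\mathcal{C}_w(\pi_P)$ in general. Take $G=\GL_2$, $P=B$, $w=w_0$ and $\pi_B$ a character of $T$. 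Then $w_0Nw_0^{-1}=N^-\subseteq B^-$, so $B^-w_0N=B^-w_0$ is a single coset and $\mathcal{C}_{w_0}(\pi_B)\cong\pi_B^{w_0}$ is one-dimensional with $N$ acting trivially and $T$ acting through the (unit-valued) character $\pi_B(w_0\cdot w_0^{-1})$. Every $B_+$-stable $o$-submodule is then $B$-stable, the only generating one is the whole space, and $\bigcap_W W=\pi\neq 0$. (The same degeneration occurs whenever $N\subseteq w^{-1}P^-w$, i.e.\ whenever $(w^{-1}P^-w\cap N)\backslash N$ is a point, so your contraction-of-supports mechanism has nothing to act on.) The implication ``$\bigcap_W W=0\Rightarrow D^\vee_\xi=0$'' only goes one way, and in these cases the vanishing of $D^\vee_\xi$ has an entirely different source: $N_0$ acts through a quotient on which admissibility forces the modules $M$ to be finite over $o$, so $M^\vee[1/X]=0$ even though the Schneider--Vigneras intersection is everything. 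A secondary issue is that you never reduce to $\pi'=\mathcal{C}_w(\pi_P)$ (via the right exactness of $D^\vee_\xi$, Prop.\ 2.7$(ii)$ of \cite{B}); without that reduction you cannot hope to describe $\mathcal{B}_+(\pi')$ for an arbitrary subrepresentation $\pi'$ at all, and you yourself flag that the adaptation of Lemma 3.2 of \cite{SVig} to induction from the non-compact group $TN_w'$ is missing.

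The paper's proof works entirely on the Breuil side and avoids $D_{SV}$ altogether: after reducing to $\pi'=\mathcal{C}_w(\pi_P)$ by right exactness, it follows Breuil's Prop.\ 6.2. When $w^{-1}N_{P^-}w\cap N_0\not\subseteq H_0$ the double coset space $(w^{-1}N_{P^-}w\cap N_0)\backslash N_0/H_0$ is trivial and Breuil's Lemma 6.5 applies verbatim. In the remaining case $\{1\}\neq w^{-1}N_{P^-}w\cap N_0\subseteq H_0$ (which is exactly where your approach also degenerates) one shows instead that the Hecke operator $F$ acts nilpotently on $\mathcal{C}((w^{-1}N_{P^-}w\cap N_0)\backslash N_0,\pi_P^w)^{H_0}$, because the trace map $\Tr_{H_0/sH_0s^{-1}}$ is zero there: every double coset in $(w^{-1}N_{P^-}w\cap H_0)\backslash H_0/sH_0s^{-1}$ has size divisible by $p$ and $sH_0s^{-1}$-invariant functions are constant on them. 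Vanishing of $D^\vee_\xi$ then follows from Prop.\ 2.7$(iii)$ of \cite{B}. If you want to salvage your plan, you would have to replace the intersection criterion by a nilpotence (or $X$-torsion) criterion of this kind.
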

\begin{proof}
By the right exactness of $D^\vee_\xi$ (Prop.\ 2.7$(ii)$ in \cite{B}) it suffices to treat the case $\pi'=\mathcal{C}_w(\pi_P)$. For this the same argument works as in Prop.\ 6.2 \cite{B} with the following modification:

The particular shape of $\ell$ is only used in Lemma 6.5 in \cite{B} (note that the subgroup $H_0=\Ker(\ell\colon N_0\to\Zp)$ is denoted by $N_1$ therein). For an element $w\neq 1$ in the Weyl group we have $(w^{-1}N_{P^-}w\cap N_0)\backslash N_0/H_0=\{1\}$ if and only if $H_0$ does not contain $w^{-1}N_{P^-}w\cap N_0$. Whenever $w^{-1}N_{P^-}w\cap N_0\not\subseteq H_0$, the statement of Lemma 6.5 in \cite{B} is true and there is nothing to prove. 

In case we have $\{1\}\neq  w^{-1}N_{P^-}w\cap N_0\subseteq H_0$, the statement of Lemma 6.5 is not true for $\ell=\ell_\alpha$. However, the argument using it in the proof of Prop.\ 6.2 can be replaced by the following: the operator $F$ acts on the space $\mathcal{C}((w^{-1}N_{P^-}w\cap N_0)\backslash N_0,\pi_P^w)^{H_0}$ nilpotently. Indeed, the trace map $\Tr_{H_0/sH_0s^{-1}}$
\begin{eqnarray*}
\mathcal{C}((w^{-1}N_{P^-}w\cap N_0)\backslash N_0,\pi_P^w)^{sH_0s^{-1}}\to \mathcal{C}((w^{-1}N_{P^-}w\cap N_0)\backslash N_0,\pi_P^w)^{H_0}
\end{eqnarray*}
is zero as each double coset $(w^{-1}N_{P^-}w\cap H_0)\backslash H_0/sH_0s^{-1}$ has size divisible by $p$ and any function in $\mathcal{C}((w^{-1}N_{P^-}w\cap N_0)\backslash N_0,\pi_P^w)^{sH_0s^{-1}}$ is constant on these double cosets. The statement follows from Prop.\ 2.7$(iii)$ in \cite{B}.
\qed\end{proof}

In order to extend Thm.\ 6.1 in \cite{B} (the compatibility with parabolic induction) to our situation ($\ell=\ell_\alpha$) we need to distinguish two cases: whether the root subgroup $N_\alpha$ is contained in $L_P$ or in $N_P$. Similarly to \cite{E} we define the $s^{\mathbb{Z}}N_{L_P}$-ordinary part $\mathrm{Ord}_{s^\mathbb{Z}N_{L_P}}(\pi_P)$ of a smooth representation $\pi_P$ of $L_P$ as follows. We equip $\pi_P^{N_{L_P,0}}$ with the Hecke action $F_P:=\Tr_{N_{L_P,0}/sN_{L_P,0}s^{-1}}\circ (s\cdot)$ of $s$ making $\pi_P^{N_{L_P,0}}$ a module over the polynomial ring $o/\varpi^h[F_P]$ and put $$\mathrm{Ord}_{s^\mathbb{Z}N_{L_P}}(\pi_P):=\Hom_{o/\varpi^h[F_p]}(o/\varpi^h[F_P,F_P^{-1}],\pi_P^{N_{L_P,0}})_{F_P-fin}$$ where $F_P-fin$ stands for those elements in the $\Hom$-space whose orbit under the action of $F_P$ is finite. By Lemmata 3.1.5 and 3.1.6 in \cite{E} we may identify $\mathrm{Ord}_{s^\mathbb{Z}N_{L_P}}(\pi_P)$ with an $o/\varpi^h[F_P]$-submodule in $\pi_P^{N_{L_P,0}}$ by sending a map $f\in \mathrm{Ord}_{s^\mathbb{Z}N_{L_P}}(\pi_P)$ to its value $f(1)\in \pi_P^{N_{L_P,0}}$ at $1\in o/\varpi^h[F_P,F_P^{-1}]$.

\begin{pro}\label{parindnongen}
Let $\pi_P$ be a smooth locally admissible representation of $L_P$ over $A$ which we view by inflation as a representation of $P^-$. We have an isomorphism 
\begin{equation*}
D^\vee_{\xi}\left(\Ind_{P^-}^G\pi_P\right)\cong \begin{cases}D^\vee_{\xi}(\pi_P)&\text{if }N_\alpha\subseteq L_P\\
o/\varpi^h\bg X\jg\widehat{\otimes}_{o/\varpi^h}\mathrm{Ord}_{s^{\mathbb{Z}}N_{L_P}}(\pi_P)^\vee&\text{if }N_\alpha\subseteq N_P
\end{cases}
\end{equation*}
as \'etale $(\varphi,\Gamma)$-modules. In particular, for $P=B$ we have $D^\vee_\xi(\Ind_{B^-}\pi_B)\cong o/\varpi^h\bg X\jg\widehat{\otimes}_{o/\varpi^h}\pi_B^\vee$, ie.\ the value of $D^\vee_\xi$ at the principal series is the same $(\varphi,\Gamma)$-module of rank $1$ regardless of the choice of $\ell$ (generic or not).
\end{pro}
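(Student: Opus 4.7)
The plan is to reduce the computation to the open Bruhat cell $\mathcal{C}_1(\pi_P)$ and then analyze $D^\vee_\xi(\mathcal{C}_1(\pi_P))$ in each case by comparing $\mathcal{M}$-systems.

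For the reduction, the filtration of $\Ind_{P^-}^G\pi_P$ recalled in the excerpt has graded pieces for $w\in K_P\setminus\{1\}$ that are $B$-subrepresentations of $\mathcal{C}_w(\pi_P)$, all annihilated by $D^\vee_\xi$ by Lemma~\ref{DxiCw}. Iterating the right exactness of $D^\vee_\xi$ (Proposition~2.7(ii) of \cite{B}) on the corresponding short exact sequences yields $D^\vee_\xi(Q)=0$ for the quotient $Q:=\Ind_{P^-}^G\pi_P/\mathcal{C}_1(\pi_P)$, and right exactness applied to $0\to\mathcal{C}_1(\pi_P)\to\Ind_{P^-}^G\pi_P\to Q\to 0$ then gives a canonical isomorphism $D^\vee_\xi(\Ind_{P^-}^G\pi_P)\cong D^\vee_\xi(\mathcal{C}_1(\pi_P))$.

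In Case~1 ($N_\alpha\subseteq L_P$) one has $H_0=(H_0\cap N_{L_P,0})\cdot N_{P,0}$, with $\ell_\alpha|_{N_{L_P,0}}\colon N_{L_P,0}\twoheadrightarrow\Zp$ surjective and $\ell_\alpha|_{N_{P,0}}=0$; in particular the variable $X$ and the operators $\varphi_s$ and $\Gamma$ are identical whether computed for $(N_0,H_0)$ or for $(N_{L_P,0},H_0\cap N_{L_P,0})$. For each $v\in\pi_P^{H_0\cap N_{L_P,0}}$ the function $\tilde f_v\in\mathcal{C}_1(\pi_P)^{H_0}$ supported on $P^-N_{P,0}$ with $\tilde f_v(pn)=p\cdot v$ (for $p\in P^-$, $n\in N_{P,0}$) is well-defined. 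For $M_P\in\mathcal{M}(\pi_P^{H_0\cap N_{L_P,0}})$ I would show that $M:=\Lambda(N_0/H_0)/\varpi^h[F]\cdot\{\tilde f_v:v\in M_P\}$ lies in a cofinal subfamily of $\mathcal{M}(\mathcal{C}_1(\pi_P)^{H_0})$ and satisfies $M^\vee[1/X]\cong M_P^\vee[1/X]$ as $(\varphi,\Gamma)$-modules, via Lemma~\ref{induced} applied to the finite quotient $H_0/(H_0\cap N_{L_P,0})\cong N_{P,0}/(H_0\cap N_{P,0})$, which kills the ``extra $N_{P,0}$-direction'' after inverting $X$.

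In Case~2 ($N_\alpha\subseteq N_P$) one has $N_{L_P,0}\subseteq H_0$ and $H_0=N_{L_P,0}\cdot(H_0\cap N_{P,0})$, with $\ell_\alpha$ factoring through $N_0\twoheadrightarrow N_{P,0}$. Identifying $\mathcal{C}_1(\pi_P)$ with compactly supported functions $g\colon N_P\to\pi_P$ (via $N=N_{L_P}\ltimes N_P$ and the relation $f(uv)=u\cdot f(v)$), the $H_0$-invariants become functions with values in $\pi_P^{N_{L_P,0}}$, right-invariant under $H_0\cap N_{P,0}$, and satisfying a twisted $N_{L_P,0}$-conjugation covariance. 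Stalk by stalk at $1\in N_P$, the Hecke operator $F=\Tr_{H_0/sH_0s^{-1}}\circ(s\cdot)$ induces the operator $F_P=\Tr_{N_{L_P,0}/sN_{L_P,0}s^{-1}}\circ(s\cdot)$ on $\pi_P^{N_{L_P,0}}$, so admissibility together with finite generation over $\Lambda(N_0/H_0)/\varpi^h[F]$ and the $p$-adic contraction $sN_{P,0}s^{-1}\subsetneq N_{P,0}$ will force, after inverting $X$, each $M\in\mathcal{M}(\mathcal{C}_1(\pi_P)^{H_0})$ to see $\pi_P^{N_{L_P,0}}$ only through its $F_P$-locally-finite subspace $\mathrm{Ord}_{s^{\mathbb{Z}}N_{L_P}}(\pi_P)$. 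A direct match of the $(\varphi,\Gamma)$-action then yields the asserted isomorphism $D^\vee_\xi(\mathcal{C}_1(\pi_P))\cong o/\varpi^h\bg X\jg\,\widehat{\otimes}_{o/\varpi^h}\,\mathrm{Ord}_{s^{\mathbb{Z}}N_{L_P}}(\pi_P)^\vee$; the specialization to $P=B$ is then immediate since $N_{L_P,0}=\{1\}$ forces $\mathrm{Ord}_{s^{\mathbb{Z}}N_{L_P}}(\pi_B)=\pi_B$.

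The main obstacle will be Case~2: pinning down precisely that only the $F_P$-locally-finite elements of $\pi_P^{N_{L_P,0}}$ survive in the localized projective limit. Translating admissibility and finite generation in $\mathcal{M}(\mathcal{C}_1(\pi_P)^{H_0})$ into the ordinary condition on stalks will require an argument in the spirit of Lemma~\ref{tr} and Proposition~\ref{M_ketale}, using that powers of $s$ shrink the $N_P$-support arbitrarily close to $1$.
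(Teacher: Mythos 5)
Your overall route is the paper's: reduce to the open cell $\mathcal{C}_1(\pi_P)$ via Lemma \ref{DxiCw} and the right exactness of $D^\vee_\xi$, then compute the $H_0$-invariants and the Hecke action separately according to whether $N_\alpha$ lies in $L_P$ or in $N_P$; your Case 2 analysis (stalk at $1\in N_P$, $F$ inducing $F_P$ on values, admissibility plus surjectivity of $1\otimes F$ after inverting $X$ forcing each $M$ to be $\mathcal{C}(\Zp,M_\ast)$ with $M_\ast$ a finite $F_P$-bijective submodule) is exactly what the paper does, and your treatment of $P=B$ is also the paper's. Two repairs are needed. First, your Case 1 justification is wrong as stated: since $N_{P,0}\subseteq H_0=\Ker(\ell_\alpha)$, the quotient $H_0/(H_0\cap N_{L_P,0})$ is isomorphic to $N_{P,0}$, hence infinite, while $N_{P,0}/(H_0\cap N_{P,0})$ is trivial; the claimed isomorphism fails and Lemma \ref{induced} (which concerns the finite groups $H_0/H_k$) does not apply. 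But nothing needs to be killed after inverting $X$ here: precisely because $N_{P,0}\subseteq H_0$, taking $H_0$-invariants of $\mathcal{C}(N_{P,0},\pi_P)$ already collapses the $N_{P,0}$-direction to constant functions, giving $\pi_P^{H_0\cap N_{L_P,0}}$ compatibly with $X$, $F$ and $\Gamma$, whence $D^\vee_\xi(\mathcal{C}_1(\pi_P))\cong D^\vee_\xi(\pi_P)$ directly. Second, the passage from $\mathcal{C}_1(\pi_P)$ (compactly supported functions on all of the noncompact $N_P$) to functions on the compact $N_{P,0}$ is needed in \emph{both} cases, not only in Case 2 where you flag it; the paper outsources this to Prop.\ 6.7 of \cite{B}, whose proof carries over unchanged and yields $D^\vee_\xi(\mathcal{C}_1(\pi_P))\cong D^\vee((\Ind_{P^-\cap N_0}^{N_0}\pi_P)^{H_0})$ before any case distinction, so you should either invoke that result or reproduce its support-shrinking argument up front. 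With these two corrections your outline coincides with the paper's proof.
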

\begin{proof}
By Lemma \ref{DxiCw} and the right exactness of $D_\xi^\vee$ (Prop.\ 2.7$(ii)$ in \cite{B}) it suffices to show that $D^\vee_\xi(\mathcal{C}_1(\pi_P))\cong D^\vee_{\xi}(\pi_P)$. Moreover, the proof of Prop.\ 6.7 in \cite{B} goes through without modification so we have an isomorphism $D^\vee_\xi(\mathcal{C}_1(\pi_P))\cong D^\vee((\Ind_{P^-\cap N_0}^{N_0}\pi_P)^{H_0})$. Hence we are reduced to computing $D^\vee((\Ind_{P^-\cap N_0}^{N_0}\pi_P)^{H_0})$ in terms of $\pi_P$. We further have an identification $$\Ind_{P^-\cap N_0}^{N_0}\pi_P\cong\mathcal{C}(N_{P,0},\pi_P)\cong \mathcal{C}(N_{P,0},o/\varpi^h)\otimes_{o/\varpi^h}\pi_P$$ by equation $(40)$ in \cite{B}. We need to distinguish two cases.

\emph{Case 1: $N_\alpha\subseteq L_P$.} In this case we have $N_{P,0}\subseteq H_0$. Hence we deduce $(\mathcal{C}(N_{P,0},o/\varpi^h)\otimes_{o/\varpi^h}\pi_P)^{H_0}=\pi_P^{H_0/N_{P,0}}=\pi_P^{H_{P,0}}$. So we have $$D^\vee_{\xi}\left(\Ind_{P^-}^G\pi_P\right)\cong D^\vee(\Ind_{P^-\cap N_0}^{N_0}\pi_P)^{H_0})\cong D^\vee(\pi_P^{H_{P,0}})\cong D^\vee_{\xi}(\pi_P)$$ in this case as claimed.

\emph{Case 2: $N_\alpha\subseteq N_P$.} In this case we have $N_{L_P,0}\subseteq H_0$ and $N_{P,0}/(N_{P,0}\cap H_0)\cong \Zp$. So we have an identification $$\mathcal{C}(N_{P,0},\pi_P)^{H_0}\cong \mathcal{C}(N_{P,0}/(N_{P,0}\cap H_0),\pi_P^{N_{L_P,0}})\cong \mathcal{C}(\Zp,\pi_P^{N_{L_P,0}})\ .$$
Here the Hecke action $F=F_G=\Tr_{H_0/sH_0s^{-1}}\circ (s\cdot)$ of $s$ on the right hand side is given by the formula $$F_G(f)(a)=\begin{cases}F_P(f(a/p))&\text{if }a\in p\Zp\\ 0&\text{if }a\in\Zp\setminus p\Zp\end{cases}\ ,$$ where $F_P=\Tr_{N_{L_P,0}/sN_{L_P,0}s^{-1}}\circ (s\cdot)$ denotes the Hecke action of $s$ on $\pi_P^{N_{L_P,0}}$.

Now let $M$ be a finitely generated $o/\varpi^h\bs X\js[F]$ submodule of $\mathcal{C}(\Zp,\pi_P^{N_{L_P,0}})$ that is stable under the action of $\Gamma$ and is admissible as a representation of $\Zp$. By possibly passing to a finite index submodule of $M$ we may assume without loss of generality that the natural map $M^\vee\to M^\vee[1/X]$ is injective whence the map $\id\otimes F\colon o/\varpi^h\bs X\js\otimes_{o/\varpi^h\bs X\js,F}M\to M$ is surjective. Let $f\in M$ be arbitrary. By continuity of $f$ there exists an integer $n\geq 0$ such that $f$ is constant on the cosets of $p^n\Zp$. Writing $f=\sum_{i=0}^{p^n-1}[i]\cdot F^n(f_i)$ (where $[i]\cdot$ denotes the multiplication by the group element $i\in\Zp$) by the surjectivity of $\id\otimes F$ we find that each $f_i$ is necessarily constant as a function on $\Zp$ satisfying $F_P^n(f_0(0))=f(0)$. Put $M_\ast:=\{f(0)\mid f\in M\}\subseteq \pi_P^{N_{L_P,0}}$. By the previous discussion $F_P$ acts surjectively on $M_\ast$ and is generated by the values of elements in $M^{\Zp}$ (ie.\ constant functions) as a module over $A[F_P]$. By the admissibility of $M$ we deduce that $M^{\Zp}$ hence $M_\ast$ is finite (or, equivalently, finitely generated over $o/\varpi^h$). We deduce that in fact we have $M=\mathcal{C}(\Zp,M_\ast)$, ie.\ $M^\vee\cong o/\varpi^h\bs X\js\otimes_{o/\varpi^h}M_\ast^\vee$. Conversely, whenever we have a $o/\varpi^h[F_P]$-submodule $M'\leq\pi_P^{N_{L_P,0}}$ that is finitely generated over $o/\varpi^h$ and on which $F_P$ acts surjectively (hence bijectively as the cardinality of $o/\varpi^h$ is finite) then for $M:=\mathcal{C}(\Zp,M')$ we have $M'=M_\ast$, $M\in \mathcal{M}(\mathcal{C}(\Zp,\pi_P^{N_{L_P,0}}))$, and $M^\vee\cong o/\varpi^h\bs X\js\otimes_{o/\varpi^h}(M')^\vee$ is $X$-torsion free. In particular, we compute 
\begin{align*}
D^\vee_\xi(\mathcal{C}_1(\pi_P))\cong\varprojlim_{M\in\mathcal{M}(\mathcal{C}(\Zp,\pi_P^{N_{L_P,0}}))}M^\vee[1/X]\cong\\
\cong\varprojlim_{\substack{M\in\mathcal{M}(\mathcal{C}(\Zp,\pi_P^{N_{L_P,0}})),\\ M^\vee\hookrightarrow M^\vee[1/X]}}o/\varpi^h\bg X\jg\otimes_{o/\varpi^h}M_\ast^\vee\cong\\
o/\varpi^h\bg X\jg\widehat{\otimes}_{o/\varpi^h}(\varinjlim_{\substack{M\in\mathcal{M}(\mathcal{C}(\Zp,\pi_P^{N_{L_P,0}})),\\ M^\vee\hookrightarrow M^\vee[1/X]}}M_\ast)^\vee =\\  
=o/\varpi^h\bg X\jg\widehat{\otimes}_{o/\varpi^h}\mathrm{Ord}_{s^\mathbb{Z}N_{L_P}}(\pi_P)^\vee
\end{align*}
as claimed.
\qed\end{proof}

\begin{cor}
Assume $L_P\cong \GL_2(\mathbb{Q}_p)\times T'$ where $T'$ is a torus and let $\pi_P\cong \pi_2\otimes_{k}\chi$ be the twist of a supercuspidal modulo $p$ representation $\pi_2$ of $\GL_2(\mathbb{Q}_p)$ by a character $\chi$ of the torus. Then we have $$\dim_{k\bg X\jg}D^\vee_{\xi}\left(\Ind_{P^-}^G\pi_P\right)=\begin{cases}0&\text{if }N_\alpha\not\subseteq L_P\\
2&\text{if }N_\alpha\subseteq L_P\end{cases}\ .$$
\end{cor}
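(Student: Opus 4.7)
First decompose $L_P = \GL_2(\Qp) \times T'$ with Borel $B_P = B_2 \times T'$ and unipotent radical $N_{L_P} = N_2$; write $T = T_2 \times T'$ and $\xi = (\xi_2, \xi')$ accordingly. The condition $N_\alpha \subseteq L_P$ holds iff $\alpha$ is the simple root $\alpha_2$ of the $\GL_2$-factor, and $N_\alpha \subseteq N_P$ in the other case. The strategy is to apply Proposition \ref{parindnongen} in each case and invoke two classical facts about mod $p$ supercuspidal representations of $\GL_2(\Qp)$.

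Suppose first that $N_\alpha \not\subseteq L_P$, so $N_\alpha \subseteq N_P$. Proposition \ref{parindnongen} gives
\begin{equation*}
D^\vee_\xi\left(\Ind_{P^-}^G \pi_P\right) \cong k\bg X \jg \widehat{\otimes}_k \mathrm{Ord}_{s^{\mathbb{Z}} N_{L_P}}(\pi_P)^\vee,
\end{equation*}
so it suffices to show $\mathrm{Ord}_{s^{\mathbb{Z}} N_{L_P}}(\pi_P) = 0$. Since the $T'$-factor acts trivially on $N_{L_P,0} = N_{2,0}$, we have $\pi_P^{N_{L_P,0}} = \pi_2^{N_{2,0}} \otimes_k \chi$, on which $F_P = \Tr_{N_{L_P,0}/s N_{L_P,0} s^{-1}}\circ (s\cdot)$ acts as the usual $\GL_2(\Qp)$ Hecke operator on $\pi_2^{N_{2,0}}$ rescaled by the nonzero scalar $\chi(\xi'(p))$. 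Hence the vanishing reduces to the vanishing of Emerton's ordinary part $\mathrm{Ord}_{B_2^-}(\pi_2)$, which follows from Emerton's adjunction: a nonzero ordinary part would provide an embedding of $\pi_2$ into a principal series, contradicting supercuspidality. This gives dimension $0$.

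Now suppose $N_\alpha \subseteq L_P$, so $\alpha = \alpha_2$. Proposition \ref{parindnongen} yields $D^\vee_\xi(\Ind_{P^-}^G \pi_P) \cong D^\vee_\xi(\pi_P)$, the right-hand side being computed as the $L_P$-functor. Since $\ell_\alpha|_{N_{L_P,0}}$ is already an isomorphism $N_{2,0} \overset{\sim}{\to} \Zp$, the analogue of $H_0$ inside $N_{L_P,0}$ is trivial, so $\pi_P^{H_0} = \pi_P$ and the Hecke operator $F$ is just multiplication by $s$. As $T' \subseteq \ker\alpha$ acts on $\pi_P$ via the character $\chi$, a direct inspection identifies $D^\vee_\xi(\pi_P)$ with Breuil's functor $D^\vee_{\xi_2}(\pi_2)$ for the $\GL_2(\Qp)$-representation $\pi_2$, twisted by the character $\chi \circ \xi' : \Gamma \to k^\times$. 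For supercuspidal $\pi_2$ mod $p$, the functor $D^\vee_{\xi_2}(\pi_2)$ coincides with Colmez's Montr\'eal functor and is the $2$-dimensional irreducible $(\varphi,\Gamma)$-module over $k\bg X \jg$ corresponding via Fontaine's equivalence to the $2$-dimensional mod $p$ Galois representation attached to $\pi_2$ by mod $p$ local Langlands for $\GL_2$. Twisting by a character preserves the rank, hence the dimension is $2$.

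The only nontrivial inputs beyond Proposition \ref{parindnongen} are Emerton's vanishing of the ordinary part on supercuspidals and the classical identification of Breuil's functor with Colmez's Montr\'eal functor for $\GL_2(\Qp)$; no further calculation is required.
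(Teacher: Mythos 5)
Your proposal is correct and follows essentially the same route as the paper: apply Proposition \ref{parindnongen}, then in the case $N_\alpha\subseteq N_P$ identify $\mathrm{Ord}_{s^{\mathbb{Z}}N_{L_P}}(\pi_P)$ with (a twist of) Emerton's ordinary part of $\pi_2$ and invoke the adjunction formula together with supercuspidality, and in the case $N_\alpha\subseteq L_P$ reduce to the $2$-dimensionality of Colmez's functor on supercuspidals (Thm.\ 0.10 of \cite{C}). The only difference is cosmetic: you spell out the identification of $D^\vee_\xi(\pi_P)$ with the $\GL_2(\mathbb{Q}_p)$-functor and the rescaling of $F_P$ by $\chi(\xi'(p))$, where the paper instead notes that $T^{(2)}$ is generated by $s^{(2)}$ and $T_0^{(2)}$ and cites the references directly.
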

\begin{proof}
Let the superscript $^{(2)}$ denote the analogous construction of the subgroups $B,T,N,T_0$ and element $s$ of $G$ in case $G=\GL_2(\mathbb{Q}_p)$. Note that the torus $T^{(2)}$ is generated by $s^{(2)}$ and $T_0^{(2)}$. So in this case we have an isomorphism $\mathrm{Ord}_{s^\mathbb{Z}N_{L_P}}(\pi_P)\cong(\mathrm{Ord}_{B^{(2)}}(\pi_2)\otimes\chi)_{\mid k[F_P]}=0$ by the adjunction formula of Emerton's ordinary parts (Thm.\ 4.4.6 in \cite{E}). In the other case we apply Thm.\ 0.10 in \cite{C}.
\qed\end{proof}

\subsection{The action of $T_+$}

Our goal in this section is to define a $\varphi$-action of $T_+$ on $D^{\vee}_{\xi,\ell,\infty}(\pi)$ or, equivalently, on $D^\vee_\xi(\pi)$ extending the action of $\xi(\Zp\setminus\{0\})\leq T_+$ and making $D^{\vee}_{\xi,\ell,\infty}(\pi)$ an \'etale $T_+$-module over $\Lambda_\ell(N_0)$. Let $t\in T_+$ be arbitrary. Note that by the choice of this $\ell$ we have $tH_0t^{-1}\subseteq H_0$. In particular, $T_+$ acts via conjugation on the ring $\Lambda(N_0/H_0)\cong o\bs X\js$; we denote the action of $t\in T_+$ by $\varphi_t$. This action is via the character $\alpha$ mapping $T_+$ onto $\Zp\setminus\{0\}$. In particular, $o\bs X\js$ is a free module of finite rank over itself via $\varphi_t$. Moreover, we define the Hecke action of $t\in T_+$ on $\pi^{H_0}$ by the formula $F_t(m):=\Tr_{H_0/tH_0t^{-1}}(tm)$ for any $m\in \pi^{H_0}$. For $t,t'\in T_+$ we have 
\begin{align*}
F_{t'}\circ F_t=\Tr_{H_0/t'H_0t'^{-1}}\circ (t'\cdot)\circ\Tr_{H_0/tH_0t^{-1}}\circ (t\cdot)=\\
=\Tr_{H_0/t'H_0t'^{-1}}\circ\Tr_{t'H_0t'^{-1}/t'tH_0t^{-1}t'^{-1}}\circ(t't\cdot)=F_{t't}\ . 
\end{align*}
For any $M\in \mathcal{M}(\pi^{H_0})$ we put $F_t^*M:=N_0F_t(M)$.

\begin{lem}
For any $M\in\mathcal{M}(\pi^{H_0})$ we have $F_t^*M\in\mathcal{M}(\pi^{H_0})$.
\end{lem}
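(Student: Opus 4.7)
The plan is to verify directly the three defining conditions of $\mathcal{M}(\pi^{H_0})$ for $F_t^*M=N_0F_t(M)$, mirroring the structure of Lemma \ref{M_k} with the power $s^k$ replaced by the single element $t\in T_+$. The essential new input is a $\varphi_t$-semilinearity of the Hecke operator $F_t$ on $\pi^{H_0}$: for $u\in N_0$ and $x\in\pi^{H_0}$, I would compute
\[
F_t([u]x)=\Tr_{H_0/tH_0t^{-1}}\bigl((tut^{-1})tx\bigr)=(tut^{-1})\,F_t(x),
\]
the last equality holding because conjugation by $tut^{-1}$ permutes the coset representatives in $J(H_0/tH_0t^{-1})$. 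This in turn uses that $N_0$ normalizes $H_0$ (since $\ell_\alpha\colon N_0\to\Zp$ is a group homomorphism into an abelian group) and, by the same reasoning applied inside $N_0$, that $N_0$ normalizes $tH_0t^{-1}$. By continuity this extends to $F_t(\lambda x)=\varphi_t(\lambda)F_t(x)$ for every $\lambda\in\Lambda(N_0/H_0)/\varpi^h$.

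Together with the identity $F_t\circ F=F\circ F_t$ (proven by chaining traces as in Lemma \ref{formula}, using that $s,t$ commute in $T$), this yields, for any generators $m_1,\dots,m_r$ of $M$ as a $\Lambda(N_0/H_0)/\varpi^h[F]$-module, the formula
\[
F_t\Bigl(\sum_j\lambda_j F^{i_j}m_j\Bigr)=\sum_j\varphi_t(\lambda_j)F^{i_j}F_t(m_j)\in\Lambda(N_0/H_0)/\varpi^h[F]\cdot\{F_t(m_j)\}.
\]
The right-hand side is visibly $N_0$-stable, and it is also $F$-stable because $F(u F_t(m))=(sus^{-1})F_t(F(m))$ by the analogous $\varphi_s$-semilinearity of $F$ combined with the commutation $F\circ F_t=F_t\circ F$. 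Hence it coincides with $N_0F_t(M)$, establishing condition (i).

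Condition (ii) is then straightforward: $\Gamma\leq T$ commutes with $t$ and normalizes both $H_0$ and $tH_0t^{-1}$, so $\gamma\circ F_t=F_t\circ\gamma$ on $\pi^{H_0}$, and combined with $\gamma N_0=N_0\gamma$ this gives $\Gamma$-stability of $N_0F_t(M)$. For condition (iii), the bijection $M\overset{t\cdot}{\to}tM$ shows that $tM$ is admissible as a $tN_0t^{-1}$-representation; the $tN_0t^{-1}$-equivariance of $\Tr_{H_0/tH_0t^{-1}}$ (the exact analogue of \eqref{s^kN_0s^{-k}linear}) then makes $F_t(M)$ admissible as a $tN_0t^{-1}$-representation, and the finite sum
\[
N_0F_t(M)=\sum_{u\in J(N_0/tN_0t^{-1})}uF_t(M)
\]
inherits admissibility as a $tN_0t^{-1}$-representation. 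Since $\beta(t)\in\Zp\setminus\{0\}$ for every $\beta\in\Phi_+$, the subgroup $tN_0t^{-1}$ has finite index in $N_0$, so admissibility as a $tN_0t^{-1}$-representation is equivalent to admissibility as an $N_0$-representation.

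The only genuine subtlety I anticipate is the $\varphi_t$-semilinearity step: one must check that $N_0$-conjugation preserves the coset decomposition $H_0/tH_0t^{-1}$. This is precisely the place where the specific choice $\ell=\ell_\alpha$ matters, via the inclusion $tH_0t^{-1}\subseteq H_0$ and the fact that both subgroups are normal in $N_0$. After that, everything is a routine adaptation of the arguments already given in Lemmata \ref{formula} and \ref{M_k}.
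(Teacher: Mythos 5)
Your proof is correct and follows essentially the same route as the paper's: the paper records $F_{t'}\circ F_t=F_{t't}$ immediately before the lemma and then checks the three conditions exactly as you do (the $F_t(m_i)$ generate $F_t^*M$ over $\Lambda(N_0/H_0)/\varpi^h[F]$, $F_t$ commutes with $\Gamma$, and $F_t^*M=\sum_{u\in J(N_0/tN_0t^{-1})}uF_t(M)$ is a finite sum of admissible submodules). One small correction to your justification of the semilinearity step: $N_0$ need \emph{not} normalize $tH_0t^{-1}$ in general (already for $\GL_3$ and $t$ strictly dominant it fails); what your coset-permutation argument actually requires, and what is true, is that the element $tut^{-1}\in tN_0t^{-1}$ normalizes both $H_0$ and $tH_0t^{-1}$ --- the exact analogue of \eqref{s^kN_0s^{-k}linear}.
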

\begin{proof}
We have 
\begin{align*}
F(F_t^*M)=F(N_0F_t(M))\subset N_0FF_t(M)=\\
=N_0F_{st}(M)=N_0F_t(F(M))\subseteq F_t^*M\ . 
\end{align*}
So $F_t^*M$ is a module over $\Lambda(N_0/H_0)/\varpi^h[F]$. Moreover, if $m_1,\dots m_r$ generates $M$, then the elements $F_t(m_i)$ ($1\leq i\leq r$) generate $F_t^*M$, so it is finitely generated. The admissibility is clear as $F_t^*M=\sum_{u\in J(N_0/tN_0t^{-1})}uF_t(M)$ is the sum of finitely many admissible submodules. Finally, $F_t^*M$ is stable under the action of $\Gamma$ as $F_t$ commutes with the action of $\Gamma$.
\qed\end{proof}

By the definition of $F_t^*M$ we have a surjective $o/\varpi^h\bs X\js$-homomorphism
\begin{equation*}
1\otimes F_t\colon o/\varpi^h\bs X\js \otimes_{o/\varpi^h\bs X\js,\varphi_t}M\twoheadrightarrow F_t^*M
\end{equation*}
which gives rise to an injective $o/\varpi^h\bg X\jg$-homomorphism
\begin{equation}
(1\otimes F_t)^\vee[1/X]\colon (F_t^*M)^\vee[1/X]\hookrightarrow o/\varpi^h\bg X\jg \otimes_{o/\varpi^h\bg X\jg,\varphi_t}M^\vee[1/X]\ . \label{1otimesFt}
\end{equation}
Moreover, there is a structure of an $o/\varpi^h\bs X\js[F]$-module on $$o/\varpi^h\bs X\js \otimes_{o/\varpi^h\bs X\js,\varphi_t}M$$ by putting $F(\lambda\otimes m):=\varphi_t(\lambda)\otimes F(m)$. Similarly, the group $\Gamma$ also acts on $o/\varpi^h\bs X\js \otimes_{o/\varpi^h\bs X\js,\varphi_t}M$ semilinearly. The map $1\otimes F_t$ is $F$ and $\Gamma$-equivariant as $F_t$, $F$, and the action of $\Gamma$ all commute. We deduce that $(1\otimes F_t)^\vee[1/X]$ is a $\varphi$- and $\Gamma$-equivariant map of \'etlae $(\varphi,\Gamma)$-modules.

Note that for any $t\in T_+$ there exists a positive integer $k\geq 0$ such that $t\leq s^k$, ie.\ $t':=t^{-1}s^k$ lies in $T_+$. So we have $F_t^*(F_{t'}^*M)=F_{s^k}^*M=N_0F^k(M)\subseteq M$. So we obtain an isomorphism $M^\vee[1/X]\cong (F_{s^k}^*M)^\vee[1/X]= (F_t^*(F_{t'}^*M))^\vee[1/X]$ as $M/N_0F^k(M)$ is finitely generated over $o$.

\begin{lem}
The map \eqref{1otimesFt} is an isomorphism of \'etale $(\varphi,\Gamma)$-modules for any $M\in \mathcal{M}(\pi^{H_0})$ and $t\in T_+$.
\end{lem}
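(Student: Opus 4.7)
The plan is to prove surjectivity of $(1\otimes F_t)^\vee[1/X]$ (injectivity is already given) via a length count, by reducing to the known isomorphism $(1\otimes F^k)^\vee[1/X]$ obtained by iterating Lemma 2.6 in \cite{B}. Choose $k\geq 0$ with $t':=t^{-1}s^k\in T_+$, so that $F^k=F_{s^k}=F_{t'}\circ F_t$ and $F^*_{s^k}M=F^*_{t'}(F^*_tM)$. Then $1\otimes F^k$ factors as
\begin{multline*}
o/\varpi^h\bs X\js\otimes_{\varphi^k}M\cong o/\varpi^h\bs X\js\otimes_{\varphi_{t'}}(o/\varpi^h\bs X\js\otimes_{\varphi_t}M)\\
\xrightarrow{\mathrm{id}\otimes(1\otimes F_t)}o/\varpi^h\bs X\js\otimes_{\varphi_{t'}}F^*_tM\xrightarrow{1\otimes F_{t'}}F^*_{s^k}M.
\end{multline*}
Dualizing and inverting $X$ (via the analog of Lemma \ref{dualtensor}, which simplifies in this setting because $\varphi_t$ is injective on $o\bs X\js$, so no $H_{k,-}$-trace correction is needed) yields $(1\otimes F^k)^\vee[1/X]=\bigl(R\otimes_{\varphi_{t'}}((1\otimes F_t)^\vee[1/X])\bigr)\circ(1\otimes F_{t'})^\vee[1/X]$, where $R:=o/\varpi^h\bg X\jg$, viewed as a composition $(F^*_{s^k}M)^\vee[1/X]\to R\otimes_{\varphi_{t'}}(F^*_tM)^\vee[1/X]\to R\otimes_{\varphi^k}M^\vee[1/X]$.

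The composition $(1\otimes F^k)^\vee[1/X]$ is an isomorphism by iterating Lemma 2.6 in \cite{B}, together with $(F^*_{s^k}M)^\vee[1/X]\cong M^\vee[1/X]$ (since $M/F^*_{s^k}M$ is finitely generated over $o$). Both factors are injective: the first by the general argument in the paragraph preceding the lemma (applied with $t'$ and $F^*_tM$ in place of $t$ and $M$), and the second because $R$ is free (hence flat) of rank $p^{\val_p(\alpha(t'))}$ over $\varphi_{t'}(R)$, so tensoring preserves injectivity. A two-map composition of injections that is an isomorphism forces both factors to be isomorphisms. In particular, the middle term $R\otimes_{\varphi_{t'}}(F^*_tM)^\vee[1/X]$ has $R$-length equal to $l:=\length_R M^\vee[1/X]$. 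A length count analogous to the one in Proposition \ref{M_ketale} shows that both $R\otimes_{\varphi_{t'}}(-)$ and $R\otimes_{\varphi_t}(-)$ preserve $R$-length (the factor $p^{\val_p(\alpha(t'))}$ coming from freeness cancels against the corresponding ratio between $R$-length and $\varphi_{t'}(R)$-length). Hence $\length_R(F^*_tM)^\vee[1/X]=l=\length_R R\otimes_{\varphi_t}M^\vee[1/X]$, and the injection $(1\otimes F_t)^\vee[1/X]$ between $R$-modules of equal finite length is an isomorphism.

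The main technical subtlety will be to justify the dualization of the tensor factorization, i.e.\ the identification $(o/\varpi^h\bs X\js\otimes_{\varphi_{t'}}F^*_tM)^\vee[1/X]\cong R\otimes_{\varphi_{t'}}(F^*_tM)^\vee[1/X]$ and the analogous identification with $\varphi_t$. These follow from Pontryagin duality combined with the adjoint property of tensor and Hom, and are cleaner here than in Lemma \ref{dualtensor} because $\varphi_t$ is already injective on $\Lambda(N_0/H_0)=o\bs X\js$ (no invariants/coinvariants step is needed).
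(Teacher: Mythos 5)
Your proof is correct and follows essentially the same route as the paper's: factor $F^k=F_{t'}\circ F_t$, observe that the dualized composite $(1\otimes F^k)^\vee[1/X]$ is an isomorphism by (iterated) Lemma 2.6 of \cite{B}, and conclude because both factors are injective. The concluding length count is redundant --- the ``composite of injections equal to an isomorphism'' argument already finishes the proof --- though your explicit treatment of the tensor--dual commutation spells out what the paper leaves implicit.
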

\begin{proof}
The composite $(1\otimes F_{t'})^\vee[1/X]\circ (1\otimes F_t)^\vee[1/X]=(1\otimes F^k)^\vee[1/X]$ is an isomorphism by Lemma 2.6 in \cite{B}. So $(1\otimes F_t)^\vee[1/X]$ is also an isomorphism as both $(1\otimes F_t)^\vee[1/X]$ and $(1\otimes F_{t'})^\vee[1/X]$ are injective.
\qed\end{proof}

Now taking projective limits  we obtain an isomorphism of pseudocompact \'etale $(\varphi,\Gamma)$-modules
\begin{eqnarray*}
(1\otimes F_t)^\vee[1/X]\colon D^\vee_\xi(\pi)&\to& \varprojlim_{M\in\mathcal{M}(\pi^{H_0})}(o/\varpi^h\bg X\jg\otimes_{o/\varpi^h\bg X\jg,\varphi_t}M^\vee[1/X])\\
(m)_{(F_t^*M)^\vee[1/X]}&\mapsto & ((1\otimes F_t)^\vee[1/X](m))_{M^\vee[1/X]}\ .
\end{eqnarray*}
Moreover, since $o\bg X\jg$ is finite free over itself via $\varphi_t$, we have an identification
\begin{align*}
 \varprojlim_{M\in\mathcal{M}(\pi^{H_0})}(o/\varpi^h\bg X\jg\otimes_{o/\varpi^h\bg X\jg,\varphi_t}M^\vee[1/X])\cong\\
\cong o/\varpi^h\bg X\jg\otimes_{o/\varpi^h\bg X\jg,\varphi_t}D^\vee_\xi(\pi)\ .
\end{align*}

Using the maps $(1\otimes F_t)^\vee[1/X]$ we define a $\varphi$-action of $T_+$ on $D^\vee_\xi(\pi)$ by putting $\varphi_t(d):=((1\otimes F_t)^\vee[1/X])^{-1}(1\otimes d)$ for $d\in D^\vee_\xi(\pi)$.
\begin{pro}
The above action of $T_+$ extends the action of $\xi(\Zp\setminus\{0\})\leq T_+$ and makes $D^\vee_\xi(\pi)$ into an \'etale $T_+$-module over $o/\varpi^h\bs X\js$.
\end{pro}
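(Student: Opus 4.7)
The plan is to reduce all four required properties---extension, monoid law, $o\bs X\js$-semilinearity, and étaleness---to the bijectivity of the maps $(1\otimes F_t)^\vee[1/X]$ just established, together with the compositional identity $F_{t_1}\circ F_{t_2}=F_{t_1t_2}$ recorded at the opening of this subsection and the commutativity of $T_+$.

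For the \emph{extension}, when $t=s$ one has $F_s=F$, so $(1\otimes F_s)^\vee[1/X]$ is the isomorphism of Proposition \ref{M_ketale} at $k=0$ and $\varphi_s$ recovers Breuil's original $\varphi$. When $\gamma\in\Gamma=\xi(\Zp^\times)$, the normalisation property $\gamma H_0\gamma^{-1}=H_0$ forces the trace in $F_\gamma$ to be the identity, so $F_\gamma=(\gamma\cdot)$ on $\pi^{H_0}$; the induced map identifies with the Pontryagin dual of the $\gamma$-action, and thus $\varphi_\gamma$ coincides with the natural $\Gamma$-action on $D^\vee_\xi(\pi)$.

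For the \emph{monoid law}, for $t_1,t_2\in T_+$ and $M\in\mathcal{M}(\pi^{H_0})$ one verifies that $F_{t_1t_2}^*M=F_{t_1}^*F_{t_2}^*M$ and that, under the canonical identification $o\bs X\js\otimes_{\varphi_{t_1t_2}}M\cong o\bs X\js\otimes_{\varphi_{t_1}}(o\bs X\js\otimes_{\varphi_{t_2}}M)$ (using $\varphi_{t_1t_2}=\varphi_{t_1}\circ\varphi_{t_2}$ on $\Lambda(N_0)$), the map $1\otimes F_{t_1t_2}$ factors as the composite of $1\otimes F_{t_1}$ with the base change of $1\otimes F_{t_2}$ along $\varphi_{t_1}$. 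Dualising, inverting $X$, passing to the projective limit over $M$ and chasing an element of the form $1\otimes d$ yields $\varphi_{t_1t_2}=\varphi_{t_1}\circ\varphi_{t_2}$. The \emph{semilinearity} $\varphi_t(\lambda d)=\varphi_t(\lambda)\varphi_t(d)$ is then immediate because $(1\otimes F_t)^\vee[1/X]$ is an isomorphism of $o\bs X\js$-modules, the target being equipped with the $\varphi_t$-twisted structure. Finally, the structure map $1\otimes\varphi_t\colon o\bs X\js\otimes_{\varphi_t,o\bs X\js}D^\vee_\xi(\pi)\to D^\vee_\xi(\pi)$, $\lambda\otimes d\mapsto\lambda\varphi_t(d)$, is by construction the inverse of $(1\otimes F_t)^\vee[1/X]$, hence itself an isomorphism, which is precisely the \emph{étaleness} of $\varphi_t$.

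The main obstacle I anticipate is purely organisational: one must check that the isomorphisms $(1\otimes F_t)^\vee[1/X]$ and the identifications $F_{t_1t_2}^*M=F_{t_1}^*F_{t_2}^*M$ are natural in $M\in\mathcal{M}(\pi^{H_0})$, so that both the factorisation of $1\otimes F_{t_1t_2}$ and the ensuing monoid identity survive the passage to the projective limit defining $D^\vee_\xi(\pi)$. This bookkeeping requires some care---especially because the finite-level operations $M\mapsto F_t^*M$ need not preserve a chosen cofinal subfamily---but no new ideas beyond those already deployed in the proof that $(1\otimes F_t)^\vee[1/X]$ is an isomorphism.
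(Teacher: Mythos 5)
Your proposal is correct and follows essentially the same route as the paper: the monoid law comes from $F_{t}\circ F_{t'}=F_{tt'}$ (equivalently, composing the inverses of the isomorphisms $(1\otimes F_t)^\vee[1/X]$), semilinearity from the identity $1\otimes\lambda d=\varphi_t(\lambda)\otimes d$ in the twisted tensor product, and \'etaleness from the bijectivity of $(1\otimes F_t)^\vee[1/X]$. Your extra verifications (that $F_s=F$ and $F_\gamma=(\gamma\cdot)$ recover the original $(\varphi,\Gamma)$-action, and that $F_{t_1t_2}^*M=F_{t_1}^*F_{t_2}^*M$ so the factorisation survives the projective limit over $\mathcal{M}(\pi^{H_0})$) are correct details that the paper leaves implicit.
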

\begin{proof}
By the definition of the $T_+$-action it is indeed an extension of the action of the monoid $\Zp\setminus\{0\}$. For $t,t'\in T_+$ we compute
\begin{align*}
\varphi_{t'}\circ\varphi_t(d)=((1\otimes F_{t'})^\vee[1/X])^{-1}\circ((1\otimes F_t)^\vee[1/X])^{-1}(1\otimes d)=\\
=((1\otimes F_{t})^\vee[1/X]\circ(1\otimes F_{t'})^\vee[1/X])^{-1}(1\otimes d)=\\
=((1\otimes F_{tt'})^\vee[1/X])^{-1}(1\otimes d)=\varphi_{tt'}(d)=\varphi_{t't}(d)\ .
\end{align*}
Further, we have 
\begin{align*}
\varphi_t(\lambda d)=((1\otimes F_t)^\vee[1/X])^{-1}(1\otimes \lambda d)=((1\otimes F_t)^\vee[1/X])^{-1}(\varphi_t(\lambda)\otimes d)=\\
=\varphi_t(\lambda)((1\otimes F_t)^\vee[1/X])^{-1}(1\otimes d)=\varphi_t(\lambda)\varphi_t(d)
\end{align*}
showing that this is indeed a $\varphi$-action of $T_+$. The \'etale property follows from the fact that $(1\otimes F_t)^\vee[1/X]$ is an isomorphism for each $t\in T_+$.
\qed\end{proof}

The inclusion $u_{\alpha}\colon \Zp\to N_{\alpha,0}\leq N_0$ induces an injective ring homomor-phism---still denoted by $u_\alpha$ by a certain abuse of notation---$u_\alpha\colon \widehat{o\bg X\jg}^{p} \hookrightarrow \Lambda_\ell(N_0)$ where $\widehat{o\bg X\jg}^{p}$ denotes the $p$-adic completion of the Laurent-series ring $o\bg X\jg$. For each $t\in T_+$ this gives rise to a commutative diagram
\begin{equation*}
\xymatrix{
\widehat{o\bg X\jg}^{p}\ar[r]^{u_\alpha}\ar[d]_{\varphi_t}&\Lambda_\ell(N_0)\ar[d]^{\varphi_t}\\
\widehat{o\bg X\jg}^{p}\ar[r]^{u_\alpha}&\Lambda_\ell(N_0)
}
\end{equation*}
with injective ring homomorphisms. On the other hand, by the equivalence of categories in Thm.\ 8.20 in \cite{SVZ} we have a $\varphi$- and $\Gamma$-equivariant identification $M_\infty^\vee[1/X]\cong \Lambda_\ell(N_0)\otimes_{\widehat{o\bg X\jg}^{p}, u_\alpha}M^\vee[1/X]$. Therefore tensoring the isomorphism \eqref{1otimesFt} with $\Lambda_\ell(N_0)$ via $u_\alpha$ we obtain an isomorphism
\begin{align}
(1\otimes F_t)_\infty^\vee[1/X] \colon (F_t^*M)_\infty^\vee[1/X]\cong \Lambda_\ell(N_0)\otimes_{u_\alpha}(F_t^*M)^\vee[1/X]\to\notag\\
\to \Lambda_\ell(N_0)\otimes_{u_\alpha}o/\varpi^h\bg X\jg \otimes_{o/\varpi^h\bg X\jg,\varphi_t}M^\vee[1/X]\cong\notag\\
\cong  \Lambda_\ell(N_0) \otimes_{\Lambda_\ell(N_0),\varphi_t}\Lambda_\ell(N_0)\otimes_{u_\alpha}M^\vee[1/X]\cong \Lambda_\ell(N_0) \otimes_{\Lambda_\ell(N_0),\varphi_t}M_\infty^\vee[1/X]\ .\label{Ftinfty}
\end{align}
Taking projective limits again we deduce an isomorphism
\begin{eqnarray*}
(1\otimes F_t)_\infty^\vee[1/X]\colon D^{\vee}_{\xi,\ell,\infty}(\pi)&\to& \Lambda_\ell(N_0)\otimes_{\Lambda_\ell(N_0),\varphi_t}D^{\vee}_{\xi,\ell,\infty}(\pi) \\
(m)_{(F_t^*M)_\infty^\vee[1/X]}&\mapsto & ((1\otimes F_t)_\infty^\vee[1/X](m))_{M_\infty^\vee[1/X]}
\end{eqnarray*}
for all $t\in T_+$ using the identification 
\begin{equation*}
 \varprojlim_{M\in\mathcal{M}(\pi^{H_0})}(\Lambda_\ell(N_0)\otimes_{\Lambda_\ell(N_0),\varphi_t}M_\infty^\vee[1/X])\cong \Lambda_\ell(N_0)\otimes_{\Lambda_\ell(N_0),\varphi_t}D^{\vee}_{\xi,\ell,\infty}(\pi)\ .
\end{equation*}
Using the maps $(1\otimes F_t)_\infty^\vee[1/X]$ we define a $\varphi$-action of $T_+$ on $D^{\vee}_{\xi,\ell,\infty}(\pi)$ by putting $\varphi_t(d):=((1\otimes F_t)_\infty^\vee[1/X])^{-1}(1\otimes d)$ for $d\in D^{\vee}_{\xi,\ell,\infty}(\pi)$.
\begin{cor}
The above action of $T_+$ extends the action of $\xi(\Zp\setminus\{0\})\leq T_+$ and makes $D^{\vee}_{\xi,\ell,\infty}(\pi)$ into an \'etale $T_+$-module over $\Lambda_\ell(N_0)$. The reduction map $D^{\vee}_{\xi,\ell,\infty}(\pi)\to D^\vee_\xi(\pi)$ is $T_+$-equivariant for the $\varphi$-action.
\end{cor}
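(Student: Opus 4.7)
The plan is to deduce the corollary from the analogous proposition for $D^\vee_\xi(\pi)$ just established, using the fact that $(1\otimes F_t)_\infty^\vee[1/X]$ is manufactured by tensoring $(1\otimes F_t)^\vee[1/X]$ along the inclusion $u_\alpha\colon \widehat{o\bg X\jg}^p\hookrightarrow \Lambda_\ell(N_0)$, together with the functoriality of the equivalence of categories of Thm.\ 8.20 in \cite{SVZ}. Every property asserted in the corollary thus lifts from its single-variable counterpart by base change.

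First I would check that the new action extends the $\varphi$-action of $\xi(\Zp\setminus\{0\})$: for $t=s^n$ we have $F_t=F^n$, and the iterated map $(1\otimes F)_\infty^\vee[1/X]\circ\cdots\circ(1\otimes F)_\infty^\vee[1/X]$ agrees with $(1\otimes F^n)_\infty^\vee[1/X]$, so inverting recovers the $n$-fold composition of the original $\varphi$. Next, to establish the monoid law $\varphi_{t't}=\varphi_{t'}\circ \varphi_t$, I would invoke $F_{t't}=F_{t'}\circ F_t$ (proved earlier in the section), dualize to obtain a factorization of $(1\otimes F_{t't})_\infty^\vee[1/X]$ through $(1\otimes F_t)_\infty^\vee[1/X]$ followed by $1\otimes (1\otimes F_{t'})_\infty^\vee[1/X]$, and then invert, using the canonical identification $\Lambda_\ell(N_0)\otimes_{\varphi_t,\Lambda_\ell(N_0)}\Lambda_\ell(N_0)\otimes_{\varphi_{t'},\Lambda_\ell(N_0)}(-)\cong \Lambda_\ell(N_0)\otimes_{\varphi_{t't},\Lambda_\ell(N_0)}(-)$, which is legitimate because $\varphi_{t't}=\varphi_{t'}\circ\varphi_t$ already holds on $\Lambda_\ell(N_0)$ itself. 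The $\Lambda_\ell(N_0)$-semilinearity $\varphi_t(\lambda d)=\varphi_t(\lambda)\varphi_t(d)$ is then built into the definition via the tensor product, while the \'etale property is exactly the statement that $(1\otimes F_t)_\infty^\vee[1/X]$ is an isomorphism, which was established in \eqref{Ftinfty}.

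For the $T_+$-equivariance of the reduction map, I would use that this reduction is precisely base change along the quotient $\Lambda_\ell(N_0)\twoheadrightarrow o/\varpi^h\bg X\jg$ (the $H_0$-coinvariants functor), which under Thm.\ 8.20 of \cite{SVZ} is inverse to tensoring up along $u_\alpha$. Since $(1\otimes F_t)_\infty^\vee[1/X]$ was built from $(1\otimes F_t)^\vee[1/X]$ by exactly this tensoring up, applying $(-)_{H_0}$ to $\varphi_t$ returns the $\varphi_t$ of the preceding proposition, giving the desired intertwining.

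The main obstacle is purely notational rather than conceptual: one must carefully track the various tensor-product and base-change identifications so that $(1\otimes F_t)_\infty^\vee[1/X]$, the canonical isomorphism of iterated tensor products, and the reduction along $H_0$-coinvariants all fit into a single commutative diagram. Once that diagram is drawn, the three claims of the corollary are formal consequences of the preceding proposition.
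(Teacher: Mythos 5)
Your proposal is correct and is essentially the argument the paper intends: the corollary is stated without proof precisely because, once $(1\otimes F_t)_\infty^\vee[1/X]$ is obtained from $(1\otimes F_t)^\vee[1/X]$ by base change along $u_\alpha$ under the equivalence of Thm.\ 8.20 in \cite{SVZ}, the extension property, the monoid law via $F_{t't}=F_{t'}\circ F_t$, semilinearity, the \'etale property, and the $T_+$-equivariance of the $H_0$-coinvariants map all transfer formally from the preceding proposition. No gaps.
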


We can view this $\varphi$-action of $T_+$ in a different way: Let us define $F_{t,k}:=\Tr_{H_k/tH_kt^{-1}}\circ(t\cdot)$. Then we have a map
\begin{equation}\label{Ftk}
1\otimes F_{t,k}\colon \Lambda(N_0/H_k)/\varpi^h\otimes_{\Lambda(N_0/H_k)/\varpi^h,\varphi_t}M_k\to F_{t,k}^*M_k:=N_0F_{t,k}(M_k)\ ,
\end{equation}
where we have $F_{t,k}^*M\in\mathcal{M}_k(\pi^{H_k})$. Let $k$ be large enough such that we have $tH_0t^{-1}\geq H_k$.  After taking Pontryagin duals, inverting $X$, taking projective limit and using the remark after Lemma \ref{dualtensor} we obtain a homomorphism of \'etale $(\varphi,\Gamma)$-modules
\begin{equation}
\varprojlim_{k}\Tr_{t^{-1}H_kt}^{-1}\circ(1\otimes F_{t,k})^\vee[1/X]\colon (F_t^*M)^\vee_\infty[1/X] \to  \Lambda_\ell(N_0)\otimes_{\varphi_t}M^\vee_\infty[1/X]\ .\label{limFtk}
\end{equation}
This map is indeed $\Gamma$- and $\varphi$-equivariant because we compute
\begin{align*}
F_k\circ F_{t,k}=\Tr_{H_k/sH_ks^{-1}}\circ(s\cdot)\circ\Tr_{H_k/tH_kt^{-1}}\circ(t\cdot)=\\
=\Tr_{H_k/s^ktH_kt^{-1}s^{-k}}\circ(s^kt\cdot)=\\
=\Tr_{H_k/tH_kt^{-1}}\circ(t\cdot)\circ\Tr_{H_k/sH_ks^{-1}}\circ(s\cdot)=F_{t,k}\circ F_k\ .
\end{align*}
Now we have two maps \eqref{Ftinfty} and \eqref{limFtk} between $(F_t^*M)^\vee_\infty[1/X]$ and $\Lambda_\ell(N_0)\otimes_{\varphi_t}M^\vee_\infty[1/X]$ that agree after taking $H_0$-coinvariants by definition. Hence they are equal by the equivalence of categories in Thm.\ 8.20 in \cite{SVZ}.

We obtain in particular that the map \eqref{Ftk} has finite kernel and cokernel as it becomes an isomorphism after taking Pontryagin duals and inverting $X$. Hence there exists a finite $\Lambda(N_0/H_k)/\varpi^h$-submodule $M_{t,k,\ast}$ of $M_k$ such that the kernel of $1\otimes F_{t,k}$ is contained in the image of $\Lambda(N_0/H_k)/\varpi^h\otimes_{\varphi}M_{t,k,\ast}$ in $\Lambda(N_0/H_k)/\varpi^h\otimes_{\varphi}M_k$. We denote by $M_{t,k}^\ast\leq F_{t,k}^\ast M_k$ the image of $1\otimes F_{t,k}$. We conclude that as in Proposition \ref{M_ketale}, we can describe the $\varphi_t$-action in the following way:
\begin{eqnarray}\label{varphi_tk}
\varphi_t \colon M_k^\vee[1/X]&\to& (F_{t,k}^*M_k)^\vee[1/X]\notag\\
f&\mapsto &(\Tr_{t^{-1}H_kt/H_k}^{-1}\circ(1\otimes F_{t,k})^\vee[1/X])^{-1}(1\otimes f)
\end{eqnarray}

Being an \'etale $T_+$-module over $\Lambda_\ell(N_0)$ we equip $D^{\vee}_{\xi,\ell,\infty}(\pi)$ with the $\psi$-action of $T_+$: $\psi_t$ is the canonical left inverse of $\varphi_t$ for all $t\in T_+$.

\begin{pro}\label{prpsiT+}
The map $\pr\colon D_{SV}(\pi)\to D^{\vee}_{\xi,\ell,\infty}(\pi)$ is $\psi$-equivariant for the $\psi$-actions of $T_+$ on both sides.
\end{pro}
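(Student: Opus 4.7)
The plan is to repeat the proof of Lemma~\ref{prpsigamma} verbatim with $s$ replaced by an arbitrary $t\in T_+$, $F_k$ replaced by $F_{t,k}$, and using the explicit description \eqref{varphi_tk} of $\varphi_t$ on $M_k^\vee[1/X]$ in the role played by \eqref{varphik} for $\varphi=\varphi_s$. The hypothesis $\ell=\ell_\alpha$ gives $tH_0t^{-1}\subseteq H_0$, hence $tH_kt^{-1}\subseteq H_k$ (since $H_k$ is generated by $N_0$-conjugates of $s^kH_0s^{-k}$ and $t$ commutes with $s$), so $F_{t,k}=t\circ\Tr_{t^{-1}H_kt/H_k}$ acts on $\pi^{H_k}$ and \eqref{varphi_tk} applies literally.

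Fix $W\in\mathcal{B}_+(\pi)$, $M\in\mathcal{M}(\pi^{H_0})$ and $k$ large enough so that both $M_k$ and $F_{t,k}^*M_k$ are contained in $W$ (Lemma~\ref{minw}). For $f\in W^\vee$ with $f_{\mid N_0tM_{t,k,\ast}}=0$ (where $M_{t,k,\ast}$ is the finite submodule produced in the discussion following \eqref{Ftk}--\eqref{limFtk}) and $m=\sum_{u\in J(N_0/tN_0t^{-1})}uF_{t,k}(m_u)\in M_{t,k}^\ast$, I would compute
\[
f(m)=\sum_u\psi_t(u^{-1}f)\bigl(\Tr_{t^{-1}H_kt/H_k}(m_u)\bigr)=\sum_u u\varphi_t\bigl(\psi_t(u^{-1}f)_{\mid M_k}\bigr)(m),
\]
where the mixed terms vanish by exactly the same orthogonality as in Lemma~\ref{prpsigamma} (i.e.\ $u\varphi_t(g)(vF_{t,k}(m'))=0$ for $u\neq v$ in $J(N_0/tN_0t^{-1})$). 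Inverting $X$ and taking projective limits over $k$ and over $M$ yields
\[
\pr(f)=\sum_{u\in J(N_0/tN_0t^{-1})}u\varphi_t\bigl(\pr(\psi_t(u^{-1}f))\bigr)
\]
in $D^\vee_{\xi,\ell,\infty}(\pi)$. Since the latter is now an \'etale $T_+$-module over $\Lambda_\ell(N_0)$, the decomposition $d=\sum_u u\varphi_t(\psi_t(u^{-1}d))$ is unique for every $d$; comparing with $d=\pr(f)$ forces $\psi_t(u^{-1}\pr(f))=\pr(\psi_t(u^{-1}f))$ for all $u$, and in particular $\psi_t(\pr(f))=\pr(\psi_t(f))$ on reading off the $u=1$ component.

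For arbitrary $f\in W^\vee$ I would pass from the vanishing-on-$N_0tM_{t,k,\ast}$ case to the general one by multiplying $f$ by $\varphi_t(X^r)\in\Lambda(N_0)$ for $r$ large enough to kill $M_{t,k,\ast}$ and using $\Lambda(N_0)$-linearity of $\pr$ together with the $(\varphi\psi)$-compatibility on $D^\vee_{\xi,\ell,\infty}(\pi)$, exactly as at the end of the proof of Lemma~\ref{prpsigamma}; invertibility of $X^r$ in $\Lambda_\ell(N_0)$ then cancels $X^r$ from both sides, and passing to the injective limit over $W\in\mathcal{B}_+(\pi)$ completes the argument. The main point to verify with some care is that \eqref{varphi_tk} and the finite module $M_{t,k,\ast}$ truly play the same role for $\psi_t$ as their $s$-counterparts played for $\psi_s$ -- but this was already built into the framework established before the statement of the Proposition, so the remainder is essentially formal bookkeeping mirroring the $\psi_s$-case.
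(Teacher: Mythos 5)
Your proposal is correct and follows essentially the same route as the paper, which itself proceeds "as in the proofs of Proposition \ref{limit} and Lemma \ref{prpsigamma}": establish the analogue \eqref{varphit} of \eqref{varphik}, compute $f_{\mid F_{t,k}^*M_k}(m)=\sum_u u\varphi_t(\psi_t(u^{-1}f)_{\mid M_k})(m)$ for $f$ vanishing on $N_0tM_{t,k,\ast}$, pass to the limit, invoke uniqueness of the \'etale decomposition, and remove the vanishing hypothesis via $\varphi_t(X^r)$. The only cosmetic difference is that the paper keeps track of the shift $M\mapsto F_t^*M$ in the subscripts ($\pr_{W,F_t^*M}$ versus $\pr_{W,M}$) before passing to the projective limit over $\mathcal{M}(\pi^{H_0})$, and records explicitly the auxiliary condition $tH_0t^{-1}\geq H_k$ needed for the trace isomorphism.
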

\begin{proof}
We proceed as in the proofs of Proposition \ref{limit} and Lemma \ref{prpsigamma}. We fix $t\in T_+$, $W\in\mathcal{B}_+(\pi)$ and $M\in\mathcal{M}(\pi^{H_0})$ and show that $\pr_{W,M}$ is $\psi_t$-equivariant. Fix $k$ such that $F_{t,k}^*M_k\leq W$ and $tH_0t^{-1}\geq H_k$.

At first we compute the formula analogous to \eqref{varphik}. Let $f$ be in $M_k^\vee$ such that its restriction to $M_{t,k,\ast}$ is zero and $m\in M_{t,k}^\ast\leq F_{t,k}^*M_k$ be in the form 
\begin{equation*}
m=\sum_{u\in J(N_0/tN_0t^{-1})}u F_{t,k}(m_u)
\end{equation*}
with elements $m_u\in M_k$ for $u\in J(N_0/tN_0t^{-1})$. $M_{t,k}^\ast$ is a finite index submodule of $F_{t,k}^*M_k$. Note that the elements $m_u$ are unique upto $M_{t,k,\ast}+\Ker(F_{t,k})$. Therefore $\varphi_t(f)\in (M_{t,k}^\ast)^\vee$ is well-defined by our assumption that $f_{\mid M_{t,k,\ast}}=0$ noting that the kernel of $F_{t,k}$ equals the kernel of $\Tr_{t^{-1}H_kt/H_k}$ since the multiplication by $t$ is injective and we have $F_{t,k}=t\circ\Tr_{t^{-1}H_kt/H_k}$. So we compute
\begin{align}
\varphi_t(f)(m)=((1\otimes F_{t,k})^\vee)^{-1}(\Tr_{t^{-1}H_kt/H_k}(1\otimes f))(m)=\notag\\
=((1\otimes F_{t,k})^\vee)^{-1}(1\otimes \Tr_{t^{-1}H_kt/H_k}(f))(\sum_{u\in J((N_0/H_k)/t(N_0/H_k)t^{-1})}u F_{t,k}(m_u))=\notag\\
=\Tr_{t^{-1}H_kt/H_k}(f)(F_{t,k}^{-1}(u_0F_{t,k}(m_{u_0})))=f(\Tr_{t^{-1}H_kt/H_k}((t^{-1}u_0t)m_{u_0}))\label{varphit}
\end{align}
where $u_0$ is the single element in $J(N_0/tN_0t^{-1})$ corresponding to the coset of $1$.

Now let $f$ be in $W^\vee$ such that the restriction $f_{\mid N_0tM_{t,k,\ast}}=0$. By definition we have $\psi_t(f)(w)=f(tw)$ for any $w\in W$. Choose an element $m\in M_{t,k}^\ast\in F_{t,k}^*M_k$ written in the form

$$m=\sum_{u\in J(N_0/tN_0t^{-1})}u F_{t,k}(m_u)=\sum_{u\in J(N_0/tN_0t^{-1})}ut\Tr_{t^{-1}H_kt/H_k}(m_u)\ .$$

Then we compute
\begin{align*}
f_{\mid F_{t,k}^*M_k}(m)=\sum_{u\in J(N_0/tN_0t^{-1})}f(ut\Tr_{t^{-1}H_kt/H_k}(m_u))=\\
=\sum_{u\in J(N_0/tN_0t^{-1})}\psi_t(u^{-1}f)(\Tr_{t^{-1}H_kt/H_k}(m_u))=\\
\overset{\eqref{varphit}}{=}\sum_{u\in J(N_0/tN_0t^{-1})}\varphi_t(\psi_t(u^{-1}f)_{\mid F_{t,k}^*M_k})(F_{t,k}(m_u))=\\
=\sum_{u\in J(N_0/tN_0t^{-1})}u\varphi_t(\psi_t(u^{-1}f)_{\mid M_k})(uF_{t,k}(m_u))=\\
=\sum_{u\in J(N_0/tN_0t^{-1})}u\varphi_t(\psi_t(u^{-1}f)_{\mid M_k})(m)
\end{align*}
as for distinct $u,v\in J(N_0/tN_0t^{-1})$ we have $u\varphi_t(f_0)(vF_{t,k}(m_v))=0$ for any $f_0\in (M_{t,k}^\ast)^\vee$. So by inverting $X$ and taking projective limits with respect to $k$ we obtain
\begin{equation*}
\pr_{W,F_t^*M}(f)=\sum_{u\in J(N_0/tN_0t^{-1})}u\varphi_t(\pr_{W,M}(\psi_t(u^{-1}f)))
\end{equation*}
as we have $(M_{t,k}^\ast)^\vee[1/X]\cong (F_{t,k}^*M)^\vee[1/X]$. Since the map \eqref{Ftinfty} is an isomorphism we may decompose $\pr_{W,F_t^*M}(f)$ uniquely as
\begin{equation*}
\pr_{W,F_t^*M}(f)=\sum_{u\in J(N_0/tN_0t^{-1})}u\varphi_t(\psi_t(u^{-1}\pr_{W,F_t^*M}(f)))
\end{equation*}
so we must have $\psi_t(\pr_{W,F_t^*M}(f))=\pr_{W,M}(\psi_t(f))$. For general $f\in W^\vee$ note that $N_0sM_{t,k,*}$ is killed by $\varphi_t(X^r)$ for $r\geq 0$ big enough, so we have 
\begin{align*}
X^r\psi_t(\pr_{W,F_t^*M}(f))=\psi_t(\pr_{W,F_t^*M}(\varphi_t(X^r)f))=\\
=\pr_{W,M}(\psi_t(\varphi_t(X^r)f))=X^r\pr_{W,M}(\psi_t(f))\ . 
\end{align*}
Since $X^r$ is invertible in $\Lambda_\ell(N_0)$, we obtain $$\psi_t(\pr_{W,F_t^*M}(f))=\pr_{W,M}(\psi_t(f))$$ for any $f\in W^\vee$. The statement follows taking the projective limit with respect to $M\in\mathcal{M}(\pi^{H_0})$ and the inductive limit with respect to $W\in\mathcal{B}_+(\pi)$.
\qed\end{proof}

We end this section by proving a Lemma that will be needed several times later on.
\begin{lem}\label{MT'}
For any $M\in \mathcal{M}(\pi^{H_0})$ there exists an open subgroup $T'=T'(M)\leq T$ such that $M$ is $T'$-stable.
\end{lem}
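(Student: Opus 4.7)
The plan is to exploit smoothness of $\pi$ together with the observation that the maximal compact subgroup $T_0 \leq T$ normalizes both $N_0$ and $H_0 = \Ker(\ell_\alpha)$ and commutes with $s$, so that a small enough open subgroup of $T_0$ fixing a finite set of generators of $M$ will automatically preserve all the algebraic structure used to build $M$.

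First I would fix generators $m_1,\ldots,m_r \in \pi^{H_0}$ of $M$ as a module over $\Lambda(N_0/H_0)/\varpi^h[F]$. Since $\pi$ is smooth, each $\Stab_T(m_i)$ is open in $T$, so
$$T' := T_0 \cap \bigcap_{i=1}^r \Stab_T(m_i)$$
is an open subgroup of $T$ by construction fixing every generator.

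Next I check that $T'$ normalizes all the relevant data. For any $t\in T' \subseteq T_0$ and any simple root $\beta$ one has $\beta(t)\in\Zp^\times$, so conjugation by $t$ preserves each root subgroup $N_{\beta,0}$ and hence normalizes both $N_0$ and $H_0$. Consequently $\varphi_t\colon \lambda\mapsto t\lambda t^{-1}$ induces an automorphism of $\Lambda(N_0/H_0)/\varpi^h$, so that
$$t\cdot(\lambda m) = \varphi_t(\lambda)\cdot(tm) \quad \text{for all } \lambda\in\Lambda(N_0/H_0)/\varpi^h,\ m\in\pi^{H_0}.$$
Moreover, since $T$ is commutative, $t$ commutes with $s$; combined with $tH_0t^{-1}=H_0$ this gives $t(sH_0s^{-1})t^{-1}=sH_0s^{-1}$, so $\{tut^{-1}\}_{u\in J(H_0/sH_0s^{-1})}$ is again a system of representatives of $H_0/sH_0s^{-1}$. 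Therefore
$$tF(m) = \sum_{u\in J(H_0/sH_0s^{-1})}(tut^{-1})\,s\,(tm) = F(tm),$$
i.e.\ $t$ commutes with $F$ on $\pi^{H_0}$.

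Finally, combining the three properties $t\cdot m_i = m_i$, $t\cdot(\lambda m) = \varphi_t(\lambda)(tm)$, and $tF = Ft$, an arbitrary element $\sum_{i,j}\lambda_{i,j}F^j m_i \in M$ is sent by $t\in T'$ to $\sum_{i,j}\varphi_t(\lambda_{i,j})F^j m_i$, which still lies in $M$. Hence $M$ is $T'$-stable, as required. There is no real obstacle; the only point to notice is that $T'$ must be chosen inside $T_0$ (rather than in all of $T$) in order to guarantee that conjugation by $T'$ preserves $N_0$ and $H_0$, so that the two operators $\varphi_t$ and $F$ continue to make sense and commute with the action of $t$.
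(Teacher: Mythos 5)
Your proof is correct and follows essentially the same route as the paper's: pick finitely many generators of $M$ over $o/\varpi^h\bs X\js[F]$, use smoothness of $\pi$ to find an open subgroup $T'\leq T_0$ fixing them, and observe that $T'$ normalizes $N_0$ (and $H_0$) and commutes with $F$, so the whole module is preserved. The extra verifications you spell out (that $T'$ must sit inside $T_0$, and the explicit check that $tF=Ft$) are exactly the points the paper's terser proof relies on implicitly.
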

\begin{proof}
Choose $m_1,\dots,m_a\in M$ ($a\geq1$)  generating $M$ as a module over $o/\varpi^h\bs X\js[F]$. Since $\pi$ is smooth, there exists an open subgroup $T'\leq T_0$ stabilizing all $m_1,\dots,m_a$. Now $T'$ normalizes $N_0$ and all the elements $t\in T'$ commute with $F$ we deduce that $T'$ acts on $M$.
\qed\end{proof}

\section{Compatibility with a reverse functor}

Assume $\ell=\ell_\alpha$ for some simple root $\alpha\in \Delta$ so we may apply the results of section \ref{nongeneric}.

\subsection{A $G$-equivariant sheaf $\mathfrak{Y}$ on $G/B$ attached to $D^\vee_{\xi,\ell,\infty}(\pi)$}\label{sheaf}

Let $D$ be an \'etale $(\varphi,\Gamma)$-module over the ring $\Lambda_\ell(N_0)/\varpi^h$. Recall that the $\Lambda(N_0)$-submodule $D^{bd}$ of bounded elements in $D$ is defined \cite{SVZ} as
\begin{equation*}
D^{bd}=\{x\in D\mid \{\ell_D(\psi_s^k(u^{-1}x))\mid k\geq 0,u\in N_0\}\subseteq D_{H_0} \text{ is bounded}\}\ .
\end{equation*}
where $\ell_D$ denotes the natural map $D\to D_{H_0}$. Note that $D_{H_0}$ is an \'etale $(\varphi,\Gamma)$-module over $o/\varpi^h\bg X\jg$, so the bounded subsets of $D_{H_0}$ are exactly those contained in a compact $o/\varpi^h\bs X\js$-submodule of $D_{H_0}$.

\begin{lem}
Assume that $D$ is a finitely generated \'etale $(\varphi,\Gamma)$-module over $\Lambda_\ell(N_0)/\varpi^h$. Then $d\in D$ lies in $D^{bd}$ if and only if $d$ is contained in a compact $\psi_s$-invariant $\Lambda(N_0)$-submodule of $D$.
\end{lem}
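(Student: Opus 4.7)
The plan splits into the two implications.

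For the easy direction $(\Leftarrow)$, suppose $d$ lies in a compact $\psi_s$-invariant $\Lambda(N_0)$-submodule $C\subseteq D$. Since $N_0\subseteq \Lambda(N_0)$, $C$ is automatically $N_0$-stable, so $\psi_s^k(u^{-1}d)\in C$ for all $k\geq 0$ and $u\in N_0$. The continuous quotient $\ell_D\colon D\to D_{H_0}$ sends the compact set $C$ to a compact (hence bounded) subset of $D_{H_0}$ containing all $\ell_D(\psi_s^k(u^{-1}d))$, so $d\in D^{bd}$.

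For the hard direction $(\Rightarrow)$, suppose $d\in D^{bd}$. The strategy is to reduce to the one-variable case via Theorem 8.20 of \cite{SVZ}, which gives an equivalence between finitely generated \'etale $(\varphi,\Gamma)$-modules over $\Lambda_\ell(N_0)/\varpi^h$ and over $o/\varpi^h\bg X\jg$, realised by $D\mapsto D_{H_0}$. Since $\ell_D$ is $\psi_s$-equivariant and intertwines the $\Lambda(N_0)$-action with the $\Lambda(N_0/H_0)\cong o\bs X\js$-action, the boundedness of $\{\ell_D(\psi_s^k(u^{-1}d))\}=\{\psi_s^k(\bar u^{-1}\ell_D(d))\}$ (with $\bar u\in\Zp$), together with continuity and the density $\Zp\subseteq o\bs X\js\cong \Lambda(\Zp)$, implies the boundedness of $\{\psi_s^k(\lambda\ell_D(d))\colon k\geq 0,\lambda\in o\bs X\js\}$. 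This is exactly the classical notion of boundedness of $\ell_D(d)$ in the $(\varphi,\Gamma)$-module $D_{H_0}$ over $o/\varpi^h\bg X\jg$, and a standard argument from Colmez's theory then shows that $\ell_D(d)$ is contained in a compact $\psi_s$-stable $o/\varpi^h\bs X\js$-submodule $C'\subseteq D_{H_0}$.

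It remains to lift $C'$ back to $D$. The equivalence of categories matches treillis on both sides: compact $\psi_s$-stable $\Lambda(N_0)$-submodules of $D$ generating it over $\Lambda_\ell(N_0)$ correspond bijectively, via $C\mapsto \ell_D(C)$, to compact $\psi_s$-stable $o\bs X\js$-submodules of $D_{H_0}$ generating it over $o\bg X\jg$. Enlarging $C'$ to such a treillis if necessary, one obtains a corresponding treillis in $D$; to guarantee that $d$ itself lies inside, I would take $C$ to be the closed $\Lambda(N_0)$-submodule of $D$ generated by the $\psi_s$-orbit $\{\psi_s^k(d):k\geq 0\}$. By construction $\ell_D(C)$ is contained in the analogous closed $o\bs X\js$-submodule generated by the $\psi_s$-orbit of $\ell_D(d)$, which sits inside $C'$ and is thus compact; the treillis correspondence then yields the compactness of $C$.

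The main obstacle is the very last point, namely showing that the closed $\Lambda(N_0)$-submodule of $D$ generated by the $\psi_s$-orbit of $d$ is compact once its image in $D_{H_0}$ is. Making this rigorous requires a refinement of the correspondence in Theorem 8.20 of \cite{SVZ} at the level of compact $\psi_s$-stable submodules, or equivalently an analysis of the topological structure of $D$ as a finitely generated module over $\Lambda_\ell(N_0)=\Lambda(H_0)\bg X\jg$, where the lift from $D_{H_0}$ to $D$ is obtained by extending scalars along the pseudocompact ring $\Lambda(H_0)$.
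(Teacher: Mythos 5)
Your backward implication coincides with the paper's. In the forward direction, however, you have correctly located but not closed the only nontrivial point. Producing a compact $\psi_s$-stable $o/\varpi^h\bs X\js$-submodule $D_0\subseteq D_{H_0}$ containing all the $\ell_D(\psi_s^k(u^{-1}d))$ is essentially immediate (the paper records, just before the lemma, that bounded subsets of $D_{H_0}$ are exactly those contained in a compact $o/\varpi^h\bs X\js$-submodule); the entire content of the statement is the lifting step that you flag as "the main obstacle" at the end. Your proposed lift --- the closed $\Lambda(N_0)$-submodule of $D$ generated by the $\psi_s$-orbit of $d$ --- does not obviously work: it is the closure of the countable union $\bigcup_{k\geq 0}\Lambda(N_0)\psi_s^k(d)$ of compact sets, and there is no reason for this closure to be compact in the weak topology of $D$ (failure of compactness of such orbits is precisely what distinguishes unbounded elements); nor is it clear that this submodule is $\psi_s$-stable, since $\psi_s(\lambda\,\psi_s^k(d))$ for $\lambda\in\Lambda(N_0)$ need not lie in the $\Lambda(N_0)$-module generated by the $\psi_s$-orbit. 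The "treillis correspondence" refinement of Thm.\ 8.20 of \cite{SVZ} that you appeal to is not proved in your argument and is essentially equivalent to what must be shown, so as written the proof is circular at this point.

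The paper resolves this by taking a different, saturated lift rather than the submodule generated by the orbit of $d$: for the treillis $D_0$ it considers
\begin{equation*}
D^{bd}(D_0)=\{x\in D\mid \ell_D(\psi_s^k(u^{-1}x))\in D_0\text{ for all }k\geq 0,\ u\in N_0\}\ ,
\end{equation*}
which contains $d$ by the choice of $D_0$ and is a $\psi_s$-stable $\Lambda(N_0)$-submodule of $D$ (using the \'etale decomposition to check stability under multiplication by $\Lambda(N_0)$); its compactness is exactly the content of Prop.\ 9.10 in \cite{SVZ}. That proposition is the external input your argument is missing; with it, the forward direction is a two-line verification, and without it (or a proof of the analogous compactness statement) your lifting step does not go through.
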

\begin{proof}
If $d$ is in $D^{bd}$ then it is contained in $$D^{bd}(D_0)=\{x\in D\mid \ell_D(\psi_{s}^k(u^{-1}x))\subseteq D_0\}$$ for some treillis $D_0\subset D_{H_0}$ where $D^{bd}(D_0)$ is a compact $\psi_s$-stable $\Lambda(N_0)$-submodule of $D$ by Prop.\ 9.10 in \cite{SVZ}. On the other hand if $x\in D_1$ for some compact $\psi_s$-invariant $\Lambda(N_0)$-submodule $D_1\subset D$ then we have $$\{\ell_D(\psi_s^k(u^{-1}x))\mid k\geq 0,u\in N_0\}\subseteq \ell_D(D_1)$$ where $\ell_D(D_1)$ is bounded as $D_1$ is compact and $\ell_D$ is continuous.
\qed\end{proof}

We call a pseudocompact $\Lambda_\ell(N_0)$-module together with a $\varphi$-action of the monoid $T_+$ (resp.\ $\Zp\setminus\{0\}$) a pseudocompact \'etale $T_+$-module (resp.\ $(\varphi,\Gamma)$-module) over $\Lambda_\ell(N_0)$ if  it is a topologically \'etale $o[B_+]$-module in the sense of section 4.1 in \cite{SVZ}. Recall that a pseudocompact module over the pseudocompact ring $\Lambda_\ell(N_0)$ is the projective limit of finitely generated $\Lambda_\ell(N_0)$-modules. As for $D=D^\vee_{\xi,\ell,\infty}(\pi)$ in section \ref{multvarbr} we equip the pseudocompact $\Lambda_\ell(N_0)$-modules $D$ with the weak topology, ie.\ with the projective limit  topology of the weak topologies of these finitely generated quotients of $D$. Recall from section 4.1 in \cite{SVZ} that the condition for $D$ to be topologically \'etale means in this case that the map
\begin{eqnarray}
B_+\times D&\to& D\notag\\
(b,x)&\mapsto&\varphi_b(x)\label{multbyB+}
\end{eqnarray}
is continuous and $\psi=\psi_s\colon D\to D$ is continuous (Lemma 4.1 in \cite{SVZ}).

\begin{lem}\label{B+cont}
$D^\vee_{\xi,\ell,\infty}(\pi)$ is a pseudocompact \'etale $T_+$-module over $\Lambda_\ell(N_0)$. 
\end{lem}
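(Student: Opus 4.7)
By Lemma 4.1 of \cite{SVZ}, it suffices to check that the action map $B_+ \times D \to D$ (with $D := D^\vee_{\xi,\ell,\infty}(\pi)$) is jointly continuous in the weak topology, and that $\psi_s$ is continuous. Pseudocompactness of $D$ is itself immediate from the definition as a projective limit $\varprojlim_{k,M} M_k^\vee[1/X]$: by Proposition \ref{M_ketale} each $M_k^\vee[1/X]$ is finitely generated over the pseudocompact ring $\Lambda(N_0/H_k)/\varpi^h[1/X]$, and the weak topology is exactly the corresponding projective-limit topology.

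I would handle the constituents of the $B_+$-action separately. The $N_0$-action is the restriction of the $\Lambda_\ell(N_0)$-module structure, so it is jointly continuous in the weak topology by standard properties of pseudocompact modules. For each fixed $t \in T_+$, the operator $\varphi_t$ is continuous: by the explicit formula \eqref{varphi_tk}, at each finite level $\varphi_t$ factors as the $\Lambda(N_0)$-linear ``tensor with $1$'' map $M_k^\vee[1/X]\to \Lambda(N_0/H_k)/\varpi^h[1/X]\otimes_{\varphi_t} M_k^\vee[1/X]$ composed with the inverse of the isomorphism in \eqref{1otimesFt} of finitely generated pseudocompact modules, which is automatically continuous. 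Continuity of $\psi_s$ is similar: on each finite level it is just the projection onto a summand of the direct-sum decomposition \eqref{etalevarphi_t}.

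The subtle point is joint continuity of $T_+ \times D \to D$. Since $T_+/T_0$ is countable and discrete, this reduces to joint continuity of $T_0 \times D \to D$. Here I would invoke Lemma \ref{MT'}: for each $M \in \mathcal{M}(\pi^{H_0})$ there is an open subgroup $T'(M)\leq T_0$ stabilising $M$; because $\pi$ is smooth and $M$ is admissible over $N_0$, a further open subgroup of $T'(M)$ acts trivially on any prescribed finite quotient of $M_k^\vee[1/X]$ cutting out a basic open neighborhood $O(M,l,l')$ of $0$ (cf.\ \eqref{OMll'}). This equicontinuity at every finite level assembles into joint continuity of the $T_0$-action at the projective limit.

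Finally, joint continuity of the full $B_+$-action $(ut,x)\mapsto u\varphi_t(x)$ follows by combining the continuity of the $N_0$- and $T_+$-actions, together with the fact that $T_+$ normalises $N_0$ by a jointly continuous conjugation action. The main obstacle throughout is really the joint continuity in the $T_0$-variable, which rests on Lemma \ref{MT'} and the admissibility of the modules in $\mathcal{M}(\pi^{H_0})$.
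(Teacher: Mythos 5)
Your skeleton coincides with the paper's: both reduce to the two conditions of Lemma 4.1 in \cite{SVZ} (joint continuity of \eqref{multbyB+} and continuity of $\psi_s$), work with the basic neighbourhoods $O(M,l,l')$ of \eqref{OMll'}, use Lemma \ref{MT'} to handle the $T_0$-direction, and get continuity of each fixed $\varphi_t$ from the fact that $f_{M,l}\circ\varphi_t$ factors through a continuous map out of $(F_{t'}^*M)_l^\vee[1/X]$ (where $tt'=s^r$). The pseudocompactness remark, the $N_0$-direction, the continuity of $\psi_s$ (the paper cites Prop.\ 8.22 of \cite{SVZ}), and the reduction of joint continuity on $T_+\times D$ to joint continuity of the $T_0$-action near the identity are all fine.

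The gap is at the step you yourself single out as the crux. You justify joint continuity of the $T_0$-action by asserting that an open subgroup of $T'(M)$ ``acts trivially on any prescribed finite quotient of $M_k^\vee[1/X]$ cutting out a basic open neighbourhood $O(M,l,l')$''. But the quotient cutting out $O(M,l,l')$ is $M_l^\vee[1/X]/(\Lambda(N_0/H_l)\otimes_{u_\alpha}X^{l'}M^\vee[1/X]^{++})$, and this is \emph{not} finite: $X^{l'}M^\vee[1/X]^{++}$ is a treillis, so the quotient is an infinite discrete module, and no open subgroup of $T_0$ acts trivially on it in general. Triviality of the action on that quotient would amount to the uniform inclusion $(T_1-1)D^\vee_{\xi,\ell,\infty}(\pi)\subseteq O(M,l,l')$, which already fails for the trivial rank-one module $o/\varpi\bg X\jg$ with its $\Gamma$-action, since for fixed $\gamma\neq 1$ the elements $(\gamma-1)X^{-n}$ acquire poles of unbounded order as $n\to\infty$. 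What is actually needed --- and what the paper proves --- splits into two weaker statements: $(a)$ the \emph{pointwise} estimate $(B_1-1)x\subseteq O(M,l,l')$ at the specific point $x=u\varphi_t(y)$, which follows from continuity of the $T'$-action on $M_l^\vee[1/X]$ together with smoothness of $D/O(M,l,l')$ as an $N_0$-representation; and $(b)$ the invariance $B_1\cdot O(M,l,l')\subseteq O(M,l,l')$ for $B_1=N_1T_1$ small enough, which is not formal but holds because each $\varphi_{t_1}$ with $t_1\in T_1\leq T'(M)$ commutes with $\varphi_s$ and therefore preserves $X^{l'}M^\vee[1/X]^{++}$. Ingredient $(b)$ is absent from your sketch, and it is indispensable: in $b_1b(y+v)=x+(b_1-1)x+b_1u\varphi_t(v)$ one needs $(b)$ to absorb the last term after checking $u\varphi_t(v)\in O(M,l,l')$ for $v\in O(F_{t'}^*M,l,l')$. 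With $(a)$ and $(b)$ in place the argument closes as you intend.
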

\begin{proof}
At first we show that the map \eqref{multbyB+} is continuous in the weak topology of $D=D^\vee_{\xi,\ell,\infty}(\pi)$. Let $b=ut\in B_+$ ($u\in N_0$, $t\in T_+$), $x,y\in D^\vee_{\xi,\ell,\infty}(\pi)$ be such that $u\varphi_t(y)=x$ and let $M\in \mathcal{M}(\pi^{H_0})$, $l,l'\geq 0$ be arbitrary. Recall from \eqref{OMll'} that the sets
\begin{equation*}
O(M,l,l'):=f_{M,l}^{-1}(\Lambda(N_0/H_l)\otimes_{u_\alpha}X^{l'}M^\vee[1/X]^{++})
\end{equation*}
form a system of neighbourhoods of $0$ in the weak topology of $D^\vee_{\xi,\ell,\infty}(\pi)$. We need to verify that the preimage of $x+O(M,l,l')$ under \eqref{multbyB+} contains a neighbourhood of $(b,y)$. By Lemma \ref{MT'} there exists an open subgroup $T'\leq T_0\leq T$ acting on $M$ therefore also on $M_l^\vee[1/X]$ as $T_0$ normalizes $H_l$ for all $l\geq 0$ by the assumption $\ell=\ell_\alpha$. Moreover, this action is continuous in the weak topology of $M_l^\vee[1/X]$, so there exists an open subgroup $T_1\leq T'$ such that we have $(T_1-1)x\subset O(M,l,l')$.
Moreover, since we have $D^\vee_{\xi,\ell,\infty}(\pi)/O(M,l,l')\cong M_l^\vee[1/X]/(\Lambda(N_0/H_l)\otimes_{u_\alpha}X^{l'}M^\vee[1/X]^{++})$ is a smooth representation of $N_0$, we have an open subgroup $N_1\leq N_0$ with $(N_1-1)x\subset O(M,l,l')$. Moreover, we may assume that $T_1$ normalizes $N_1$ so that $B_1:=N_1T_1$ is an open subgroup in $B_0\leq B_+$ for which we have $(B_1-1)x\subset O(M,l,l')$ as $O(M,l,l')$ is $N_0$-invariant. Choose an elemet $t'\in T_+$ such that $tt'=s^r$ for some $r\geq 0$. Note that the composite map $D^\vee_{\xi,\ell,\infty}(\pi)\overset{\varphi_{t}}{\rightarrow}D^\vee_{\xi,\ell,\infty}\twoheadrightarrow M^\vee[1/X]$ factors through the $\varphi_s$-equivariant map $$((1\otimes F_{t})^\vee[1/X])^{-1}\colon (F_{t'}^*M)^\vee[1/X]\to M^\vee[1/X]$$ mapping $X^{l'}(F_{t'}^*M)^\vee[1/X]^{++}$ into $X^{l'}M^\vee[1/X]^{++}$.
Since $X^{l'}M^\vee[1/X]^{++}$ is $B_1$-invariant (as each $\varphi_{t_1}$ for $t_1\in T_1$ commutes with $\varphi_s$), so is $O(M,l,l')$. We deduce that $$B_1b\times (y+O(F_{t'}^*M,l,l'))\subset B_+\times D^\vee_{\xi,\ell,\infty}(\pi)$$ maps into $x+O(M,l,l')$ via \eqref{multbyB+}.

The continuity of $\psi_s$ follows from Proposition 8.22 in \cite{SVZ} since $\psi_s$ on  $D^\vee_{\xi,\ell,\infty}(\pi)$ is the projective limit of the maps $\psi_s\colon M_\infty^\vee[1/X]\to M_\infty^\vee[1/X]$ for $M\in\mathcal{M}(\pi^{H_0})$.
\qed\end{proof}

In view of the above Lemmata we define $D^{bd}$ for a pseudocompact \'etale $(\varphi,\Gamma)$-module $D$ over $\Lambda_\ell(N_0)$ as 
\begin{equation*}
D^{bd}=\bigcup_{D_c\in\mathfrak{C}_0(D)}D_c
\end{equation*}
where we denote the set of $\psi_s$-invariant compact $\Lambda(N_0)$-submodules $D_c\subset D$ by $\mathfrak{C}_0=\mathfrak{C}_0(D)$.

The following is a generalization of Prop.\ 9.5 in \cite{SVZ}.
\begin{pro}\label{bdetale}
Let $D$ be a pseudocompact \'etale $(\varphi,\Gamma)$-module over $\Lambda_\ell(N_0)$. Then $D^{bd}$ is an \'etale $(\varphi,\Gamma)$-module over $\Lambda(N_0)$. If we assume in addition that $D$ is an \'etale $T_+$-module over $\Lambda_\ell(N_0)$ (for a $\varphi$-action of the monoid $T_+$ extending that of $\xi(\Zp\setminus\{0\})$) then $D^{bd}$ is an \'etale $T_+$-module over $\Lambda(N_0)$ (with respect to the action of $T_+$ restricted from $D$).
\end{pro}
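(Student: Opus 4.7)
The plan is to show that the family $\mathfrak{C}_0$ of compact $\psi_s$-invariant $\Lambda(N_0)$-submodules of $D$ is closed under the operations of interest, so that $D^{bd}=\bigcup_{D_c\in\mathfrak{C}_0}D_c$ acquires the required structure. The formal closure properties come first: for $D_c,D_c'\in\mathfrak{C}_0$, the sum $D_c+D_c'$ is compact (continuous image of $D_c\times D_c'$), $\psi_s$-stable by linearity, and a $\Lambda(N_0)$-submodule, so $D^{bd}$ is a $\Lambda(N_0)$-submodule. For $\Gamma$-stability I would use that $\Gamma\cong\Zp^\times$ is compact, normalizes $N_0$, and commutes with $s$ in the abelian torus $T$; then the closed $\Lambda(N_0)$-submodule generated by $\Gamma\cdot D_c$ is compact, and is $\psi_s$-stable because $\psi_s\gamma=\gamma\psi_s$ for each $\gamma\in\Gamma$. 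The $\psi_s$-stability of $D^{bd}$ itself is immediate from the definition of $\mathfrak{C}_0$.

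The key construction is the closure of $\mathfrak{C}_0$ under $\varphi_s$: given $D_c\in\mathfrak{C}_0$, I would set $D_c^{\flat}:=D_c+\Lambda(N_0)\varphi_s(D_c)$, which is compact and a $\Lambda(N_0)$-submodule by construction. Its $\psi_s$-stability rests on the compatibility axiom $(\varphi\psi)$: for $\lambda\in\Lambda(N_0)$ and $d\in D_c$ one has $\psi_s(\lambda\varphi_s(d))=\psi_s(\lambda)d\in\Lambda(N_0)D_c=D_c$, which together with $\psi_s(D_c)\subseteq D_c$ gives $\psi_s(D_c^{\flat})\subseteq D_c\subseteq D_c^{\flat}$. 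Thus $D_c^{\flat}\in\mathfrak{C}_0$ contains $\varphi_s(D_c)$, yielding $\varphi_s(D^{bd})\subseteq D^{bd}$.

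For the étale property, given $x\in D_c\subseteq D^{bd}$, the $\Lambda_\ell(N_0)$-étaleness of $D$ provides the unique decomposition $x=\sum_{u\in J(N_0/sN_0s^{-1})}u\varphi_s(x_u)$ with $x_u=\psi_s(u^{-1}x)\in\psi_s(D_c)\subseteq D_c\subseteq D^{bd}$. This gives the surjectivity of the map $\Lambda(N_0)\otimes_{\varphi_s,\Lambda(N_0)}D^{bd}\to D^{bd}$; injectivity descends from injectivity on $D$ since $\Lambda(N_0)$ is free as a right $\varphi_s(\Lambda(N_0))$-module. This completes the $(\varphi,\Gamma)$-case.

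The $T_+$-case follows the same blueprint, but the main obstacle is that $\psi_s$ and $\varphi_t$ do not commute for general $t\in T_+$, so $(\varphi\psi)$ has no clean analog when $\varphi_s$ is replaced by $\varphi_t$. I would circumvent this using the cofinality of $s^{\mathbb{Z}_{\geq 0}}$ in $T_+$: for $t\in T_+$ pick $t'\in T_+$ and $k\geq 0$ with $tt'=s^k$. The $\varphi_{t'}$-étale decomposition on $D$ then yields $\varphi_t(x)=\sum_u\varphi_t(u)\varphi_{s^k}(\psi_{t'}(u^{-1}x))$ for $x\in D_c$, reducing $\varphi_t$-stability of $D^{bd}$ to $\varphi_{s^k}$-stability (obtained by iterating the preceding paragraph) together with $\psi_{t'}$-stability of $D^{bd}$. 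The latter is the delicate point; I would handle it by an argument parallel to the second paragraph, exploiting $\psi_s\psi_{t'}=\psi_{t'}\psi_s$ (valid as $s,t'$ commute in $T$) in place of the direct appeal to $(\varphi\psi)$. Once $\varphi_t$-stability is established, the étale property for $\varphi_t$ on $D^{bd}$ follows from the $\varphi_t$-étaleness of $D$ via the same decomposition argument as in the third paragraph.
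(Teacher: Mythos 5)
Your proof is correct and its overall architecture matches the paper's: reduce everything to showing that the family $\mathfrak{C}_0$ of compact $\psi_s$-stable $\Lambda(N_0)$-submodules absorbs the various operations, handle $\varphi_s$ by exhibiting a member of $\mathfrak{C}_0$ containing $\varphi_s(D_c)$, and treat a general $t\in T_+$ by writing $tt'=s^k$. The one genuine difference is in the key $\varphi_s$-step: the paper invokes the \'etale-hull machinery (the nondegeneracy of $\psi_s$ on $D_c$, Lemma \ref{psipushforward} and Corollary \ref{tildeinjective} give an injection $\widetilde{D}_c\hookrightarrow D$ whose compact $\psi$-stable pieces $\varphi_{s^k}^*D_c$ land in $D^{bd}$), whereas you verify directly that $D_c+\Lambda(N_0)\varphi_s(D_c)$ lies in $\mathfrak{C}_0$ via the projection formula $\psi_s(\lambda\varphi_s(d))=\psi_s(\lambda)d$ — note that $\Lambda(N_0)\varphi_s(D_c)=\sum_{u\in J(N_0/sN_0s^{-1})}u\varphi_s(D_c)$ is exactly the paper's $i(\varphi_s^*D_c)$, so the two computations are the same one, yours being more self-contained and the paper's recycling already-proved lemmata. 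For general $t$ the paper uses the cleaner identity $\varphi_t=\psi_{t'}\circ\varphi_{s^k}$ on $D$ directly, while you pass through the \'etale decomposition with respect to $t'$; these are equivalent, and the $\psi_{t'}$-stability of $D^{bd}$ you flag as delicate is in fact immediate ($\psi_{t'}(D_c)$ is compact, is a $\Lambda(N_0)$-submodule by the projection formula, and is $\psi_s$-stable since $\psi_s\psi_{t'}=\psi_{t'}\psi_s$), which is exactly the one-line observation the paper opens with. You also spell out the \'etale isomorphism property and $\Gamma$-stability, which the paper treats as clear; both are fine.
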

\begin{proof}
We prove the second statement assuming that $D$ is an \'etale $T_+$-module. The first statement follows easily the same way. 

At first note that $D^{bd}$ is $\psi_t$-invariant for all $t\in T_+$ as for $D_c\in\mathfrak{C}_0$ we also have $\psi_t(D_c)\in\mathfrak{C}_0$. So it suffices to show that it is also stable under the $\varphi$-action of $T_+$ since these two actions are clearly compatible (as they are compatible on $D$). At first we show that we have $\varphi_s(D^{bd})\subset D^{bd}$. Let $D_c\in \mathfrak{C}_0$ be arbitrary. Then the $\psi$-action of the monoid $p^{\mathbb{Z}}$ (ie.\ the action of $\psi_s$) is nondegenerate on $D_c$ as $D_c$ is a $\psi_s$-invariant submodule of an \'etale module $D$. So by the remark after Proposition \ref{etalehull} and by Corollary \ref{tildeinjective} we obtain an injective $\psi_s$ and $\varphi_s$-equivariant homomorphism $i\colon\widetilde{D}_c\hookrightarrow D$.
However, each $\varphi_{s^k}^*D_c\subseteq \widetilde{D}_c$ is compact and $\psi$-equivariant therefore the image of $\widetilde{D}_c$ is contained in $D^{bd}$ showing that $\varphi_s(D_c)\subset N_0\varphi_s(D_c)=i(\varphi_s^*D_c)\subseteq D^{bd}$. However, for each $t\in T_+$ there exists a $t'\in T_+$ with $tt'=s^k$ for some $k\geq 0$, so $\varphi_t(D_c)=\psi_{t'}(\varphi_{s^k}(D_c))\subseteq D^{bd}$ showing that $D^{bd}$ is $\varphi_t$-invariant for all $t\in T_+$.
\qed\end{proof}

\begin{cor}\label{prbd}
The image of the map $\widetilde{\pr}\colon\widetilde{D_{SV}}(\pi)\to D^\vee_{\xi,\ell,\infty}(\pi)$ is contained in $D^\vee_{\xi,\ell,\infty}(\pi)^{bd}$.
\end{cor}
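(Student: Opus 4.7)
The plan is to first establish the boundedness of $\pr(D_{SV}(\pi))$ by exhibiting each $\pr_W(W^\vee)$ as a $\psi_s$-invariant compact $\Lambda(N_0)$-submodule of $D^\vee_{\xi,\ell,\infty}(\pi)$, and then to bootstrap this to $\widetilde{D_{SV}}(\pi)$ via the $T_+$-stability of the bounded part.

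First, fix $W\in\mathcal{B}_+(\pi)$. I claim that $\pr_W(W^\vee)$ is a $\psi_s$-invariant compact $\Lambda(N_0)$-submodule of $D^\vee_{\xi,\ell,\infty}(\pi)$, which by the definition of $D^\vee_{\xi,\ell,\infty}(\pi)^{bd}$ places it inside $D^\vee_{\xi,\ell,\infty}(\pi)^{bd}$. Compactness is checked component by component in the defining projective limit: for every $M\in\mathcal{M}(\pi^{H_0})$ and every $k\geq 0$ large enough so that $M_k\subseteq W$ (Lemma~\ref{minw}), the map $\pr_{W,M,k}$ factors as the restriction $W^\vee\twoheadrightarrow M_k^\vee$ followed by the natural inclusion $M_k^\vee\hookrightarrow M_k^\vee[1/X]$; the first is continuous as the Pontryagin dual of an inclusion of smooth torsion modules, and the second is continuous for the weak topology, so $\pr_{W,M,k}(W^\vee)$ is the continuous image of the compact module $W^\vee$ and is therefore compact. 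Taking the projective limit over $k$ and $M$ preserves compactness. The $\Lambda(N_0)$-linearity is built into the construction of $\pr_W$, and the $\psi_s$-invariance follows from Lemma~\ref{prpsigamma} together with the fact that $W$ is $B_+$-stable (so $sW\subseteq W$), which means $\psi_s$ restricts to an operator on $W^\vee$.

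Second, taking the union over $W\in\mathcal{B}_+(\pi)$ yields $\pr(D_{SV}(\pi))=\bigcup_{W}\pr_W(W^\vee)\subseteq D^\vee_{\xi,\ell,\infty}(\pi)^{bd}$. Since by Lemma~\ref{B+cont} the module $D^\vee_{\xi,\ell,\infty}(\pi)$ is a pseudocompact \'etale $T_+$-module over $\Lambda_\ell(N_0)$, Proposition~\ref{bdetale} applies and gives that $D^\vee_{\xi,\ell,\infty}(\pi)^{bd}$ is an \'etale $T_+$-module over $\Lambda(N_0)$; in particular it is stable under the action of both $T_+$ (via $\varphi_t$ for every $t\in T_+$) and $\Lambda(N_0)$.

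Finally, to pass from $D_{SV}(\pi)$ to the \'etale hull, I would use the explicit description $\widetilde{D_{SV}}(\pi)=\varinjlim_{t\in T_+}\varphi_t^*D_{SV}(\pi)$ from Proposition~\ref{etalehull}. Every element of $\widetilde{D_{SV}}(\pi)$ is a finite $\Lambda(N_0)$-linear combination of elements of the form $(\lambda\otimes_t x)$ for $x\in D_{SV}(\pi)$, $t\in T_+$, and its image under $\widetilde{\pr}$ (which by construction satisfies $\widetilde{\pr}(\lambda\otimes_t x)=\lambda\varphi_t(\pr(x))$) lies in $\Lambda(N_0)\cdot\varphi_t(\pr(D_{SV}(\pi)))\subseteq D^\vee_{\xi,\ell,\infty}(\pi)^{bd}$ by the preceding step and the $T_+$- and $\Lambda(N_0)$-stability of $D^\vee_{\xi,\ell,\infty}(\pi)^{bd}$.

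The main point that needs a little care is the verification that each $\pr_W(W^\vee)$ really is compact in the weak topology of $D^\vee_{\xi,\ell,\infty}(\pi)$ --- everything else is almost formal given the universal property of the \'etale hull and the machinery of Section~\ref{sheaf}. This compactness, however, reduces cleanly to the fact that in each finitely generated quotient $M_k^\vee[1/X]$ the image of $W^\vee$ lies in the compact submodule $M_k^\vee$.
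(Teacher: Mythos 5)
Your proposal is correct and follows essentially the same route as the paper: the paper likewise reduces via Propositions~\ref{etalehull} and~\ref{bdetale} to showing $\pr(D_{SV}(\pi))\subseteq D^\vee_{\xi,\ell,\infty}(\pi)^{bd}$, and then observes that $\pr(D_{SV}(\pi))$ is a $\psi_s$-invariant compact $\Lambda(N_0)$-submodule (using the compact quotient topology on $D_{SV}(\pi)$ inherited from $\pi^\vee$, rather than your $W$-by-$W$ argument, but this is a cosmetic difference). Your unwinding of the direct-limit description of $\widetilde{D_{SV}}(\pi)$ in the last step is just the explicit form of the universal property the paper cites.
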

\begin{proof}
By Propositions \ref{etalehull} and \ref{bdetale} it suffices to show that the image of $\pr\colon D_{SV}(\pi)\to D^\vee_{\xi,\ell,\infty}(\pi)$ lies in $D^\vee_{\xi,\ell,\infty}(\pi)^{bd}$. However, this is clear since $\pr(D_{SV}(\pi))$ is a $\psi_s$-invariant compact $\Lambda(N_0)$-submodule of $D^\vee_{\xi,\ell,\infty}(\pi)$.
\qed\end{proof}

Let $\mathfrak{C}$ be the set of all compact subsets $C$ of $D^\vee_{\xi,\ell,\infty}(\pi)$ contained in one of the compact subsets $D_c\in\mathfrak{C}_0=\mathfrak{C}_0(D^\vee_{\xi,\ell,\infty}(\pi))$. Recall from Definition 6.1 in \cite{SVZ} that the family $\mathfrak{C}$ is said to be special if it satisfies the following axioms:
\begin{enumerate}[$\mathfrak{C}(1)$]
\item Any  compact subset of  a compact set in
$ \mathfrak{C}$  also lies in $\mathfrak{C}$.
\item If $C_1,C_2,\dots,C_n\in\mathfrak{C}$ then $\bigcup_{i=1}^nC_i$ is in
  $\mathfrak{C}$, as well.
\item For all $C\in\mathfrak{C}$ we have $N_0C\in\mathfrak{C}$.
\item $D(\mathfrak{C}):=\bigcup_{C\in\mathfrak{C}}C$ is an \'etale $T_+$-submodule
of $D$.
\end{enumerate}

\begin{lem}
The set $\mathfrak{C}$ is a special family of compact sets in $D^\vee_{\xi,\ell,\infty}(\pi)$ in the sense of Definition 6.1 in \cite{SVZ}.
\end{lem}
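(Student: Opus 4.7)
The plan is to verify the four axioms $\mathfrak{C}(1)$--$\mathfrak{C}(4)$ in order; the bulk of the work has already been done in Proposition \ref{bdetale}, so only routine topological arguments remain.

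First, $\mathfrak{C}(1)$ is immediate from the very definition of $\mathfrak{C}$: if $C'\subseteq C\in\mathfrak{C}$ with $C'$ compact and $C\subseteq D_c$ for some $D_c\in\mathfrak{C}_0$, then $C'\subseteq D_c$ too, so $C'\in\mathfrak{C}$.

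For $\mathfrak{C}(2)$, given $C_1,\dots,C_n\in\mathfrak{C}$ with $C_i\subseteq D_{c,i}\in\mathfrak{C}_0$, I would show that $D_{c,1}+\cdots+D_{c,n}$ belongs to $\mathfrak{C}_0$. It is compact as the continuous image of the compact $D_{c,1}\times\cdots\times D_{c,n}$ under addition, it is clearly a $\Lambda(N_0)$-submodule, and $\psi_s$-invariance follows because $\psi_s$ is additive and each $D_{c,i}$ is $\psi_s$-stable. Hence $\bigcup_i C_i\subseteq\sum_i D_{c,i}\in\mathfrak{C}_0$, giving $\bigcup_i C_i\in\mathfrak{C}$.

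For $\mathfrak{C}(3)$, if $C\in\mathfrak{C}$ with $C\subseteq D_c\in\mathfrak{C}_0$, then $N_0C\subseteq D_c$ since $D_c$ is already a $\Lambda(N_0)$-submodule; moreover $N_0C$ is compact as the image of the compact product $N_0\times C$ under the continuous multiplication map (continuity of the $B_+$-action on $D^\vee_{\xi,\ell,\infty}(\pi)$ was established in Lemma \ref{B+cont}). Thus $N_0C\in\mathfrak{C}$.

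Finally, $\mathfrak{C}(4)$ is essentially Proposition \ref{bdetale}: by construction
\[
D(\mathfrak{C})=\bigcup_{C\in\mathfrak{C}}C=\bigcup_{D_c\in\mathfrak{C}_0}D_c=D^\vee_{\xi,\ell,\infty}(\pi)^{bd},
\]
and Proposition \ref{bdetale} tells us that the right-hand side is an \'etale $T_+$-submodule over $\Lambda(N_0)$ (since $D^\vee_{\xi,\ell,\infty}(\pi)$ is a pseudocompact \'etale $T_+$-module over $\Lambda_\ell(N_0)$ by Lemma \ref{B+cont} together with the action of $T_+$ constructed in section \ref{nongeneric}). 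The only mildly delicate point is the compactness of $N_0C$ in $\mathfrak{C}(3)$, which is precisely where the continuity of the $B_+$-action feeds in; everything else is formal.
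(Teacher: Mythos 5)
Your proof is correct and follows essentially the same route as the paper: $\mathfrak{C}(1)$ by construction, $\mathfrak{C}(2)$ via closure of $\mathfrak{C}_0$ under finite sums, $\mathfrak{C}(3)$ via the $N_0$-stability of the enveloping $D_c$, and $\mathfrak{C}(4)$ as a restatement of Proposition \ref{bdetale}. The extra detail you supply (compactness of sums and of $N_0C$ via continuity of the action) is a harmless elaboration of what the paper leaves implicit.
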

\begin{proof}
$\mathfrak{C}(1)$ is satisfied by construction. So is $\mathfrak{C}(3)$ by noting that any $C\in\mathfrak{C}$ is contained in a $D_c\in \mathfrak{C}_0$ which is $N_0$-stable. For $\mathfrak{C}(2)$ note that for any $D_{c,1},\dots,D_{c,r}\in\mathfrak{C}_0$ we have $\sum_{i=1}^rD_{c,i}\in\mathfrak{C}_0$. Finally, $\mathfrak{C}(4)$ is just Proposition \ref{bdetale}.
\qed\end{proof}

Our next goal is to construct a $G$-equivariant sheaf $\mathfrak{Y}=\mathfrak{Y}_{\alpha,\pi}$ on $G/B$ in \cite{SVZ} with sections $\mathfrak{Y}(\mathcal{C}_0)$ on $\mathcal{C}_0:=N_0w_0B/B$ isomorphic to $D^\vee_{\xi,\ell,\infty}(\pi)^{bd}$ as a $B_+$-module. Here $w_0\in N_G(T)$ is a representative of an element in the Weyl group $N_G(T)/C_G(T)$ of \emph{maximal length}. For this we identify $D^\vee_{\xi,\ell,\infty}(\pi)^{bd}$ with the global sections of a $B_+$-equivariant sheaf on $N_0$ as in \cite{SVZ}. The restriction maps $\res_{us^kN_0s^{-k}}^{N_0}$ are defined as $u\circ\varphi_s^k\circ\psi_s^k\circ u^{-1}$. The open sets $us^kN_0s^{-k}$ form a basis of the topology on $N_0$, so it suffices to give these restriction maps. Indeed,  any open compact subset $\mathcal{U}\subseteq N_0$ is the disjoint union of cosets of the form $us^kN_0s^{-k}$ for $k\geq k'(\mathcal{U})$ large enough. For a fixed $k\geq k'(\mathcal{U})$ we put
\begin{align*}
\res_{\mathcal{U}}=\res^{N_0}_{\mathcal{U}}:=\sum_{u\in J(N_0/s^kN_0s^{-k})\cap \mathcal{U}}u\varphi_{s^k}\circ\psi_s^k\circ(u^{-1}\cdot)\ .
\end{align*}

This is independent of the choice of $k\geq k'(\mathcal{U})$ by Prop.\ 3.16 in \cite{SVZ}. Note that the map $$u\mapsto x_u:=uw_0B/B\in\mathcal{C}_0$$ is a $B_+$-equivariant homeomorphism from $N_0$ to $\mathcal{C}_0$ therefore we may view $D^\vee_{\xi,\ell,\infty}(\pi)^{bd}$ as the global sections of a sheaf on $\mathcal{C}_0$. For an open subset $U\subseteq N_0$ we denote the image of $U$ by $x_U\subseteq \mathcal{C}_0$ under the above map $u\mapsto x_u$. Moreover,  we regard $\res$ as an $\End^{cont}_o(D^\vee_{\xi,\ell,\infty}(\pi))$-valued measure on $\mathcal{C}_0$, ie.\ a ring homomorphism $\res: C ^\infty(\mathcal{C}_0,o)\to \End^{cont}_o(D^\vee_{\xi,\ell,\infty}(\pi))$. We restrict $\res$ to a map $\res\colon C ^\infty(\mathcal{C}_0,o)\to \Hom^{cont}_o(D^\vee_{\xi,\ell,\infty}(\pi)^{bd},D^\vee_{\xi,\ell,\infty}(\pi))$.
Put $\mathcal{C}:=Nw_0B/B\supset \mathcal{C}_0$. By the discussion in section 5 of \cite{SVZ} in order to construct a $G$-equivariant sheaf on $G/B$ with the required properties we need to integrate the map
\begin{eqnarray*}
\alpha_{g}\colon \mathcal{C}_0&\to &\Hom_o^{cont}(D^\vee_{\xi,\ell,\infty}(\pi)^{bd},D^\vee_{\xi,\ell,\infty}(\pi))  \\
x_u&\mapsto& \alpha(g,u) \circ \res(1_{\alpha(g,u)^{-1}\mathcal{C}_0\cap\mathcal C_0})
\end{eqnarray*}
with respect to the measure $\res$ where for $x_u\in g^{-1}\mathcal{C}_0\cap\mathcal{C}_0\subset g^{-1}\mathcal{C}\cap \mathcal{C}$ we take $\alpha(g,u)$ to be the unique element in $B$ with the property
\begin{equation*}
guw_0N=\alpha(g,u)uw_0N\ .
\end{equation*}
Note that since $x_u$ lies in $g^{-1}\mathcal{C}_0\cap\mathcal{C}_0$ we also have $x_u\in \alpha(g,u)^{-1}\mathcal{C}_0\cap \mathcal{C}_0$ so the latter set is nonempty and open in $G/B$. Recall from section 6.1 in \cite{SVZ} that a map $F\colon \mathcal{C}_0\to  \Hom_o^{cont}(D^\vee_{\xi,\ell,\infty}(\pi)^{bd},D^\vee_{\xi,\ell,\infty}(\pi))$ is called integrable with respect to ($s$, $\res$, $\mathfrak{C}$)  if the limit
\begin{equation*}
    \int_{\mathcal{C}_0} Fd\res := \lim_{k \rightarrow \infty} \sum_{u \in J(N_0/s^kNs^{-k})} F(x_u) \circ \res(1_{x_{us^kNs^{-k}}}) 
\end{equation*}
exists in $ \Hom_o^{cont} (D^\vee_{\xi,\ell,\infty}(\pi)^{bd},D^\vee_{\xi,\ell,\infty}(\pi))$ and does not depend on the choice of the sets of representatives $J(N_0/s^kNs^{-k})$.

\begin{pro}
The map $\alpha_g$ is $(s,\res,\mathfrak{C})$-integrable for any $g\in G$.
\end{pro}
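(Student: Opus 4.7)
The plan is to reduce integrability to the finite-rank case already treated in \cite{SVZ} via the projective-limit presentation
\begin{equation*}
D := D^\vee_{\xi,\ell,\infty}(\pi) = \varprojlim_{M \in \mathcal{M}(\pi^{H_0})} D^{(M)}, \qquad D^{(M)} := M_\infty^\vee[1/X],
\end{equation*}
from Corollary \ref{MinftyM}: each $D^{(M)}$ is a finitely generated \'etale $(\varphi,\Gamma)$-module over $\Lambda_\ell(N_0)/\varpi^h$, and the weak topology on $D$ is the projective-limit topology of those on the $D^{(M)}$. Denote by $\pi_M\colon D\twoheadrightarrow D^{(M)}$ the canonical projection.

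First I would verify that $\pi_M$ sends $D^{bd}$ into $(D^{(M)})^{bd}$: since $\pi_M$ is continuous and $\psi_s$- and $N_0$-equivariant, it carries any compact $\psi_s$-invariant $\Lambda(N_0)$-submodule of $D$ into one of $D^{(M)}$, whence the claim follows from the Lemma preceding Proposition \ref{bdetale}. This makes $\alpha_g^{(M)} := \pi_M\circ \alpha_g$ a well-defined map $\mathcal{C}_0 \to \Hom_o^{cont}((D^{(M)})^{bd}, D^{(M)})$. For each $M$ I would then appeal to the finite-rank integrability theorem of \cite{SVZ} to obtain integrals $I^{(M)} := \int_{\mathcal{C}_0}\alpha_g^{(M)}\,d\res \in \Hom_o^{cont}((D^{(M)})^{bd}, D^{(M)})$ together with the convergence $\pi_M\circ S_k \to I^{(M)}$ of the approximating sums $S_k := \sum_{u\in J(N_0/s^kN_0s^{-k})}\alpha_g(x_u)\circ\res(1_{x_{us^kN_0s^{-k}}})$, independently of the choice of representatives.

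The family $\{I^{(M)}\}_M$ is automatically compatible with the natural surjections $D^{(M')}\twoheadrightarrow D^{(M)}$ for $M\subseteq M'$, since $\res$ and $\alpha(g,\cdot)$ are intrinsic to $\mathcal{C}_0$ and independent of $M$; passing to the projective limit yields $I := \varprojlim_M I^{(M)}\in \Hom_o^{cont}(D^{bd}, D)$ together with the required convergence $S_k\to I$ in the weak topology of $D$. The main obstacle is that \cite{SVZ} states integrability for finitely generated \'etale $T_+$-modules, whereas each $D^{(M)}$ is a priori only a $(\varphi,\Gamma)$-module --- the map $\varphi_t$ for $t\in T_+$ carries $D^{(M)}$ to $D^{(F_t^*M)}$ rather than back to $D^{(M)}$. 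I would sidestep this either by restricting to the cofinal subsystem of those $M\in\mathcal{M}(\pi^{H_0})$ closed under $F_t^*$ for the finitely many $t\in T_+$ appearing in the Bruhat-type decomposition of $g$, or equivalently by inspecting the \cite{SVZ} proof to observe that the constituents of $\alpha_g$ --- namely elements $u\in N_0$, the maps $\varphi_{s^k}$ and $\psi_s^k$ on $D^{(M)}$, and the transition maps between different $D^{(M)}$'s --- are all available in our projective system, so the approximation argument carries over verbatim.
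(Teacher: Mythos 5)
Your overall instinct---work fibre-by-fibre in the projective system where everything is finitely generated---is close in spirit to how the paper handles the hard estimates, but the central reduction step of your proposal does not go through. The claim that $\alpha_g^{(M)}:=\pi_M\circ\alpha_g$ is valued in $\Hom_o^{cont}((D^{(M)})^{bd},D^{(M)})$ is false: $\alpha_g(x_u)$ involves the operator $\varphi_{t(g,u)s^k}$ for a $T_+$-element varying with $u$, and the composite $\pi_M\circ\varphi_t$ does not factor through $\pi_M$ --- it factors through $\pi_{F_{t'}^*M}$ for a \emph{different} member $F_{t'}^*M$ of the system (this is exactly how the paper uses $\varphi_t$ in the proof of Lemma \ref{B+cont}, via the isomorphism $((1\otimes F_t)^\vee[1/X])^{-1}\colon (F_{t'}^*M)^\vee[1/X]\to M^\vee[1/X]$). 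So $\pi_M\circ\alpha_g(x_u)$ is a map $D^{bd}\to D^{(M)}$ that does not descend to $(D^{(M)})^{bd}$, and there is no sequence of operators on a single $D^{(M)}$ to which the finite-rank integrability result of \cite{SVZ} could be applied. Your proposed fix of passing to a cofinal subsystem closed under $F_t^*$ does not repair this: it organizes the transition maps but still never makes any individual $D^{(M)}$ an \'etale $T_+$-module (and the relevant $t(g,u)$ form a compact, not finite, family; only modulo an open subgroup stabilizing $M$, as in Lemma \ref{finFt'}, does one get finiteness). The second fix, ``inspect the proof and observe the constituents carry over,'' is not a proof; carrying the argument over is precisely the content that has to be supplied.

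What the paper actually does is invoke the \emph{abstract} integrability criterion, Proposition 6.8 of \cite{SVZ}, which applies to any module equipped with a special family $\mathfrak{C}$ of compact subsets, and reduces the Proposition to verifying two axioms: $\mathfrak{C}(5)$ (stability of $\mathfrak{C}$ under $\psi_s$), which is immediate, and the uniform contraction property $\mathfrak{T}(1)$, namely that for $C=N_0C$ in $\mathfrak{C}$, any open $o[N_0]$-submodule $\mathcal{D}$, and any compact $C_+\subseteq T_+$ one has $\varphi_s^k\circ(1-B_1)C_+\psi_s^k(C)\subseteq\mathcal{D}$ for a suitable open $B_1\leq B_0$ and all large $k$. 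The projections $f_{M,n}$ enter only at this point, to reduce $\mathcal{D}$ to the preimage of a compact submodule of some $M_n^\vee[1/X]$; the estimate is then made by hand using the compactness of $C_+D_c$, the continuity of the $B_0$-action (after replacing $M$ by $B_0M$), and the treillis $M^\vee[1/X]^{++}$. If you want to salvage your approach, you would have to prove a statement of this $\mathfrak{T}(1)$ type anyway, so the detour through a purported fibrewise application of the finite-rank theorem does not save any work.
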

\begin{proof}
By Proposition 6.8 in \cite{SVZ} it suffices to show that $\mathfrak{C}$ satisfies:
\begin{enumerate}
\item[$\mathfrak{C}(5)$]   For any $C\in\mathfrak{C}$ the compact subset $\psi_{s}(C)\subseteq D^\vee_{\xi,\ell,\infty}(\pi)$ also lies in $\mathfrak{C}$.

\item[$\mathfrak{T}(1)$]   For any $C\in\mathfrak{C}$ such that $C=N_{0}C$, any open $o[N_0]$-submodule $\mathcal{D}$ of $D^\vee_{\xi,\ell,\infty}(\pi)$, and any
compact subset $C_+ \subseteq T_+$ there exists a compact open subgroup $B_{1}=B_1(C,\mathcal{D},C_+)
\subseteq B_0$ and an integer $k(C,\mathcal{D},C_+) \geq 0$ such that
\begin{equation*}
\varphi_{s}^{k }\circ(1-B_1)C_+ \psi_s^{k}(C) \subseteq \mathcal{D} \qquad\text{for any $k \geq k(C,\mathcal{D},C_+)$} \ .
\end{equation*}
Here the multiplication by $C_+$ is via the $\varphi$-action of $T_+$ on $D^\vee_{\xi,\ell,\infty}(\pi)$.
\end{enumerate}

The condition $\mathfrak{C}(5)$ is clearly satisfied as for any $D_c\in \mathfrak{C}_0$ we have $\psi_{s}(D_c)\in\mathfrak{C}_0$, as well. For the condition $\mathfrak{T}(1)$ choose a $C\in \mathfrak{C}$ with $C=N_0C$, a compact subset $C_+\subset T_+$, and an open $o[N_0]$-submodule $\mathcal{D}\subseteq D^\vee_{\xi,\ell,\infty}(\pi)$. As $D^\vee_{\xi,\ell,\infty}(\pi)$ is the topological projective limit $\varprojlim_{M\in\mathcal{M}(\pi^{H_0}),n\geq 0}M_n^\vee[1/X]$ we may assume without loss of generality that $\mathcal{D}$ is the preimage of a compact $\Lambda(N_0)$-submodule $D_{n}\leq M_n^\vee[1/X]$ with $D_{n}[1/X]=M_n^\vee[1/X]$ under the natural surjective map $f_{M,n}\colon D^\vee_{\xi,\ell,\infty}(\pi)\twoheadrightarrow M_n^\vee[1/X]$ for some $M\in \mathcal{M}(\pi^{H_0})$ and $n\geq 0$.
Moreover, since $B_0=T_0N_0$ is compact and normailizes $H_0$, the $T_0$-orbit of any element $m\in M\leq \pi^{H_0}$ is finite and contained in $\pi^{H_0}$. Therefore we also have $B_0M=T_0M\in \mathcal{M}(\pi^{H_0})$. So we may assume without loss of generality that $M$ is $B_0$-invariant whence we have an action of $B_0$ on $M_n^\vee[1/X]$. Choose a $D_c\in \mathfrak{C}_0$ with $C\subseteq D_c$. Since $D_c$ is $\psi_{s}$-invariant, we have $C_+\psi_{s}^k(C)\subseteq C_+\psi_{s}^k(D_c)\subseteq C_+D_c$. Moreover, $C_+D_c$ is compact as both $C_+$ and $D_c$ are compact, so $f_{M,n}(C_+\psi_{s}^k(C))\subset M_n^\vee[1/X]$ is bounded. In particular, we have a compact $\Lambda(N_0)$-submodule $D'$ of $M_n^\vee[1/X]$ containing $f_{M,n}(C_+\psi_{s}^k(C))$. So by the continuity of the action of $B_0$ on $M_n^\vee[1/X]$ there exists an open subgroup $B_1\leq B_0$ such that we have
\begin{align*}
(1-B_1)f_{M,n}(C_+\psi_{s}^k(C))\subset \Lambda(N_0/H_n)\otimes_{\Lambda(N_{\alpha,0})}(M^\vee[1/X]^{++})\leq\\
\leq \Lambda(N_0/H_n)\otimes_{\Lambda(N_{\alpha,0})}M^\vee[1/X]\cong M_n^\vee[1/X]
\end{align*}
for any $k\geq 0$. Here $M^\vee[1/X]^{++}$ denotes the treillis in $M^\vee[1/X]$ consisting of those elements $d\in M^\vee[1/X]$ such that $\varphi_s^n(d)\to 0$ in $M^\vee[1/X]$ as $n\to \infty$ (cf.\ section I.3.2 in \cite{C}). Finally, since $D_n$ is open and $M^\vee[1/X]^{++}$ is finitely generated over $\Lambda(N_{\alpha,0})\cong o\bs X\js$ there exists an integer $k_1\geq 0$ such that $\varphi_s^k(\Lambda(N_0/H_n)\otimes_{\Lambda(N_{\alpha,0})}(M^\vee[1/X]^{++}))$ is contained in $D_n$ for all $k\geq k_1$. In particular, we have
\begin{align*}
f_{M,n}(\varphi_{s}^{k }\circ(1-B_1)C_+ \psi_s^{k}(C))=\varphi_{s}^{k }\circ(1-B_1)(f_{M,n}(C_+ \psi_s^{k}(C)))\subseteq \\
\subseteq \varphi_{s}^{k }\circ(1-B_1)(M^\vee[1/X]^{++})\subseteq D_n
\end{align*}
showing that $\varphi_{s}^{k }\circ(1-B_1)C_+ \psi_s^{k}(C)$ is contained in $\mathcal{D}$.
\qed\end{proof}

For all $g\in G$ we denote by $\mathcal{H}_g\in\Hom_o^{cont}(D^\vee_{\xi,\ell,\infty}(\pi)^{bd},D^\vee_{\xi,\ell,\infty}(\pi))$ the integral 
\begin{equation*}
   \mathcal{H}_g:= \int_{\mathcal{C}_0} \alpha_g d\res = \lim_{k \rightarrow \infty} \sum_{u \in J(N_0/s^kNs^{-k})} \alpha_g(x_u)u\circ\varphi_{s}^k\circ\psi_{s}^k\circ u^{-1}
\end{equation*}
we have just proven to converge. We denote the $k$th term of the above sequence by 
\begin{equation}
\mathcal{H}_g^{(k)}=\mathcal{H}_{g,J(N_0/s^kN_0s^{-k})}:=\sum_{u \in J(N_0/s^kNs^{-k})} \alpha_g(x_u)u\circ\varphi_{s}^k\circ\psi_{s}^k\circ u^{-1}\ .\label{Hgkdef}
\end{equation}

Our main result in this section is the following

\begin{pro}\label{sheafGB}
The image of the map $\mathcal{H}_g\colon D^\vee_{\xi,\ell,\infty}(\pi)^{bd}\to D^\vee_{\xi,\ell,\infty}(\pi)$ is contained in $D^\vee_{\xi,\ell,\infty}(\pi)^{bd}$. There exists a $G$-equivariant sheaf $\mathfrak{Y}=\mathfrak{Y}_{\alpha,\pi}$ on $G/B$ with sections $\mathfrak{Y}(\mathcal{C}_0)$ on $\mathcal{C}_0$ isomorphic $B_+$-equivariantly to $D^\vee_{\xi,\ell,\infty}(\pi)^{bd}$ such that we have $\mathcal{H}_g=\res^{G/B}_{\mathcal{C}_0}\circ(g\cdot)\circ\res^{G/B}_{\mathcal{C}_0}$ as maps on $D^\vee_{\xi,\ell,\infty}(\pi)^{bd}=\mathfrak{Y}(\mathcal{C}_0)$.
\end{pro}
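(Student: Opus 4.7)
The plan is to mirror the construction of \cite{SVZ} Sections~5--6 (developed there for finitely generated \'etale $T_+$-modules over $\Lambda_\ell(N_0)$) with suitable adaptations for the pseudocompact setting. There are two essential tasks: first, to show that $\mathcal{H}_g$ preserves $D^\vee_{\xi,\ell,\infty}(\pi)^{bd}$ for every $g \in G$, and second, once the $\mathcal{H}_g$ act on $D^{bd}$, to assemble them into a $G$-equivariant sheaf on $G/B$ whose sections on $\mathcal{C}_0$ are isomorphic $B_+$-equivariantly to $D^{bd}$.

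For the first task, given $x \in D^\vee_{\xi,\ell,\infty}(\pi)^{bd}$, I would fix $D_c \in \mathfrak{C}_0$ containing $x$ and aim to produce a single compact $\psi_s$-invariant $\Lambda(N_0)$-submodule $D'_c$ of $D^\vee_{\xi,\ell,\infty}(\pi)$ containing the whole sequence $\{\mathcal{H}_g^{(k)}(x)\}_{k \geq 0}$ of partial sums defined in \eqref{Hgkdef}; since $D'_c$ will then be closed and the sequence converges in $D^\vee_{\xi,\ell,\infty}(\pi)$ by construction of $\mathcal{H}_g$, the limit $\mathcal{H}_g(x)$ also lies in $D'_c \subseteq D^{bd}$. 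To control each summand $\alpha_g(x_u) \cdot u \varphi_s^k \psi_s^k u^{-1}(x)$, note that the restriction $u \varphi_s^k \psi_s^k u^{-1}(x)$ already lies in $D^{bd}$ (since the $N_0$-action, $\varphi_s^k$ and $\psi_s^k$ all preserve $D^{bd}$ by Proposition~\ref{bdetale}); the subsequent action of $\alpha_g(x_u) \in B = TN$ after decomposition into its $N$- and $T$-factors, combined with the support condition imposed by the further restriction to $\alpha_g(x_u)^{-1}\mathcal{C}_0 \cap \mathcal{C}_0$, can be expressed as a composite of finitely many operators ($\varphi_t$, $\psi_t$ for $t \in T_+$, and $N_0$-action) each preserving $D^{bd}$. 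A cleaner alternative is to reduce to finitely generated quotients: for each $M \in \mathcal{M}(\pi^{H_0})$ the image of $\mathcal{H}_g(x)$ in the finitely generated \'etale $(\varphi,\Gamma)$-module $M_\infty^\vee[1/X]$ lies in a treillis-based bounded subset by \cite{SVZ}, and compatibility across the projective system $\{M_\infty^\vee[1/X]\}_{M}$ allows one to lift these bounds to a single compact $\psi_s$-invariant $\Lambda(N_0)$-submodule of $D^\vee_{\xi,\ell,\infty}(\pi)$ containing the image.

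Once the preservation of $D^{bd}$ under $\mathcal{H}_g$ is established, the sheaf $\mathfrak{Y}$ is constructed exactly as in the proof of \cite{SVZ} Proposition~6.2: one verifies the cocycle relation $\mathcal{H}_{g_1 g_2} = \mathcal{H}_{g_1} \circ \mathcal{H}_{g_2}$ on suitably restricted sections (a formal consequence of the integral definition of $\mathcal{H}_g$), the continuity of each $\mathcal{H}_g$, and the compatibility with the restriction maps $\res_\mathcal{U}^{\mathcal{C}_0}$; sections on arbitrary open subsets of $G/B$ are then obtained by glueing over translates of $\mathcal{C}_0$. The required identity $\mathcal{H}_g = \res^{G/B}_{\mathcal{C}_0} \circ (g\cdot) \circ \res^{G/B}_{\mathcal{C}_0}$ holds by the very definition of $\alpha_g$ via the Bruhat-style formula $g u w_0 N = \alpha(g,u) u w_0 N$. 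The $B_+$-equivariance of the identification $\mathfrak{Y}(\mathcal{C}_0) \cong D^\vee_{\xi,\ell,\infty}(\pi)^{bd}$ is inherited from the fact that both $N_0$ and the $\varphi_t$-action of $T_+$ preserve $D^{bd}$ (again by Proposition~\ref{bdetale}).

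The main obstacle is precisely the preservation $\mathcal{H}_g(D^{bd}) \subseteq D^{bd}$: in the finitely generated case of \cite{SVZ} this is handled using treillis-based descriptions of the bounded part that are not directly available here, since $D^{bd}$ is only defined as the union (not the projective limit) of compact $\psi_s$-invariant $\Lambda(N_0)$-submodules, while $\mathcal{H}_g$ is a limit of arbitrarily long sums. One must therefore either construct $D'_c$ explicitly by hand, controlling the contribution of each summand, or reduce to the finitely generated case and patch the resulting compact submodules compatibly along the projective system.
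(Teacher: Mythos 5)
Your overall strategy coincides with the paper's: the sheaf-theoretic part is delegated to the machinery of \cite{SVZ} (the paper invokes Prop.\ 5.14 and 6.9 there, which reduce everything to verifying the axioms $\mathfrak{C}(6)$ and $\mathfrak{T}(2)$ for the special family $\mathfrak{C}$), and the real content is exactly what you call the main obstacle, namely producing a \emph{single} compact $\psi_s$-invariant $\Lambda(N_0)$-submodule containing $\mathcal{H}_g^{(k)}(x)$ for all large $k$. You correctly identify both viable routes and correctly diagnose why the finitely generated theory does not apply verbatim. However, you stop at naming the obstacle rather than overcoming it, and the step you leave open is precisely where the proof's work lies. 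Your first sub-argument (each summand $\alpha_g(x_u)u\varphi_s^k\psi_s^ku^{-1}(x)$ lies in $D^{bd}$, and $\alpha_g(x_u)$ decomposes into operators preserving $D^{bd}$) only shows that each \emph{individual} partial sum $\mathcal{H}_g^{(k)}(x)$ is bounded; it gives no uniformity in $k$, so it cannot by itself produce the single compact module $D'_c$ you need.

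The paper's resolution of your "patching" alternative hinges on one quantitative point that your proposal does not supply: for each $M\in\mathcal{M}(\pi^{H_0})$ one shows (Steps 1--3 of the paper's proof, using equation $(43)$ of \cite{SVZ}, the axiom $\mathfrak{T}(1)$ applied to $C=D_c$ and $\mathcal{D}=f_{M,0}^{-1}(M^\vee[1/X]^{++})$, and a case split on whether $r\le k-k_g(M)$ or $r\ge k-k_g(M)$) that $f_{M,\infty}\bigl(\bigcup_{k\ge k_{x,g}}\mathcal{H}_g^{(k)}(D_c)\bigr)$ lands in a compact $\psi_s$-invariant submodule $M_\infty^\vee[1/X]^{bd}(D_0)$ of $M_\infty^\vee[1/X]$, where the threshold $k_{x,g}=k_g^{(1)}$ depends only on $g$ and \emph{not on $M$}. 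It is this $M$-independence of the threshold that makes the projective limit $D_{x,g}=\varprojlim_M D_{x,g}(M)$ in Step 4 a legitimate compact $\psi_s$-invariant submodule of $D^\vee_{\xi,\ell,\infty}(\pi)$ containing all $\mathcal{H}_g^{(k)}(x)$, $k\ge k_{x,g}$. Without establishing this uniformity your patching could fail: if the admissible range of $k$ shrank as $M$ varied, no single $k_{x,g}$ would work in the limit. So the proposal is a correct outline of the paper's approach, but the decisive estimate --- the uniform-in-$M$ verification of $\mathfrak{T}(2)$ --- is missing and must be carried out for the argument to close.
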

\begin{proof}
By Prop.\ 5.14 and 6.9 in \cite{SVZ} it suffices to check the following conditions:
\begin{enumerate}
\item[$\mathfrak{C}(6)$] For any $C\in\mathfrak{C}$ the compact subset
  $ \varphi_{s}(C)\subseteq M$ also lies in $\mathfrak{C}$.
\item[$\mathfrak{T}(2)$] Given a  set $J(N_0/s^kN_0s^{-k})\subset N_{0}$ of representatives for all $k\geq 1$, for any $x\in D^\vee_{\xi,\ell,\infty}(\pi)^{bd}$ and $g\in G$ there exists a compact $\psi_{s}$-invariant $\Lambda(N_0)$-submodule $D_{x,g}\in\mathfrak{C}$ and a positive integer $k_{x,g}$ such that
  $\mathcal{H}_g^{(k)}(x)\subseteq D_{x,g}$ for any $k\geq  k_{x,g}$.
 \end{enumerate}
The condition $\mathfrak{C}(6)$ follows from (the proof of) Prop.\ \ref{bdetale} as for $C\subseteq D_c\in\mathfrak{C}_0$ we have $\varphi_{s}(C)\subseteq \varphi_{s}(D_c)\subseteq i(\varphi_s^*D_c)\in\mathfrak{C}_0$.

The proof of $\mathfrak{T}(2)$ is very similar to the proof of Corollary 9.15 in \cite{SVZ}. However, it is not a direct consequence of that as $D^\vee_{\xi,\ell,\infty}(\pi)$ is not necessarily finitely generated over $\Lambda_\ell(N_0)$, so we recall the details. For any $x$ in $D^\vee_{\xi,\ell,\infty}(\pi)^{bd}$, the element $\mathcal{H}_g^{(k)}(x)$ also lies in $D^\vee_{\xi,\ell,\infty}(\pi)^{bd}$ for any fixed $k$ since the set of bounded elements form an \'etale $T_+$-submodule (by axiom $\mathfrak{C}(4)$) whence they are closed under the operations ($\varphi$-, $\psi$-, and $N_0$-actions) defining the map $\mathcal{H}_g^{(k)}$. So by axiom $\mathfrak{C}(2)$ we only need to show that for $k$ large enough the difference $$s_g^{(k)}(x):=\mathcal{H}_g^{(k)}(x)-\mathcal{H}_{g}^{(k+1)}(x)$$ lies in a compact submodule $D_{x,g}\leq D^\vee_{\xi,\ell,\infty}(\pi)^{bd}$ in $\mathfrak{C}_0$ independent of $k$. In order to do so we proceed in four steps. In steps $1$, $2$, and $3$ the goal is to show that for a fixed choice $M\in \mathcal{M}(\pi^{H_0})$ the image of $s_g^{(k)}(x)$ lies in a compact $\psi$-invariant $\Lambda(N_0)$-submodule of $M_\infty^\vee[1/X]$ under the projection map $D^\vee_{\xi,\ell,\infty}(\pi)\twoheadrightarrow M_\infty^\vee[1/X]$ for $k$ large enough \emph{not depending} on $M$. This compact submodule of $M_\infty^\vee[1/X]$ will be of the form
$$\{m\in M^\vee_\infty[1/X]\mid \ell_M(\psi_s^r(u^{-1}m))\text{ is in }\\ D_0\text{ for all }r\geq 0,u\in N_0\}$$
for some treillis $D_0\subset M^\vee[1/X]$ where $\ell_M\colon M_\infty^\vee[1/X]\to M^\vee[1/X]$ is the natural projection map. Step $1$ is devoted to showing this for smaller $r$ (compared to $k$) with some choice of a treillis and in Step $2$ we take care of all larger $r$ (using a different treillis in $M^\vee[1/X]$). In both of these steps $k\geq k(M)$ is large enough \emph{depending} on $M$. In Step $3$ we eliminate this dependence on $M$ of the lower bound for $k$ by choosing a third treillis so that the sum $D_0$ of these three different choices of a treillis will do. In Step $4$ we take the projective limit of these compact sets for all possible choices of $M$ to obtain a compact subset of $D^\vee_{\xi,\ell,\infty}(\pi)$.

\emph{Step 1.} Equation $(43)$ in \cite{SVZ} shows that for any compact open subgroup $B_1\leq B_0$ there exist integers $0\leq k_g^{(1)}\leq k_g^{(2)}(B_1)$ and a compact subset $\Lambda_g\subset T_+$ such that for $k\geq k_g^{(2)}(B_1)$ we have
\begin{equation}\label{sgk}
 s_{g}^{(k)} \in \  \langle N_0 s^{k-k_{g}^{(1)}} (1- B_1) \Lambda_g s \psi_{s}^{k+1} N_0\rangle_{o}\ ,
\end{equation}
where we denote by $\langle\cdot\rangle_o$ the generated $o$-submodule. Here $k_g^{(1)}$ is chosen so that $\{\alpha(g,u)us^{k_g^{(1)}}\mid x_u\in g^{-1}\mathcal{C}_0\cap\mathcal{C}_0\}$ is contained in $B_+=N_0T_+$. There exists such an integer $k_g^{(1)}$ since $\{\alpha(g,u)u\mid x_u\in g^{-1}\mathcal{C}_0\cap\mathcal{C}_0\}$ is a compact subset in $N_0T$.
Choose a compact $\psi_{s}$-invariant $\Lambda(N_0)$-submodule $D_c\in \mathfrak{C}_0$ containing the element $x\in D^\vee_{\xi,\ell,\infty}(\pi)^{bd}$ and pick an $M$ in $\mathcal{M}(\pi^{H_0})$. Applying $\mathfrak{T}(1)$ in the situation $C=D_c$, $C_+=\Lambda_gs$, and $\mathcal{D}=f_{M,0}^{-1}(M^\vee[1/X]^{++})$ we find an integer $k_1\geq 0$ and a compact open subgroup $B_1\leq B_0$ such that $\varphi_s^k\circ(1-B_1)\Lambda_gsD_c\subseteq \mathcal{D}$ for all $k\geq k_1$. Noting that $D_c$ is $\psi_{s}$-stable and $\mathcal{D}$ is a $\Lambda(N_0)$-submodule we obtain $s_g^{(k)}(D_c)\subseteq N_0\varphi_s^r(\mathcal{D})$ for $k\geq r+k_1+k_g^{(2)}(B_1)$. Applying $\psi_s^r$ to this using \eqref{sgk} and putting $k_g(M):=k_1+k_g^{(2)}(B_1)$ we deduce 
\begin{equation}\label{smallr}
\psi_s^r(\Lambda(N_0)s_g^{(k)}(D_c))\subseteq \mathcal{D}\qquad\text{for all $k\geq k_g(M)$ and $r\leq k-k_g(M)$ .} 
\end{equation}
Note that the subgroup $B_1$ depends on $M$ therefore so do $k_g^{(2)}(B_1)$ and $k_g(M)$, but not $k_g^{(1)}$. 

\emph{Step 2.} We are going to find another treillis $D_1\leq M^\vee[1/X]$ such that for all $k\geq k_g(M)$ and $r\geq k-k_g(M)$ we have 
\begin{equation}\label{bigr}
\psi_s^r(\Lambda(N_0)\mathcal{H}_g^{(k)}(D_c))\subseteq \mathcal{D}_1:=f_{M,0}^{-1}(D_1)\ .
\end{equation}
For $x_u\in g^{-1}\mathcal{C}_0\cap\mathcal{C}_0$ write $\alpha(g,u)u$ in the form $\alpha(g,u)u=n(g,u)t(g,u)$ with $n(g,u)\in N_0$ and $t(g,u)\in T$. Since $g^{-1}\mathcal{C}_0\cap \mathcal{C}_0$ is compact, $t(g,\cdot)$ is continuous, and $k_g(M)\geq k_g^{(1)}$ the set $C'_+:=\{t(g,u)s^{k_g(M)}\mid x_u\in g^{-1}\mathcal{C}_0\cap\mathcal{C}_0\}\subset T$ is compact and contained in $T_+$. So we compute
\begin{align*}
\psi_s^r(\Lambda(N_0)\mathcal{H}_g^{(k)}(D_c))=\\
=\psi_s^r(\Lambda(N_0)\sum_{u\in J(N_0/s^kN_0s^{-k})}n(g,u)\varphi_{t(g,u)s^k}\circ\psi_s^k(u^{-1}D_c))\subseteq \\
\subseteq \psi_s^r(\Lambda(N_0)\varphi_{s}^{k-k_g(M)}\circ\varphi_{t(g,u)s^{k_g(M)}}(D_c))\subseteq \psi_{s}^{r-k+k_g(M)}(\Lambda(N_0)C'_+(D_c))\ .
\end{align*}
Since $C'_+\subset T_+$ is compact , there exists an integer $k(C'_+)$ such that $s^kt^{-1}$ lies in $T_+$ for all $t\in C'_+$. So we have $C'_+(D_c)\subseteq i(\varphi_{s^{k(C'_+)}}^*D^\vee_{\xi,\ell,\infty}(\pi)^{bd})\in\mathfrak{C}_0$ showing that $$D_1:=f_{M,0}(i(\varphi_{s^{k(C'_+)}}^*D^\vee_{\xi,\ell,\infty}(\pi)^{bd}))$$ is a good choice as $i(\varphi_{s^{k(C'_+)}}^*D^\vee_{\xi,\ell,\infty}(\pi)^{bd})$ is a $\psi_s$-stable $\Lambda(N_0)$ submodule. 

\emph{Step 3.} For each fixed $k\geq k_g^{(1)}$ there exists a compact $\psi_{s}$-invariant $\Lambda(N_0)$-submodule $D_{c,k}\in\mathfrak{C}_0$ containing $\mathcal{H}_g^{(k)}(D_c)$. In particular, we may choose a treillis $D_2\leq M^\vee[1/X]$ containing
\begin{equation*}
f_{M,0}(\psi_{s}^r(\Lambda(N_0)\mathcal{H}_g^{(k)}(D_c)))
\end{equation*}
for all $k_g^{(1)}\leq k\leq k_g(M)$ and $r\geq 0$. Putting $\mathcal{D}_2:=f_{M,0}^{-1}(D_2)$ and combining this with \eqref{smallr} and \eqref{bigr} we obtain
\begin{equation}\label{D12}
\psi_{s}^r(\Lambda(N_0)\mathcal{H}_g^{(k)}(D_c))\subseteq \mathcal{D}+\mathcal{D}_1+\mathcal{D}_2
\end{equation}
for all $k\geq k_{x,g}:=k_g^{(1)}$ and $r\geq 0$. Denote by $f_{M,\infty}$ the natural surjective map $f_{M,\infty}\colon D^\vee_{\xi,\ell,\infty}\to M^\vee_\infty[1/X]$. Note that $f_{M,0}$ factors through $f_{M,\infty}$. The equation \eqref{D12} implies (in fact, is equivalent to) that 
\begin{equation*}
f_{M,\infty}\left(\bigcup_{k\geq k_{x,g}}\mathcal{H}_g^{(k)}(D_c)\right)\subseteq M^\vee_\infty[1/X]^{bd}(D_0)
\end{equation*}
where 
\begin{align*}
M^\vee_\infty[1/X]^{bd}(D_0)=\{m\in M^\vee_\infty[1/X]\mid \ell_M(\psi_s^r(u^{-1}m))\text{ is in }\\ D_0:=M^\vee[1/X]^{++}+D_1+D_2\text{ for all }r\geq 0,u\in N_0\}
\end{align*}
is a compact $\psi_s$-invariant $\Lambda(N_0)$-submodule in $M^\vee_\infty[1/X]$ (Prop.\ 9.10 in \cite{SVZ}). 

\emph{Step 4.} We put $D_{x,g}(M):=\bigcap \mathfrak{D}$ where $\mathfrak{D}$ runs through all the $\psi_s$-invariant compact $\Lambda(N_0)$-submodules of $M^\vee_\infty[1/X]$ containing $f_{M,\infty}(\bigcup_{k\geq k_{x,g}}\mathcal{H}_g^{(k)}(D_c))$. Therefore 
\begin{equation*}
D_{x,g}:=\varprojlim_{M\in\mathcal{M}(\pi^{H_0})}D_{x,g}(M)
\end{equation*}
is a $\psi_{s}$-invariant compact $\Lambda(N_0)$-submodule of $D^\vee_{\xi,\ell,\infty}(\pi)$ (ie.\ we have $D_{x,g}\in\mathfrak{C}_0$) containing $\bigcup_{k\geq k_{x,g}}\mathcal{H}_g^{(k)}(D_c)$.
\qed\end{proof}

We end this section by putting a natural topology (called the weak topology) on the global sections $\mathfrak{Y}(G/B)$ that will be needed in the next section. At first we equip $D^\vee_{\xi,\ell,\infty}(\pi)^{bd}$ with the inductive limit topology of the compact topologies of each $D_c\in\mathfrak{C}_0$. This makes sense as the inclusion maps $D_c\hookrightarrow D'_c$ for $D_c\subseteq D'_c\in\mathfrak{C}_0$ are continuous as these compact topologies are obtained as the subspace topologies in the weak topology of $D^\vee_{\xi,\ell,\infty}(\pi)$. We call this topology the weak topology on $D^\vee_{\xi,\ell,\infty}(\pi)^{bd}$.

\begin{lem}\label{Hgcont}
The operators $\mathcal{H}_g$ and $\res_{\mathcal{U}}$ on $D^\vee_{\xi,\ell,\infty}(\pi)^{bd}$ are continuous in the weak topology of $D^\vee_{\xi,\ell,\infty}(\pi)^{bd}$ for all $g\in G$ and $\mathcal{U}\subseteq N_0$ compact open. In particular, $D^\vee_{\xi,\ell,\infty}(\pi)^{bd}$ is the topological direct sum of $\res_{\mathcal{U}}(D^\vee_{\xi,\ell,\infty}(\pi)^{bd})$ and $\res_{N_0\setminus\mathcal{U}}(D^\vee_{\xi,\ell,\infty}(\pi)^{bd})$.
\end{lem}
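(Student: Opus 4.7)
The plan is to treat $\res_{\mathcal{U}}$ first, then $\mathcal{H}_g$ by the same pattern, and finally to deduce the topological direct sum decomposition from continuity of $\res_{\mathcal{U}}$. Throughout, I use that by the definition of the inductive limit topology, a map $f\colon D^\vee_{\xi,\ell,\infty}(\pi)^{bd}\to D^\vee_{\xi,\ell,\infty}(\pi)^{bd}$ is continuous in the weak topology if and only if, for every $D_c\in\mathfrak{C}_0$, there is some $D'_c\in\mathfrak{C}_0$ with $f(D_c)\subseteq D'_c$ and $f|_{D_c}\colon D_c\to D'_c$ continuous in the compact (subspace) topologies. Moreover, for any $D'_c\in\mathfrak{C}_0$ the compact topology on $D'_c$ coincides with the subspace topology from the weak topology of $D^\vee_{\xi,\ell,\infty}(\pi)$, since both are compact Hausdorff and comparable.

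First I would handle $\res_{\mathcal{U}}$. Writing $\mathcal{U}=\bigsqcup_i u_is^kN_0s^{-k}$ for $k$ large enough, we have $\res_{\mathcal{U}}=\sum_i u_i\circ\varphi_s^k\circ\psi_s^k\circ u_i^{-1}$, a finite sum of compositions of operators that preserve $\mathfrak{C}_0$: the $N_0$-action preserves each $D_c$, $\psi_s$ preserves $D_c$ by definition of $\mathfrak{C}_0$, and $\varphi_s(D_c)$ lies in a compact $\psi_s$-stable $\Lambda(N_0)$-submodule by (the proof of) Proposition~\ref{bdetale}. Hence $\res_{\mathcal{U}}(D_c)\subseteq D'_c$ for some $D'_c\in\mathfrak{C}_0$. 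Continuity of $\res_{\mathcal{U}}|_{D_c}\colon D_c\to D'_c$ follows from the continuity of $\varphi_s$, $\psi_s$ and the $N_0$-action on $D^\vee_{\xi,\ell,\infty}(\pi)$ in its weak topology, which was established in Lemma~\ref{B+cont}.

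Next I would treat $\mathcal{H}_g$. Fix $D_c\in\mathfrak{C}_0$. The analysis in the proof of Proposition~\ref{sheafGB}, applied uniformly to all $x\in D_c$ (the argument produces a compact submodule $D_{x,g}$ which in fact depends only on the enclosing $D_c$, not on $x$), yields an integer $k_g\geq 0$ and a single $D'_c\in\mathfrak{C}_0$ with $\mathcal{H}_g^{(k)}(D_c)\subseteq D'_c$ for all $k\geq k_g$. Adding in the finitely many $D_{c,k}\in\mathfrak{C}_0$ containing $\mathcal{H}_g^{(k)}(D_c)$ for $k<k_g$ (these exist because $\mathcal{H}_g^{(k)}$ is a finite sum of operations preserving $\mathfrak{C}_0$, just as for $\res_{\mathcal{U}}$), and using axiom $\mathfrak{C}(2)$, we may enlarge $D'_c$ so that $\mathcal{H}_g^{(k)}(D_c)\subseteq D'_c$ for all $k$. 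Passing to the limit and using that $D'_c$ is closed in $D^\vee_{\xi,\ell,\infty}(\pi)$ gives $\mathcal{H}_g(D_c)\subseteq D'_c$. Continuity of $\mathcal{H}_g|_{D_c}\colon D_c\to D'_c$ is then immediate: Proposition~\ref{sheafGB} asserts $\mathcal{H}_g\in\Hom_o^{cont}(D^\vee_{\xi,\ell,\infty}(\pi)^{bd},D^\vee_{\xi,\ell,\infty}(\pi))$, so its restriction to the compact $D_c$ is continuous into $D^\vee_{\xi,\ell,\infty}(\pi)$, and hence (since $D'_c$ is a closed subspace whose subspace topology matches its compact topology) continuous into $D'_c$.

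For the ``in particular'' statement, note that $\res\colon C^\infty(\mathcal{C}_0,o)\to\End_o^{cont}(D^\vee_{\xi,\ell,\infty}(\pi))$ is a ring homomorphism and $1_{\mathcal{U}}$ is an idempotent with $1_{\mathcal{U}}+1_{N_0\setminus\mathcal{U}}=1_{N_0}$. Since $\res_{N_0}=\id_{D^\vee_{\xi,\ell,\infty}(\pi)^{bd}}$, we obtain an algebraic direct sum decomposition $D^\vee_{\xi,\ell,\infty}(\pi)^{bd}=\res_{\mathcal{U}}(D^\vee_{\xi,\ell,\infty}(\pi)^{bd})\oplus\res_{N_0\setminus\mathcal{U}}(D^\vee_{\xi,\ell,\infty}(\pi)^{bd})$ in which the two projections are continuous by the first part; this is precisely the definition of a topological direct sum. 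The main obstacle is bookkeeping in the $\mathcal{H}_g$ case: verifying that the compact submodule $D'_c\in\mathfrak{C}_0$ provided by the proof of Proposition~\ref{sheafGB} can be chosen uniformly for all $x\in D_c$ and all $k$, but a careful re-reading of Steps~1--4 of that proof shows this is automatic.
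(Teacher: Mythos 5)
Your proposal is correct and follows essentially the same route as the paper: restrict to a compact $D_c\in\mathfrak{C}_0$, use $\mathfrak{T}(2)$ (i.e.\ the uniformity in $x\in D_c$ visible in the proof of Proposition \ref{sheafGB}) to land in a single $D'_c\in\mathfrak{C}_0$, check continuity of the restriction $D_c\to D'_c$, and deduce the topological direct sum from the continuity of the two complementary idempotents $\res_{\mathcal{U}}$ and $\res_{N_0\setminus\mathcal{U}}$. The only difference is that where you invoke the assertion $\mathcal{H}_g\in\Hom_o^{cont}(D^\vee_{\xi,\ell,\infty}(\pi)^{bd},D^\vee_{\xi,\ell,\infty}(\pi))$ for continuity on $D_c$, the paper makes this explicit by noting that each $\mathcal{H}_g^{(k)}$ is continuous (Lemma \ref{B+cont}) and that the convergence $\mathcal{H}_g^{(k)}\to\mathcal{H}_g$ is uniform on each compact $D_c$ (Prop.\ 6.3 in \cite{SVZ}), so the limit is continuous on $D_c$.
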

\begin{proof}
By the property $\mathfrak{T}(2)$ the restriction of $\mathcal{H}_g^{(k)}$ to a compact subset $D_c$ in $\mathfrak{C}_0$ has image in a compact set $D_{c,g}\in\mathfrak{C}_0$ for all large enough $k$. Moreover, each $\mathcal{H}_g^{(k)}$ is continuous by Lemma \ref{B+cont}. On the other hand, the limit $\mathcal{H}_g=\lim_{k\to\infty}\mathcal{H}_g^{(k)}$ is uniform on each compact subset $D_c\in\mathfrak{C}_0$ by Proposition 6.3 in \cite{SVZ}, so the limit $\mathcal{H}_g\colon D_c\to D_{c,g}$ is also continuous. Taking the inductive limit on both sides we deduce that $\mathcal{H}_g\colon D^\vee_{\xi,\ell,\infty}(\pi)\to D^\vee_{\xi,\ell,\infty}(\pi)$ is also continuous. The continuity of $\res_{\mathcal{U}}$ follows in a similar but easier way.
\qed\end{proof}

So far we have put a topology on $D^\vee_{\xi,\ell,\infty}(\pi)^{bd}=\mathfrak{Y}(\mathcal{C}_0)$. The multiplication by an element $g\in G$ gives an $o$-linear bijection $g\colon \mathfrak{Y}(\mathcal{C}_0)\to \mathfrak{Y}(g\mathcal{C}_0)$. We define the weak topology on $\mathfrak{Y}(g\mathcal{C}_0)$ so that this is a homeomorphism. Now we equip $\mathfrak{Y}(G/B)$ with the coarsest topology such that the restriction maps $\res^{G/B}_{g\mathcal{C}_0}\colon \mathfrak{Y}(G/B)\to\mathfrak{Y}(g\mathcal{C}_0)$ are continuous for all $g\in G$. We call this the weak topology on $\mathfrak{Y}(G/B)$ making $\mathfrak{Y}(G/B)$ a linear-topological $o$-module.

\begin{lem}
\begin{enumerate}[$a)$]
\item The multiplication by $g$ on $\mathfrak{Y}(G/B)$ is continuous (in fact a homeomorphism) for each $g\in G$. 
\item The weak topology on $\mathfrak{Y}(G/B)$ is Hausdorff.
\end{enumerate}
\end{lem}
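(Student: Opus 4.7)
The plan for part $a)$ is to check continuity of $g\cdot \colon \mathfrak{Y}(G/B) \to \mathfrak{Y}(G/B)$ via the universal property defining the weak topology on the target: it suffices to verify that $\res^{G/B}_{h\mathcal{C}_0}\circ (g\cdot)$ is continuous for every $h\in G$. By the $G$-equivariance of $\mathfrak{Y}$ as a sheaf, this map factors as
\begin{equation*}
\res^{G/B}_{h\mathcal{C}_0}\circ (g\cdot) = (g\cdot)\circ \res^{G/B}_{g^{-1}h\mathcal{C}_0},
\end{equation*}
where now $g\cdot\colon \mathfrak{Y}(g^{-1}h\mathcal{C}_0)\to\mathfrak{Y}(h\mathcal{C}_0)$ is the structure map of the equivariant sheaf. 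The first factor is continuous by definition of the weak topology on $\mathfrak{Y}(G/B)$. For the second, I observe that $(g^{-1}h)\cdot$ and $h\cdot$ are both homeomorphisms from $\mathfrak{Y}(\mathcal{C}_0)$ onto their respective images by the very definition of the topologies on $\mathfrak{Y}(g^{-1}h\mathcal{C}_0)$ and $\mathfrak{Y}(h\mathcal{C}_0)$, and the group action axiom gives $g\cdot\circ (g^{-1}h)\cdot = h\cdot$. Hence the structure map equals $(h\cdot)\circ ((g^{-1}h)\cdot)^{-1}$ and is a homeomorphism. The composition is thus continuous; applying the same argument to $g^{-1}\cdot$ shows $g\cdot$ is a homeomorphism.

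For part $b)$, the plan is to reduce Hausdorffness on $G/B$ to that of $\mathfrak{Y}(\mathcal{C}_0)=D^\vee_{\xi,\ell,\infty}(\pi)^{bd}$. First I note that $\{g\mathcal{C}_0\}_{g\in G}$ covers $G/B$: any coset $xB$ lies in $(xw_0^{-1})\mathcal{C}_0$ since $w_0B\in\mathcal{C}_0$. By the sheaf property, distinct sections $x\neq y\in\mathfrak{Y}(G/B)$ admit some $g\in G$ with $\res^{G/B}_{g\mathcal{C}_0}(x)\neq\res^{G/B}_{g\mathcal{C}_0}(y)$. If $\mathfrak{Y}(g\mathcal{C}_0)$ is Hausdorff, disjoint open neighbourhoods of these images pull back along the continuous restriction map to disjoint open neighbourhoods of $x,y$. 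Because $\mathfrak{Y}(g\mathcal{C}_0)$ is homeomorphic to $\mathfrak{Y}(\mathcal{C}_0)$ by construction, everything reduces to the Hausdorffness of $D^\vee_{\xi,\ell,\infty}(\pi)^{bd}$.

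For that, the key point is that the inclusion $\iota\colon D^\vee_{\xi,\ell,\infty}(\pi)^{bd}\hookrightarrow D^\vee_{\xi,\ell,\infty}(\pi)$ is continuous for the inductive limit topology on the source and the projective limit (pseudocompact) topology on the target. Indeed, each $D_c\in\mathfrak{C}_0$ is compact in the weak topology of $D^\vee_{\xi,\ell,\infty}(\pi)$, so each restricted inclusion $D_c\hookrightarrow D^\vee_{\xi,\ell,\infty}(\pi)$ is continuous; the universal property of the inductive limit then yields continuity of $\iota$ itself. Since $D^\vee_{\xi,\ell,\infty}(\pi)=\varprojlim_{M,k}M_k^\vee[1/X]$ is a projective limit of Hausdorff modules (each $M_k^\vee[1/X]$ is Hausdorff for its weak topology as a finitely generated \'etale $(\varphi,\Gamma)$-module), it is Hausdorff, and a continuous injection into a Hausdorff space forces the source to be Hausdorff as well.

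I do not anticipate a serious obstacle. The only subtlety worth flagging is that the inductive limit topology on $D^\vee_{\xi,\ell,\infty}(\pi)^{bd}$ is a priori strictly finer than the subspace topology inherited from $D^\vee_{\xi,\ell,\infty}(\pi)$, so one must be careful not to confuse them; however, Hausdorffness passes along any continuous injection into a Hausdorff space, which is exactly the situation here.
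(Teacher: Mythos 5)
Your proposal is correct and follows essentially the same route as the paper: part $a)$ via the identity $\res^{G/B}_{h\mathcal{C}_0}\circ(g\cdot)=(g\cdot)\circ\res^{G/B}_{g^{-1}h\mathcal{C}_0}$, and part $b)$ by reducing to the Hausdorffness of $D^\vee_{\xi,\ell,\infty}(\pi)^{bd}$, which follows from the continuity (checked on each $D_c\in\mathfrak{C}_0$) of its inclusion into the Hausdorff module $D^\vee_{\xi,\ell,\infty}(\pi)$. The extra details you supply (the covering $\{g\mathcal{C}_0\}$ and the homeomorphism $(h\cdot)\circ((g^{-1}h)\cdot)^{-1}$) are only making explicit what the paper leaves implicit.
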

\begin{proof}
For $a)$ we need to check that the composite of the function $$(g\cdot)_{G/B}\colon \mathfrak{Y}(G/B)\to \mathfrak{Y}(G/B)$$ with the projections $\res_{h\mathcal{C}_0}^{G/B}$ is continuous for all $h\in G$. However, $\res_{h\mathcal{C}_0}^{G/B}\circ (g\cdot)_{G/B}=(g\cdot)_{g^{-1}h\mathcal{C}_0}\circ \res_{g^{-1}h\mathcal{C}_0}^{G/B}$ is the composite of two continuous maps hence also continuous.

For $b)$ note that the weak topology on $D^\vee_{\xi,\ell,\infty}(\pi)^{bd}$ is finer than the subspace topology inherited from $D^\vee_{\xi,\ell,\infty}(\pi)$ therefore it is Hausdorff. To see this we need to show that the inclusion $D^\vee_{\xi,\ell,\infty}(\pi)^{bd}\hookrightarrow D^\vee_{\xi,\ell,\infty}(\pi)$ is continuous. As the weak topology on $D^\vee_{\xi,\ell,\infty}(\pi)^{bd}$ is defined as a direct limit, it suffices to check this on the defining compact sets $D_c\in\mathfrak{C}_0$. However, on these compact sets the inclusion map is even a homeomorphism by definition.

So the topology on $\mathfrak{Y}(G/B)$ is also Hausdorff as for any two different global sections $x\neq y\in \mathfrak{Y}(G/B)$ there exists an element $g\in G$ such that $\res_{g\mathcal{C}_0}^{G/B}(x)\neq \res_{g\mathcal{C}_0}^{G/B}(y)$.
\qed\end{proof}

\subsection{A $G$-equivariant map $\pi^\vee\to \mathfrak{Y}(G/B)$}\label{Gmap}

Here we generalize Thm.\ IV.4.7 in \cite{C} to $\mathbb{Q}_p$-split reductive groups $G$ over $\mathbb{Q}_p$ with connected centre. Assume in this section that $\pi$ is an \emph{admissible} smooth $o/\varpi^h$-representation of $G$ of \emph{finite length}.

By Corollary \ref{prbd} we have the composite maps
\begin{equation*}
\beta_{g\mathcal{C}_0}\colon \pi^\vee\overset{g^{-1}\cdot}{\longrightarrow} \pi^\vee\overset{\pr_{SV}}{\longrightarrow} D_{SV}(\pi) \overset{\pr}{\longrightarrow}D^\vee_{\xi,\ell,\infty}(\pi)^{bd}\overset{\sim}{\longrightarrow}\mathfrak{Y}(\mathcal{C}_0)\overset{g\cdot}{\longrightarrow} \mathfrak{Y}(g\mathcal{C}_0)
\end{equation*}
for each $g\in G$. By definition we have $\beta_{g\mathcal{C}_0}(\mu)=g\beta_{\mathcal{C}_0}(g^{-1}\mu)$ for all $\mu\in\pi^\vee$ and $g\in G$. Our goal is to show that these maps glue together to a $G$-equivariant map $\beta_{G/B}\colon \pi^\vee\to \mathfrak{Y}(G/B)$.

Let $n_0=n_0(G)\in\mathbb{N}$ be the maximum of the degrees of the algebraic characters $\beta\circ\xi\colon \mathbb{G}_m\to\mathbb{G}_m$ for all $\beta$ in $\Phi^+$ and put $U^{(k)}:=\Ker (G_0\twoheadrightarrow G(\Zp/p^k\Zp))$ where $G_0=\mathbf{G}(\Zp)$.

\begin{lem}\label{smallt}
For any fixed $r_0\geq 1$ we have $t^{-1}U^{(k)}t\leq U^{(k-n_0r_0)}$ for all $t\leq s^{r_0}$ in $T_+$ and $k\geq r_0n_0$.
\end{lem}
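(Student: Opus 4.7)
The plan is to decompose $U^{(k)}$ via an Iwahori-type factorization into root subgroups and the torus, and then analyze conjugation by $t^{-1}$ factor by factor. First I bound the $p$-adic valuations of $\beta(t)$. Any positive root decomposes as $\beta=\sum_{\alpha\in\Delta}c_\alpha\alpha$ with $c_\alpha\geq 0$, so $\beta\circ\xi$ has degree $\sum_\alpha c_\alpha\leq n_0$ as a character of $\mathbb{G}_m$, using $\alpha\circ\xi=\mathrm{id}$ for each simple $\alpha$. The hypothesis $s^{r_0}t^{-1}\in T_+$ then gives $\val_p(\beta(t))\leq \val_p(\beta(s^{r_0}))=r_0\deg(\beta\circ\xi)\leq r_0 n_0$ for every $\beta\in\Phi^+$. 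For $\beta\in\Phi^-$ we have $-\beta\in\Phi^+$ and $t\in T_+$, hence $\val_p((-\beta)(t))\geq 0$, i.e.\ $\val_p(\beta(t)^{-1})\geq 0$.

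Next I invoke the standard Iwahori factorization of the principal congruence subgroups of the $\Zp$-split reductive group $\mathbf{G}$: for each $k\geq 1$ and any fixed ordering of $\Phi^\pm$, multiplication gives a bijection
\begin{equation*}
\prod_{\beta\in\Phi^-}u_\beta(p^k\Zp)\,\times\, T^{(k)}\,\times\,\prod_{\beta\in\Phi^+}u_\beta(p^k\Zp)\;\xrightarrow{\;\sim\;}\;U^{(k)}.
\end{equation*}

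Now I compute the conjugation action of $t^{-1}$ factor by factor, using the defining identity $tu_\beta(x)t^{-1}=u_\beta(\beta(t)x)$ (applied with $t$ replaced by $t^{-1}$). For $\beta\in\Phi^+$, this yields
$t^{-1}u_\beta(p^k\Zp)t=u_\beta(\beta(t)^{-1}p^k\Zp)\subseteq u_\beta(p^{k-r_0n_0}\Zp)$
by the first step (the hypothesis $k\geq r_0n_0$ keeps the exponent non-negative). For $\beta\in\Phi^-$, the same formula together with $\val_p(\beta(t)^{-1})\geq 0$ gives
$t^{-1}u_\beta(p^k\Zp)t\subseteq u_\beta(p^k\Zp)\subseteq u_\beta(p^{k-r_0n_0}\Zp)$. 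Finally, $T$ is abelian so $t^{-1}T^{(k)}t=T^{(k)}$. Combining these factor-wise inclusions with the Iwahori factorization applied to both $U^{(k)}$ and $U^{(k-r_0n_0)}$ delivers $t^{-1}U^{(k)}t\leq U^{(k-r_0n_0)}$, as required.

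The only nontrivial input is the Iwahori factorization for every $k\geq 1$, which is standard for split reductive group schemes over $\Zp$ (it can be established via the Moy--Prasad theory, or in a pedestrian way by passing to the Lie algebra through the truncated exponential on each graded piece $U^{(k)}/U^{(k+1)}\cong\mathfrak{g}(\Fp)$, which decomposes as a $T$-representation into the root spaces and $\Lie T$). Everything else is a straightforward bookkeeping of $p$-adic valuations.
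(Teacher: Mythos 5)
Your proof is correct and follows essentially the same route as the paper: bound $\val_p(\beta(t))$ by $r_0n_0$ for positive roots using $t\leq s^{r_0}$ and the definition of $n_0$, then apply the Iwahori factorization of $U^{(k)}$ and check the conjugation action of $t^{-1}$ separately on the negative root part (where $t\in T_+$ already gives containment), the torus part, and the positive root part. The extra detail you supply (the degree computation via simple roots, and the justification of the Iwahori factorization) is consistent with what the paper takes as known.
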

\begin{proof}
The condition $t\leq s^{r_0}$ implies that $v_p(\beta(t))\leq v_p(\beta(s^{r_0}))=v_p(\beta\circ\xi(p^{r_0}))\leq r_0n_0$ for all $\beta\in \Phi^+$ by the maximality of $n_0$. On the other hand, by the Iwahori factorization we have $U^{(k)}=(U^{(k)}\cap\overline{N})(U^{(k)}\cap T)(U^{(k)}\cap N)$. Since $t$ is in $T_+$ we deduce
\begin{eqnarray*}
t^{-1}(U^{(k)}\cap\overline{N})t\leq (U^{(k)}\cap\overline{N})&\leq& (U^{(k-r_0n_0)}\cap\overline{N})\\
t^{-1}(U^{(k)}\cap T)t= (U^{(k)}\cap T)&\leq& (U^{(k-r_0n_0)}\cap T)\\
t^{-1}(U^{(k)}\cap N)t=&&\\
\prod_{\beta\in \Phi^+}t^{-1}(U^{(k)}\cap N_\beta)t &\leq&  \prod_{\beta\in \Phi^+}(U^{(k-r_0n_0)}\cap N_\beta)\\
&&= (U^{(k-r_0n_0)}\cap N)\ .\ \qed
\end{eqnarray*}
\end{proof}

\begin{lem}\label{W0}
Assume that $\pi$ is an admissible representation of $G$ of finite length. Then there exists a finitely generated $o$-submodule $W_0\leq \pi$ such that $\pi=BW_0$.
\end{lem}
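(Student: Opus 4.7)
The plan is to prove this by induction on the length of $\pi$, using in an essential way the Iwasawa decomposition $G = G_0 B$ (which is available since $\mathbf{G}$ is $\mathbb{Q}_p$-split reductive over $\mathbb{Z}_p$ and $\mathbf{B}$ is the chosen split Borel).

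For the base case I would take $\pi$ irreducible and pick any nonzero $v \in \pi$. By smoothness the stabilizer of $v$ in the compact group $G_0 = \mathbf{G}(\mathbb{Z}_p)$ is open and hence of finite index, so the orbit $G_0 v$ is finite and $W_0 := o[G_0]v$ is finitely generated over $o$. Using $G = B G_0$, the $o$-submodule $BW_0$ coincides with the $o$-span of $Gv$, which is a nonzero $G$-stable subspace of $\pi$; by irreducibility it equals $\pi$.

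For the inductive step, I would fix a nonzero proper subrepresentation $\pi_1 \leq \pi$ (for instance an irreducible one). Both $\pi_1$ and $\pi_2 := \pi/\pi_1$ are then admissible smooth of strictly smaller length, so the inductive hypothesis yields finitely generated $o$-submodules $W_1 \leq \pi_1$ and $\bar W_2 \leq \pi_2$ with $BW_1 = \pi_1$ and $B\bar W_2 = \pi_2$. Choosing lifts in $\pi$ of a finite $o$-generating set of $\bar W_2$ produces a finitely generated $o$-submodule $W_2 \leq \pi$ surjecting onto $\bar W_2$. For any $v \in \pi$, its image in $\pi_2$ may be written as a finite sum $\sum_i b_i \bar w_i$ with $b_i \in B$, $\bar w_i \in \bar W_2$; subtracting $\sum_i b_i w_i$ for lifts $w_i \in W_2$ lands the result in $\pi_1 = BW_1$. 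Hence $v \in B(W_1 + W_2)$, and $W_0 := W_1 + W_2$ is the desired finitely generated $o$-submodule.

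I do not expect any genuine obstacle: the argument reduces to Iwasawa plus the triviality that smooth vectors have finite $G_0$-orbits. In fact the admissibility hypothesis is not strictly needed for this lemma (smoothness and finite length suffice), though it is convenient because subquotients of admissible representations are again admissible, so the induction stays within the same class.
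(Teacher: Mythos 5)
Your proof is correct, and its skeleton (induction on the length plus the Iwasawa decomposition $G=BG_0$) is the same as the paper's; the paper likewise reduces to the irreducible case by induction without spelling out the extension step that you make explicit. The one genuine difference is the choice of $W_0$ in the irreducible case: the paper takes $W_0=\pi^{U^{(1)}}$ for the principal congruence subgroup $U^{(1)}\lhd G_0$, which is automatically $G_0$-stable (so that $BG_0W_0=BW_0$ literally) but needs admissibility to be finitely generated over $o$; you instead take the $o$-span of a single finite $G_0$-orbit $G_0v$, which is also $G_0$-stable and uses only smoothness. Your variant thus shows, as you note, that admissibility is not needed for this lemma, at the (negligible) cost of not producing the canonical submodule $\pi^{U^{(1)}}$ that the paper happens to reuse elsewhere (e.g.\ in the discussion around Lemma \ref{k0g}, where $U^{(k)}$-invariance of $W_0$ is exploited). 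Both arguments are complete and correct.
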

\begin{proof}
Since $\pi$ has finite length, by induction we may assume it is irreducible (hence killed by $\varpi$). In this case we may take $W_0=\pi^{U^{(1)}}$ which is $G_0$-stable as $U^{(1)}$ is normal in $G_0$. It is nonzero since $\pi$ is smooth, and finitely generated over $o$ as $\pi$ is admissible. By the Iwasawa decomposition we have $\pi=GW_0=BG_0W_0=BW_0$.
\qed\end{proof}

Let $W_0$ be as in Lemma \ref{W0} and put $W:=B_+W_0$, $W_r:=\bigcup_{t\leq s^r}N_0tW_0$ so we have 
\begin{equation}\label{Wr}
W=\varinjlim_r W_r=\bigcup_{r\geq 0}W_r
\end{equation}
where $W_r$ is finitely generated over $o$ for all $r\geq 0$. By construction $W$ is a generating $B_+$-subrepresentation of $\pi$. So the map $\pr_{SV}$ factors through the natural projection map $\pr_W\colon \pi^\vee\to W^\vee$. Here the Pontryagin dual $W^\vee$ is a compact $\Lambda(N_0)$-module with a $\psi$-action of $T_+$ coming from the multiplication by $T_+$ on $W$. By Proposition \ref{etalehull} we may form the \'etale hull $\widetilde{W^\vee}$ of $W^\vee$ which is an \'etale $T_+$-module over $\Lambda(N_0)$. Since $D^\vee_{\xi,\ell,\infty}(\pi)$ is an \'etale $T_+$-module over $\Lambda(N_0)$ and the composite map $W^\vee\to D_{SV}(\pi)\to D^\vee_{\xi,\ell,\infty}(\pi)$ is $\psi$-equivariant, it factors through $\widetilde{W^\vee}$. All in all we have factored the map $\pr\circ\pr_{SV}$ as
\begin{equation*}
\pr\circ\pr_{SV}\colon \pi^\vee\overset{\widetilde{\pr_{W}}}{\longrightarrow}\widetilde{W^\vee}\overset{\pr^{\widetilde{W^\vee}}_{D}}{\longrightarrow}D^\vee_{\xi,\ell,\infty}(\pi)\ .
\end{equation*}
The advantage of considering $\widetilde{W^\vee}$ is that the operators $\mathcal{H}_g^{(k)}$ make sense as maps $\widetilde{W^\vee}\to\widetilde{W^\vee}$ and the map $\widetilde{W^\vee}\to D^\vee_{\xi,\ell,\infty}(\pi)$ is $\mathcal{H}_g^{(k)}$-equivariant as it is a morphism of \'etale $T_+$-modules over $\Lambda(N_0)$. More precisely, let $g$ be in $G$ and put $\mathcal{U}_g:=\{u\in N_0\mid x_u\in g^{-1}\mathcal{C}_0\cap\mathcal{C}_0\}$, $\mathcal{U}_g^{(k)}:=J(N_0/s^kN_0s^{-k})\cap \mathcal{U}_g$. For any $u\in \mathcal{U}_g$ we write $gu$ in the form $gu=n(g,u)t(g,u)\overline{n}(g,u)$ for some unique $n(g,u)\in N_0$, $t(g,u)\in T$, $\overline{n}(g,u)\in \overline{N}$.

\begin{lem}\label{k0g}
There exists an integer $k_0=k_0(g)$ such that for all $k\geq k_0$ and $u\in \mathcal{U}_g$ we have $us^kN_0s^{-k}\subseteq \mathcal{U}_g$, $s^kt(g,u)\in T_+$, and $s^{-k}\overline{n}(g,u)s^k\in \overline{N}_0=G_0\cap \overline{N}$. In particular, for any set $J(N_0/s^kN_0s^{-k})$ of representatives of the cosets in $N_0/s^kN_0s^{-k}$ we have $\mathcal{U}_g=\bigcup_{u\in \mathcal{U}_g^{(k)}}us^kN_0s^{-k}$.
\end{lem}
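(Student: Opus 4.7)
The plan is to exploit compactness of $\mathcal{U}_g$ together with the continuity of the Iwahori-type factorization $gu = n(g,u)t(g,u)\overline{n}(g,u)$ on the open Bruhat cell. First I would observe that $\mathcal{U}_g \subset N_0$ is compact open: it is the preimage of the compact open subset $g^{-1}\mathcal{C}_0\cap\mathcal{C}_0\subseteq \mathcal{C}_0$ under the homeomorphism $u\mapsto x_u$. On $\mathcal{U}_g$ the maps $u\mapsto t(g,u)\in T$ and $u\mapsto \overline{n}(g,u)\in\overline{N}$ are continuous (they are restrictions of algebraic maps on the open cell $P^-w_0N\subseteq G$), so their images $C_T:=\{t(g,u)\mid u\in\mathcal{U}_g\}$ and $C_{\overline{N}}:=\{\overline{n}(g,u)\mid u\in\mathcal{U}_g\}$ are compact subsets of $T$ and $\overline{N}$ respectively.

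Next I would produce the three integers separately. For condition (1), since the cosets $us^kN_0s^{-k}$ for $k\geq 0$ form a neighbourhood basis of $u$ in $N_0$ and $\mathcal{U}_g$ is open, every $u\in\mathcal{U}_g$ has some $k(u)$ with $us^{k(u)}N_0s^{-k(u)}\subseteq \mathcal{U}_g$; by compactness of $\mathcal{U}_g$ finitely many such cosets cover, giving a uniform $k_1$. For condition (2), note that $s^kt\in T_+$ is equivalent to $v_p(\alpha(t))\geq -k\cdot v_p(\alpha(s))$ for all $\alpha\in\Phi^+$; since the continuous function $v_p\circ\alpha$ is bounded below on the compact set $C_T$ for each of the finitely many $\alpha\in\Phi^+$, a uniform $k_2$ suffices. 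For condition (3), the key point is that conjugation by $s$ expands $\overline{N}_0$: for any $\beta\in\Phi^+$ one has $s^{-1}u_{-\beta}(x)s=u_{-\beta}(\alpha(s)^{-1}x\cdot\text{(stuff)})$, so $s\overline{N}_0 s^{-1}\supseteq \overline{N}_0$ and $\bigcup_{k\geq 0}s^k\overline{N}_0s^{-k}=\overline{N}$. Thus the compact $C_{\overline{N}}$ lies inside some $s^{k_3}\overline{N}_0 s^{-k_3}$, so $s^{-k}\overline{n}(g,u)s^k\in \overline{N}_0$ for all $k\geq k_3$ and $u\in\mathcal{U}_g$. Setting $k_0:=\max(k_1,k_2,k_3)$ takes care of all three simultaneously.

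Finally, the last assertion follows from (1) by a direct coset argument: fix $k\geq k_0$ and $u\in\mathcal{U}_g$, and let $u_0\in J(N_0/s^kN_0s^{-k})$ be the representative of the coset of $u$. Then $u_0s^kN_0s^{-k}=us^kN_0s^{-k}\subseteq\mathcal{U}_g$ by (1), so in particular $u_0\in\mathcal{U}_g$, hence $u_0\in\mathcal{U}_g^{(k)}$ and $u\in u_0s^kN_0s^{-k}$. Since the cosets $us^kN_0s^{-k}$ for varying $u\in\mathcal{U}_g^{(k)}$ are disjoint by construction, this gives the disjoint union $\mathcal{U}_g=\bigsqcup_{u\in\mathcal{U}_g^{(k)}}us^kN_0s^{-k}$.

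The main things requiring a little care are the direction of conjugation in step (3) — making sure that $s$ really expands and not contracts $\overline{N}_0$, which is dual to the fact that $s\in T_+$ contracts $N_0$ — and checking that one is allowed to pass from the pointwise bound $k(u)$ in step (1) to a uniform $k_1$, which uses compactness of $\mathcal{U}_g$ rather than openness alone. Neither is a serious obstacle; both are immediate consequences of $T_+$ being defined by strict inequalities on $\Phi^+$ and $\mathcal{U}_g$ being compact open.
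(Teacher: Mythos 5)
Your proof is correct and follows essentially the same route as the paper: compactness of $\mathcal{U}_g$ plus continuity of $t(g,\cdot)$ and $\overline{n}(g,\cdot)$, together with the exhaustion $\overline{N}=\bigcup_{k\geq 0}s^k\overline{N}_0s^{-k}$; your valuation argument for $s^kt(g,u)\in T_+$ is just a spelled-out version of the paper's observation that the image of $t(g,\cdot)$ meets only finitely many cosets of $T/T_0$. (Only nitpick: in your step (3) the displayed conjugation formula has the scalar inverted relative to the conclusion you draw, but the conclusion $s\overline{N}_0s^{-1}\supseteq\overline{N}_0$ is the correct one.)
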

\begin{proof}
Since $\mathcal{U}_g$ is compact and open in $N_0$, it is a union of finitely many cosets of the form $us^kN_0s^{-k}$ for $k$ large enough. Moreover, the maps $t(g,\cdot)$ and $\overline{n}(g,\cdot)$ are continuous in the $p$-adic topology. So the image of $t(g,\cdot)$ is contained in finitely many cosets of $T/T_0$ as $T_0$ is open. For the statement regarding $\overline{n}(g,u)$ note that we have $\overline{N}=\bigcup_{k\geq 0}s^k\overline{N}_0s^{-k}$.
\qed\end{proof}

For $k\geq k_0=k_0(g)$ let $J(N_0/s^kN_0s^{-k})\subset N_0$ be an arbitrary set of representatives of $N_0/s^kN_0s^{-k}$. Recall from the proof of Prop.\ \ref{sheafGB} Step $2$ (see also \cite{SVZ}) that for fixed $g\in G$ and all $u\in N_0$ we may write $\alpha_g(x_u)u$ in the form $n(g,u)t(g,U)$ for some $n(g,u)\in N_0$ and $t(g,U)\in s^{-k_0}T_+$. In particular the equation \eqref{Hgkdef} defining $\mathcal{H}_g^{(k)}$ reads
\begin{align*}
\mathcal{H}_g^{(k)}=\mathcal{H}_{g,J(N_0/s^kN_0s^{-k})}:=\sum_{u\in \mathcal{U}_g^{(k)}}n(g,u)\varphi_{t(g,u)s^k}\circ\psi_s^k\circ(u^{-1}\cdot)
\end{align*}
where $t(g,u)s^k$ lies in $T_+$. Further, any open compact subset $\mathcal{U}\subseteq N_0$ is the disjoint union of cosets of the form $us^kN_0s^{-k}$ for $k\geq k'(\mathcal{U})$ large enough. For a fixed $k\geq k'(\mathcal{U})$ we put
\begin{align*}
\res_{\mathcal{U}}:=\sum_{u\in J(N_0/s^kN_0s^{-k})\cap \mathcal{U}}u\varphi_{s^k}\circ\psi_s^k\circ(u^{-1}\cdot)\ .
\end{align*}
The operators $\mathcal{H}_g^{(k)}$ and $\res_{\mathcal{U}}$ make sense in any \'etale $T_+$-module over $\Lambda(N_0)$, in particular also in $\widetilde{W^\vee}$ and $D^\vee_{\xi,\ell,\infty}(\pi)$. Moreover, $\res_{\mathcal{U}}$ is independent of the choice of $k\geq k'(\mathcal{U})$. Further, any morphism between \'etale $T_+$-modules over $\Lambda(N_0)$ is $\mathcal{H}_g^{(k)}$- and $\res_{\mathcal{U}}$-equivariant.

\begin{lem}\label{ngubij}
Let $g$ be in $G$, $u$ be in $\mathcal{U}_g$, and $k\geq k_0+1$ be an integer. Then the map 
\begin{equation}\label{uskN0}
n(g,\cdot)\colon us^kN_0s^{-k}\to n(g,u)t(g,u)s^kN_0s^{-k}t(g,u)^{-1} 
\end{equation}
is a bijection. In particular, for any set $J(N_0/s^kN_0s^{-k})$ of representatives of the cosets in $N_0/s^kN_0s^{-k}$ the set $\mathcal{U}_{g^{-1}}$ is the disjoint union of the cosets $n(g,u)t(g,u)s^kN_0s^{-k}t(g,u)^{-1}$ for $u\in \mathcal{U}_g^{(k)}$.
\end{lem}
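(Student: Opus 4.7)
The plan is to explicitly decompose $g(uv) = (gu)v = n(g,u)t(g,u)\overline{n}(g,u)v$ in the $N \cdot T \cdot \overline{N}$ form. Writing $v = s^k n_0 s^{-k}$ with $n_0 \in N_0$ and $\overline{n}(g,u) = s^k \overline{n}_1 s^{-k}$ with $\overline{n}_1 \in \overline{N}_0$ (Lemma \ref{k0g}), everything reduces to decomposing $\overline{n}_1 n_0 \in \overline{N}_0 N_0$. The crucial observation is that since $\alpha(s) \in p\mathbb{Z}_p$ for every $\alpha \in \Phi_+$ (as $\alpha\circ\xi$ takes values in positive powers of $p$ on $\Phi_+$), $s^{-1}$-conjugation strictly shrinks $\overline{N}_0$; combined with $k \geq k_0 + 1$, this forces $\overline{n}_1 \in s^{-1}\overline{N}_0 s \subseteq \overline{N}_0^{(1)}$, where $\overline{N}_0^{(1)} := \prod_{\alpha \in \Phi_+} u_{-\alpha}(p\mathbb{Z}_p)$ is the pro-$p$ part of $\overline{N}_0$. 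In particular $\overline{n}_1 n_0$ lies in the Iwahori subgroup $I$, admitting the two bijective parameterizations $I = N_0 T_0 \overline{N}_0^{(1)} = \overline{N}_0^{(1)} T_0 N_0$.

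Applying the first Iwahori decomposition, I write $\overline{n}_1 n_0 = n_0^\natural t_0^\natural \overline{n}_0^\natural$ uniquely with $n_0^\natural \in N_0$, $t_0^\natural \in T_0$, $\overline{n}_0^\natural \in \overline{N}_0^{(1)}$. Substituting back, using that $T$ is abelian (so $s^k$ commutes with $t_0^\natural$ and $t(g,u)$) and that $t(g,u)s^k \in T_+$ (Lemma \ref{k0g}) preserves $N_0$ under conjugation, I obtain
\begin{equation*}
g(uv) = \bigl(n(g,u)\,(t(g,u)s^k) n_0^\natural (t(g,u)s^k)^{-1}\bigr) \cdot \bigl(t(g,u) t_0^\natural\bigr) \cdot \bigl(s^k \overline{n}_0^\natural s^{-k}\bigr),
\end{equation*}
which is an $N_0 \cdot T \cdot \overline{N}$ decomposition. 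Reading off the first factor yields $n(g, uv) = n(g,u) \cdot (t(g,u)s^k) n_0^\natural (t(g,u)s^k)^{-1}$, visibly in the target coset $n(g,u) t(g,u) s^k N_0 s^{-k} t(g,u)^{-1}$.

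For bijectivity of $v \mapsto n(g,uv)$, the bijective steps $v \leftrightarrow n_0$, conjugation $n_0^\natural \mapsto (t(g,u)s^k) n_0^\natural (t(g,u)s^k)^{-1}$, and left-translation by $n(g,u)$ can be peeled off, leaving the key claim: for fixed $\overline{n}_1 \in \overline{N}_0^{(1)}$, the map $N_0 \to N_0$, $n_0 \mapsto n_0^\natural$, is a bijection. Injectivity is immediate from the uniqueness of the Iwahori decomposition. For surjectivity, given $n_0^\natural \in N_0$, I decompose $(n_0^\natural)^{-1}\overline{n}_1 \in I$ via the reverse parameterization $I = \overline{N}_0^{(1)} T_0 N_0$ as $\overline{n}' t' n'$ and set $n_0 := (n')^{-1} \in N_0$; then $(n_0^\natural)^{-1}\overline{n}_1 n_0 = \overline{n}' t' \in \overline{N}_0^{(1)} T_0 = T_0 \overline{N}_0^{(1)}$ (using that $T_0$ normalizes $\overline{N}_0^{(1)}$), whence $\overline{n}_1 n_0 \in n_0^\natural T_0 \overline{N}_0^{(1)}$, as required.

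For the ``in particular'' statement, I first observe that $u \mapsto n(g,u)$ is a bijection $\mathcal{U}_g \to \mathcal{U}_{g^{-1}}$ with inverse $n(g^{-1}, \cdot)$; indeed, $g \cdot x_u = x_{n(g,u)}$, which follows because $t(g,u) \overline{n}(g,u) w_0 \in w_0 B$ (as $w_0^{-1}$ conjugates $T$ and $\overline{N}$ into $T$ and $N$, respectively). Applying this bijection termwise to the disjoint decomposition $\mathcal{U}_g = \bigsqcup_{u \in \mathcal{U}_g^{(k)}} us^k N_0 s^{-k}$ from Lemma \ref{k0g}, and invoking the bijectivity just proved on each piece, yields the claimed disjoint decomposition of $\mathcal{U}_{g^{-1}}$. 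The main obstacle is the reduction to the Iwahori subgroup: without the refined bound $k \geq k_0 + 1$ (rather than merely $k \geq k_0$), $\overline{n}_1 n_0$ need not lie in $I$ and the Iwahori decomposition essential to the argument would fail.
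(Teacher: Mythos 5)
Your proof is correct. It rests on the same key observation as the paper's --- namely that $k\geq k_0+1$ forces $\overline{n}_1=s^{-k}\overline{n}(g,u)s^k$ into $s^{-1}\overline{N}_0s$, so that an Iwahori-type factorization becomes available --- but the two arguments package this differently. The paper transfers everything to the flag variety: it shows $s^{-k}\overline{n}(g,u)s^k\mathcal{C}_0=\mathcal{C}_0$ (using the Iwahori factorization of the principal congruence subgroup $U^{(1)}$ together with its normality in $G_0$), computes $g(us^kN_0s^{-k})w_0B=n(g,u)\,t(g,u)s^kN_0s^{-k}t(g,u)^{-1}w_0B$ as an identity of subsets of $G/B$, and then obtains bijectivity for free because $g$ acts bijectively on $G/B$ and $v\mapsto x_v$ is a homeomorphism. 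You instead stay inside the group: you extract the full $N\cdot T\cdot\overline{N}$ decomposition of $g(uv)$ from the Iwahori decomposition of $\overline{n}_1n_0$ in $I=N_0T_0\overline{N}_0^{(1)}$, and you must then prove by hand that $n_0\mapsto n_0^{\natural}$ is a bijection of $N_0$, which you do correctly by running the decomposition in the reverse order $I=\overline{N}_0^{(1)}T_0N_0$ to construct the preimage. Your route is longer but yields more: explicit formulas for $n(g,uv)$, $t(g,uv)=t(g,u)t_0^{\natural}$ and $\overline{n}(g,uv)=s^k\overline{n}_0^{\natural}s^{-k}$, which the flag-variety argument does not produce; the paper's route is shorter because the bijectivity is absorbed into the group action on $G/B$. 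Your closing remark about where $k\geq k_0$ alone would fail is also accurate --- for $\overline{n}_1$ merely in $\overline{N}_0$ the product $\overline{n}_1n_0$ can leave the big cell, as one already sees in $\GL_2$.
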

\begin{proof}
By our assumption $k\geq k_0+1$, $s^{-k}\overline{n}(g,u)s^k$ lies in $s^{-1}\overline{N}_0s\subseteq U^{(1)}$. So for any $v\in N_0$ we have $s^{-k}\overline{n}(g,u)s^kv=vv_1$ for some $v_1$ in $v^{-1}U^{(1)}v=U^{(1)}$. Further, by the Iwahori factorization we have $U^{(1)}=(N\cap U^{(1)})(T\cap U^{(1)})(\overline{N}\cap U^{(1)})$. So we obtain that $s^{-k}\overline{n}(g,u)s^kvw_0B\subset \mathcal{C}_0$ for all $v\in N_0$, whence we deduce $s^{-k}\overline{n}(g,u)s^k\mathcal{C}_0\subseteq \mathcal{C}_0$. Similarly we have $s^{-k}\overline{n}(g,u)^{-1}s^k\mathcal{C}_0\subseteq \mathcal{C}_0$ showing that in fact $s^{-k}\overline{n}(g,u)s^k\mathcal{C}_0= \mathcal{C}_0$. We compute
\begin{align*}
g(us^kN_0s^{-k})w_0B=gus^kN_0w_0B=n(g,u)t(g,u)s^k(s^{-k}\overline{n}(g,u)s^k)\mathcal{C}_0=\\
=n(g,u)t(g,u)s^k\mathcal{C}_0=n(g,u)(t(g,u)s^kN_0s^{-k}t(g,u)^{-1})w_0B\ .
\end{align*}
Since the map $n(g,\cdot)$ is induced by the multiplication by $g$ on $g^{-1}\mathcal{C}_0\cap \mathcal{C}_0$ (identified with $\mathcal{U}_g$), we deduce that the map \eqref{uskN0} is a bijection. The second statement follows as $n(g,\cdot)\colon \mathcal{U}_g\to\mathcal{U}_{g^{-1}}$ is a bijection and we have a partition of $\mathcal{U}_g$ into cosets $us^kN_0s^{-k}$ for $u\in \mathcal{U}_g^{(k)}$ by Lemma \ref{k0g}.
\qed\end{proof}

\begin{lem}\label{WrzeroM++}
Let $M$ be arbitrary in $\mathcal{M}(\pi^{H_0})$ and $l,l'\geq 0$ be integers. There exists an integer $k_1=k_1(M,W_0,l,l')\geq 0$ such that for all $r\geq k_1$ the image of the natural composite map
\begin{equation*}
(W/W_r)^\vee\hookrightarrow W^\vee\to D^\vee_{\xi,\ell,\infty}(\pi)\overset{f_{M,l}}{\twoheadrightarrow} M_l^\vee[1/X]
\end{equation*}
lies in $\Lambda(N_0/H_l)\otimes_{u_\alpha}X^{l'}M^\vee[1/X]^{++}\subset \Lambda(N_0/H_l)\otimes_{u_\alpha}M^\vee[1/X]\cong M_l^\vee[1/X]$. Here $M^\vee[1/X]^{++}$ denotes the $o/\varpi^h\bs X\js$-submodule of $M^\vee[1/X]$ consisting of elements $d\in M^\vee[1/X]$ with $\varphi_{s}^n(d)\to 0$ as $n\to\infty$.
\end{lem}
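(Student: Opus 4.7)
The plan is to prove the lemma by a purely topological continuity argument. The composite map in question is continuous, the target subset is an open neighbourhood of $0$ in the weak topology by construction, and the subgroups $(W/W_r)^\vee$ form a cofinal system of open neighbourhoods of $0$ in the compact module $W^\vee$; so the preimage of the target will necessarily contain some $(W/W_{k_1})^\vee$.

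First I would verify continuity of the composite
\[
g \colon W^\vee \to D_{SV}(\pi) \xrightarrow{\pr} D^\vee_{\xi,\ell,\infty}(\pi) \xrightarrow{f_{M,l}} M_l^\vee[1/X].
\]
The initial arrow is continuous since $W^\vee$ carries the quotient topology from the surjection $\pi^\vee \twoheadrightarrow W^\vee$ dual to $W \hookrightarrow \pi$, and by Lemma~\ref{minw} the kernel of this surjection is contained in that of the natural continuous map $\pi^\vee \to D^\vee_{\xi,\ell,\infty}(\pi)$, so the induced arrow on the quotient is continuous. The map $\pr$ is continuous by the second Remark after Lemma~\ref{prpsigamma}, and $f_{M,l}$ is continuous as a projection in the topological projective limit $D^\vee_{\xi,\ell,\infty}(\pi) = \varprojlim_{M,k} M_k^\vee[1/X]$. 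By the definition of the weak topology recalled in \eqref{OMll'}, the set $\Lambda(N_0/H_l) \otimes_{u_\alpha} X^{l'} M^\vee[1/X]^{++}$ is a basic open neighbourhood of $0$ in $M_l^\vee[1/X]$, so the preimage $U := g^{-1}(\Lambda(N_0/H_l) \otimes_{u_\alpha} X^{l'} M^\vee[1/X]^{++})$ is open in $W^\vee$ and contains $0$.

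Finally, I would recall that $W^\vee = \Hom_o(W, K/o)$ is compact Hausdorff with a fundamental system of open neighbourhoods of $0$ given by the annihilators of finitely generated $o$-submodules $V \leq W$. Since $W = \bigcup_{r \geq 0} W_r$ is a directed union with each $W_r$ finitely generated over $o$, every such $V$ lies in some $W_r$, and the subgroups $(W/W_r)^\vee$ form a cofinal subsystem in the fundamental system of neighbourhoods of $0$. Hence $U$ contains some $(W/W_{k_1})^\vee$, giving the required integer $k_1 = k_1(M, W_0, l, l')$. I do not anticipate any genuine obstacle; the argument is essentially a repackaging of the continuity of $\pr$ established in Section~\ref{transf} together with the defining properties of the weak topology on $D^\vee_{\xi,\ell,\infty}(\pi)$.
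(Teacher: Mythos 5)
Your argument is correct and is essentially the paper's own proof, just spelled out in more detail: the paper likewise observes that the $(W/W_r)^\vee$ form a neighbourhood basis of $0$ in $W^\vee$, that $\Lambda(N_0/H_l)\otimes_{u_\alpha}X^{l'}M^\vee[1/X]^{++}$ is open in the weak topology (because $X^{l'}M^\vee[1/X]^{++}$ is a treillis), and concludes by continuity that the preimage contains $(W/W_r)^\vee$ for $r$ large. Your extra care in justifying the continuity of $W^\vee\to D^\vee_{\xi,\ell,\infty}(\pi)$ and the cofinality of the $(W/W_r)^\vee$ is welcome but not a different method.
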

\begin{proof}
By \eqref{Wr} the $\Lambda(N_0)$-submodules $(W/W_r)^\vee$ form a system of neighbourhoods of $0$ in $W^\vee$. On the other hand, $X^{l'}M^\vee[1/X]^{++}$ being a treillis in $M^\vee[1/X]$ (Prop.\ II.2.2 in \cite{Mira}), $\Lambda(N_0/H_l)\otimes_{u_\alpha}X^{l'}M^\vee[1/X]^{++}$ is open in the weak topology of $M_l^\vee[1/X]$. Therefore its preimage in $W^\vee$ contains $(W/W_r)^\vee$ for $r$ large enough.
\qed\end{proof}

Since $t(g,\cdot)$ is continuous and $\mathcal{U}_g$ is compact, there exists an integer $c\geq 0$ such that for all $u\in \mathcal{U}_g$ there is an element $t'(g,u)\in T_+$ such that $t(g,u)s^{k_0}t'(g,u)=s^c$.
\begin{lem}\label{finFt'}
For any fixed $M\in\mathcal{M}(\pi^{H_0})$ there are finitely many different values of $F_{t'(g,u)}^*M$ where $g\in G$ is fixed and $u$ runs on $\mathcal{U}_g$.
\end{lem}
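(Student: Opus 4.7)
\emph{Proof plan.} My strategy is to exhibit an open subgroup $T'\leq T_0$ of $T$ such that (a) $t'(g,u)$ meets only finitely many cosets of $T'$ as $u$ ranges over $\mathcal{U}_g$, and (b) $F^*_{t_1}M=F^*_{t_2}M$ whenever $t_1,t_2\in T_+$ differ by multiplication by an element of $T'$. Together these imply the lemma, since then $u\mapsto F^*_{t'(g,u)}M$ factors through the finite set $\{t'(g,u)\mid u\in\mathcal{U}_g\}/T'$.

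For (a), I would first apply Lemma \ref{MT'} and intersect the resulting subgroup with $T_0$ (which is open in $T$) to obtain an open subgroup $T'\leq T_0$ stabilising $M$. The Bruhat decomposition is a homeomorphism onto the big cell, so $u\mapsto t(g,u)$ is continuous on the compact set $\mathcal{U}_g$, whence its image is a compact subset of $T$ that can meet only finitely many cosets of the open subgroup $T'$. Since $T$ is commutative, the defining relation $t(g,u)s^{k_0}t'(g,u)=s^c$ rearranges to $t'(g,u)=s^{c-k_0}t(g,u)^{-1}$, so the set $\{t'(g,u)\mid u\in\mathcal{U}_g\}$ also meets only finitely many cosets of $T'$.

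For (b), write $t_1=t_2 t_0$ with $t_0\in T'\leq T_0$. Because $\ell=\ell_\alpha$, the torus $T_0$ normalises $H_0$ (conjugation by $T_0$ preserves each root subgroup $N_{\beta,0}$), so $t_0 H_0 t_0^{-1}=H_0$ and consequently $t_1 H_0 t_1^{-1}=t_2 H_0 t_2^{-1}$. Combined with $t_0 M=M$ this gives $F_{t_1}(M)=\Tr_{H_0/t_1 H_0 t_1^{-1}}(t_1 M)=\Tr_{H_0/t_2 H_0 t_2^{-1}}(t_2(t_0 M))=F_{t_2}(M)$, whence $F^*_{t_1}M=N_0 F_{t_1}(M)=N_0 F_{t_2}(M)=F^*_{t_2}M$. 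I do not foresee any real obstacle in this argument: the only point requiring a small adjustment is shrinking the subgroup from Lemma \ref{MT'} into $T_0$ so that it automatically normalises $H_0$, which is harmless since $T_0$ is open in $T$.
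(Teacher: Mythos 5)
Your proposal is correct and follows essentially the same route as the paper: Lemma \ref{MT'} gives an open subgroup $T'\leq T_0$ stabilising $M$, continuity of $t'(g,\cdot)=s^{c-k_0}t(g,\cdot)^{-1}$ on the compact set $\mathcal{U}_g$ gives finitely many cosets $t'(g,u)T'$, and $F^*_{t}M$ depends only on the coset $tT'$. Your part (b) merely spells out the last point, which the paper leaves implicit (note Lemma \ref{MT'} already produces $T'\leq T_0$, so no intersection with $T_0$ is needed).
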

\begin{proof}
By Lemma \ref{MT'} there exists an open subgroup $T'\leq T$ acting on $M$. In particular, $F_{t'(g,u)}^*M$ only depends on the coset $t'(g,u)T'$. Now $t'(g,\cdot)=s^{c-k_0}t(g,\cdot)^{-1}$ is continuous and $\mathcal{U}_g$ is compact therefore there are only finitely many cosets of the form $t'(g,u)T'$.
\qed\end{proof}

Our key proposition is the following:

\begin{pro}\label{betag}
For all $g\in G$ we have $\res_{g\mathcal{C}_0\cap\mathcal{C}_0}^{\mathcal{C}_0}\circ\beta_{\mathcal{C}_0}=\res_{g\mathcal{C}_0\cap\mathcal{C}_0}^{g\mathcal{C}_0}\circ\beta_{g\mathcal{C}_0}$.
\end{pro}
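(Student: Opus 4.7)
Set $\nu := g^{-1}\mu$. By $G$-equivariance of $\mathfrak{Y}$ combined with $\beta_{g\mathcal{C}_0}(\mu) = g\cdot\beta_{\mathcal{C}_0}(\nu)$, the desired identity rewrites as
$$\res^{\mathcal{C}_0}_{g\mathcal{C}_0\cap\mathcal{C}_0}\beta_{\mathcal{C}_0}(\mu)\;=\;g\cdot\res^{\mathcal{C}_0}_{\mathcal{C}_0\cap g^{-1}\mathcal{C}_0}\beta_{\mathcal{C}_0}(\nu)\quad\text{in}\quad\mathfrak{Y}(g\mathcal{C}_0\cap\mathcal{C}_0),$$
equivalently (via the last sentence of Proposition \ref{sheafGB} and extension by zero from $\mathcal{C}_0$ into $G/B$) as the statement that $\beta_{\mathcal{C}_0}(\mu)$ and $\mathcal{H}_g(\beta_{\mathcal{C}_0}(\nu))$ have the same image under $\res^{\mathcal{C}_0}_{g\mathcal{C}_0\cap\mathcal{C}_0}$ in $\mathfrak{Y}(\mathcal{C}_0)$. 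The plan is to approximate $\mathcal{H}_g$ by the finite sums $\mathcal{H}_g^{(k)}$ of \eqref{Hgkdef} and match each summand, reindexed via Lemma \ref{ngubij}, against the corresponding summand in the (already finite) expression for $\res^{\mathcal{C}_0}_{g\mathcal{C}_0\cap\mathcal{C}_0}\beta_{\mathcal{C}_0}(\mu)$.

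Factor $\beta_{\mathcal{C}_0}=\pr^{\widetilde{W^\vee}}_D\circ\widetilde{\pr_W}$ through the étale hull (Proposition \ref{etalehull}, Corollary \ref{prbd}), with $W=B_+W_0$ and $W_0$ as in Lemma \ref{W0}; by smoothness of $\pi$, enlarge $W_0$ so that some $U^{(k')}\leq G_0$ fixes $W_0$ pointwise (the intersection of the stabilisers of a finite $o$-generating set of $W_0$ is open in $G_0$, hence contains $U^{(k')}$ for some $k'$). Fix $k\geq\max\bigl(k_0(g)+k'+1,\ k'(g\mathcal{C}_0\cap\mathcal{C}_0)\bigr)$ with $k_0(g)$ from Lemma \ref{k0g}. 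Then the left-hand side equals the single finite sum $\sum_{v\in\mathcal{U}_{g^{-1}}^{(k)}}v\varphi_{s^k}\psi_s^k\bigl(v^{-1}\beta_{\mathcal{C}_0}(\mu)\bigr)$ by definition of the restriction operator, while $\mathcal{H}_g^{(k)}(\beta_{\mathcal{C}_0}(\nu))$ is automatically supported on $\mathcal{U}_{g^{-1}}$ by Lemma \ref{ngubij}, hence coincides with its own image under $\res^{\mathcal{C}_0}_{g\mathcal{C}_0\cap\mathcal{C}_0}$ and is given by the formula \eqref{Hgkdef}.

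The crucial step is to match, for each $u\in\mathcal{U}_g^{(k)}$ and $v:=n(g,u)$, the contribution of $u$ to $\mathcal{H}_g^{(k)}(\beta_{\mathcal{C}_0}(\nu))$ with the contribution of $v$ to the left-hand side, modulo a shrinking error. All of this takes place in $\widetilde{W^\vee}$, where the operator $\varphi_{t(g,u)s^k}$ is literally the étale $T_+$-action from the universal property. Using $\psi$-equivariance of $\pr_W$ (Lemma \ref{prpsigamma}, Proposition \ref{prpsiT+}), the former summand's inner element $\psi_s^k(u^{-1}\widetilde{\pr_W}(\nu))$ is the image of the functional $\xi_u\in W^\vee$, $\xi_u(w)=\nu(us^kw)=\mu(gus^kw)$, while the latter's $\psi_s^k(v^{-1}\widetilde{\pr_W}(\mu))$ is the image of $\xi_v'(w)=\mu(vs^kw)$. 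Inserting $gus^k=n(g,u)\cdot t(g,u)s^k\cdot(s^{-k}\overline{n}(g,u)s^k)$ from Lemma \ref{k0g} and absorbing the central $T_+$-element $t(g,u)s^k$ into $\varphi_{t(g,u)s^k}$ (permitted by the étale structure in $\widetilde{W^\vee}$), the two functionals agree on $W_0$ up to the trailing factor $s^{-k}\overline{n}(g,u)s^k\in U^{(k-k_0(g))}\leq U^{(k')}$, which fixes $W_0$ pointwise by the choice of $k'$.

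The residual error, coming from evaluation on $W\setminus W_0$ (equivalently, on $(1-s^{-k}\overline{n}(g,u)s^k)W$), vanishes in the limit: Lemma \ref{WrzeroM++} combined with Lemma \ref{finFt'} (to handle the finitely many $F_{t'(g,u)}^*M$ uniformly) forces the image of the error in any fixed $M_l^\vee[1/X]$ into a basic open neighbourhood of $0$ shrinking with $k$. Continuity of the restriction in the weak topology (Lemma \ref{Hgcont}) and uniform convergence $\mathcal{H}_g^{(k)}\to\mathcal{H}_g$ on compact sets (Proposition \ref{sheafGB}) then deliver the equality in $\mathfrak{Y}(g\mathcal{C}_0\cap\mathcal{C}_0)$. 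The main obstacle will be the third paragraph: the $\overline{N}$-component of the Bruhat decomposition $gu=n(g,u)t(g,u)\overline{n}(g,u)$ has no a priori action on $W=B_+W_0$ and only becomes manageable after conjugation by $s^k$ places it in a congruence subgroup fixing $W_0$; even then the match of summands holds only up to an error on $W\setminus W_0$ that must be shown to vanish in the limit, forcing the use of the étale hull and the approximating sums $\mathcal{H}_g^{(k)}$ rather than working at a single finite level throughout.
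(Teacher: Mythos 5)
Your overall strategy is the one the paper uses: reduce to $\res_{\mathcal{U}_{g^{-1}}}\circ\pr\circ\pr_{SV}(\mu)=\mathcal{H}_g(\pr\circ\pr_{SV}(g^{-1}\mu))$, lift everything to the \'etale hull $\widetilde{W^\vee}$ so that $\mathcal{H}_g^{(k)}$ makes sense there, match the summand indexed by $u\in\mathcal{U}_g^{(k)}$ against the one indexed by $n(g,u)$ via Lemma \ref{ngubij}, absorb $t(g,u)s^k$ into the $\varphi$-action, and kill the leftover $s^{-k}\overline{n}(g,u)s^k\in U^{(k-k_0)}$ using smoothness, with Lemmata \ref{WrzeroM++} and \ref{finFt'} controlling the error.

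There is, however, one genuine gap in your error analysis. You assert that the matched functionals agree on $W_0$ (because $s^{-k}\overline{n}(g,u)s^k$ fixes $W_0$ pointwise) and that the resulting error, ``coming from evaluation on $W\setminus W_0$,'' vanishes as $k\to\infty$. As stated this fails: an error supported on the \emph{fixed} complement $W\setminus W_0$ does not shrink with $k$, and Lemma \ref{WrzeroM++} only applies to functionals vanishing on $W_r$ with $r\geq k_1$, i.e.\ to elements of $(W/W_r)^\vee$ for \emph{growing} $r$. The missing ingredient is Lemma \ref{smallt}: to evaluate the two functionals on a general element $utw_0\in W_r$ with $t\leq s^r$, you must commute $s^{-k}\overline{n}(g,u)^{-1}s^k\in U^{(k-k_0)}$ past $ut$, which costs at most $n_0r$ congruence levels, so the two functionals in fact agree on all of $W_r$ provided $r\leq (k-k_2-k_0)/n_0$. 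Only then does the difference of the $k$-th partial sums land in $\sum_u\iota\bigl(\Lambda(N_0)\otimes_{\Lambda(N_0),\varphi_{t(g,u)s^k}}(W/W_r)^\vee\bigr)$ with $r\to\infty$ as $k\to\infty$, which is exactly the input Lemmata \ref{WrzeroM++} and \ref{finFt'} need to push the error into an arbitrary $O(M,l,l')$. With this repair your argument coincides with the paper's proof.
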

\begin{proof}
Note that since $G/B$ is totally disconnected in the $p$-adic topology, in particular $g\mathcal{C}_0\cap\mathcal{C}_0$ is both open and closed in $\mathcal{C}_0$, we have $\mathfrak{Y}(\mathcal{C}_0)=\mathfrak{Y}(g\mathcal{C}_0\cap\mathcal{C}_0)\oplus \mathfrak{Y}(\mathcal{C}_0\setminus g\mathcal{C}_0)$. By Prop.\ \ref{sheafGB} $\mathcal{H}_g$ is the composite map $$D^\vee_{\xi,\ell,\infty}(\pi)^{bd}=\mathfrak{Y}(\mathcal{C}_0)\overset{g\cdot}{\to}\mathfrak{Y}(g\mathcal{C}_0)\overset{\res^{g\mathcal{C}_0}_{g\mathcal{C}_0\cap \mathcal{C}_0}}{\twoheadrightarrow} \mathfrak{Y}(g\mathcal{C}_0\cap\mathcal{C}_0)\hookrightarrow \mathfrak{Y}(\mathcal{C}_0)=D^\vee_{\xi,\ell,\infty}(\pi)^{bd}\ ,$$ ie.\ we obtain $\res_{g\mathcal{C}_0\cap\mathcal{C}_0}^{g\mathcal{C}_0}\circ(g\cdot)=\mathcal{H}_g$ as maps on $\mathfrak{Y}(\mathcal{C}_0)$ once we identify $\mathfrak{Y}(g\mathcal{C}_0\cap \mathcal{C}_0)$ with a subspace in $\mathfrak{Y}(\mathcal{C}_0)$ via the above direct sum decomposition. On the other hand, by definition $\beta_{\mathcal{C}_0}=\pr\circ\pr_{SV}\colon \pi^\vee\to D^\vee_{\xi,\ell,\infty}(\pi)^{bd}\subset D^\vee_{\xi,\ell,\infty}(\pi)$ is the natural map and we have $\beta_{g\mathcal{C}_0}(\mu)=g\beta_{\mathcal{C}_0}(g^{-1}\mu)$ for any $g\in G$ and $\mu\in \pi^\vee$. Further, as maps on $D^\vee_{\xi,\ell,\infty}(\pi)^{bd}=\mathfrak{Y}(\mathcal{C}_0)$ we have $\res_{\mathcal{U}_{g^{-1}}}=\res^{\mathcal{C}_0}_{g\mathcal{C}_0\cap \mathcal{C}_0}$. Putting these together our equation to show reads $$\res_{\mathcal{U}_{g^{-1}}}\circ\pr\circ\pr_{SV}(\mu)=\mathcal{H}_g(\pr\circ\pr_{SV}(g^{-1}\mu))\ .$$ We want to write $\mathcal{H}_g$ as the limit of the maps $\mathcal{H}_g^{(k)}$, so we set $\mathcal{U}_g^{(k)}:= \{u\in J(N_0/s^kN_0s^{-k})\mid x_u\in g^{-1}\mathcal{C}_0\cap\mathcal{C}_0\}$ and compute
\begin{align}
\mathcal{H}_g^{(k)}\circ\widetilde{\pr_W}(g^{-1}\mu)=\notag\\
=\sum_{u\in \mathcal{U}_g^{(k)}}n(g,u)\varphi_{t(g,u)s^k}\circ\psi_s^k(u^{-1}\widetilde{\pr_W}(g^{-1}\mu))=\notag\\
=\sum_{u\in \mathcal{U}_g^{(k)}}n(g,u)\varphi_{t(g,u)s^k}\circ\widetilde{\pr_W}(s^{-k}u^{-1}g^{-1}\mu)=\notag\\
=\sum_{u\in \mathcal{U}_g^{(k)}}\iota_{t(g,u)s^k,\infty}(n(g,u)\otimes_{s^k}\pr_W(s^{-k}u^{-1}g^{-1}\mu))=\notag\\
=\sum_{u\in \mathcal{U}_g^{(k)}}\iota_{t(g,u)s^k,\infty}(n(g,u)\otimes_{s^k}\pr_W(s^{-k}\overline{n}(g,u)^{-1}t(g,u)^{-1}n(g,u)^{-1}\mu))\notag\\
=\sum_{u\in \mathcal{U}_g^{(k)}}\iota_{t(g,u)s^k,\infty}(n(g,u)\otimes_{s^k}\pr_W((s^{-k}\overline{n}(g,u)^{-1}s^k)t(g,u)^{-1}s^{-k}n(g,u)^{-1}\mu))\label{HgtildeW}
\end{align}
where $\iota_{t(g,u)s^k,\infty}\colon \varphi_{t(g,u)s^k}^*W^\vee\to \varinjlim_t\varphi_t^*W^\vee=\widetilde{W^\vee}$ is the natural map. By Lemma \ref{k0g} we have $$s^{-k}\overline{n}(g,u)^{-1}s^k\in s^{-k+k_0}(G_0\cap \overline{N})s^{k-k_0}\leq U^{(k-k_0)}\ .$$ As $\pi$ is a smooth representation of $G$ and $W_0$ is finite, there exists an integer $k_2=k_2(W_0)$ such that for all $k'\geq k_2$ the subgroup $U^{(k')}$ acts trivially on $W_0$. By Lemma \ref{smallt} we deduce 
\begin{align*}
\pr_W(s^{-k}\overline{n}(g,u)^{-1}t(g,u)^{-1}n(g,u)^{-1}\mu)\mid_{W_r}= \pr_W(s^{-k}t(g,u)^{-1}n(g,u)^{-1}\mu)\mid_{W_r}
\end{align*}
for all $r\leq \frac{k-k_2-k_0}{n_0}$ since $N_0$ normalizes $U^{(k-k_0)}$. Therefore by Lemma \ref{ngubij} and \eqref{HgtildeW} we obtain
\begin{align*}
\mathcal{H}_g^{(k)}\circ\widetilde{\pr_W}(g^{-1}\mu)-\res_{\mathcal{U}_{g^{-1}}}\circ\widetilde{\pr_W}(\mu)=\\
=\mathcal{H}_g^{(k)}\circ\widetilde{\pr_W}(g^{-1}\mu)-\sum_{u\in \mathcal{U}_g^{(k)}}n(g,u)\varphi_{t(g,u)s^k}\circ\psi_{t(g,u)s^{k}}(n(g,u)^{-1}\widetilde{\pr_W}(\mu))=\\
=\sum_{u\in \mathcal{U}_g^{(k)}}\iota(n(g,u)\otimes\pr_W((s^{-k}\overline{n}(g,u)^{-1}s^k-1)s^{-k}t(g,u)^{-1}n(g,u)^{-1}\mu))\\
\in \sum_{u\in \mathcal{U}_g^{(k)}}\iota(\Lambda(N_0)\otimes_{\Lambda(N_0),\varphi_{t(g,u)s^k}}(W/W_r)^\vee)
\end{align*}
where $\iota=\iota_{t(g,u)s^k,\infty}$.

Finally, the sets $O(M,l,l')\subset D^\vee_{\xi,\ell,\infty}(\pi)$ in \eqref{OMll'} form a system of open neighbourhoods of $0$ in $D^\vee_{\xi,\ell,\infty}(\pi)$. Moreover, for any fixed choice $l,l'\geq 0$ and $M\in \mathcal{M}(\pi^{H_0})$ there exists an integer $k_1\geq 0$ such that for all $r\geq k_1$ and $u\in\mathcal{U}_g$ we have $$\pr_{W,F_{t'(g,u)}^*M_l}((W/W_r)^\vee)\subseteq \Lambda(N_0/H_l)\otimes_{u_\alpha}X^{l'}(F_{t'(g,u)}^*M)^\vee[1/X]^{++}$$ 
(see Lemmata \ref{WrzeroM++} and \ref{finFt'}). Note that the composite map $D^\vee_{\xi,\ell,\infty}(\pi)\overset{\varphi_{t(g,u)s^k}}{\rightarrow}D^\vee_{\xi,\ell,\infty}(\pi)\overset{f_{M,0}}{\twoheadrightarrow} M^\vee[1/X]$ factors through the $\varphi_s$-equivariant map $$((1\otimes F_{t(g,u)s^k})^\vee[1/X])^{-1}\colon (F_{t'(g,u)}^*M)^\vee[1/X]\to M^\vee[1/X]$$ mapping $X^{l'}(F_{t'(g,u)}^*M)^\vee[1/X]^{++}$ into $X^{l'}M^\vee[1/X]^{++}$. So we deduce that
\begin{equation*}
\mathcal{H}_g^{(k)}\circ\pr\circ\pr_{SV}(g^{-1}\mu)-\res_{\mathcal{U}_{g^{-1}}}\circ\pr\circ\pr_{SV}(\mu)
\end{equation*}
lies in $O(M,l,l')$ for all $k\geq k_0+k_2+n_0k_1$ and any choice of $J(N_0/s^kN_0s^{-k})$. The result follows by taking the limit $\mathcal{H}_g=\lim_{k\to\infty}\mathcal{H}_g^{(k)}$.
\qed\end{proof}

Now for any fixed $\mu\in\pi^\vee$ consider the the elements $\beta_{g\mathcal{C}_0}(\mu)\in\mathfrak{Y}(g\mathcal{C}_0)$ for $g\in G$. By Proposition \ref{betag} we also deduce 
\begin{align*}
\res^{g\mathcal{C}_0}_{g\mathcal{C}_0\cap h\mathcal{C}_0}\circ\beta_{g\mathcal{C}_0}(\mu)
=\res_{g\mathcal{C}_0\cap h\mathcal{C}_0}^{g\mathcal{C}_0}(g\beta_{\mathcal{C}_0}(g^{-1}\mu))=\\
=g\res^{\mathcal{C}_0}_{\mathcal{C}_0\cap g^{-1}h\mathcal{C}_0}\circ\beta_{\mathcal{C}_0}(g^{-1}\mu)
\overset{\ref{betag}}{=}g\res^{g^{-1}h\mathcal{C}_0}_{\mathcal{C}_0\cap g^{-1}h\mathcal{C}_0}\circ\beta_{g^{-1}h\mathcal{C}_0}(g^{-1}\mu)=\\
=\res^{h\mathcal{C}_0}_{g\mathcal{C}_0\cap h\mathcal{C}_0}(g(g^{-1}h)\beta_{\mathcal{C}_0}((g^{-1}h)^{-1}g^{-1}\mu))
=\res^{h\mathcal{C}_0}_{g\mathcal{C}_0\cap h\mathcal{C}_0}(h\beta_{\mathcal{C}_0}(h^{-1}\mu))=\\
=\res_{g\mathcal{C}_0\cap h\mathcal{C}_0}^{h\mathcal{C}_0}\circ\beta_{h\mathcal{C}_0}(\mu)
\end{align*}
for all $g,h\in G$. Since $\mathfrak{Y}$ is a sheaf and we have $\bigcup_{g\in G}g\mathcal{C}_0=G/B$, there exists a unique element $\beta_{G/B}(\mu)$ in the global sections $\mathfrak{Y}(G/B)$ with $$\res_{g\mathcal{C}_0}^{G/B}(\beta_{G/B}(\mu))=\beta_{g\mathcal{C}_0}(\mu)$$ for all $g\in G_0$. So we obtained a map $\beta_{G/B}\colon \pi^\vee\to \mathfrak{Y}(G/B)$. Our main result in this section is the following

\begin{thm}\label{main}
The family of morphisms $\beta_{G/B,\pi}$ for smooth, admissible $o$-torsion representations $\pi$ of $G$ of finite length form a natural transformation between the functors $(\cdot)^\vee$ and $\mathfrak{Y}_{\alpha,\cdot}(G/B)$. Whenever $D^\vee_{\xi,\ell}(\pi)$ is nonzero, the map $\beta_{G/B,\pi}$ is nonzero either. In particular, if we further assume that $\pi$ is irreducible then $\beta_{G/B}$ is injective.
\end{thm}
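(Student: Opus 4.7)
My plan is to prove the three assertions in sequence: naturality, non-vanishing, and injectivity. Naturality is a bookkeeping consequence of the functoriality of every intermediate construction: given a morphism $f\colon\pi\to\pi'$ in the category of admissible smooth $o$-torsion $G$-representations of finite length, $f^\vee\colon(\pi')^\vee\to\pi^\vee$ is continuous and $\Lambda(N_0)$-equivariant, and each of $(\cdot)^\vee$, $D_{SV}$, the \'etale hull (Prop.\ \ref{etalehull}), $D^\vee_{\xi,\ell,\infty}$, the bounded-submodule construction (Prop.\ \ref{bdetale}), and the sheaf $\mathfrak{Y}$ (Prop.\ \ref{sheafGB}) is functorial in $\pi$. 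Transporting $f^\vee$ through $\beta_{\mathcal{C}_0}=\pr\circ\pr_{SV}$ gives the identity $\beta_{\mathcal{C}_0,\pi}\circ f^\vee=\mathfrak{Y}_{\alpha,f}(\mathcal{C}_0)\circ\beta_{\mathcal{C}_0,\pi'}$; left translating by $g\in G$ yields the analogous identity on $g\mathcal{C}_0$, and the sheaf condition on $G/B$ then assembles these into naturality of $\beta_{G/B}$.

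For the non-vanishing, I would exploit the fact that, by Corollary \ref{prbd}, $\beta_{\mathcal{C}_0}=\pr\circ\pr_{SV}$ factors through $D^\vee_{\xi,\ell,\infty}(\pi)^{bd}\hookrightarrow D^\vee_{\xi,\ell,\infty}(\pi)$, and composing further with the $H_0$-coinvariants projection $D^\vee_{\xi,\ell,\infty}(\pi)\twoheadrightarrow D^\vee_\xi(\pi)$ recovers the natural map $\pi^\vee\to D^\vee_\xi(\pi)=\varprojlim_M M^\vee[1/X]$ of the Remark after Lemma \ref{prpsigamma}. If $D^\vee_\xi(\pi)\neq 0$ then some $M\in\mathcal{M}(\pi^{H_0})$ has $M^\vee[1/X]\neq 0$, and the composite $\pi^\vee\twoheadrightarrow M^\vee\to M^\vee[1/X]$ is nonzero since $M^\vee\to M^\vee[1/X]$ is nonzero whenever its target is. Hence $\beta_{\mathcal{C}_0,\pi}\neq 0$, and a fortiori $\beta_{G/B,\pi}\neq 0$.

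For the injectivity, assume $\pi$ is irreducible (and implicitly that $D^\vee_\xi(\pi)\neq 0$, so that the conclusion is not vacuous). The defining identity $\beta_{g\mathcal{C}_0}(\mu)=g\beta_{\mathcal{C}_0}(g^{-1}\mu)$ propagates under the gluing to $\beta_{G/B}(h\mu)=h\beta_{G/B}(\mu)$ for all $h\in G$, so $\beta_{G/B,\pi}$ is $G$-equivariant. It is moreover continuous from the compact topology on $\pi^\vee$ to the weak topology on $\mathfrak{Y}(G/B)$: the image of $\pi^\vee$ in $D^\vee_{\xi,\ell,\infty}(\pi)^{bd}$ is a compact set contained in some $D_c\in\mathfrak{C}_0$, on which the weak and compact topologies agree, reducing the continuity of $\beta_{\mathcal{C}_0}$ to that of $\pr$ and $\pr_{SV}$; continuity of each restriction $\pi^\vee\to\mathfrak{Y}(g\mathcal{C}_0)$ then follows by $G$-translation (Lemma \ref{Hgcont}), and continuity into $\mathfrak{Y}(G/B)$ by the universal property of the weak topology. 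Hence $\ker(\beta_{G/B,\pi})$ is a closed $G$-stable subspace of $\pi^\vee$ whose annihilator in $\pi$ is a $G$-subrepresentation, which by irreducibility is $0$ or $\pi$; combined with the non-vanishing already established, this forces the kernel to be $0$. The main technical obstacle is the careful verification of continuity of $\beta_{G/B,\pi}$ for the weak topology on $\mathfrak{Y}(G/B)$; the remainder is formal naturality together with Pontryagin duality.
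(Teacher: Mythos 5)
Your proposal is correct and follows essentially the same route as the paper: naturality is reduced to the functoriality of the intermediate constructions (the paper phrases this as $f(M_k)\in\mathcal{M}_k(\pi'^{H_k})$ for a morphism $f\colon\pi\to\pi'$), $G$-equivariance and continuity are checked locally on the $g\mathcal{C}_0$ and glued via the sheaf property. The paper's proof leaves the non-vanishing and injectivity assertions implicit; your arguments for them (the composite $\pi^\vee\to D^\vee_{\xi,\ell,\infty}(\pi)\to D^\vee_\xi(\pi)\to M^\vee[1/X]$ recovering $\pi^\vee\twoheadrightarrow M^\vee\to M^\vee[1/X]$, and the closed $G$-stable kernel argument under irreducibility) are exactly the intended ones and are sound.
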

\begin{proof}
At first we need to check that $\beta_{G/B,\pi}\colon \pi^\vee\to \mathfrak{Y}_{\alpha,\pi}(G/B)$ is $G$-equivari-ant and continuous for all $\pi$. For $g,h\in G$ and $\mu\in\pi^\vee$ we compute
\begin{align*}
\res_{g\mathcal{C}_0}^{G/B}(\beta_{G/B}(h\mu))=\beta_{g\mathcal{C}_0}(h\mu)=g\beta_{\mathcal{C}_0}(g^{-1}h\mu)=\\
=h\beta_{h^{-1}g\mathcal{C}_0}(\mu)=h\res^{G/B}_{h^{-1}g\mathcal{C}_0}\circ\beta_{G/B}(\mu)=\res^{G/B}_{g\mathcal{C}_0}(h\beta_{G/B}(\mu))
\end{align*}
showing that $\beta_{G/B}(h\mu)$ and $h\beta_{G/B}(\mu)$ are equal locally everywhere, so they are equal globally, too. The continuity follows from the fact that $\beta_{g\mathcal{C}_0}$ is continuous for each $g\in G$. 

By Thm.\ 9.24 in \cite{SVZ} the assignment $\pi\mapsto\mathfrak{Y}_{\alpha,\pi}$ is functorial. Moreover, by definition we have $\beta_{g\mathcal{C}_0,\pi}=(g\cdot)\circ\beta_{\mathcal{C}_0,\pi}\circ(g^{-1}\cdot)$ so we are reduced to showing the naturality of $\beta_{\mathcal{C}_0,\cdot}$. This follows from the fact that for any morphism $f\colon \pi\to\pi'$ of smooth, admissible $o$-torsion representations of $G$ of finite length and $M_k\in\mathcal{M}_k(\pi^{H_k})$ for any $k\geq 0$ we have $f(M_k)\in\mathcal{M}_k(\pi'^{H_k})$.
\qed\end{proof}

\section*{Acknowledgements}
Our debt to the works of Christophe Breuil \cite{B}, Pierre Colmez \cite{Mira} \cite{C}, Peter Schneider, and Marie-France Vigneras \cite{SVig} \cite{SVZ} will be obvious to the reader. We would especially like to thank Breuil for discussions on the exactness properties of his functor and its dependence on the choice of $\ell$. We would also like to thank P.\ Schneider for discussions on the topic.

\end{document}